
\documentclass[12pt,english]{amsart}

\usepackage[left=3cm,top=3cm,right=3cm,bottom=3cm]{geometry}
\usepackage[utf8]{inputenc}
\usepackage[pdfstartpage=1,colorlinks=true,bookmarks=false,pdfstartview={FitH}]{hyperref}

\DeclareMathOperator{\dist}{dist}

\usepackage{amsmath,amssymb}
\usepackage{mathtools}
\usepackage{esint}
\usepackage{verbatim}
\usepackage{comment}
\usepackage{cleveref}

\DeclarePairedDelimiter\abs{\lvert}{\rvert}
\makeatletter
\let\oldabs\abs
\def\abs{\@ifstar{\oldabs}{\oldabs*}}
\newcommand{\eps}{\varepsilon}

\newcommand{\R}{\mathbb{R}}
\newcommand{\p}{\partial}

\newcommand{\Ds}{(-\Delta)^s_{\rm RFL}}
\newcommand{\Dssp}{(-\Delta)^{s}_{\rm SFL}}
\newcommand{\Ints}{(-\Delta)^{-s}}

\newcommand{\PV}{\textnormal{P.V.}\,}
\newcommand{\loc}{\textnormal{loc}}
\newcommand{\norm}[2][]{\left\|{#2}\right\|_{#1}}

\newcommand{\sign}{\textnormal{sign}\,}

\newcommand{\set}[1]{\left\{#1\right\}}
\newcommand{\bG}{\mathbb{G}}
\newcommand{\bH}{\mathbb{H}}
\newcommand{\bM}{\mathbb{M}}

\newcommand{\bS}{\mathbb{S}}

\newcommand{\cG}{\mathcal{G}}
\newcommand{\cH}{\mathcal{H}}
\newcommand{\cJ}{\mathcal{J}}
\newcommand{\cL}{\mathcal{L}}
\newcommand{\cM}{\mathcal{M}}
\newcommand{\cK}{\mathcal{K}}

\newcommand{\textas}{\text{ as }}
\newcommand{\texton}{\text{ on }}

\newcommand{\textor}{\text{ or }}
\newcommand{\textin}{\text{ in }}
\newcommand{\textfor}{\text{ for }}
\newcommand{\textforall}{\text{ for all }}
\newcommand{\textand}{\text{ and }}

\newcommand{\angles}[1]{\left\langle#1\right\rangle}

\DeclareMathOperator{\diam}{diam}

\DeclareMathOperator{\spanned}{span}

\theoremstyle{plain}
\newtheorem{thm}{Theorem}[section]
\newtheorem*{thm*}{Theorem}
\newtheorem{lem}[thm]{Lemma}
\newtheorem{lemma}[thm]{Lemma}
\newtheorem{cor}[thm]{Corollary}
\newtheorem{prop}[thm]{Proposition}

\theoremstyle{definition}
\newtheorem{defn}{Definition}[section]

\theoremstyle{remark}
\newtheorem{remark}{Remark}[section]

\numberwithin{equation}{section}

\vbadness=\maxdimen

\usepackage{enumerate}

\begin{document}

\title[Nonlocal eigenvalue problems]{Blow-up phenomena in nonlocal eigenvalue problems: when theories of $L^1$ and $L^2$ meet}

\author{Hardy Chan}
\address[{\bf H. Chan}]{ \sc Dept. of Mathematics, ETH Z\"urich,
R\"amistrasse 101 \\ 8092 Z\"urich, Switzerland }
\email[]{hardy.chan@math.ethz.ch}
\urladdr{https://people.math.ethz.ch/~hchan/}

\author{David G\'omez-Castro}
\address[{\bf D. G\'omez-Castro}]{\sc Instituto de Matemática Interdisciplinar,
	Universidad Complutense de Madrid\\
	28040 Madrid, Spain}
\email[]{dgcastro@ucm.es}
\urladdr{http://blogs.mat.ucm.es/dgcastro/}

\author{Juan Luis V\'{a}zquez}
\address[{\bf J.~L.~V\'azquez}]{\sc Depto. de Matem\'aticas\\
Univ. Aut\'onoma de Madrid (UAM)\\
28049 Madrid, Spain}
\email[]{juanluis.vazquez@uam.es}
\urladdr{http://verso.mat.uam.es/~juanluis.vazquez/}

\begin{abstract}
	
We develop a linear theory of very weak solutions for nonlocal eigenvalue problems $\mathcal L u = \lambda u + f$ involving integro-differential operators posed in bounded domains with homogeneous Dirichlet exterior condition, with and without singular boundary data. We consider mild hypotheses on the Green function and the standard eigenbasis of the operator. The main examples in mind are the fractional Laplacian operators.

Without singular boundary datum and when $\lambda$ is not an eigenvalue of the operator, we construct an $L^2$-projected theory of solutions, which we extend to the optimal space of data for the operator $\mathcal L$. We present a Fredholm alternative as $\lambda$ tends to the eigenspace and characterise the possible blow-up limit. The main new ingredient is the transfer of orthogonality to the test function.

We then extend the results to singular boundary data and study the so-called large solutions, which blow up at the boundary.  For that problem we show that, for any regular value $\lambda$, there exist ``large eigenfunctions'' that are singular on the boundary and regular inside. We are also able to present a Fredholm alternative in this setting, as $\lambda$ approaches the values of the spectrum.

We also obtain a maximum principle for weighted $L^1$ solutions when the operator is $L^2$-positive. It yields a global blow-up phenomenon as the first eigenvalue is approached from below.

Finally, we recover the classical Dirichlet problem as the fractional exponent approaches one under mild assumptions on the Green functions. Thus ``large eigenfunctions'' represent a purely nonlocal phenomenon.

\bigskip

{\it {Keywords and phrases.} \rm Integro-differential operators, eigenvalue problems,  large solutions, fractional Laplacian.}

\

{\it 2010 Mathematics Subject Classification.} {
	35R09, %
	35R11, %
	35D30, %
	45C05.
}
\end{abstract}

\maketitle

\pagebreak

\section{Introduction}

Let $\Omega\subset\R^n$, $n\ge 1$, be bounded of class $C^{1,1}$. Let $\cL$ be an integro-differential operator of order $2s\in(0,2)$ in $\Omega$ with exterior or boundary Dirichlet condition, whichever is applicable. Typical choices  for $\cL$ are the restricted and spectral versions  of the fractional Laplacian $(-\Delta)^s$ for all $s\in(0,1)$, briefly RFL and SFL, but our theory includes a wider class of operators described in \Cref{sec:L}. Let $\Omega^c = \mathbb R^n \setminus \overline \Omega$. From  standard theory, it is known that non-trivial solutions for the problems
\[
\begin{cases}
\Ds u=\lambda u
    & \textin \Omega\\
u=0
    & \textin {\partial \Omega \cup \Omega^c },
\end{cases}
    \quad \textor \quad
\begin{cases}
\Dssp u=\lambda u
    & \textin \Omega\\
u=0
    & \texton \p\Omega,
\end{cases}
\]
 exist if and only if $\lambda$ belongs to the standard spectrum, $\Sigma$, a discrete subset of $(0,+\infty)$. Moreover, the corresponding eigenfunctions  are continuous up to the boundary, though with different regularity. Surprisingly, even the homogeneous Dirichlet problem
\[\begin{cases}
\Ds u=0
    & \textin B_1\\
u=0
    & \textin {B_1^c},
\end{cases}\]
without condition \emph{on} the boundary is ill-posed  as shown by the non-trivial solution given by
\[
u(x)=
\begin{cases}
(1-|x|^2)^{s-1}
	& \textin B_1\\
0
	& \textin {B_1^c},
\end{cases}\]
found in Hmissi \cite{Hmissi} and later Bogdan--Byczkowski--Kulczycki--Ryznar--Song--Vondra\v{c}ek \cite{Bogdan}.
This initiated the study of \emph{large solutions}, or solutions with boundary blow-up, which are the leitmotif of this paper. We need one further notation for the distance function to the boundary, $\delta:\Omega \rightarrow(0,+\infty)$ :
\[
 \delta(x)=\dist(x,\p\Omega).
\]
Among other things, Abatangelo \cite{A} finds large RFL-harmonic solutions blowing up at the boundary of a general domain like $\delta^{s-1}$, and Abatangelo--Dupaigne \cite{AD} establishes large solutions for SFL growing like $\delta^{2s-2}$. Using two-sided estimates for Green's function, Abatangelo--G\'{o}mez-Castro--V\'{a}zquez \cite{AGV} provides a unified theory which also includes the censored fractional Laplacian (CFL), where no large solutions exist. The authors of \cite{AGV} also show the boundary behavior for the large solution (or its candidate) is $\delta^{-(1-2s+\gamma)}$, where $\gamma$ is the parameter that determines the optimal boundary behavior; thus $\gamma=s$ for RFL, $\gamma=1$ for SFL, and $\gamma=2s-1\in(0,1)$ for CFL. Notice that for the classical Laplacian, $s=\gamma=1$, so $1-2s+\gamma=0$, consistent with the fact that no large harmonic functions exist.

\medskip

\noindent {\bf Problem and type of results.} In this paper we are interested in ``large eigenfunctions'', or \emph{large solutions} of the eigenvalue problem, namely
\[\begin{cases}
\cL{u}-\lambda u=0
	& \textin \Omega,\\
u(x)\to+\infty
	& \textas x\to\p\Omega,\\
u=0
    & \textin \Omega^c
        \quad \text{(if applicable)}
\end{cases}\]
with  $\lambda\in\R$. The precise equations are introduced in \eqref{eq:intro-1}, \eqref{eq:main-f-h}. The class of operators is precisely defined in \Cref{sec:L}, and a suitable notion of  solution in \Cref{ssec.notion}. In contrast to the standard theory, we will establish the existence of a unique large solution for \emph{any} $\lambda\in\R\setminus\Sigma$, and for all prescribed (non-zero) boundary data in a suitable class. Moreover, as $\lambda$ approaches the standard spectrum $\Sigma$ of $\cL$, one expects a blow-up phenomenon, which we will precisely identify. See \Cref{thm:blow}. It follows from the study that ``large eigenfunctions'' are different from the standard eigenfunctions in three striking ways:
\begin{itemize}
\item they exist when $\lambda$ stays away from the standard spectrum;
\item they are singular on the boundary;
\item they blow up in the interior, when $\lambda$ tends to the standard spectrum.
\end{itemize}
For these reasons, although ``large eigenfunctions'' solve an equation similar to standard eigenfunctions, they show drastically different behavior from the standard ones. We emphasize this fact by using the quotation marks.

\medskip

\noindent {\bf Related work.} For the homogeneous eigenvalue problem, to our best knowledge, only locally bounded large solutions are known for $\lambda\in(-\infty,0)$ (absorption) or for $\lambda>0$ (reaction) that is sufficiently small \cite{A,AD}. %

\medskip

D\'{i}az--G\'{o}mez-Castro--V\'{a}zquez \cite{DGV} study very weak solutions of the RFL-Schr\"{o}dinger equation where the potential $-\lambda$ (when $\lambda\leq 0$) is replaced by a general non-negative function $V\geq0$.

\medskip

Interesting boundary blow-up solutions of nonlinear elliptic equations are also found in various settings, see for instance \cite{A,AD,ACDFGW,BB,BFV,CFQ,FQ}.

\medskip

In the classical case $\cL=-\Delta$, Keller \cite{K} and Osserman \cite{O} have found independently a necessary and sufficient condition for the existence of solutions of
\begin{equation}\label{eq:KO}
{-\Delta} u+f(u)=0
    \quad \textin \Omega,
\end{equation}
with explosive boundary behavior,\footnote{More precisely, unbounded behavior on $\p\Omega$ for $\cL=-\Delta$, and more generally, blowing up faster than $\delta^{s-1}(x)$ as $x\to\p\Omega$ for $\cL=\Ds$, $s\in(0,1)$.} when $f$ is non-negative and non-decreasing with $f(0)=0$. In fact, they show the equivalence of
\begin{itemize}
\item \emph{Keller--Osserman condition}, or superlinear growth of the nonlinearity as in $\int^{+\infty}(\int_0^t f)^{-1/2}\,dt<+\infty$ (which implies an \emph{a priori} estimate in terms of the radial solution),
\item existence of large solutions of \eqref{eq:KO} when $\Omega$ is a bounded smooth domain, and
\item non-existence of entire solutions of \eqref{eq:KO} when $\Omega=\R^n$.
\end{itemize}
In particular, when $f(u)=-\lambda u$ (for $\lambda\leq0$) the first condition is violated so no large solutions exist. See also Dumont--Dupaigne--Goubet--R\u{a}dulescu \cite{DDGV} for a study with oscillating nonlinearity. In the fractional case, a sufficient condition for the existence of solutions of $\Ds+f(u)=0$ with a blow-up rate faster than $\delta^{s-1}$, or a partial fractional Keller--Osserman condition, is obtained by Abatangelo \cite{A2}.

\bigskip

\section{Preliminaries, definitions and precise statement of results}\label{sec:prelim}

\subsection{The operator}
\label{sec:L}

The operator $\cL$ that we consider includes the two most popular versions of the fractional Laplacian in a bounded domain. $\cL$ takes the sign convention which is the one of $-\Delta$, hence is a non-negative operator. We make the following assumptions on $\cL$ throughout the paper:

\begin{enumerate}
\item[(K1)]
    $\cL$ has a Green's function\footnote{The zero subscript in $\cG_0$ suggests $\lambda=0$.} $\cG_0:(\Omega\times\Omega)\setminus\set{(x,y)\in\Omega\times\Omega:x=y}\to(0,+\infty)$, such that\footnote{$f\asymp g$ means $C^{-1}f\leq g\leq Cf$ for a constant $C$. $a\wedge b$ denotes the minimum of $a$ and $b$.}
\[\begin{split}
\cG_0(x,y)=\cG_0(y,x)
&\asymp
    \dfrac{1}{|x-y|^{n-2s}}
    \left(
        \dfrac{
            \delta^\gamma(x)
        }{
            |x-y|^{\gamma}
        }
        \wedge 1
    \right)
    \left(
        \dfrac{
            \delta^\gamma(y)
        }{
            |x-y|^{\gamma}
        }
        \wedge 1
    \right)\\
&\asymp
    \dfrac{1}{|x-y|^{n-2s}}
    \left(
        \dfrac{
            \delta^\gamma(x)
            \delta^\gamma(y)
        }{
            |x-y|^{2\gamma}
        }
        \wedge 1
    \right),
\end{split}\]
for $\gamma\in(0,1]$, $n\neq 2s$.
\item[(K2)] $\cG_0$ has an (inner) $\gamma$-normal derivative $D_\gamma\cG_0:\p\Omega\times\Omega\to(0,+\infty)$,
\begin{equation*}%
D_\gamma\cG_0(z,x)
:=
\lim_{\Omega\ni y\to z}
\dfrac{
    \cG_0(y,x)
}{
    \delta^\gamma(y)
}
\asymp
    \dfrac{
        \delta^\gamma(x)
    }{
        |x-z|^{n-2s+2\gamma}
    },
        \quad x\in \Omega,\,z\in\p\Omega,
\end{equation*}
which is referred to as the \emph{Martin kernel} in this setting.
\end{enumerate}

\begin{remark}
	In similar settings, some authors included the following additional assumptions: the linear operator $\cL : \mathrm{dom} (\cL) \subset L^1 (\Omega) \to L^1 (\Omega)$ is assumed to be densely defined and sub-Markovian,
	\begin{enumerate}
		\item[(A1)] $\cL$ is $m$-accretive on $L^1 (\Omega)$,
		\item[(A2)] If $0 \le f \le 1$, then $0 \le e^{-t \cL } f \le 1$.
	\end{enumerate}
	We do not need them in this elliptic context.
\end{remark}

\begin{remark}
Note that when $n=2s=1$, $\cG_0$ has a logarithmic singularity.
\end{remark}

\begin{remark}
	By \cite[Proposition 5.1]{BFV},  (K1) (in fact $0 \le \cG_0 \le C |x-y|^{-(n-2s)}$) are sufficient for the Green function $\bG_0$ defined in \eqref{eq:G0-def} to be a compact operator $L^2(\Omega)\to L^2 (\Omega)$. They apply the Riesz–Fréchet–Kol\-mo\-go\-rov Theorem. Hence $\cL$ has a discrete standard spectrum $\Sigma=(\lambda_i)_{i\geq1}$, containing a non-decreasing, divergent sequence of positive Dirichlet eigenvalues
	\[
	0<\lambda_1\le \lambda_2\leq\cdots
	\nearrow+\infty,
	\]
	repeated according to multiplicity, and the corresponding standard eigenfunctions, $(\varphi_j)_{j\geq1}\subset C(\overline{\Omega})$, form an orthonormal basis of $L^2(\Omega)$, with $\varphi_1\ge 0$.
	
	The parameter $\gamma$ is responsible for the optimal boundary behavior in the sense of Hopf, and the first eigenfunction satisfies $\varphi_1\asymp \delta^\gamma$ (see \cite{BFV}).
	\end{remark}
	
\begin{remark}
Considering the range of $\gamma\in(0,1]$, we will at times look at the extra condition $\gamma<2s$, a case that is covered but not distinguished in \cite{AD,AGV}. The advantage in such regime is that $L^1$ weak-dual solutions can be directly defined, see \Cref{rem:cond}. On the other hand, from \cite{AGV} or the discussion of \Cref{sec:trace}, one obtains a large solution if and only if $\gamma>2s-1$. If $\gamma\leq 2s-1$, then  an $L^2$-theory suffices and \Cref{thm:blow} becomes simply an exercise.
\end{remark}

We discuss two concrete examples. They take the usual form
\begin{equation*}
	\cL u (x) = \int_\Omega (u(x) - u(y)) \cJ (x,y) dy + \kappa (x) u(x).
\end{equation*}
In \cite{BFV}, the authors check that the following two operators are $m$-accretive and sub-Markovian.
\begin{itemize}
\item
    The \emph{restricted fractional Laplacian} (RFL) is defined by imposing the exterior Dirichlet condition $u=0$ in $\Omega^c$ in the singular integral formula in the whole space, i.e. for $u\in C^{2s+\epsilon}_\loc(\R^n)\cap L^1(\R^n;(1+|x|)^{-n-2s})$,
\[
\Ds u(x)
=C_{n,s}\PV
    \int_{\R^n}
        \dfrac{
            u(x)-u(y)
        }{
            |x-y|^{n+2s}
        }
    \,dy,
\]
where $C_{n,s}=\frac{2^{2s}\Gamma(\frac{n+2s}{2})}{|\Gamma(-s)|\pi^{n/2}}$. It leads to
\[
\cJ(x,y)=\dfrac{C_{n,s}}{|x-y|^{n+2s}},
    \quad
\kappa(x)=
    \int_{\Omega^c}
        \dfrac{C_{n,s}}{
            |x-y|^{n+2s}
        }
    \,dy
\asymp \delta^{-2s}(x).
\]
By \cite{RS1}, $\gamma=s$ in \textrm{(K1)}. Note that $\gamma<2s$ is satisfied for all $s\in(0,1)$. Considering the exterior Dirichlet condition as part of the definition of the operator, we will not repeat it in the equations.
\item
    The \emph{spectral fractional Laplacian} (SFL) is defined by taking a fractional power of the Dirichlet Laplacian, typically done by an eigenfunction expansion. %
    This operator is constructed so that $\varphi_j$ are the basis of $L^2(\Omega)$-eigenfunctions of $-\Delta$ associated to eigenvalues (say) $\mu_j$, and $\lambda_j=\mu_j^s$.
In terms of the heat kernel $\cK(t,x,y)$ of $-\Delta$ in $\Omega$, one has \cite{SoVo,AD},
\[\begin{split}
\cJ(x,y)
&=\dfrac{s}{\Gamma(1-s)}
    \int_{0}^{+\infty}
        \cK(t,x,y)
    \,\dfrac{dt}{t^{1+s}}
\asymp
    \dfrac{1}{|x-y|^{n+2s}}
    \left(
        \dfrac{
            \delta(x)\delta(y)
        }{
            |x-y|^2
        }
        \wedge 1
    \right)\\
\kappa(x)
&=\dfrac{s}{\Gamma(1-s)}
    \int_{0}^{+\infty}
        \left(1-
            \int_{\Omega}
                \cK(t,x,y)
            \,dy
        \right)
    \,\dfrac{dt}{t^{1+s}}
\asymp \delta^{-2s}(x).
\end{split}\]
By \cite{CSt}, $\gamma=1$ in \textrm{(K1)}. See also \cite{CT}. Note that $\gamma<2s$ if and only if $s\in(\frac12,1)$.
\end{itemize}

\begin{remark}
	In \cite{BSV,BFV,AGV} the reader may find some other operators of fractional type satisfying (A1), (A2) and (K1). Whether hypothesis (K2) is satisfied is usually a difficult problem, which we will not discuss here. The study of other operators of  Schrödinger relativistic type $\cL = (-\Delta + m^2 {\rm Id})^{s} - m^{2s} {\rm Id}$ have also been of significant recent interest. Conditions (A1)--(A2) are easy to check. We do not know if (K1) and (K2) hold.
\end{remark}

We will make some further comments on the sharp structure of compact embeddings and functional spaces related to our operators in \Cref{sec:compactness}.

\subsection{Large $\cL$-harmonic functions}
\label{sec:trace}

This material corresponds to the case $\lambda=0$ of our equation. The knowledge of the precise boundary blow-up rate of the large $\cL$-harmonic functions, \cite{AGV}, leads us to introduce the important exponent
\begin{equation}\label{eq:b}
b=1-2s+\gamma.
\end{equation}
In particular,
\[
b=\begin{cases}
1-s
	& \textfor {\cL}=\Ds,\,s\in(0,1)\\
2-2s
	& \textfor {\cL}=\Dssp,\,s\in(0,1)\\
0
	& \textfor {\cL}={-\Delta}
		\quad \textor \quad
		{\cL}={(-\Delta)^s_{\rm CFL}},\,s\in(\frac12,1).\\
\end{cases}\]
Here $(-\Delta)^s_{\rm CFL}$ is the censored fractional Laplacian defined by
\[
(-\Delta)^s_{\rm CFL}u(x)=C_{n,s}\PV\int_{\Omega}\frac{u(x)-u(y)}{|x-y|^{n+2s}}\,dy.
\]
Notice that the parameter $\gamma=2s-1$, as discussed in \cite{AGV}.
We are interested in the case $b>0$. Define the Martin operator
\[\begin{split}
\bM:C(\p\Omega)
    &\longrightarrow \delta^{-b}L^\infty(\Omega)\\
h&\longmapsto \bM(h)
\end{split}\]
by
\begin{equation}\label{eq:martin}\begin{split}
\bM(h)(x)
    &=\int_{\p\Omega}
        D_\gamma\cG_0(z,x)h(z)
    \,d\cH^{n-1}(z),
        \quad \textfor x\in\p\Omega,
\end{split}\end{equation}
where $D_\gamma\cG_0(z,x)$ is the (inner) $\gamma$-normal derivative defined in \textrm{(K2)}. (When $\gamma=1$, it is reduced to the Poisson kernel.) For the reader's convenience, we include, in \Cref{prop:martin}, a short proof showing that the Martin operator %
is well-defined, in the hope of demystifying the naturally-occurring exponent $b=1-2s+\gamma$. When $h\equiv1$, in particular, \cite[Corollary 4.3]{AGV} asserts that
\[
\bM(1)(x)
=\int_{\p\Omega}
    D_\gamma\cG_0(z,x)
\,d\cH^{n-1}(z)
\asymp \delta^{-b}.
\]
When normalized by $\bM(1)$, the equation \eqref{eq:martin} actually leads to the representation formula for $\cL$-harmonic functions with prescribed (large) boundary data, i.e. $u=\bM(h)$ solves
\[\begin{cases}
\cL{u}=0
    & \textin \Omega\\
\dfrac{u}{\bM(1)}=h
    & \texton \p\Omega, \\
    u = 0 & \textin \Omega^c \text{ (if applicable)}.
\end{cases}\]
Since we will only focus on the case of null exterior condition, we do not  indicate this in the future. Indeed, by \cite[Theorem 4.13]{AGV} (see also \cite{A,AD}), for $h\in C(\p\Omega)$, the limit
\[
\dfrac{\bM(h)}{\bM(1)}(z)
:=\lim_{\Omega\ni x\to z}
    \dfrac{
        \bM(h)(x)
    }{
        \bM(1)(x)
    }
\]
exists uniformly in $z\in\p\Omega$ and equals $h(z)$. Even when $h\in L^1(\p\Omega)$, such boundary trace can still be understood in a weaker sense, as an average over constricting tubular neighborhoods of $\p\Omega$, see \cite[Theorem 4.15]{AGV}.

\medskip

On the other hand, the weighted trace operator
\[\begin{split}
B:\delta^{-b}C(\overline{\Omega})
    &\longrightarrow L^\infty(\p\Omega)\\
u&\longmapsto Bu
\end{split}\]
given by
\[
Bu(z)
=\lim_{\Omega\ni x\to z}
    \dfrac{
        u(x)
    }{
        \bM(1)(x)
    }
    \quad \textfor z\in\p\Omega,
\]
is well-defined. In the case $\cL=\Ds$, thanks to the connection pointed out to us by Ros-Oton, we even have the explicit formula
\begin{equation}\label{eq:trace-def}
Bu(z)=
	\Gamma(1+s) \Gamma(s)
	\normalcolor 
    \lim_{\Omega\ni x\to z}
        \delta^{1-s}(x)u(x),
\end{equation}
The Ros-Oton--Serra constant $\Gamma(1+s)^2$ appears already in the fractional Poho\v{z}aev identity \cite{RS2}. Through its relation to the integration-by-parts formula of Abatangelo \cite{A}, the precise expression \eqref{eq:trace-def} can be obtained. 
See \Cref{lem:constant} for the proof.

\subsection{Main equation and notions of solution}\label{ssec.notion}

Given $h\in C(\p\Omega)$ and $\lambda\in\R\setminus\Sigma$, we consider $L^1$-solutions of the eigenvalue problem
\begin{equation}\label{eq:intro-1}
\begin{cases}
{\cL}u-\lambda u=0
	& \textin \Omega\\
Bu=h
	& \texton \p\Omega,\\
\end{cases}\end{equation}
understood in a certain weak sense that we describe below. One of our main results, \Cref{thm:blow}, which is also the motivation of this work, is to determine the precise blow-up behavior  of its solutions.

More generally, consider the equation
\begin{equation}\label{eq:main-f-h}\begin{cases}
{\cL}u-\lambda u=f
	& \textin \Omega\\
Bu=h
	& \texton \p\Omega.\\
\end{cases}\end{equation}

Let us recall the notion of $L^1$ or very weak solution used in the literature \cite{A,AD,AGV}. Denote the Green function of $\cL$ in $\Omega$ by $\cG_0$, and the Green operator by
\begin{equation}\label{eq:G0-def}
\bG_0(f)(x)=\int_{\Omega}\cG_0(x,y)f(y)\,dy.
\end{equation}
We stress two facts: it was shown in \cite{AGV} that the largest class of functional data $f$ admissible is $L^1 (\Omega; \delta^\gamma)$ and that, if $f \in L^\infty_c (\Omega)$, then $|\bG_0 (f)| \le C \delta^\gamma$.

\bigskip

The notion of \emph{very weak formulation} first introduced by H. Brezis for local operators, was later extended to the notion of \emph{weak-dual formulation} and extensively used for instance in \cite{BFV, AGV}. Its merit is that it avoids defining $\cL$ on the test function. As a matter of fact, the regularity needed to define $\cL$ classically calls for more (i.e. $2s+\epsilon$, $\epsilon>0$) than what a Schauder estimate can recover (i.e. $2s$, or $2s-\epsilon=1-\epsilon$ when $2s=1$). This issue will not be present when the Green function is used instead. We will specify the duality precisely by naming the Green function used in the definition.

\begin{defn}[$\bG_0(C_c^\infty(\Omega))$-weak solutions \cite{A,AD}]
Suppose $f\in L^1(\Omega;\delta^\gamma)$ and $h\in C(\p\Omega)$. We say that $u\in L^1(\Omega; \delta^\gamma )$ is a \emph{$\bG_0(C_c^\infty(\Omega))$-weak solution} of \eqref{eq:main-f-h} if
\begin{equation}
\label{eq:def-L1}
\int_{\Omega}
	u({\cL}\zeta-\lambda\zeta)
\,dx
=\int_{\Omega}
	f\zeta
\,dx
+\int_{\p\Omega}
    D_\gamma\zeta
    \,h
\,d\cH^{n-1},
	\quad \forall\zeta\in \bG_0(C_c^\infty(\Omega)),
\end{equation}
or, equivalently,
\begin{equation}\label{eq:weak1}
\int_{\Omega}
	u(\psi-\lambda\bG_0(\psi))
\,dx
=\int_{\Omega}
	f\bG_0(\psi)
\,dx
+\int_{\p\Omega}
    D_\gamma\bG_0(\psi)
    \,h
\,d\cH^{n-1},
	\quad \forall\psi\in C_c^\infty(\Omega).
\end{equation}
\end{defn}
Formulation \eqref{eq:def-L1} corresponds to the notion of very weak solution introduced by Brezis, where all ``derivatives'' are transferred to the test function. Formulation \eqref{eq:weak1} corresponds to the notion of weak-dual solution, in which derivatives are not present.

\begin{remark}
	We point out that
	\begin{enumerate}
		\item The integrability $u\in L^1(\Omega; \delta^\gamma )$ is sufficient in this setting. Notice that, since $\psi$ is bounded and compactly supported, then $u \psi$ is integrable. Furthermore, since $|\cG_0 (\psi)|\le C \delta^\gamma$, then $u \cG_0 (\psi)$ is also integrable.
		\item Notice that the $\gamma$-normal derivative $D_\gamma\bG_0(\psi)$ is well-defined on $\p\Omega$ by the assumption \textrm{(K2)}, see \Cref{prop:D-gamma-def}. To gain an intuition of the boundary integral in \eqref{eq:weak1}, suppose $\lambda=0$ and $f=0$. Testing the representation formula
		\[
		u(x)=
		\int_{\p\Omega}
		D_\gamma\cG_0(z,x)
		h(z)
		\,d\cH^{n-1}(z)
		\]
		against $\psi\in C_c^\infty(\Omega)$, \Cref{prop:D-gamma-def} implies
		\[\begin{split}
		\int_{\Omega}
		u\psi
		\,dx
		&=\int_{\Omega}
		\left(
		\int_{\p\Omega}
		D_\gamma\cG_0(z,x)
		h(z)
		\,d\cH^{n-1}(z)
		\right)
		\psi(x)
		\,dx\\
		&=\int_{\p\Omega}
		\left(
		\int_{\Omega}
		D_\gamma\cG_0(z,x)
		\psi(x)
		\,dx
		\right)
		h(z)
		\,d\cH^{n-1}(z)\\
		&=\int_{\p\Omega}
		D_\gamma\bG_0(\psi)(z)
		h(z)
		\,d\cH^{n-1}(z).
		\end{split}\]
	\end{enumerate}
\end{remark}

For an $L^1$-theory, it is convenient to enlarge the class of test functions to bounded functions with compact support.
\begin{defn}[$\bG_0(L_c^\infty(\Omega))$-weak solutions \cite{AGV}]
Suppose $f\in L^1(\Omega;\delta^\gamma)$ and $h\in C(\p\Omega)$. We say that $u\in L^1(\Omega; \delta^\gamma)$ is a \emph{$\bG_0(L_c^\infty(\Omega))$-weak solution} of \eqref{eq:main-f-h} if
\begin{equation*}
\int_{\Omega}
	u(\psi-\lambda\bG_0(\psi))
\,dx
=\int_{\Omega}
	f\bG_0(\psi)
\,dx
+\int_{\p\Omega}
    D_\gamma\bG_0(\psi)
    \,h
\,d\cH^{n-1},
	\quad \forall\psi\in L_c^\infty(\Omega).
\end{equation*}
\end{defn}

Let us now restrict ourselves to the case $h=0$, which happens in most parts of the present article. %
\begin{equation}
\label{eq:main-f}
\begin{cases}
{\cL}u-\lambda u=f
    & \textin \Omega\\
B u=0
    & \texton \p\Omega.\\
\end{cases}
\end{equation}
Note that in this case, the boundary condition can be rewritten as $\delta^b u=0$.

For later purposes, it is more convenient to use the test function space $\bG_\lambda(L^\infty(\Omega))$, where $\bG_\lambda$ is the solution operator of the full Helmholtz problem \eqref{eq:main-f}, when $\lambda$ is not an eigenvalue of $\cL$.  By spectral decomposition, it can  be  defined simply by \eqref{eq:G-lambda-f} below.
In \Cref{thm:lin} we will show this operator has a unique extension to $L^1 (\Omega; \delta^\gamma)$ data.
We note that, to our best knowledge, the exact behavior of its  Green  function $\cG_\lambda$ has  not yet been studied.

\begin{remark}\label{rem:cond}
	Further along,  we will want to perform projections onto the basis of eigenfunctions, which do not preserve the compact support. Hence $\psi \in L^\infty_c (\Omega)$ seems not to be convenient for us. Fortunately, we know that the eigenfunction $\varphi_j \in \delta^\gamma L^\infty (\Omega)$. We need sharp information on the boundary behaviour. In \cite{AGV} the authors prove that, for $\alpha > - 1-\gamma$,
	\begin{equation}
	\label{eq:improvement of weights}
		\bG_0 (\delta^\alpha) \asymp \begin{dcases}
		\delta^{\alpha + 2s} & \textfor \alpha + 2s < \gamma, \\
		\delta^\gamma (1 + |\ln \delta| ) & \textfor \alpha + 2s = \gamma, \\
		\delta^{\gamma} & \textfor \alpha + 2s > \gamma.
		\end{dcases}
	\end{equation}
	In particular,
	\begin{equation*}
	\bG_0 (\delta^\gamma) \asymp \delta^\gamma, \qquad \bG_0 (1) \asymp \begin{dcases}
	\delta^\gamma & \textfor \gamma < 2s, \\
	\delta^{2s} ( 1+ |\ln \delta| ) & \textfor \gamma = 2s, \\
	\delta^{2s} & \textfor \gamma > 2s.
	\end{dcases}
	\end{equation*}
	It is easy to see that 	\begin{equation}
	\label{eq:comparison weight gamma}
		\bG_0 (\delta^\alpha) \ge c_\alpha \delta^\gamma.
	\end{equation}
	The case of equality  $\alpha+2s=\gamma$   involves a logarithm. Notice that, for the case of the SFL, we have $\gamma = 1$ so  the range $\gamma>2s$  is achieved for $s < \frac 1 2$. We must adapt our data for this setting. This relation can be seen in the singular solution $\bM (1) \asymp \delta^{2s- \gamma -1}$. If $2s > \gamma$ then $\bM (1) \in L^1 (\Omega)$.  This  ranges simplifies the theory immensely. Nevertheless, $\bM(1) \in L^1 (\Omega; \delta^\gamma)$ for all $s$ and $\gamma$.
\end{remark}

\begin{defn}[$\bG_\lambda( \delta^\gamma L^\infty(\Omega))$-weak solutions]
\label{def:lambda-wd}
Suppose $f\in L^1(\Omega;\delta^\gamma)$. We say that $u\in L^1(\Omega;\delta^\gamma)$ is a \emph{$\bG_\lambda(\delta^\gamma  L ^\infty(\Omega))$-weak solution} of \eqref{eq:main-f} if
\begin{equation*}
\int_{\Omega}
	u\psi
\,dx
=\int_{\Omega}
	f\bG_\lambda(\psi)
\,dx,
	\quad \forall\psi\in \delta^\gamma  L^\infty(\Omega).
\end{equation*}
\end{defn}
Note that by \Cref{prop:lin-Lq}, $\bG_\lambda(\psi)$ is well-defined and controlled in $L^\infty(\Omega)$ by $\delta^\gamma$, which is crucial for the right hand integral to be defined.

After establishing the linear theory, one may talk about solutions of \eqref{eq:main-f} in the sense of Green's operator, or briefly, \emph{Green solution}.\footnote{This is in fact a convenient abuse of nomenclature because it is defined by solving the PDE and not by studying the Green function for the operator $\cL-\lambda$.}

\begin{defn}[Green solutions]
Let $u, f\in L^1(\Omega;\delta^\gamma)$. We say that $u$ is a $\bG_0$-\emph{Green solution} of \eqref{eq:main-f} if
\[
u=\lambda \bG_0(u) + \bG_0(f)
	\quad \text{ a.e. in } \Omega,
\]
for $\bG_0$ defined by the integral formula in \eqref{eq:G0-def}. We say that $u$ is a $\bG_\lambda$-\emph{Green solution} of \eqref{eq:main-f} if
\[
u=\bG_\lambda(f)
	\quad \text{ a.e. in } \Omega,
\]
where $\bG_\lambda:L^1(\Omega;\delta^\gamma)\to L^1(\Omega;\delta^\gamma)$ is defined in \Cref{thm:lin}.
\end{defn}

\begin{remark}
	Notice that, in order to define $\bG_0$-Green solutions, $u$ must be in the admissible class of data for $\bG_0$.
\end{remark}

When $f\in L^2(\Omega)$, solutions are in
\[
\bH^{2}_{\cL}(\Omega)
=\set{
	v\in L^2(\Omega):
	\sum_{j\geq1}
	\lambda_j^{2}
	\angles{v,\varphi_j}^2
	<+\infty
},
\]
and are naturally understood in the \emph{spectral} sense.

\begin{defn}[Spectral solutions]
\label{def:spectral}
Suppose $f\in L^2(\Omega)$. %
We say that $u\in \bH^{2}_{\cL}(\Omega)$ is a \emph{spectral solution} of \eqref{eq:main-f} if
\begin{equation*}
(\lambda_j-\lambda)
\int_{\Omega}
    u\varphi_j
\,dx
=\int_{\Omega}
	f\varphi_j
\,dx,
	\quad \forall j\geq1.
\end{equation*}
\end{defn}
\begin{remark}
	We will show below that the eigenfunctions enjoy the optimal boundary regularity $\varphi_j\in \delta^\gamma L^\infty(\Omega)$, therefore these equations are defined for $f \in L^1 (\Omega; \delta^\gamma)$. However, when $f \notin L^2 (\Omega)$, $u$ is not in $\bH^{2}_{\cL} (\Omega)$, so the resulting series may not converge to $u$.
\end{remark}

It is a simple exercise of approximation of the test functions to show that
\begin{thm}[Equivalence of notions of solution]
\label{thm:notions}
Assume {\textrm{(K1)}}. Suppose $f,u\in L^1(\Omega; \delta^\gamma)$. Then the following are equivalent:
\begin{enumerate}
	\item
	$u$ is a $\bG_\lambda$-Green solution of \eqref{eq:main-f}.
	\item
	$u$ is a $\bG_\lambda(\delta^\gamma L^\infty(\Omega))$-weak solution of \eqref{eq:main-f}.
	\item
	$u$ is a $\bG_0(L_c^\infty(\Omega))$-weak solution of \eqref{eq:main-f}.
	\item
	$u$ is a $\bG_0(C_c^\infty(\Omega))$-weak solution of \eqref{eq:main-f}.
	\item
	$u$ is a $\bG_0$-Green solution of \eqref{eq:main-f}.
\end{enumerate}
Suppose, in addition, that $f\in L^2(\Omega)$ and $u\in \bH^{2}_{\cL}(\Omega)$. Then any of the above is equivalent to
\begin{enumerate}
\setcounter{enumi}{5}
\item
	$u$ is a spectral solution of \eqref{eq:main-f}.
\end{enumerate}
\end{thm}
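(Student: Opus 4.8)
The plan is to prove the cyclic chain of implications $(1)\Rightarrow(2)\Rightarrow(3)\Rightarrow(4)\Rightarrow(5)\Rightarrow(1)$, together with the equivalence $(4)\Leftrightarrow(6)$ under the extra integrability hypothesis. The guiding principle throughout is that each formulation is obtained from another by testing against a suitable class of functions, and the only real content is a density/approximation argument combined with the boundedness estimates already recorded in the excerpt (notably $|\bG_0(\psi)|\le C\delta^\gamma$ for $\psi\in L^\infty_c(\Omega)$, the improved weight estimates \eqref{eq:improvement of weights}--\eqref{eq:comparison weight gamma}, $\varphi_j\in\delta^\gamma L^\infty(\Omega)$, and the fact from \Cref{prop:lin-Lq} that $\bG_\lambda:\delta^\gamma L^\infty(\Omega)\to\delta^\gamma L^\infty(\Omega)$ is bounded).

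First I would treat the ``easy'' inclusions. The implication $(1)\Rightarrow(2)$ is immediate: if $u=\bG_\lambda(f)$ a.e., then for any $\psi\in\delta^\gamma L^\infty(\Omega)$ one has $\int u\psi=\int\bG_\lambda(f)\psi=\int f\bG_\lambda(\psi)$ by the self-adjointness of $\bG_\lambda$ (which follows from the symmetry of $\cG_0$ and the spectral definition \eqref{eq:G-lambda-f}); here $\bG_\lambda(\psi)$ is controlled by $\delta^\gamma$ so the pairing with $f\in L^1(\Omega,\delta^\gamma)$ makes sense. The step $(2)\Rightarrow(3)$ requires relating the $\bG_\lambda$-weak formulation to the $\bG_0(L^\infty_c)$-weak formulation; the key algebraic identity is the resolvent relation $\bG_\lambda=\bG_0+\lambda\bG_\lambda\bG_0=\bG_0+\lambda\bG_0\bG_\lambda$ on the relevant spaces, so that for $\psi\in L^\infty_c(\Omega)$, testing $(2)$ against $\bG_0(\psi)\in\delta^\gamma L^\infty(\Omega)$ and using $\bG_\lambda(\bG_0(\psi))=\bG_0(\psi)+\lambda\bG_0(\bG_\lambda(\psi))$ recovers the identity $\int u(\psi-\lambda\bG_0(\psi))=\int f\bG_0(\psi)$ (with $h=0$, so no boundary term). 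The step $(3)\Rightarrow(4)$ is trivial since $C^\infty_c(\Omega)\subset L^\infty_c(\Omega)$. For $(4)\Rightarrow(5)$ one tests $(4)$ against an approximation of the identity $\psi_\eps\to\bG_0(f)+\lambda\bG_0(u)$; more precisely, mollifying and truncating, and using that $\bG_0$ maps $L^1(\Omega,\delta^\gamma)$ continuously into $L^1(\Omega,\delta^\gamma)$, one obtains $u=\lambda\bG_0(u)+\bG_0(f)$ a.e. Finally $(5)\Rightarrow(1)$ follows by applying $(I-\lambda\bG_0)^{-1}=$ (the bounded operator $\bG_\lambda\circ$ something) — concretely, from $u=\lambda\bG_0(u)+\bG_0(f)$ one deduces by iteration or by the spectral representation that $u=\bG_\lambda(f)$, using that $\lambda\notin\Sigma$.

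Next, the equivalence with the spectral formulation $(6)$ under $f\in L^2(\Omega)$, $u\in\bH^2_\cL(\Omega)$. Given $(4)$ (or $(1)$), I would test against $\psi=\varphi_j$, which is legitimate because $\varphi_j\in\delta^\gamma L^\infty(\Omega)$ and (for formulation $(4)$) against smooth compactly supported approximations of $\varphi_j$ — here one must check that $\bG_0(\varphi_j)=\lambda_j^{-1}\varphi_j$ and hence $\varphi_j-\lambda\bG_0(\varphi_j)=(1-\lambda/\lambda_j)\varphi_j=\lambda_j^{-1}(\lambda_j-\lambda)\varphi_j$, yielding exactly $(\lambda_j-\lambda)\int u\varphi_j=\int f\varphi_j$. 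Conversely, given $(6)$, the series $u=\sum_j\langle u,\varphi_j\rangle\varphi_j=\sum_j(\lambda_j-\lambda)^{-1}\langle f,\varphi_j\rangle\varphi_j$ converges in $\bH^2_\cL(\Omega)$ (using $\lambda\notin\Sigma$ and $\lambda_j\nearrow\infty$, the factor $(\lambda_j-\lambda)^{-1}$ is bounded), and one recognizes this as the spectral definition of $\bG_\lambda(f)$, giving $(1)$.

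The main obstacle is the density/approximation step $(4)\Rightarrow(5)$ and, more subtly, justifying the use of non-compactly-supported test functions such as $\varphi_j$ when passing between the $C^\infty_c$-formulation and the $\delta^\gamma L^\infty$-formulation. The point is that $\varphi_j$ does not have compact support, so to go from $(4)$ to an equation tested against $\varphi_j$ one approximates $\varphi_j$ by $\psi_k\in C^\infty_c(\Omega)$ with $\psi_k\to\varphi_j$ and, crucially, $\bG_0(\psi_k)\to\bG_0(\varphi_j)$ in a norm strong enough to pass to the limit in $\int u\,\bG_0(\psi_k)$ — this needs the uniform weighted bound $|\bG_0(\psi_k)|\le C\|\psi_k\|_\infty\delta^\gamma$ on compact pieces together with a tail estimate near $\p\Omega$ coming from \eqref{eq:improvement of weights}, and dominated convergence against $f\in L^1(\Omega,\delta^\gamma)$. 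Once this approximation scheme is set up carefully it applies uniformly to all the implications involving a change of test-function class, and the rest is bookkeeping.
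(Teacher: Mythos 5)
Your overall plan is close in spirit to the paper's proof — both rely on the integration-by-parts formulae for $\bG_0$ and $\bG_\lambda$, the algebraic relation between $\bG_0$ and $\bG_\lambda$, and mollification/density to switch test-function classes — but the paper proves the equivalences pair by pair ((1)$\Leftrightarrow$(2), (2)$\Leftrightarrow$(3), (3)$\Leftrightarrow$(4), (3)$\Leftrightarrow$(5)) rather than closing a cycle, and this choice is significant. Your cyclic scheme forces you to prove (5)$\Rightarrow$(1), i.e.\ to invert $I-\lambda\bG_0$ on $L^1(\Omega;\delta^\gamma)$, and this is exactly where your argument is too thin. Writing $w:=u-\bG_\lambda(f)$, one deduces from (5) that $w=\lambda\bG_0(w)$ in $L^1(\Omega;\delta^\gamma)$; ``iteration'' (a Neumann series) diverges unless $|\lambda|$ is small, and ``the spectral representation'' cannot be invoked because $w$ is not known to be in $L^2(\Omega)$. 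What is actually needed is the bootstrap of \Cref{prop:eigenfunctions regularity}: any $w\in L^1(\Omega;\delta^\gamma)$ with $w=\lambda\bG_0(w)$ automatically lies in $\delta^\gamma L^\infty(\Omega)\subset L^2(\Omega)$, and only then does the eigenfunction expansion (together with $\lambda\notin\Sigma$) force $w=0$. The paper sidesteps this entirely: (3)$\Leftrightarrow$(5) is established by the elementary substitution $\phi=\sign(u-\lambda\bG_0(u)-\bG_0(f))\mathbf 1_K$, and (2)$\Rightarrow$(1) by testing with $\psi=\sign(u-\bG_\lambda(f))\delta^\gamma$ after the integration-by-parts formula for $\bG_\lambda$, so no inversion of $I-\lambda\bG_0$ is ever required.

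Two smaller points. In (2)$\Rightarrow$(3) the test function you should insert in the $\bG_\lambda$-weak formulation is $\psi-\lambda\bG_0(\psi)$ (for $\psi\in L^\infty_c(\Omega)$), not $\bG_0(\psi)$: one has $\bG_\lambda\bigl(\psi-\lambda\bG_0(\psi)\bigr)=\bG_0(\psi)$ because $\zeta:=\bG_0(\psi)$ solves $\cL\zeta-\lambda\zeta=\psi-\lambda\bG_0(\psi)$, and this is the identity that turns the $\bG_\lambda$-pairing into the $\bG_0$-pairing. The paper also handles the converse direction carefully: given $\psi\in\delta^\gamma L^\infty(\Omega)$, the natural choice $\phi=\psi+\lambda\bG_\lambda(\psi)$ is not compactly supported, so one must truncate to $\phi\mathbf 1_K$ and let $K\nearrow\Omega$ using dominated convergence on the $L^1(\Omega;\delta^\gamma)$--$\delta^\gamma L^\infty(\Omega)$ pairing; your sketch does not address this. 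Finally, the equivalence with (6) is easier if routed through (1) or (5): for $f\in L^2$, $u\in\bH^2_{\cL}$, the statement ``$u=\lambda\bG_0(u)+\bG_0(f)$'' is equivalent to ``$(\lambda_j-\lambda)\angles{u,\varphi_j}=\angles{f,\varphi_j}$ for all $j$'' by projecting onto $\varphi_j$ (this is \Cref{lem:spec-Green}), which avoids the compact-support issue you flag when trying to test (4) against $\varphi_j$.
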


The proofs  are  given in \Cref{sec:notions}, using the linear theory  in  \Cref{thm:lin} (see below) and the integration-by-parts formulae in \Cref{sec:by-parts}.

\subsection{General linear theory}

Secondly, we prove that $\bG_\lambda$ is well-defined in various Lebesgue spaces and establish its mapping properties. %
Denote the eigenvalue of $\cL$ just larger than $\lambda$ by%
\begin{equation}\label{eq:bar-lambda}
\bar\lambda = \lambda_{I+1},
    \qquad \text{where} \qquad
I
=\min\set{ j : \lambda_j\ge \lambda \textand \lambda_{j+1} > \lambda_j }.
\end{equation}
and the distance of $\lambda$ to the nearest eigenvalue by
\begin{equation}\label{eq:d-lambda}
d_\Sigma(\lambda)
=\min_j {|\lambda_j-\lambda|}.
\end{equation}
Notice that, with this choice, $\lambda_{I+1} >  \lambda_I \ge \lambda$.

\begin{thm}[The solution operator $\bG_\lambda$]
\label{thm:lin}
Assume \textrm{(K1)}. Let $\lambda\in\R\setminus\Sigma$. Let $\bar\lambda,d_\Sigma(\lambda)$ be defined in \eqref{eq:bar-lambda}, \eqref{eq:d-lambda} respectively. Then
\begin{enumerate}
\item
	$\bG_\lambda$ is well-defined on $L^1(\Omega;\delta^\gamma)$ and maps continuously from
\begin{align*}
        L^1(\Omega;\bG_0(\delta^\alpha))
&\longrightarrow L^1 (\Omega; \delta^\alpha), && \textfor \alpha > -1-\gamma,\\
L^1(\Omega)
&\longrightarrow L^p(\Omega),
&& \textfor p\in [1, \tfrac{n}{n-2s} ),\\
L^2(\Omega)
    &\longrightarrow \bH^{2}_{\cL}(\Omega),\\
   L^{p_0} (\Omega) & \longrightarrow L^{p_1} (\Omega) && \textfor p_0\in(1, \tfrac n {2s}) \textand 	\tfrac{1}{{p_1}} = \tfrac 1 {p_0} - \tfrac{2s}n, \\
L^q(\Omega)
    &\longrightarrow L^\infty(\Omega),
        && \textfor q\in(\tfrac{n}{2s},+\infty),\\
\delta^\alpha L^\infty (\Omega) &\longrightarrow \bG_0(\delta^\alpha) L^\infty(\Omega), && \textfor \alpha > -1-\gamma.
\end{align*}
with corresponding operator norms depending only on $\bar{\lambda},d_\Sigma(\lambda)$ and, where appropriate, $p$ or $q$.

\item
	$\bG_\lambda$ maps continuously from
	\begin{align*}
L^1(\Omega;\delta^\gamma)
&\longrightarrow
L^{p}_\loc(\Omega),
\quad \textfor
p \in(1,\tfrac{n}{n-2s}),\\
L^1(\Omega;\delta^\gamma)
\cap L^{p_0}_\loc(\Omega)
&\longrightarrow
L^{p_1}_\loc(\Omega),
\quad \textfor
p_0\in (1,\tfrac{n}{2s}) \textand  \tfrac{1}{p_1} = \tfrac{1}{p_0} - \tfrac {2s} n ,\\
L^1(\Omega;\delta^\gamma)
\cap L^{q}_\loc(\Omega)
&\longrightarrow
L^{\infty}_\loc(\Omega),
\quad \textfor q\in(\tfrac{n}{2s},+\infty).
\end{align*}
Furthermore, for $f\in L^1(\Omega;\delta^\gamma)\cap L^\infty_\loc(\Omega)$, $K\Subset K_0\Subset\Omega$ and $u=\bG_\lambda(f)$ we have that
\[
\norm[L^\infty(K)]{u}
\leq C(\bar\lambda,d_\Sigma(\lambda),K,K_0)
    \left(
		\norm[L^\infty(K_0)]{f}
		+\norm[L^1(\Omega)]{f\delta^\gamma}
	\right).
\]
\end{enumerate}
\end{thm}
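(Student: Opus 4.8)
The plan is to build $\bG_\lambda$ via the spectral decomposition on the ``good'' part of the spectrum and to treat the finite-dimensional ``bad'' part by hand. Write $\lambda_1 \le \cdots \le \lambda_N$ for the eigenvalues $\le \lambda$ (so $N = I$ in the notation of \eqref{eq:bar-lambda}), with associated orthonormal eigenfunctions $\varphi_1, \dots, \varphi_N$, and set $P$ to be the $L^2$-orthogonal projection onto $V := \Span\{\varphi_1, \dots, \varphi_N\}$. For $f \in L^2(\Omega)$ the natural definition is
\begin{equation}\label{eq:G-lambda-f}
\bG_\lambda(f) = \sum_{j \ge 1} \frac{\angles{f,\varphi_j}}{\lambda_j - \lambda}\,\varphi_j
= \sum_{j > N} \frac{\angles{f,\varphi_j}}{\lambda_j - \lambda}\,\varphi_j
+ \sum_{j \le N} \frac{\angles{f,\varphi_j}}{\lambda_j - \lambda}\,\varphi_j,
\end{equation}
and the key algebraic identity is the resolvent-type formula
\[
\bG_\lambda(f) = \bG_0\bigl((I-P)f\bigr) + \lambda\,\bG_\lambda\bigl(\bG_0((I-P)f)\bigr) + \sum_{j \le N} \frac{\angles{f,\varphi_j}}{\lambda_j - \lambda}\,\varphi_j,
\]
or, more usefully, a Neumann-series / second-resolvent expansion of $\bG_\lambda$ in terms of $\bG_0$ restricted to the range of $I-P$, where $\lambda/(\lambda_j - \lambda)$ is bounded by $\bar\lambda/d_\Sigma(\lambda)$ uniformly for $j > N$. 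This lets every mapping property of $\bG_\lambda$ be reduced to the corresponding known property of $\bG_0$ (the estimates $|\bG_0(f)| \le C\delta^\gamma$ for $f \in L^\infty_c$, the improvement-of-weights bounds \eqref{eq:improvement of weights}, and the standard Riesz-potential-type $L^p \to L^q$ and $L^q \to L^\infty$ smoothing coming from $0 \le \cG_0 \le C|x-y|^{-(n-2s)}$) plus the finite-dimensional correction $\sum_{j \le N} \angles{f,\varphi_j}(\lambda_j-\lambda)^{-1}\varphi_j$, which is harmless because each $\varphi_j \in \delta^\gamma L^\infty(\Omega)$ and $f \mapsto \angles{f,\varphi_j}$ is continuous on $L^1(\Omega,\delta^\gamma)$.

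For part (1), I would proceed mapping by mapping. The cleanest route is: first establish the $L^1(\Omega;\bG_0(\delta^\alpha)) \to L^1(\Omega,\delta^\alpha)$ bound — this is where the weighted spaces are tailor-made, since $\bG_0$ maps $L^1(\Omega;\bG_0(\delta^\alpha))$ into $L^1(\Omega,\delta^\alpha)$ essentially by Fubini and self-adjointness of $\cG_0$, and the resolvent correction terms sit in $\delta^\gamma L^\infty \subset \delta^\alpha L^\infty$ (using \eqref{eq:comparison weight gamma} and $\alpha > -1-\gamma$) hence in $L^1(\Omega,\delta^\alpha)$. Then $L^1(\Omega) \to L^p(\Omega)$ for $p < n/(n-2s)$ and the $L^{p_0} \to L^{p_1}$, $L^q \to L^\infty$ estimates all follow from the Riesz-potential bound on $\bG_0$ (the operator with kernel $|x-y|^{-(n-2s)}$ on a bounded domain maps $L^{p_0} \to L^{p_1}$ by the Hardy–Littlewood–Sobolev / Young inequality, and $L^q \to L^\infty$ for $q > n/(2s)$ by Hölder against the kernel), again carrying the finite-rank correction along trivially since $f \in L^q(\Omega) \subset L^1(\Omega,\delta^\gamma)$ on a bounded domain. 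The $L^2 \to \bH^2_\cL$ statement is immediate from \eqref{eq:G-lambda-f} and $\lambda_j^2/(\lambda_j-\lambda)^2 \le C(\bar\lambda, d_\Sigma(\lambda))$. The last line, $\delta^\alpha L^\infty \to \bG_0(\delta^\alpha) L^\infty$, again reduces to the known pointwise bound $|\bG_0(g)| \le \|g\|_{\delta^\alpha L^\infty}\,\bG_0(\delta^{\alpha})$ plus the resolvent expansion and \eqref{eq:comparison weight gamma}. Throughout, I must keep track that the operator norms depend only on $\bar\lambda$, $d_\Sigma(\lambda)$ (and $p$ or $q$): this is automatic because the correction coefficients are $(\lambda_j-\lambda)^{-1}$ with $|\lambda_j - \lambda| \ge d_\Sigma(\lambda)$, and the number $N$ of bad modes together with $\|\varphi_j\|_{\delta^\gamma L^\infty}$ is controlled by $\bar\lambda$ via Weyl-type counting and the uniform eigenfunction bound from \cite{BFV}.

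For part (2), the local estimates, the idea is a two-scale splitting: fix $K \Subset K_0 \Subset \Omega$, pick a cutoff $\chi \in C_c^\infty(K_0)$ with $\chi \equiv 1$ near $K$, and write $f = \chi f + (1-\chi)f$. For $\chi f \in L^{p_0}(\Omega)$ (globally), part (1) already gives the desired global $L^{p_1}$ or $L^\infty$ control in terms of $\|f\|_{L^{p_0}(K_0)}$. For $(1-\chi)f$, which vanishes on a neighborhood of $K$, the relevant quantity restricted to $K$ is $\bG_\lambda((1-\chi)f)|_K$; using the resolvent expansion, this is $\bG_0((I-P)(1-\chi)f)|_K$ plus lower-order iterates plus the finite-rank correction, and $\bG_0((1-\chi)f)|_K = \int_\Omega \cG_0(x,y)(1-\chi(y))f(y)\,dy$ with $x \in K$, $y \notin$ a neighborhood of $K$, so the kernel is bounded (no singularity on the support) and one gets $\|\bG_0((1-\chi)f)\|_{L^\infty(K)} \le C(K,K_0)\|f\delta^\gamma\|_{L^1(\Omega)}$ using $\cG_0(x,y) \le C_{K}\,\delta^\gamma(y)$ for $x \in K$, which is exactly the Martin-kernel-type bound in (K2)/(K1). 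The finite-rank correction is $\le C\sum_{j\le N}|\angles{f,\varphi_j}|\|\varphi_j\|_{L^\infty(K)} \le C(\bar\lambda,K)\|f\delta^\gamma\|_{L^1(\Omega)}$. Combining the two pieces and iterating the bootstrap $L^{p_0} \to L^{p_1}$ finitely many times to climb from $p=1$ up to $L^\infty_\loc$ yields the final displayed inequality.

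The main obstacle, and where I would spend the most care, is making the reduction-to-$\bG_0$ argument genuinely rigorous on $L^1(\Omega,\delta^\gamma)$ rather than just on $L^2$: the series \eqref{eq:G-lambda-f} and the resolvent identity are transparent on $L^2(\Omega)$, but the theorem claims a well-defined \emph{continuous extension} to $L^1(\Omega,\delta^\gamma)$, which is not dense in $L^2$. The honest path is to \emph{define} $\bG_\lambda$ on $L^1(\Omega,\delta^\gamma)$ directly by the formula ``$\bG_0$-part via Neumann series $+$ finite-rank correction'', prove each term is well-defined there (the Neumann series converges in $L^1(\Omega,\delta^\gamma)$ because $\bG_0$ followed by $I-P$ is a contraction-type map on the complement after the $d_\Sigma(\lambda)$ rescaling — this needs the spectral gap and an $L^1(\Omega,\delta^\gamma)$-bound on $\bG_0(I-P)$, which itself requires knowing $P$ maps $L^1(\Omega,\delta^\gamma)$ boundedly, true since the $\varphi_j$ are bounded and the weight integrable), and then check consistency with the spectral definition on $L^2 \cap L^1(\Omega,\delta^\gamma)$, so that the two definitions agree on the overlap. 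Once that foundational point is settled, all the stated mapping properties fall out of the corresponding properties of $\bG_0$ as sketched above; verifying the spectral-gap contraction estimate for $\bG_0$ on the weighted $L^1$ space (equivalently, that the non-self-adjoint iteration does not lose the $d_\Sigma(\lambda)^{-1}$ control) is the one genuinely technical lemma.
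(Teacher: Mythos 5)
Your global strategy --- reduce $\bG_\lambda$ to $\bG_0$ via the resolvent identity, carry a finite-rank correction along, and borrow the regularity estimates from $\bG_0$ --- is the right one, and the Lebesgue-scale and local estimates in parts (1)--(2) would follow once the foundational definition is secure. That is also where the genuine gap sits.

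You propose to \emph{define} $\bG_\lambda$ on $L^1(\Omega;\delta^\gamma)$ via a Neumann series in $\lambda\bG_0$ (equivalently $\lambda\bG_0(I-P)$ on $E^\perp$), arguing convergence from the coefficient bound $|\lambda/(\lambda_j-\lambda)|\le \bar\lambda/d_\Sigma(\lambda)$. That bound controls each spectral coefficient but is not an operator-norm bound on $\lambda\bG_0$: on $E^\perp\cap L^2$ the norm of $\lambda\bG_0$ is $|\lambda|/\bar\lambda$, which exceeds $1$ whenever $\lambda<-\bar\lambda$ (and since for $\lambda<\lambda_1$ one has $\bar\lambda=\lambda_2$ by \eqref{eq:bar-lambda}, this already fails for $\lambda<-\lambda_2$). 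The geometric series $\sum_k(\lambda\bG_0)^k$ therefore diverges in $L^2$ --- and a fortiori in $L^1(\Omega;\delta^\gamma)$ --- for a wide range of $\lambda$ that \Cref{thm:lin} must cover. The paper's route avoids the series entirely: $\bG_\lambda$ is first given by the \emph{closed} spectral formula \eqref{eq:G-lambda-f} on $L^2$ (\Cref{lem:lin-L2}), regularity comes from \emph{finitely many} bootstrap steps of the already-valid identity $u=\lambda\bG_0(u)+\bG_0(f)$, anchored on the a priori bound $\norm[L^2]{u}\le d_\Sigma(\lambda)^{-1}\norm[L^2]{f}$ (\Cref{prop:lin-Lq}); no convergence of any infinite sum is ever invoked. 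The $L^1(\Omega;\delta^\gamma)$ extension is then by approximation: truncate to $f_k=(f\wedge k)\vee(-k)\in L^\infty$, and show the $u_k$ are Cauchy in $L^1(\Omega;\delta^\gamma)$ by testing against $\psi=\sign(u_k-u_\ell)\delta^\gamma$ and using $|\bG_\lambda(\delta^\gamma)|\le C(\bar\lambda,d_\Sigma(\lambda))\delta^\gamma$, one of the $L^\infty$ conclusions of \Cref{prop:lin-Lq} (\Cref{prop:lin-L1}). That duality pairing with $\sign(\cdot)\delta^\gamma$ is the technical device you need in place of the Neumann-series contraction, and it is precisely the ``genuinely technical lemma'' you flagged --- you have correctly located the crux, but proposed a tool that breaks for $\lambda$ negative and large. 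A lesser stylistic point: the projection $P$ onto the low modes plays no role in \Cref{thm:lin} (these estimates are permitted to blow up as $d_\Sigma(\lambda)\to0$); the paper introduces the finite-rank decomposition only in the projected theory of \Cref{thm:plin}--\Cref{thm:plin-2}, where uniformity in $d_\Sigma(\lambda)$ is the goal.
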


The $L^2$ theory is classical and the regularity with highly-integrable data follows essentially from the behavior of the Green function. In fact, under stronger assumptions on the Green function, one may even obtain $u\in C^\alpha(\overline{\Omega})$, for $\alpha>\gamma$. Indeed, for RFL and SFL, it follows from \cite[Theorem 1.1]{F} and \cite[Theorem 1.5 (2)]{CSt} respectively.\footnote{In \cite{F} the Morrey space $\cM_\beta$ is used. Note that by \Cref{lem:embed-Lp}, $L^q(\Omega)$ is continuously embedded into $\cM_{n/q}$.} The case of $L^1(\Omega;\delta^\gamma)$ data is an elaboration of the classical paper of Brezis--Cazenave--Martel--Ramiandrisoa \cite{BCMR}. Note that we do not use the maximum principle, in order to cover the case of large $\lambda$ (i.e. $\lambda\geq \lambda_1$). Also, we obtain directly $L^p$-estimates for some $p>1$. The proof is provided in \Cref{sec:lin} for (1) global estimates and \Cref{sec:lin-loc} for (2) local boundedness.

\begin{remark}
Since the fundamental notions and results concerning $\bG_\lambda$ are grouped for easier presentation and are not ordered linearly, we make an explicit note on the logical line of thought.
\begin{enumerate}
\item
	For $\psi\in L^2(\Omega)$, $\zeta=\bG_\lambda(\psi)$ is defined in \Cref{lem:lin-L2} as the explicit spectral solution of $\cL\zeta-\lambda\zeta=\psi$.
\item
	For $\psi$ such that $\psi / \delta^\gamma \in L^\infty(\Omega)$, $\bG_\lambda(\psi)$ also  enjoys the boundary regularity $\bG_\lambda(\psi)/\delta^\gamma\in L^\infty(\Omega)$, and so  do  the eigenfunctions, as shown in \Cref{prop:eigenfunctions regularity}.
\item
	The $\bG_\lambda(\delta^\gamma L^\infty(\Omega))$-weak formulation in \Cref{def:lambda-wd} makes sense.
\item
	The definition of $\bG_\lambda$ is extended to $L^1(\Omega;\delta^\gamma)\to L^1(\Omega; \delta^\gamma )$ in \Cref{prop:lin-L1}.
\item
	An integration-by-parts formula is obtained for $\bG_\lambda$ in \Cref{lem:by-parts-G-lambda}.
\item
	All notions of solutions are shown to be equivalent in \Cref{sec:notions}.
\item
	The remaining theory, including the local bounds, the projection into $E^\perp$  \footnote{orthogonal complement of $E$ in $L^2(\Omega)$}  and the maximum principle, is developed.
\end{enumerate}
\end{remark}

\begin{remark}
When $\lambda=\lambda_i$ is an eigenvalue, the classical Fredholm alternative applies: $\bG_\lambda$ is well-defined on the $E_i^\perp$, the space of functions orthogonal to all eigenfunctions associated to $\lambda_i$, and in which case the solution space has the same dimension as $E_i$. Results can be obtained as a limiting case of \Cref{thm:plin} and \Cref{thm:plin-2}.
\end{remark}

\subsection{Projected linear theory}

Our third result concerns the solution operator $\bG_\lambda$ on the subspace $E^\perp\subset L^2(\Omega)$, whose elements are orthogonal to any eigenfunction with eigenvalue at most $\bar\lambda$. For further clarity, let us write
\begin{equation*}
E = \spanned \{\varphi_1, \ldots, \varphi_ {I}\}, \qquad E^\perp = \spanned\{ \varphi_{I+1} , \varphi_{I+2}, \ldots \}
\end{equation*}
where $\lambda\leq \lambda_I<\lambda_{I+1}=\bar\lambda$ and $\spanned$ is taken topologically in $L^2 (\Omega)$. We prove uniform estimates that are independent of $d_\Sigma(\lambda)$. Furthermore, let us define
\begin{equation*}
\widetilde{ E^{\perp} } = \left\{  f  \in L^1 (\Omega; \delta^\gamma) : \int_\Omega f \varphi_j = 0 , \text{for } j = 1, \cdots, {I}  \right\}.
\end{equation*}
It is easy to see that $$\widetilde{ E^{\perp} }  = \overline{ E^\perp }^{L^1(\Omega;\delta^\gamma)}.$$
Indeed, the backward inclusion follows by taking the $L^1(\Omega;\delta^\gamma)$-closure in $E^\perp \subset \widetilde{E^\perp}$ (note that the pairing with eigenfunction is continuous in $L^1(\Omega;\delta^\gamma)$), and the converse follows by considering the projection in $E^\perp$ of the sequence $f_k=(|f|\wedge k)\sign(f)\in L^2(\Omega)$, which converges to $f$ in $L^1(\Omega;\delta^\gamma)$.

\begin{thm}[The solution operator $\bG_\lambda$ on $E^\perp$]
\label{thm:plin}
Assume \textrm{(K1)}. Let $\lambda\in\R\setminus\Sigma$. Let $\bar\lambda$ be defined in \eqref{eq:bar-lambda}. Then $\bG_\lambda$ maps continuously from
\begin{align*}
L^1(\Omega;\bG_0(\delta^\alpha))
    \cap \widetilde{E^\perp}
&\longrightarrow
L^1 (\Omega; \delta^\alpha)
    \cap \widetilde{E^\perp},
    && \textfor \alpha > -1-\gamma,\\
L^1(\Omega)\cap \widetilde{ E^{\perp} } &\longrightarrow  L^p(\Omega)\cap \widetilde{ E^{\perp} }, && \textfor p \in [1,\tfrac{n}{n-2s}),\\
L^2(\Omega)\cap E^\perp
    &\longrightarrow \bH^{2}_{\cL}(\Omega)\cap E^\perp,\\
    L^{p_0} (\Omega) \cap   \widetilde{ E^{\perp} }  &\longrightarrow %
L^{p_1}(\Omega)\cap \widetilde{E^\perp}
, && \textfor p_0 \in(1,\tfrac n {2s}) \textand \tfrac{1}{{p_1}} = \tfrac 1 {p_0} - \tfrac{2s}n, \\
L^q(\Omega)\cap E^\perp
    &\longrightarrow L^\infty(\Omega) \cap E^\perp,
	&& \textfor q\in(\tfrac{n}{2s},+\infty),\\
\delta^\alpha L^\infty (\Omega)
    \cap E^\perp
&\longrightarrow
    \bG_0(\delta^\alpha) L^\infty(\Omega)
    \cap E^\perp,
    && \textfor \alpha > -1-\gamma.
\end{align*}
with corresponding operator norms depending only on $\bar{\lambda}$ and, where appropriate, $p$ or $q$.
\end{thm}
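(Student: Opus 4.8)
The strategy is to obtain the projected estimates from the (already-proven) unprojected estimates of \Cref{thm:lin} together with three structural facts: (i) $\bG_\lambda$ commutes with the spectral projections onto the eigenspaces $E$ and $E^\perp$; (ii) on the finite-dimensional space $E$ the operator $\bG_\lambda$ is just multiplication by the scalars $(\lambda_j-\lambda)^{-1}$ in each coordinate, hence bounded by $d_\Sigma(\lambda)^{-1}$; and (iii) once we restrict to $E^\perp$, the ``bad'' eigenvalues (those within distance $d_\Sigma(\lambda)$ of $\lambda$ that could be below $\lambda$) are excluded, so the only small denominators that survive are $\lambda_j-\lambda$ with $\lambda_j\ge\bar\lambda=\lambda_{I+1}$, and these are bounded below by $\bar\lambda-\lambda>0$, a quantity that does \emph{not} degenerate as $\lambda$ runs through $(\lambda_I,\lambda_{I+1})$ at a fixed distance from $\lambda_{I+1}$. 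This is precisely why the operator norms on $E^\perp$ can be taken to depend only on $\bar\lambda$ and not on $d_\Sigma(\lambda)$.

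\textbf{Step 1: invariance of the projected spaces.} First I would check that $\bG_\lambda$ maps $\widetilde{E^\perp}$ (and $E^\perp$, $\bH^2_\cL\cap E^\perp$) into itself. For $u=\bG_\lambda(f)$ with $f\in\widetilde{E^\perp}$, the spectral characterization gives $(\lambda_j-\lambda)\langle u,\varphi_j\rangle=\langle f,\varphi_j\rangle$ for all $j$; since $\langle f,\varphi_j\rangle=0$ for $j\le I$ and $\lambda_j-\lambda\ne0$, we get $\langle u,\varphi_j\rangle=0$ for $j\le I$, i.e.\ $u\in\widetilde{E^\perp}$. For the $L^1(\Omega,\delta^\gamma)$ data this uses that the pairing $f\mapsto\langle f,\varphi_j\rangle$ is well-defined and continuous there because $\varphi_j\in\delta^\gamma L^\infty(\Omega)$ (\Cref{prop:eigenfunctions regularity}); the characterization itself extends from $L^2$ to $L^1(\Omega,\delta^\gamma)$ by the density $\widetilde{E^\perp}=\overline{E^\perp}^{L^1(\Omega;\delta^\gamma)}$ noted above and the $L^1(\Omega,\delta^\gamma)\to L^1(\Omega,\delta^\gamma)$ continuity of $\bG_\lambda$ from \Cref{thm:lin}(1).

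\textbf{Step 2: the $d_\Sigma$-independent bound on $E^\perp$.} The core point is a pointwise resolvent estimate: define $\bG_{\bar\lambda-\epsilon}$ for a fixed small $\epsilon>0$ (so that $\bar\lambda-\epsilon$ is not an eigenvalue and lies in $(\lambda_I,\lambda_{I+1})$), or more cleanly compare $\bG_\lambda|_{E^\perp}$ with $\bG_{\mu}|_{E^\perp}$ for any reference $\mu\in(\lambda_I,\bar\lambda)$. Concretely, for $f\in L^2\cap E^\perp$, writing $u=\bG_\lambda f=\sum_{j\ge I+1}(\lambda_j-\lambda)^{-1}\langle f,\varphi_j\rangle\varphi_j$, I would use the resolvent identity $\bG_\lambda-\bG_\mu=(\lambda-\mu)\bG_\lambda\bG_\mu$ on $E^\perp$, giving $\bG_\lambda=\bG_\mu+(\lambda-\mu)\bG_\lambda\bG_\mu$; iterating and using $|\lambda_j-\lambda|\ge\bar\lambda-\lambda$ for $j\ge I+1$ shows $\|\bG_\lambda|_{E^\perp}\|\le\frac{1}{\bar\lambda-\lambda}$ in $L^2\to L^2$, with $\bar\lambda-\lambda$ controlled from below in terms of $\bar\lambda$ alone once $\lambda$ is held at fixed distance. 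Then the chain of mapping properties in the statement is obtained exactly as in \Cref{thm:lin}: compose this bounded $L^2\to L^2$ (indeed $L^2\to\bH^2_\cL$) operator with the smoothing provided by $\bG_0$ — i.e.\ write $\bG_\lambda|_{E^\perp}=\bG_0|_{E^\perp}+\lambda\bG_0\bG_\lambda|_{E^\perp}$ (the analogue of the $\bG_0$-Green identity restricted to $E^\perp$) — and invoke the $L^p$, $L^q\to L^\infty$, and weighted $\delta^\alpha L^\infty\to\bG_0(\delta^\alpha)L^\infty$ estimates for $\bG_0$ recalled in \eqref{eq:improvement of weights} and used in \Cref{thm:lin}. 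The $L^1(\Omega)\to L^p(\Omega)$ and $L^1(\Omega;\bG_0(\delta^\alpha))\to L^1(\Omega,\delta^\alpha)$ ranges follow from the Brezis--Cazenave--Martel--Ramiandrisoa-type argument of \Cref{prop:lin-L1} applied verbatim, since that argument never used $d_\Sigma(\lambda)$ except through the $L^2$ operator norm, which we have now replaced by the $d_\Sigma$-free bound on $E^\perp$.

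\textbf{Main obstacle.} The delicate point is the low-integrability/weighted endpoints — $L^1(\Omega)\cap\widetilde{E^\perp}\to L^p(\Omega)\cap\widetilde{E^\perp}$ and $L^1(\Omega;\bG_0(\delta^\alpha))\cap\widetilde{E^\perp}\to L^1(\Omega,\delta^\alpha)\cap\widetilde{E^\perp}$ — because there the projection $\Pi_{E^\perp}$ must be applied to merely $L^1(\Omega,\delta^\gamma)$ functions, and $\Pi_{E^\perp}f=f-\sum_{j\le I}\langle f,\varphi_j\rangle\varphi_j$ is bounded on $L^1(\Omega,\delta^\gamma)$ precisely because each $\varphi_j\in\delta^\gamma L^\infty$ and $\varphi_j\in L^\infty$ (finitely many terms). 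One must check that subtracting these smooth, bounded rank-one pieces does not spoil membership in the (weighted) $L^1$ data class $L^1(\Omega;\bG_0(\delta^\alpha))$ — this is where \eqref{eq:comparison weight gamma}, $\bG_0(\delta^\alpha)\ge c_\alpha\delta^\gamma$, is used to absorb the correction terms. Granting that, the passage from the unprojected to the projected theory is then a routine decomposition $f=\Pi_E f+\Pi_{E^\perp}f$, handling $\Pi_E f$ by finite-dimensional (hence all-norms-equivalent) bounds with the harmless constant $d_\Sigma(\lambda)^{-1}$ discarded after projecting to $E^\perp$, and handling $\Pi_{E^\perp}f$ by Step~2.
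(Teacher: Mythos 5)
Your Step~1 (invariance of $\widetilde{E^\perp}$ under $\bG_\lambda$), your $L^2$ bound on $E^\perp$, and the bootstrap to high-integrability data via composition with $\bG_0$ are all in line with what the paper does (it uses the plain eigenfunction expansion rather than a resolvent-identity iteration, but that is a cosmetic difference). The genuine gap is in your claim that the $L^1$-data estimates ``follow from the Brezis--Cazenave--Martel--Ramiandrisoa-type argument of \Cref{prop:lin-L1} applied verbatim,'' with only the $L^2$ operator norm swapped out. Running that argument means pairing $u^\perp$ against a test function such as $\psi=\sign(u^\perp)\,\delta^\alpha$ or $\psi=(|u^\perp|\wedge k)^{p-1}\sign(u^\perp)$, obtaining
\[
\int_\Omega u^\perp\psi\,dx=\int_\Omega f^\perp\,\bG_\lambda(\psi)\,dx,
\]
and then using a pointwise or $L^\infty$ bound on $\bG_\lambda(\psi)$. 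But $\psi$ is \emph{not} in $E^\perp$, so the $d_\Sigma$-free estimate you established on $E^\perp$ simply does not apply to $\bG_\lambda(\psi)$; the only estimate available for $\bG_\lambda(\psi)$ is the one from \Cref{prop:lin-Lq}, whose constant degenerates with $d_\Sigma(\lambda)$. Substituting in the uniform $L^2\to L^2$ bound does nothing here, because $\psi$ is built from the solution, not the data.

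The missing step --- which the paper singles out as the main new ingredient and calls ``transfer of orthogonality to the test function'' (equation \eqref{eq:plin-weak-perp} in the proof of \Cref{thm:plin-L1}) --- is the observation that, since $f^\perp\in\widetilde{E^\perp}$ is $L^2$-orthogonal to every $\varphi_j\in E$ and $\bG_\lambda(\psi)-\bG_\lambda(\psi)^\perp$ is a finite linear combination of such $\varphi_j$, one has
\[
\int_\Omega f^\perp\,\bG_\lambda(\psi)\,dx=\int_\Omega f^\perp\,\bG_\lambda(\psi)^\perp\,dx=\int_\Omega f^\perp\,\bG_\lambda(\psi^\perp)\,dx .
\]
Now $\psi^\perp\in E^\perp$, so the uniform \Cref{prop:plin-Lq} estimate $|\bG_\lambda(\psi^\perp)|\le C(\bar\lambda)\,\bG_0(\delta^\alpha)\,\|\psi^\perp/\delta^\alpha\|_{L^\infty}$ kicks in, and the duality argument closes with a constant depending only on $\bar\lambda$. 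Without this orthogonality transfer the verbatim Brezis argument stalls. Your ``Main obstacle'' paragraph instead worries about the boundedness of $\Pi_{E^\perp}$ on $L^1(\Omega,\delta^\gamma)$ and membership of $f^\perp$ in the data class; those are routine finite-rank facts (handled in \Cref{lem:norm-f}) and are not where the difficulty lies. You should replace that paragraph by the orthogonality-transfer computation above.
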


By the orthogonal decomposition $L^2(\Omega)=E\oplus E^\perp$, one writes down the solution of the eigenvalue problem as an explicit part in $E$ and a uniformly controlled part in $E^\perp$. When $f \in L^2 (\Omega)$ we can write
\begin{equation*}
	f =  \sum_{j=1} ^{{I}}  {
		\angles{f,\varphi_j}
	}\varphi_j  + \sum_{j=I+1} ^{+\infty}  {
		\angles{f,\varphi_j}
	}\varphi_j = P_E (f) + P_{E^\perp} (f).
\end{equation*}
If $f \in L^1 (\Omega; \delta^\gamma)$ this projection is not possible in general. Nevertheless, since $\varphi_j \in \delta^\gamma L^\infty (\Omega)$ for any eigenspace we have the well defined projection
\begin{eqnarray*}
	P_{E_i} : L^1 (\Omega; \delta^\gamma) &\longrightarrow& E_i \subset \delta^\gamma L^\infty (\Omega) \\
	f &\longmapsto& \sum_{j:\lambda_j = \lambda_i} \langle f , \varphi_j \rangle \varphi_j .
\end{eqnarray*}
 and
 \begin{equation*}
 	P_E (f) = \sum_{j=1} ^{{I}}  {
 		\angles{f,\varphi_j}
 	}\varphi_j .
 \end{equation*}
We can define
\begin{equation}
	\label{eq:f perp}
	f^\perp = f - P_E(f) = f - \sum_{j=1} ^{{I}}  {
		\angles{f,\varphi_j}
	}\varphi_j .
\end{equation}
It is clear that $f^\perp \in L^1 (\Omega; \delta^\gamma)$. In fact, $f^\perp \in \overline{ E^\perp }^{L^1(\Omega;\delta^\gamma)}$ and, for $f \in L^2 (\Omega)$, $f^\perp = P_{E^\perp} (f)$.

\begin{thm}[Projection and uniform estimates]
\label{thm:plin-2}
Assume \textrm{(K1)}. Let $\lambda\in\R\setminus\Sigma$. Let $b,\bar\lambda$ be defined in \eqref{eq:b}, \eqref{eq:bar-lambda} respectively. For $f\in L^1(\Omega;\delta^\gamma)$ let $u\in L^1(\Omega; \delta^\gamma)$ be the unique $\bG_\lambda( \delta^\gamma L^\infty(\Omega))$-weak solution of
\[\begin{cases}
{\cL}u-\lambda u=f
    & \textin \Omega\\
Bu=0
    & \texton \p\Omega.\\
\end{cases}\]
Then, for $f^\perp$ given by \eqref{eq:f perp} and $u^\perp = \bG_\lambda (f^\perp)$, i.e. the unique solution of
\[\begin{cases}
{\cL}u^\perp -\lambda u^\perp=f^\perp
& \textin \Omega\\
Bu=0
& \texton \p\Omega,\\
\end{cases}\]
we have that
\[
u  = \sum_{j=1}^{{I}}
    \dfrac{
        \angles{f,\varphi_j}
    }{
        \lambda_j-\lambda
    }\varphi_j + u^\perp.
\]
We have the uniform estimate
\[
	\norm[L^1(\Omega)]{u^\perp \delta^\gamma  }
\leq C(\bar\lambda)
	\norm[L^1(\Omega)]{f\delta^\gamma}.
\]
If $f \in L^{1}(\Omega)$ then, for any $p \in [1, \frac{n}{n-2s})$ we have that
\begin{equation*}
\norm[L^{1}(\Omega)]{u^\perp }
\leq C(p,\bar\lambda)
\norm[L^{p}(\Omega)]{f},
\end{equation*}
If $f \in L^{p_0}(\Omega)$ then, for any $\frac{1}{p_1} = \frac{1}{p_0} - \frac{2s}n$ we have that
\begin{equation*}
	  \norm[L^{p_1}(\Omega)]{u^\perp }
	\leq C(p_0,p_1,\bar\lambda)
	\norm[L^{p_0}(\Omega)]{f},
\end{equation*}
Moreover, if $f\in L^\infty_\loc(\Omega)$, then for any $K\Subset K_0\Subset \Omega$,
\[
\norm[L^\infty(K)]{u^\perp}
\leq C(\bar\lambda,K,K_0)
	\left(
        \norm[L^\infty(K_0)]{f}
        +\norm[L^1(\Omega)]{f\delta^\gamma}
    \right).
\]
\end{thm}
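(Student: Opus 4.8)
The plan is to deduce everything from the general linear theory (\Cref{thm:lin}), the uniform‑in‑$d_\Sigma(\lambda)$ mapping properties of $\bG_\lambda$ on $E^\perp$ (\Cref{thm:plin}) and the equivalence of the notions of solution (\Cref{thm:notions}); given those, the only non‑bookkeeping points are the uniform weighted $L^1$ bound — which I would obtain directly by transferring the orthogonality of $f^\perp$ onto the test function, the same device that makes the constants in \Cref{thm:plin} independent of $d_\Sigma(\lambda)$ — and the absorption of the $\lambda$‑factors in the local estimate, which I expect to be the real obstacle.
\emph{Step 1 (the splitting).} By \Cref{thm:notions}, $u=\bG_\lambda(f)$, and $\bG_\lambda$ is linear on $L^1(\Omega;\delta^\gamma)$ by \Cref{thm:lin}. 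For $j\le I$ the function $\varphi_j/(\lambda_j-\lambda)$ (with $\lambda_j\neq\lambda$) is the spectral — hence, by \Cref{thm:notions}, the $\bG_\lambda$‑Green — solution of $\cL v-\lambda v=\varphi_j$, so $\bG_\lambda(\varphi_j)=\varphi_j/(\lambda_j-\lambda)$; applying $\bG_\lambda$ to $f=P_E(f)+f^\perp$ gives the asserted identity with $u^\perp=\bG_\lambda(f^\perp)$ and $f^\perp\in\widetilde{E^\perp}$. Since the indices $j\le I$ are exactly those with $\lambda_j<\bar\lambda$ (hence determined by $\bar\lambda$) and $\varphi_j\in\delta^\gamma L^\infty(\Omega)$, the elementary bound $\abs{\angles{g,\varphi_j}}\le\norm[L^1(\Omega)]{g\delta^\gamma}\,\norm[L^\infty(\Omega)]{\varphi_j/\delta^\gamma}$ together with the boundedness of $\Omega$ yields
\[
\norm[L^1(\Omega)]{P_E(g)\delta^\gamma}+\norm[L^\infty(\Omega)]{P_E(g)}\le C(\bar\lambda)\norm[L^1(\Omega)]{g\delta^\gamma},\qquad \norm[L^{p_0}(\Omega)]{P_E(g)}\le C(\bar\lambda)\norm[L^{p_0}(\Omega)]{g},
\]
and $\norm[L^\infty(\Omega)]{P_E(\psi)/\delta^\gamma}\le C(\bar\lambda)\norm[L^\infty(\Omega)]{\psi/\delta^\gamma}$ for $\psi\in\delta^\gamma L^\infty(\Omega)$. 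Hence each estimate below reduces to the same estimate with $f^\perp\in\widetilde{E^\perp}$ on the right‑hand side.

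\emph{Step 2 (uniform weighted $L^1$ bound).} Fix $\phi\in L^\infty(\Omega)$ with $\norm[L^\infty(\Omega)]{\phi}\le1$ and set $\psi:=\delta^\gamma\phi\in\delta^\gamma L^\infty(\Omega)$. By the $\bG_\lambda(\delta^\gamma L^\infty(\Omega))$‑weak formulation (\Cref{def:lambda-wd}, available through \Cref{thm:notions}) for $u^\perp=\bG_\lambda(f^\perp)$,
\[
\int_\Omega u^\perp\delta^\gamma\phi=\int_\Omega u^\perp\psi=\int_\Omega f^\perp\,\bG_\lambda(\psi).
\]
As $\psi\in L^2(\Omega)$, $\bG_\lambda$ commutes with the spectral projections, so $P_{E^\perp}\bG_\lambda(\psi)=\bG_\lambda(\psi^\perp)$ with $\psi^\perp:=\psi-P_E(\psi)\in\delta^\gamma L^\infty(\Omega)\cap E^\perp$, while $P_E\bG_\lambda(\psi)\in\spanned\{\varphi_1,\dots,\varphi_I\}$ is annihilated by $f^\perp\in\widetilde{E^\perp}$; thus $\int_\Omega f^\perp\bG_\lambda(\psi)=\int_\Omega f^\perp\bG_\lambda(\psi^\perp)$. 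By the last mapping property of \Cref{thm:plin} (with $\alpha=\gamma$ and $\bG_0(\delta^\gamma)\asymp\delta^\gamma$) and Step 1, $\norm[L^\infty(\Omega)]{\bG_\lambda(\psi^\perp)/\delta^\gamma}\le C(\bar\lambda)\norm[L^\infty(\Omega)]{\psi^\perp/\delta^\gamma}\le C(\bar\lambda)$, whence $\abs{\int_\Omega u^\perp\delta^\gamma\phi}\le C(\bar\lambda)\norm[L^1(\Omega)]{f^\perp\delta^\gamma}$; taking the supremum over $\phi$ and using Step 1 gives $\norm[L^1(\Omega)]{u^\perp\delta^\gamma}\le C(\bar\lambda)\norm[L^1(\Omega)]{f\delta^\gamma}$. (Equivalently, this is the first mapping property of \Cref{thm:plin} applied to $f^\perp$, since $L^1(\Omega;\bG_0(\delta^\gamma))=L^1(\Omega;\delta^\gamma)$.)

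\emph{Step 3 ($L^p$ estimates).} Since $\Omega$ is bounded, $L^q(\Omega)\hookrightarrow L^p(\Omega)$ for $q\ge p$; applying the second and fourth mapping properties of \Cref{thm:plin} to $f^\perp\in\widetilde{E^\perp}$ and then Step 1 gives, for $f\in L^1(\Omega)$ and $p\in[1,\tfrac n{n-2s})$, $\norm[L^1(\Omega)]{u^\perp}\le C\norm[L^p(\Omega)]{u^\perp}\le C(p,\bar\lambda)\norm[L^1(\Omega)]{f^\perp}\le C(p,\bar\lambda)\norm[L^p(\Omega)]{f}$, and for $f\in L^{p_0}(\Omega)$ with $\tfrac1{p_1}=\tfrac1{p_0}-\tfrac{2s}n$, $\norm[L^{p_1}(\Omega)]{u^\perp}\le C(p_0,p_1,\bar\lambda)\norm[L^{p_0}(\Omega)]{f^\perp}\le C(p_0,p_1,\bar\lambda)\norm[L^{p_0}(\Omega)]{f}$.

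\emph{Step 4 (local bound — the main obstacle).} I would re‑run the interior estimate in the proof of part~(2) of \Cref{thm:lin}, replacing its $d_\Sigma$‑dependent global bound by the $d_\Sigma$‑free one of Step 2. Writing $u^\perp=\bG_0(\lambda u^\perp+f^\perp)$ (the $\bG_0$‑Green form, \Cref{thm:notions}) and splitting the integral defining $\bG_0$ over $\{y\in K_0\}$ — where, by \textrm{(K1)}, $\cG_0(x,\cdot)$ is uniformly integrable for $x\in K$ and gives the usual local $L^p$‑smoothing — and over $\{y\notin K_0\}$ — where $\cG_0(x,y)\le C(K,K_0)\delta^\gamma(y)$ — yields, after finitely many bootstrap steps in the integrability exponent, $\norm[L^\infty(K)]{\bG_0(h)}\le C(K,K_0)\bigl(\norm[L^\infty(K_0)]{h}+\norm[L^1(\Omega)]{h\delta^\gamma}\bigr)$ for $h\in L^1(\Omega;\delta^\gamma)\cap L^\infty_\loc(\Omega)$. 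With $h=\lambda u^\perp+f^\perp$ and the bounds $\norm[L^\infty(K_0)]{f^\perp}\le\norm[L^\infty(K_0)]{f}+C(\bar\lambda)\norm[L^1(\Omega)]{f\delta^\gamma}$ and $\norm[L^1(\Omega)]{(\lambda u^\perp+f^\perp)\delta^\gamma}\le C(\bar\lambda)(1+\abs\lambda)\norm[L^1(\Omega)]{f\delta^\gamma}$ from Steps 1–2, one reaches the stated inequality once the factors of $\lambda$ are absorbed into $C(\bar\lambda,K,K_0)$. This absorption is the delicate step: for $\lambda\ge0$ it is immediate, as $\abs\lambda<\bar\lambda=\lambda_{I+1}$; for $\lambda<0$ the operator $\cL-\lambda$ is coercive with $d_\Sigma(\lambda)=\lambda_1-\lambda\ge\lambda_1$, and one may either invoke part~(2) of \Cref{thm:lin} for $f^\perp$ directly (the factor $\abs\lambda$ being compensated by $d_\Sigma(\lambda)$, which grows like $\abs\lambda$), or, when positivity is available, use $\cG_\lambda\le\cG_0$ so that $\abs{u^\perp}\le\bG_0(\abs{f^\perp})$ and the interior estimate for $\bG_0$ applies to $\abs{f^\perp}$ with no factor of $\lambda$ at all. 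The remaining task is the routine check that all constants depend only on the quantities claimed.
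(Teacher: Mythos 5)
Your proposal reproduces the paper's proof with the same ingredients in the same order: the splitting of $\bG_\lambda(f)$ through the projections (the paper's \Cref{lem:proj}), the uniform weighted $L^1$ estimate obtained by transferring the orthogonality of $f^\perp$ onto the test function so that $\bG_\lambda(\psi)^\perp=\bG_\lambda(\psi^\perp)$ can be bounded via \Cref{prop:plin-Lq} (this is exactly \Cref{thm:plin-L1}), the bounds on $P_E$ (the paper's \Cref{lem:norm-f}), and the absorption of $\|u^\perp\delta^\gamma\|_{L^1(\Omega)}$ into $\|f^\perp\delta^\gamma\|_{L^1(\Omega)}$ in the interior bootstrap (the paper's \Cref{prop:plin-loc}); each step is sound. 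Your caution about the $(1+|\lambda|)$ factor is well placed — the paper's proof of \Cref{prop:lin-loc} does produce a factor $(1+|\lambda|)$ that is not literally dominated by $\bar\lambda$ when $\lambda<0$ — but it is benign for the paper's use of the statement (the blow-up analysis keeps $\lambda$ in a bounded interval near $\Sigma$), and both of your remedies for large negative $\lambda$ (trading the $|\lambda|$ against $d_\Sigma(\lambda)\ge|\lambda|-\lambda_1$ via \Cref{thm:lin}(2), or using the monotone comparison $0\le\bG_\lambda\le\bG_0$ for $\lambda<\lambda_1$ from the maximum principle) work.
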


\begin{remark}
	The uniform dependence on $\bar \lambda = \lambda_{I+1}$ is actually a dependence on $\lambda_{I+1} - \lambda_{I}$.
\end{remark}

These closely related main theorems are proved in \Cref{sec:plin}. While the bootstrap regularity arguments remain the same as in \Cref{thm:lin}, the $L^2$-norm of $u^\perp$ can now be bounded independently of $d_\Sigma(\lambda)$. In the (weighted) $L^1$-theory, it is impossible to write down the full eigenfunction expansion. Nonetheless, it still makes sense to project an $L^1(\Omega;\delta^\gamma)$-function into a finite dimensional subspace. Such interplay of $L^1$- and $L^2$-theory is demonstrated in the proof of \Cref{thm:plin-L1}, which is the cornerstone of this article. As the uniform estimate is available only in the orthogonal complement $E^\perp$, we absolutely need to transfer the orthogonality to the test function. This simple while decisive step is seen in \eqref{eq:plin-weak-perp}.

The basic idea behind, i.e. to single out ``bad'' directions, runs in parallel with the finite dimensional Lyapunov--Schmidt reduction in the construction of solutions of PDEs. A nonlinear analogue can be seen in the recent paper \cite{DDMR}, where the authors construct helical vortex filaments in the three-dimensional Ginzburg--Landau equation, and the orthogonality of the larger part of the nonlinear perturbation is crucially used.

\begin{remark}
	\label{rem:Fredholm}
A natural question is what happens as $\lambda \to \lambda_i$. Since we are going to move $\lambda$, let us be precise about notation. The value $\overline \lambda$ was defined for $\lambda $ fixed.  We want to be able approach $\lambda_i$ from above or below, let $\overline \lambda = \lambda_{I+1} > \lambda_i$.
Looking at the decomposition $f = P_E (f) + f^\perp$ giving a solution $u = \bG_\lambda (P_E (f)) + \bG_\lambda (f^\perp)$, the estimates above tell us that $\bG_\lambda(f^\perp)$ is uniformly bounded in the corresponding spaces. On the contrary, looking at
\begin{equation*}
	\bG_\lambda (P_E (f)) = \sum_{j=1}^{{I}}
	\dfrac{
		\angles{f,\varphi_j}
	}{\lambda_j - \lambda} \varphi_j  = \sum_{ \substack {j=1 \\ \lambda_j \ne \lambda_i  }}^{{I}}\dfrac{
	\angles{f,\varphi_j}
}{\lambda_j - \lambda} + \dfrac{
1
}{\lambda_i  - \lambda} \sum_{ \substack {j: \lambda_j = \lambda_i }}{
\angles{f,\varphi_j}
	} \varphi_j,
\end{equation*}
there is a blow-up at any point such that $P_{E_i} (f) (x) = \sum_{ \substack {j: \lambda_j = \lambda_\ell }}{
	\angles{f,\varphi_j}
} \varphi_j (x) \ne 0$. We recover the classical Fredholm alternative:
\begin{quote}
	As $\lambda \to \lambda_i$,
	\begin{enumerate}
		\item If $P_{E_i}(f) = 0$, then $\bG_\lambda (f)  \to \bG_{\lambda_i} (f^\perp) = \bG_{\lambda_i} (f)$ in $L^1 (\Omega; \delta^\gamma)$
		\item  If $P_{E_\ell}(f) \ne 0$, then $|\bG_\lambda (f)|  \to + \infty$ on a set of positive measure.
	\end{enumerate}
\end{quote}
Notice that, if $\lambda \to \lambda_i^{\pm}$, we can estimate the sign of the blow-up at a point $x$, by combining the sign of $P_{E_i}(f)(x)  \ne 0 $. At the end we point out that, since the eigenfunctions are linearly independent $P_{E_i}(f) = 0$ if and only if $\langle f, \varphi_j \rangle$ for all $j$ such that $\lambda_i = \lambda_j$.
\end{remark}

\subsection{Maximum principle}

When $\lambda<\lambda_1$, one obtains a maximum principle for weighted $L^1$-solutions, even though the positivity via Poincar\'{e} inequality makes sense only in the $L^2$-setting. This is our fourth result.

\begin{thm}[Maximum principle for $L^1$-solutions]
\label{thm:MP}
Assume \textrm{(K1)}. Let $\lambda\in(-\infty,\lambda_1)$. Let $b$ be defined in \eqref{eq:b}. Suppose $f\in L^1(\Omega;\delta^\gamma)$, and $u\in L^1(\Omega;\delta^\gamma)$ is a $\bG_\lambda(\delta^\gamma L^\infty(\Omega))$-weak solution of
\[\begin{cases}
{\cL}u-\lambda u=f
    & \textin \Omega\\
B u=0
    & \texton \p\Omega.\\
\end{cases}\]
Then, if $f\geq 0$ a.e. in $\Omega$, we have $u\geq 0$ a.e. in $\Omega$.
\end{thm}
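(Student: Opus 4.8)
The plan is to reduce the weighted-$L^1$ statement to the classical $L^2$ maximum principle for the operator $\cL-\lambda$, and to carry out the reduction by transferring the sign test onto the \emph{test function} rather than onto the datum. The decisive elementary observation is that, $\Omega$ being bounded, $\delta$ is bounded, so $\delta^\gamma L^\infty(\Omega)\subset L^\infty(\Omega)\subset L^2(\Omega)$; hence every admissible test function $\psi$ in \Cref{def:lambda-wd} is in particular an $L^2(\Omega)$-datum, and $\bG_\lambda(\psi)$ coincides with the spectral solution furnished by \Cref{thm:lin}.

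First I would record the $L^2$ maximum principle: if $g\in L^2(\Omega)$ with $g\ge0$ a.e.\ and $v=\bG_\lambda(g)\in\bH^{2}_{\cL}(\Omega)$ is the spectral solution of $\cL v-\lambda v=g$, then $v\ge0$ a.e. This is the standard energy argument. Writing $\cE(v,w)=\sum_{j\ge1}\lambda_j\angles{v,\varphi_j}\angles{w,\varphi_j}$ for the quadratic form of $\cL$, and using that the negative part $v^-$ lies in the form domain with $\cE(v,v^-)\le-\cE(v^-,v^-)$ (the normal-contraction property of Dirichlet forms, valid here because $\cL$ is represented by the nonnegative jump kernel $\cJ$ plus the nonnegative killing term $\kappa$ as in \Cref{sec:L}), one tests the identity $\cE(v,w)-\lambda\angles{v,w}=\angles{g,w}$ with $w=v^-$. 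Since $\angles{v,v^-}=-\norm[L^2(\Omega)]{v^-}^2$ and $\cE(v^-,v^-)\ge\lambda_1\norm[L^2(\Omega)]{v^-}^2$ by the Poincaré (Rayleigh) inequality, one obtains
\[
0\le\angles{g,v^-}=\cE(v,v^-)-\lambda\angles{v,v^-}\le-\cE(v^-,v^-)+\lambda\norm[L^2(\Omega)]{v^-}^2\le(\lambda-\lambda_1)\norm[L^2(\Omega)]{v^-}^2\le0,
\]
so $\norm[L^2(\Omega)]{v^-}=0$; this is the single place where the hypothesis $\lambda<\lambda_1$ is used.

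With this in hand, the weighted-$L^1$ statement follows at once. Let $f\in L^1(\Omega;\delta^\gamma)$ with $f\ge0$ and let $u$ be the $\bG_\lambda(\delta^\gamma L^\infty(\Omega))$-weak solution, so that $\int_\Omega u\psi\,dx=\int_\Omega f\,\bG_\lambda(\psi)\,dx$ for every $\psi\in\delta^\gamma L^\infty(\Omega)$. Fix such a $\psi$ with $\psi\ge0$. By the observation above $\psi\in L^2(\Omega)$, hence $\bG_\lambda(\psi)$ is the spectral solution with nonnegative datum and therefore $\bG_\lambda(\psi)\ge0$ a.e.\ by the $L^2$ maximum principle; moreover $\abs{\bG_\lambda(\psi)}\le C\delta^\gamma$ by \Cref{thm:lin}, so $f\,\bG_\lambda(\psi)\in L^1(\Omega)$ and is $\ge0$ a.e. Thus $\int_\Omega u\psi\,dx\ge0$ for every $0\le\psi\in\delta^\gamma L^\infty(\Omega)$; taking $\psi=\delta^\gamma\chi_A$ for an arbitrary measurable $A\subseteq\Omega$ gives $\int_A u\,\delta^\gamma\,dx\ge0$, whence $u\delta^\gamma\ge0$ and so $u\ge0$ a.e.\ in $\Omega$.

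The step I expect to require the most care is the $L^2$ one: verifying that $v^-$ belongs to the form domain of $\cL$ and satisfies $\cE(v,v^-)\le-\cE(v^-,v^-)$, which is where the sub-Markovian structure of $\cL$ (equivalently, the nonnegativity of $\cJ$ and $\kappa$) enters; the rest is bookkeeping with \Cref{thm:lin} and the equivalences of \Cref{thm:notions}. A fully self-contained alternative that bypasses any discussion of the form domain is to approximate: with $f_k=f\wedge k\in L^\infty(\Omega)\subset L^2(\Omega)$ one has $f_k\ge0$, $f_k\to f$ in $L^1(\Omega;\delta^\gamma)$, and $\bG_\lambda(f_k)\ge0$ by the $L^2$ maximum principle; since $\bG_\lambda\colon L^1(\Omega;\delta^\gamma)\to L^1(\Omega;\delta^\gamma)$ is continuous (\Cref{thm:lin} with $\alpha=\gamma$, recalling $\bG_0(\delta^\gamma)\asymp\delta^\gamma$), a subsequence of $\bG_\lambda(f_k)$ converges a.e.\ to $u=\bG_\lambda(f)$, so $u\ge0$ a.e.
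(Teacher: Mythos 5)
Your reduction of the weighted-$L^1$ statement to the $L^2$ maximum principle --- either by testing against nonnegative $\psi\in\delta^\gamma L^\infty(\Omega)$ and invoking $\bG_\lambda(\psi)\geq0$, or via the truncation $f_k=f\wedge k$ together with the $L^1(\Omega;\delta^\gamma)$-continuity of $\bG_\lambda$ --- is essentially the route the paper takes (\Cref{lem:MP-L1} tests with $\psi=\mathbf{1}_{\set{u<0}\cap K}$ and applies \Cref{lem:MP-L2} to the test function). Where you genuinely diverge is the $L^2$ step, and there you invoke a hypothesis the theorem does not grant you. The inequality $\cE(v,v^-)\leq-\cE(v^-,v^-)$, i.e.\ $\cE(v^+,v^-)\leq0$, is the first Beurling--Deny criterion; you justify it via the representation $\cL u(x)=\int_\Omega(u(x)-u(y))\cJ(x,y)\,dy+\kappa(x)u(x)$ with $\cJ,\kappa\geq0$, but that representation is stated only for the two concrete examples, not for the general $\cL$. \Cref{thm:MP} assumes (K1) alone, a two-sided bound on $\cG_0$, and the remark immediately after (K1)--(K2) is explicit that the sub-Markov assumptions (A1)--(A2) --- which are precisely what encode this contraction property --- are \emph{not} part of the framework. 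Positivity of $\cG_0$, which (K1) does give, does not by itself yield the Beurling--Deny criterion for $\cL=\bG_0^{-1}$, so as written your $L^2$ step proves the statement under a strictly stronger hypothesis than the one claimed.

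The paper's \Cref{lem:MP-L2} sidesteps the form domain entirely. It tests the weak-dual identity $\int_\Omega u\psi=\lambda\int_\Omega u\,\bG_0(\psi)+\int_\Omega f\,\bG_0(\psi)$ with $\psi=-u_-\in L^2(\Omega)$, expands
\[
\int_\Omega u_-^2\,dx
=-\lambda\int_\Omega u_+\bG_0(u_-)\,dx
+\lambda\int_\Omega u_-\bG_0(u_-)\,dx
-\int_\Omega f\,\bG_0(u_-)\,dx,
\]
and drops the first and third terms using only $\cG_0>0$ from (K1) (so $\bG_0(u_-)\geq0$, $u_+\geq0$, $f\geq0$); the remaining inequality closes by the Poincar\'e bound $\lambda_1\int\phi\,\bG_0(\phi)\leq\int\phi^2$ and $\lambda<\lambda_1$. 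In effect the paper replaces your form-level inequality $\cE(v^+,v^-)\leq0$ by the Green's-operator inequality $\int u_+\bG_0(u_-)\geq0$, which \emph{does} follow from (K1). Note, for fairness, that the sign of the dropped cross term $-\lambda\int u_+\bG_0(u_-)$ is favourable only for $\lambda\geq0$, so the paper's argument as written implicitly restricts to $\lambda\in[0,\lambda_1)$, whereas your Dirichlet-form route handles all $\lambda<\lambda_1$ at once; the two arguments are complementary rather than one strictly dominating. If you swap your form-domain step for the Green's-operator step above, the rest of your write-up goes through under (K1) alone in the regime where the paper's lemma applies.
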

The proof is given in \Cref{sec:MP}. A related maximum principle can be found in \cite{CD}. As customary, for $L^1$-solution one applies an $L^2$ maximum principle to the test function; the negative part of an $L^2$-solution is a valid test function. Classically, one uses the  pointwise  fact that $\nabla u_- \cdot \nabla u_+=0$ when $u\in H^1$  to deal with the integral cross term $\int \nabla u_- \cdot \nabla u$.
In the nonlocal case, one choice of dealing with the cross term is by Kato's inequality, tested against the solution itself, as can be seen in the singular integral formulation. Such cross term appears for example in the recent breakthrough related to the De Giorgi conjecture \cite{FS}. For the full pointwise  Kato's  inequality see \cite{CV,CSi,DGV} for RFL and \cite{AD} for SFL.  In our integral equation expressed in terms of the non-negative Green function, 
the cross term clearly has a sign. The burden of proof shifts to the Poincar\'{e} inequality, which has already been established in \cite{BFV}.

The maximum principle for the fractional Laplacian ($\lambda=0$) or associated coercive operators (including $\lambda<0$) are well-known. In the case of the fractional Laplacian it follows automatically from the condition $\cG_0 (x,y) \ge 0$. See for example \cite{Sil,AD,DGV}.

\subsection{Inhomogeneous eigenvalue problem}
We are finally addressing the problem of singular boundary data, i.\,e., large solutions. Let $g\in L^1(\Omega;\delta^\gamma)\cap L^\infty_\loc(\Omega)$ and $h\in C(\p\Omega)$. Consider $\bG_0(C_c^\infty(\Omega))$-weak solutions of
\begin{equation}\label{eq:main}\begin{cases}
{\cL{v}}-\lambda v=g
	& \textin \Omega\\
Bv=h
	& \textin \p\Omega.\\
\end{cases}\end{equation}
Let us consider the large $\cL$-harmonic function $v_h\in \delta^{-b}L^\infty(\Omega) \subset L^1 (\Omega; \delta^\gamma )$ solving
\begin{equation}\label{eq:vh}\begin{cases}
{\cL}v_h=0
& \textin \Omega\\
Bv_h=h
& \texton \p\Omega.\\
\end{cases}
\end{equation}
Existence, uniqueness and kernel representation has been extensively studied in \cite{A} (RFL), \cite{AD} (SFL) and \cite{AGV} (unified approach). The solution is obtained by either justifying the Martin kernel $\bM$ representation or as a limit of the interior theory. We present the corresponding results in \Cref{sec:martin kernel}.
The difference $u = v - v_h$ solves
\begin{equation*}\begin{cases}
{\cL{u}}-\lambda u=g + \lambda v_h
& \textin \Omega\\
Bu=0
& \textin \p\Omega.\\
\end{cases}\end{equation*}

Since $g + \lambda v_h \in L^1 (\Omega; \delta^\gamma)$, when $\lambda \notin \Sigma$ this problem has a unique solution  by the  theory above, given by $u = \bG_\lambda ( g + \lambda v_h )$. Then
\begin{equation}
	v = v_h + \bG_\lambda ( g + \lambda v_h ).
\end{equation}

Using \Cref{thm:plin-2}, we obtain, as our third main result, the structure of solutions and Fredholm alternative  as  $\lambda\to\Sigma$.

\begin{thm}\label{thm:blow}
Assume \textrm{(K1)} and \textrm{(K2)}. Let $\lambda\in\R\setminus\Sigma$. Let $b,\bar\lambda$ be defined in \eqref{eq:b}, \eqref{eq:bar-lambda} respectively. Let
\begin{equation}\label{eq:g-space}
g\in
	L^1(\Omega;\delta^\gamma)
	\cap L^\infty_\loc(\Omega).
\end{equation}
Given $h\in C(\p\Omega)$, let
\begin{equation}\label{eq:vh-space}
v_h\in
	\delta^{-b}L^\infty(\Omega)
\end{equation}
be the large $\cL$-harmonic function solving \eqref{eq:vh}. %
Then the following hold.
\begin{enumerate}
\item (Existence-uniqueness)
For any $\lambda\in\R\setminus\Sigma$, \eqref{eq:main} has a unique $\bG_0(C_c^\infty(\Omega))$-weak solution
\[
v_\lambda
\in L^1(\Omega;\delta^\gamma)
	\cap L^\infty_\loc(\Omega).
\]
\item (Representation formula)
The solution $v_\lambda$ is given by
\[
v_\lambda
=v_h+\sum_{\varphi_j\in E}
    \dfrac{
        \angles{g+\lambda v_h,\varphi_j}
    }{
        \lambda_j-\lambda
    }\varphi_j
    +u^\perp,
\]
where $u^\perp\in L^p(\Omega)\cap L^\infty_\loc(\Omega)\cap \widetilde{E^\perp}$, for all $p\in[1,\frac{n}{n-2s})$, is the unique solution (in any of the sense \eqref{eq:sense-1}--\eqref{eq:sense-5} in \Cref{thm:notions}) of
\begin{equation}\label{eq:perp}\begin{cases}
{\cL}u^\perp-\lambda u^\perp
=g^\perp + \lambda v_h^\perp
    & \textin \Omega\\
Bu^\perp=0
	& \texton \p\Omega,\\
\end{cases}\end{equation}
where
\[
v_h^\perp
=v_h-\sum_{\varphi_j\in E}
        \angles{v_h,\varphi_j}
        \varphi_j,
    \quad
g^\perp
=g-\sum_{\varphi_j\in E}
        \angles{g,\varphi_j}
        \varphi_j.
\]
\item (Uniform estimates)
We have that
\[
\norm[L^1(\Omega)]{u^\perp \delta^\gamma }
\leq C\left(
\norm[L^1(\Omega)]{g}
+\norm[L^\infty(\p\Omega)]{h}
\right),
\]
where $C=C(\bar\lambda)$, and for any $K\Subset K_0\Subset \Omega$ we have that
\begin{equation}\label{eq:u-perp-loc}
\norm[L^\infty(K)]{v_h}
+\norm[L^\infty(K)]{u^\perp}
\leq C\left(
        \norm[L^\infty(K_0)]{g}
        +\norm[L^1(\Omega)]{g\delta^\gamma}
        +\norm[L^\infty(\p\Omega)]{h}
    \right),
\end{equation}
for $C=(\bar\lambda,K,K_0)$.
\item (Fredholm alternative)
For $i=1,2,\dots$, let $E_i$ be the eigenspace associated to the $\lambda_i\in\Sigma$. Let $\lambda \to \lambda_i$. %
Then exactly one of the following holds.
\begin{enumerate}
	\item If $P_{E_i} (g+\lambda_i v_h) = 0$, then
	\begin{equation*}
		v_\lambda \longrightarrow  v_h - P_{E_i} (v_h) + \bG_{\lambda_i}  (  g + \lambda_i v_h  ).
	\end{equation*}
	in $L^1 (\Omega; \delta^\gamma)$.
	\item If $P_{E_i} (g+\lambda_i v_h) \ne 0$ then $|v_\lambda| \to +\infty$ uniformly in a set of positive measure. Furthermore, let, for $\varepsilon>0$
	\[
	A_{i}^+
	:=\set{
		P_{E_i} (g + \lambda_i v_h)>\varepsilon
	},
	\quad
	A_{i}^-
	:=\set{ P_{E_i} (g + \lambda_i v_h) <-\varepsilon
	}.
	\]
	Then for any $K\Subset\Omega$,
	\[
	\begin{cases}
	\displaystyle
	\lim_{\lambda\nearrow\lambda_i}
	\inf_{K\cap A_i^+}v_\lambda
	=\lim_{\lambda\searrow\lambda_i}
	\inf_{K\cap A_i^-}v_\lambda
	=+\infty,\\
	\displaystyle
	\lim_{\lambda\nearrow\lambda_i}
	\sup_{K\cap A_i^-}v_\lambda
	=\lim_{\lambda\searrow\lambda_i}
	\sup_{K\cap A_i^+}v_\lambda
	=-\infty.
	\end{cases}
	\]
\end{enumerate}

\item (Global blow-up)
If $g\geq0$ in $\Omega$ and $h> 0$ on $\p\Omega$,
and $i=1$, then
\[
\lim_{\lambda\nearrow\lambda_1}
    \inf_{\Omega}v_\lambda
=+\infty.
\]
\end{enumerate}
\end{thm}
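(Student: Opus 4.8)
The plan is to reduce \eqref{eq:main} to the interior problem \eqref{eq:main-f} (with $h=0$) by subtracting the large $\cL$-harmonic function $v_h$ of \eqref{eq:vh}, whose existence, uniqueness and kernel representation $v_h=\bM(h)/\bM(1)\in\delta^{-b}L^\infty(\Omega)$ are provided by \cite{AGV} and recalled in \Cref{sec:martin kernel}. First I would check that $v\mapsto u:=v-v_h$ is a bijection between $\bG_0(C_c^\infty(\Omega))$-weak solutions of \eqref{eq:main} and $\bG_0(C_c^\infty(\Omega))$-weak solutions of \eqref{eq:main-f} with datum $f:=g+\lambda v_h$: testing both formulations against $\zeta\in\bG_0(C_c^\infty(\Omega))$ and subtracting, the singular boundary integrals $\int_{\p\Omega}D_\gamma\zeta\,h\,d\cH^{n-1}$ cancel because $v_h$ itself satisfies $\int_\Omega v_h\,\cL\zeta=\int_{\p\Omega}D_\gamma\zeta\,h\,d\cH^{n-1}$, leaving only the term $\lambda\int_\Omega v_h\zeta$ which is absorbed into the right-hand side. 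Since $\delta^{\gamma-b}=\delta^{2s-1}$ is integrable, $v_h\in L^1(\Omega;\delta^\gamma)$, so $f\in L^1(\Omega;\delta^\gamma)$ and the interior theory applies.

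Parts (1)--(3) then reduce to the linear theory. By \Cref{thm:lin} and \Cref{thm:notions}, $u=\bG_\lambda(f)$ is the unique $\bG_0(C_c^\infty(\Omega))$-weak solution of \eqref{eq:main-f}, hence $v_\lambda:=v_h+\bG_\lambda(f)$ is the unique solution of \eqref{eq:main}; moreover the Martin kernel $D_\gamma\cG_0(z,x)$ of (K2) is bounded for $x$ ranging in a compact subset of $\Omega$, so $v_h\in L^\infty_\loc(\Omega)$, whence $f\in L^\infty_\loc(\Omega)$ and the local bound of \Cref{thm:lin}(2) yields $v_\lambda\in L^\infty_\loc(\Omega)$: this is (1). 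Applying \Cref{thm:plin-2} to $f$ and using linearity of $P_E$ to identify $f^\perp=g^\perp+\lambda v_h^\perp$ produces the decomposition $v_\lambda=v_h+\sum_{\varphi_j\in E}\frac{\angles{g+\lambda v_h,\varphi_j}}{\lambda_j-\lambda}\varphi_j+u^\perp$ with $u^\perp=\bG_\lambda(f^\perp)$ solving \eqref{eq:perp}; the memberships $u^\perp\in\widetilde{E^\perp}\cap L^\infty_\loc(\Omega)\cap L^p(\Omega)$ and the equivalence of the notions of solution follow from the mapping properties of $\bG_\lambda$ in \Cref{thm:plin} and \Cref{thm:plin-2}, giving (2). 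For (3), I would combine the weighted-$L^1$ and local estimates of \Cref{thm:plin-2} with the bounds $\norm[L^1(\Omega)]{v_h\delta^\gamma}\le C\norm[L^\infty(\p\Omega)]{h}$ and $\norm[L^\infty(K)]{v_h}\le C(K)\norm[L^\infty(\p\Omega)]{h}$, both consequences of $|v_h|\le\norm[L^\infty(\p\Omega)]{h}\,\bM(1)$, $\bM(1)\asymp\delta^{-b}$ and the boundedness of $D_\gamma\cG_0$ away from $\p\Omega$, absorbing the finite-rank projection $P_E$ into the constants.

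For (4) I would fix $i$ and, as in \Cref{rem:Fredholm}, choose $E\supseteq E_i$ so that $\bar\lambda=\lambda_{I+1}>\lambda_i$; then $E^\perp$ and $\widetilde{E^\perp}$ are fixed as $\lambda\to\lambda_i$. In the representation formula I isolate the singular contribution of the finite sum, $\frac{1}{\lambda_i-\lambda}P_{E_i}(g+\lambda v_h)=\frac{1}{\lambda_i-\lambda}P_{E_i}(g+\lambda_i v_h)-P_{E_i}(v_h)$, the remaining terms converging as $\lambda\to\lambda_i$. Since the denominators $\lambda_j-\lambda$ with $\lambda_j\ge\bar\lambda$ stay bounded away from $0$, $\bG_\lambda$ is continuous in $\lambda$ on $\widetilde{E^\perp}$ near $\lambda_i$ (uniformly, by \Cref{thm:plin}), so $u^\perp\to\bG_{\lambda_i}(g^\perp+\lambda_i v_h^\perp)$ in $L^1(\Omega;\delta^\gamma)$ and locally uniformly. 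If $P_{E_i}(g+\lambda_i v_h)=0$ the singular term drops out, and recombining the convergent pieces with $\bG_{\lambda_i}(g+\lambda_i v_h)$ --- well-defined by the Fredholm alternative exactly because $P_{E_i}(g+\lambda_i v_h)=0$ --- gives (4a). Otherwise, on any $K\Subset\Omega$ one has $v_\lambda=\frac{1}{\lambda_i-\lambda}P_{E_i}(g+\lambda_i v_h)+r_\lambda$ with $r_\lambda$ bounded in $L^\infty(K)$ uniformly as $\lambda\to\lambda_i$; since $P_{E_i}(g+\lambda_i v_h)$ is a nonzero continuous function, $\set{|P_{E_i}(g+\lambda_i v_h)|>\eps}$ has positive measure for small $\eps$, which forces $|v_\lambda|\to+\infty$ uniformly on its intersection with any $K\Subset\Omega$, and matching the sign of $\lambda_i-\lambda$ against that of $P_{E_i}(g+\lambda_i v_h)$ on $A_i^\pm$ yields the four one-sided limits; this is (4b), the two cases being manifestly exclusive.

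For (5), in the regime $b>0$, let $0<\lambda<\lambda_1$. Since $h>0$ on $\p\Omega$, the kernel representation gives $v_h\ge c_1\delta^{-b}$ on $\Omega$, while $\varphi_1\asymp\delta^\gamma$ is bounded, so for $\lambda\ge\lambda_1/2$ there is $c>0$ with $g+\lambda v_h\ge\lambda v_h\ge c\,\varphi_1$ in $\Omega$. As $\bG_\lambda(c\varphi_1)=\frac{c}{\lambda_1-\lambda}\varphi_1$, the maximum principle \Cref{thm:MP} applied to $\bG_\lambda\big((g+\lambda v_h)-c\varphi_1\big)\ge0$ gives $u=\bG_\lambda(g+\lambda v_h)\ge\frac{c}{\lambda_1-\lambda}\varphi_1$, hence $v_\lambda=v_h+u\ge c_1\delta^{-b}+\frac{c\,c_2}{\lambda_1-\lambda}\delta^\gamma$ using $\varphi_1\ge c_2\delta^\gamma$. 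Given $M>0$, first choose $\eta>0$ with $c_1\eta^{-b}\ge M$ (possible since $b>0$), so $v_\lambda\ge M$ on $\set{\delta<\eta}$; then take $\lambda$ close enough to $\lambda_1$ that $\frac{c\,c_2\eta^\gamma}{\lambda_1-\lambda}\ge M$, so $v_\lambda\ge M$ on $\set{\delta\ge\eta}$ too; hence $\inf_\Omega v_\lambda\to+\infty$. Apart from assembling \Cref{thm:lin}, \Cref{thm:plin-2} and \Cref{thm:MP}, the delicate points are the $\lambda$-uniform continuity of $\bG_\lambda$ through the spectrum in (4), which rests on the $d_\Sigma(\lambda)$-independent estimates of \Cref{thm:plin} and the consistent choice of $E\oplus E^\perp$, and in (5) the merging of the interior blow-up $\frac{c}{\lambda_1-\lambda}\varphi_1$ with the boundary growth $v_h\asymp\delta^{-b}$ into a single uniform bound.
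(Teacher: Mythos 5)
Your proposal is correct, and parts (1)--(4) follow essentially the same route as the paper: reduce to the interior problem by subtracting $v_h$, apply \Cref{thm:lin}, \Cref{thm:notions} and \Cref{thm:plin-2}, and for the Fredholm alternative isolate the singular projection onto $E_i$. One pleasant simplification you make in (4) is the algebraic identity
\[
\frac{1}{\lambda_i-\lambda}\,P_{E_i}(g+\lambda v_h)
=\frac{1}{\lambda_i-\lambda}\,P_{E_i}(g+\lambda_i v_h)-P_{E_i}(v_h),
\]
which reads off the finite limit $-P_{E_i}(v_h)$ directly when $P_{E_i}(g+\lambda_i v_h)=0$; the paper reaches the same conclusion via L'H\^opital's rule applied coefficient by coefficient, so the two are equivalent but your version is tidier.

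Where you genuinely diverge is in (5). The paper uses the maximum principle only to assert $u_\lambda=v_\lambda-v_h\ge 0$ (so $v_\lambda\ge v_h\ge\bar C$ near $\p\Omega$), and on the complementary compact $K$ it invokes the projected representation $v_\lambda=\frac{1}{\lambda_1-\lambda}\angles{g+\lambda v_h,\varphi_1}\varphi_1+v_h+u_\lambda^\perp$ together with the $\lambda$-uniform local $L^\infty$ bound on $u_\lambda^\perp$ from \Cref{thm:plin-2}. You instead compare $g+\lambda v_h$ from below with $c\,\varphi_1$ -- which is legitimate because $b>0$ forces $b+\gamma>0$, so $\delta^{-b}\gtrsim\delta^\gamma\asymp\varphi_1$ and hence $\lambda v_h\gtrsim\varphi_1$ -- and feed this single inequality through the maximum principle and $\bG_\lambda(\varphi_1)=\frac{1}{\lambda_1-\lambda}\varphi_1$ to obtain the pointwise lower bound $v_\lambda\ge c_1\delta^{-b}+\frac{cc_2}{\lambda_1-\lambda}\delta^\gamma$. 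This bypasses the uniform estimate on $u^\perp$ entirely and gives a cleaner two-region argument; the trade-off is that the paper's route keeps the proof of (5) structurally parallel to the Fredholm argument of (4), whereas yours is a self-contained comparison that exploits $h>0$ and $b>0$ more directly. Both are correct.
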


We give the proof in \Cref{sec:blow}. The main idea is straightforward: by subtracting the large $\cL$-harmonic function we obtain an equation with zero weighted (by $\delta^b$) trace, and a right hand side in $\delta^{-b}L^\infty(\Omega)\subset L^1(\Omega;\delta^\gamma)$. Then the previously developed linear theory in \Cref{thm:plin-2} applies. The interior blow-up behaviors as $\lambda\to\Sigma$ can be seen from the explicit part with the low eigenfunctions, because the orthogonal part is shown to be bounded in compact subsets.

\begin{remark}
Note that by Fubini's Theorem, one may rewrite
\[
\angles{\lambda_i v_h,\varphi_j}
=\int_{\p\Omega}
	D_\gamma \varphi_j(z)
	h(z)
\,d\cH^{n-1}(z),
	\quad \textfor \varphi_j\in E_i.
\]
\end{remark}

\begin{remark}
It is possible that the solution does not blow up in the interior, i.e. $A_+=A_-=\emptyset$. For example, this happens when $n=3$, $\Omega=B_1$, $g=0$, $h=1$ and $i=2$, in which case $U=(1-|x|^2)_+^{s-1}$ \cite{Bogdan} and all eigenfunctions corresponding to $\lambda_2$ are anti-symmetric \cite{Ferreira}. The anti-symmetry of higher eigenfunctions for the restricted fractional Laplacian appears to be a challenging problem.
\end{remark}

\subsection{Convergence and compactness as $s\nearrow1$}

As the parameter $s$ tends to $1$, we also have $\gamma\to 1$. Thus the exponent $b=1-2s+\gamma$ tends to $0$. Therefore the limit no longer blows up at the boundary. In fact, an $L^2$-theory is sufficient due to the slow blow-up rate for $s$ close to $1$.

Under additional assumptions on the Green function, we show various convergence results and prove that the solution of the inhomogeneous eigenvalue problem converges to that of the corresponding Dirichlet problem. Since the limiting solution is bounded, ``large eigenfunctions'' are purely nonlocal objects.

These results are presented in \Cref{sec:sto1}. One main ingredient for the strong convergence is the compactness of the sequence of Green's operator on bounded $L^2$ functions, in the spirit of the Riesz--Fr\'{e}chet--Kolmogorov theorem.

\subsection{Notations and organization}
\label{sec:ideas}

For convenience, we list some notations used throughout the paper.

\begin{itemize}
\item $\Omega^c:=\R^n\setminus\overline{\Omega}$ is the complement of $\Omega$.
\item $K_1 \Subset K_0$ means that the closure of $K_1$ is contained in the interior of $K_0$.
\item
    $s$ is half the order of $\cL$, and ranges over $(0,1)$ for RFL and over $(\frac12,1)$ for SFL.
\item $\gamma$ is the parameter for the boundary behavior; $\gamma=s$ for RFL and $\gamma=1$ for SFL.
\item $\bar\lambda$ is the eigenvalue just above $\lambda$, defined in \eqref{eq:bar-lambda}.
\item
    $(\lambda_j,\varphi_j)$ are the eigenpairs, with $\lambda_j\in\Sigma$ repeated according to multiplicity.
\item $d_\Sigma(\lambda)$ is the distance of $\lambda$ to the spectrum, defined in \eqref{eq:d-lambda}.
\item $E_i$ is the eigenspace associated to $\lambda_i$.
\item $E$ is the span of all eigenfunctions with eigenvalue at most $\bar\lambda$.
\item $E^\perp$ is the orthogonal complement of $E$ in $L^2(\Omega)$.
\item $\angles{u,\varphi}=\int_{\Omega}u\varphi\,dx$ is the $L^2(\Omega)$ inner product.
\item
    $\cG_0(x,y)$ is the Green function for $\cL$.
\item
    $\bG_0(u)(x)=\int_{\Omega}\cG_0(x,y)u(y)\,dy$ is the  Green  operator for $\cL$.
\item
    $\bG_\lambda(u)$ is the solution operator for $\cL-\lambda$.
\item
    $a\wedge b$ is the minimum of the real numbers $a$ and $b$.
\item
    $a_+=\max\set{a,0}$ is the positive part of $a$.
\item
    $f \asymp g$ means $C^{-1}f \leq g \leq Cf$, for positive functions $f$ and $g$.
\item $\eta_\eps=\eps^{-n}\eta(\eps^{-1}\cdot)$ is the standard mollifier, where $\eta\in C_c^\infty(B_1)$ is an $L^1$-normalized bump function.
\item Constants depending only on $n$, $s$, $\Omega$, $\gamma$ are considered universal and denoted generically by $C$. Any additional dependence is indicated.
\end{itemize}

The paper is organized as follows. We derive regularity estimates for the Green function in \Cref{sec:G0}, and apply them to establish a linear theory for $L^1$-solutions in \Cref{sec:general-lin}. %
We show the equivalence of various notions of solution, and obtain local boundedness of $L^1$-solutions. Then we derive the $L^1$-linear theory in the $L^2$-projected space in \Cref{sec:plin}. In \Cref{sec:MP} we prove a maximum principle for $L^1$-solutions. All these are applied in \Cref{sec:blow} to prove the target theorem, classifying blow-up phenomena of the nonlocal eigenvalue problem.
We recover the classical eigenvalue problem as $s\nearrow 1$ in \Cref{sec:sto1}. In \Cref{sec:compactness}, we discuss the natural Hilbert spaces and some compactness properties.
We give an explicit expression for the weighted trace for the RFL in \Cref{sec:RFL-trace} and collect elementary embedding results into Morrey spaces in \Cref{sec:embed}.

\section{Estimates for $\lambda = 0$}
\label{sec:G0}

 We assume (K1) throughout this section. 

\subsection{Regularisation between weighted $L^p$ spaces}

\subsubsection{Regularisation between $L^1$ weight spaces.}
The first item to notice is that, as shown in \cite{AGV}, we have that
\begin{prop}
	\label{prop:regularisation between L^1 weight spaces}
	The  Green  operator $\bG_0$ for $\cL$ is continuous from
	\begin{equation*}
	\bG_0 : L^1 (\Omega; \bG_0 (\delta^\alpha)) \longrightarrow L^1 (\Omega; \delta^\alpha).
	\end{equation*}
	for $\alpha > -1-\gamma$, with its operator norm  equal to  $1$.
\end{prop}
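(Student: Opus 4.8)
The plan is to reduce the statement to Tonelli's theorem together with the symmetry of the Green's function, so no serious analysis is involved. Abbreviate $w:=\bG_0(\delta^\alpha)$. The first preparatory step is to observe that $w$ is finite almost everywhere, which is exactly where the restriction $\alpha>-1-\gamma$ is used: by the upper bound in \textrm{(K1)}, the integrand $\cG_0(x,y)\delta^\alpha(y)$ has, for fixed $x$, an integrable singularity in $y$ both at $y=x$ (of order $|x-y|^{-(n-2s)}$, integrable since $n>n-2s$) and near $\p\Omega$ (of order $\delta^{\gamma+\alpha}(y)$, integrable precisely because $\gamma+\alpha>-1$). In fact $w$ is comparable to one of the expressions in \eqref{eq:improvement of weights}, but for the present argument only its a.e.\ finiteness matters, together with $w>0$ (immediate from $\cG_0>0$).

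Now take $f\in L^1(\Omega,w)$. Since $\cG_0\ge 0$, the function $(x,y)\mapsto \cG_0(x,y)\,|f(y)|\,\delta^\alpha(x)$ is non-negative and measurable on $\Omega\times\Omega$, so Tonelli's theorem gives
\[
\int_\Omega |\bG_0(f)(x)|\,\delta^\alpha(x)\,dx
\;\le\;\int_\Omega\!\int_\Omega \cG_0(x,y)\,|f(y)|\,\delta^\alpha(x)\,dy\,dx
\;=\;\int_\Omega |f(y)|\left(\int_\Omega \cG_0(x,y)\,\delta^\alpha(x)\,dx\right)dy.
\]
By the symmetry $\cG_0(x,y)=\cG_0(y,x)$ from \textrm{(K1)}, the inner integral equals $\bG_0(\delta^\alpha)(y)=w(y)$, and therefore
\[
\int_\Omega |\bG_0(f)(x)|\,\delta^\alpha(x)\,dx\;\le\;\int_\Omega |f(y)|\,w(y)\,dy\;=\;\norm[L^1(\Omega,\,\bG_0(\delta^\alpha))]{f}.
\]
The finiteness of the right-hand side forces $\bG_0(|f|)\,\delta^\alpha\in L^1(\Omega)$, hence $\bG_0(|f|)<+\infty$ a.e., so $\bG_0(f)$ is a well-defined measurable function lying in $L^1(\Omega,\delta^\alpha)$; and the displayed inequality is precisely the asserted continuity with operator norm at most $1$.

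The only point needing any care is the a.e.\ finiteness of $w=\bG_0(\delta^\alpha)$ and the measurability needed to invoke Tonelli, both of which follow at once from the two-sided bound in \textrm{(K1)} and the Fubini--Tonelli theorem; I do not expect any real obstacle here. If one wishes to avoid discussing finiteness of $w$ up front, an equivalent route is to prove the inequality first for $f\in L^\infty_c(\Omega)$, where every integral in sight is manifestly finite, and then pass to the general case by monotone convergence applied to the truncations $(|f|\wedge k)\to|f|$.
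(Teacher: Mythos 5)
Your proof is correct and is essentially the paper's own argument, merely unpacked: the paper tests the equation against $\psi=\sign(u)\,\delta^\alpha$ and invokes the self-adjointness of $\bG_0$, which when unfolded through the integral representation is exactly your Tonelli-plus-symmetry computation. Your version has the small virtue of making the a.e.\ finiteness of $\bG_0(\delta^\alpha)$ and the role of $\alpha>-1-\gamma$ explicit, but it is the same route.
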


\begin{proof}
If $u=\bG_0(f)$, then one can take $\psi = \sign (u) \delta^\alpha $ as a test function to see
\begin{equation*}
	\int_\Omega |u| \delta^\alpha = \int_\Omega f \mathbb G_0 (\textrm{sign} (u) \delta^\alpha ) \le \int_\Omega |f| \mathbb G_0 (\delta^\alpha).
\end{equation*}
 The choice $f=1$ shows that equality can hold. 
\end{proof}

\begin{remark} For $\alpha = \gamma$ we have that
	\begin{equation*}
		\bG_0 : L^1 (\Omega; \delta^\gamma) \to L^1 (\Omega; \delta^\gamma).
	\end{equation*}
	Since, for $\alpha > -1-\gamma$, $\bG_0 (\delta^\alpha) \ge c_\alpha \delta^\gamma$, we have that $L^1 (\Omega; \delta^\alpha ) \hookrightarrow L^1 (\Omega; \delta^\gamma)$. Hence,  $\bG_0(f)$ is always in $L^1 (\Omega; \delta^\gamma)$ whenever $f\in L^1(\Omega;\delta^\alpha)$ .
\end{remark}

\begin{remark}\label{rem:3.2}
	Since, by \eqref{eq:improvement of weights} we reduce the power of the weight needed, we always improve the integrability. In particular, if $\gamma < 2s$ then
	\begin{align*}
	\bG_0 : L^1 (\Omega; \delta^\gamma) \longrightarrow L^1 (\Omega)
	\end{align*}
	and, if $\gamma > 2s$ then
	\begin{align*}
	\bG_0 : L^1 (\Omega; \delta^{2s} ) &\longrightarrow L^1 (\Omega ), \\
	\bG_0 : L^1 (\Omega;\delta^\beta) &\longrightarrow L^1 (\Omega; \delta^{\beta - 2s} ), & &\text{for any } \beta \in(  \gamma , 2s) , \\
	\bG_0 : L^1 (\Omega;\delta^\gamma) &\longrightarrow L^1 (\Omega; \delta^{\alpha} ), & &\text{for any } \alpha > \gamma - 2s.
	\end{align*}
	Again, $\gamma < 2s$, all admissible data give $L^1$ solutions.
\end{remark}

\subsubsection{Regularisation from $L^p$ to $L^q$ and Marcinkiewicz spaces}
We start with a  definition.
\begin{defn}
	Let $u$ be a measurable function in $\Omega$. Let $p \in (1,+\infty)$ and let $p'$ be its conjugate exponent. We define the Marcinkiewicz norm  $\norm[M^p(\Omega)]{\cdot}$  as
	\begin{equation*}
	\| u \|_{M^p(\Omega)} = \sup_{ \substack{ K  \subset  \Omega \\ \text{measurable} } }  \frac{ \int_K |u (x)| dx}{ |K|^{\frac 1 {p'}}  }.
	\end{equation*}
	We define the Marcinkiewicz space $M^p (\Omega)$ as the space of  measurable  functions with bounded $M^p(\Omega)$ norm.
\end{defn}
Notice that the extension by zero produces an isometric embedding $M^p (\Omega) \hookrightarrow M^p (\mathbb R^n)$. The embedding $M^p (\Omega) \hookrightarrow L^q(\Omega)$ for $q < p$ is well known (see, e.g. \cite[Lemma A.2]{BBC}).

The aim of this subsection is to prove
\begin{prop}
	\label{prop:regularisation L1 to Marcinkiewicz}
	The  Green  operator $\bG_0$ for $\cL$ is continuous from
	\begin{equation}
	\bG_0: L^{1}(\Omega)
	\longrightarrow
	M^{\frac {n}{n-2s}  } (\Omega).
	\end{equation}
	In particular $\bG_0: L^{1}(\Omega)
	\longrightarrow
	L^p (\Omega)$
	and
	\begin{align}
	\bG_0: L^{p_0}(\Omega)
	\longrightarrow
	L^{p_1} (\Omega)  & \textfor  p_0 \in (1, \tfrac{n}{2s}) \textand	\tfrac{1}{{p_1}} = \tfrac 1 {p_0} - \tfrac{2s}n.
	\end{align}
\end{prop}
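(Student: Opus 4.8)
The plan is to reduce the entire statement to classical potential estimates. By \textrm{(K1)} — more precisely the bound $0\le\cG_0(x,y)\le C|x-y|^{-(n-2s)}$ recorded in the remark following it — extending $f$ by zero outside $\Omega$ gives the pointwise domination
\[
|\bG_0 f(x)|\le C\int_\Omega\frac{|f(y)|}{|x-y|^{n-2s}}\,dy\le C\,(I_{2s}|f|)(x),\qquad x\in\Omega,
\]
where $I_{2s}$ is the Riesz potential of order $2s$ on $\R^n$. Hence it suffices to prove the three mapping properties for the truncated kernel $|x-y|^{-(n-2s)}\mathbf 1_{\Omega\times\Omega}(x,y)$, and no maximum principle or spectral information is needed.

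First I would establish the endpoint bound $\bG_0:L^1(\Omega)\to M^{\frac{n}{n-2s}}(\Omega)$. Given a measurable $K\subset\Omega$, Tonelli's theorem yields
\[
\int_K|\bG_0 f(x)|\,dx\le C\int_\Omega|f(y)|\Big(\int_K\frac{dx}{|x-y|^{n-2s}}\Big)\,dy .
\]
Since $x\mapsto|x-y|^{-(n-2s)}$ is radially decreasing about $y$, the bathtub principle bounds the inner integral by its value on the ball $B_\rho(y)$ centered at $y$ with $|B_\rho(y)|=|K|$, and a one-dimensional computation gives $\int_{B_\rho(y)}|x-y|^{-(n-2s)}\,dx=c_{n,s}|K|^{2s/n}$. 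Because $1/p'=2s/n$ for $p=\tfrac{n}{n-2s}$, taking the supremum over $K$ gives $\|\bG_0 f\|_{M^{n/(n-2s)}(\Omega)}\le c_{n,s}\|f\|_{L^1(\Omega)}$; combined with the embedding $M^{\frac{n}{n-2s}}(\Omega)\hookrightarrow L^p(\Omega)$ for $p<\tfrac{n}{n-2s}$ (valid since $|\Omega|<\infty$, by \cite[Lemma A.2]{BBC}), this already produces $\bG_0:L^1(\Omega)\to L^p(\Omega)$.

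For the scale $\bG_0:L^{p_0}(\Omega)\to L^{p_1}(\Omega)$ with $\tfrac{1}{p_1}=\tfrac{1}{p_0}-\tfrac{2s}{n}$ and $p_0\in(1,\tfrac{n}{2s})$, I would either invoke the Hardy–Littlewood–Sobolev inequality for $I_{2s}$ directly, or argue by splitting the kernel at a radius $R$: write $\bG_0 f=A_R+B_R$, where $A_R$ collects the contribution of $\{|x-y|<R\}$ and $B_R$ that of $\{|x-y|\ge R\}$. Young's inequality with $|x-y|^{-(n-2s)}\mathbf 1_{|x-y|<R}\in L^1$ gives $\|A_R\|_{L^{p_0}}\le cR^{2s}\|f\|_{L^{p_0}}$, while — since $p_0<\tfrac{n}{2s}$ — the tail $|x-y|^{-(n-2s)}\mathbf 1_{|x-y|\ge R}$ lies in $L^{p_0'}$ and Hölder gives $\|B_R\|_{L^\infty}\le cR^{-n/p_1}\|f\|_{L^{p_0}}$. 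Chebyshev on $A_R$ followed by optimization in $R$ yields the weak-type bound $\bG_0:L^{p_0}(\Omega)\to M^{p_1}(\Omega)$ for every $p_0\in[1,\tfrac{n}{2s})$; then for $p_0\in(1,\tfrac{n}{2s})$ one picks $1<a<p_0<b<\tfrac{n}{2s}$ and applies the Marcinkiewicz interpolation theorem between the two weak endpoints at $a$ and $b$, the exponent relation $\tfrac{1}{p_1}=\tfrac{1}{p_0}-\tfrac{2s}{n}$ falling out by linearity of $1/p\mapsto 1/p-2s/n$.

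I do not expect a genuine obstacle: the substance is precisely the reduction to the Riesz kernel afforded by \textrm{(K1)} together with textbook estimates. The only points worth care are that the ``averaged'' Marcinkiewicz norm of the definition coincides with the usual weak-$L^p$ quasinorm for exponents $>1$ (which holds here, as $\tfrac{n}{n-2s}>1$ and $p_1>1$), so that Marcinkiewicz interpolation applies, and that the interpolation step needs an open range of exponents — which is exactly $(1,\tfrac{n}{2s})$.
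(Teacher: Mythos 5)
Your proof is correct and takes a genuinely more self-contained route than the paper. After the same reduction to the Riesz kernel via \textrm{(K1)}, the paper simply invokes the Hardy--Littlewood--Sobolev estimates from Stein's textbook (quoted as \Cref{prop:Riesz potentials}) to get both the weak endpoint and the strong $L^{p_0}\to L^{p_1}$ bound in one stroke. You instead prove the endpoint $L^1\to M^{n/(n-2s)}$ bound directly against the paper's averaged Marcinkiewicz norm via Tonelli and the bathtub principle — a slick computation that produces exactly the exponent $|K|^{1/p'}=|K|^{2s/n}$ — and then obtain the interior range either by citing HLS or, self-containedly, by the kernel truncation $A_R+B_R$, weak-type bounds at two exponents, and Marcinkiewicz interpolation. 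Your exponent bookkeeping in the splitting argument ($n/p_0'-(n-2s)=-n/p_1$, and the choice of $R$ equating the two contributions) is right, and you correctly flag the one delicate point: the averaged Marcinkiewicz norm coincides with the usual weak-$L^p$ quasinorm only for $p>1$, which holds here since $n/(n-2s)>1$ and $p_1>1$. The paper's cite-and-go version is shorter; your version is elementary and hides nothing. Both are fine.
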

\begin{remark}
	\label{rem:G_0^k from L^1 to L^q}
	Notice that $p_1 = p_0 \frac{n}{n-2sp_0}$. As $p_0 \to \frac{n}{2s}$ then $p_1 \to +\infty$. Hence, for every $q\ge 1$, there exists $k$ finite such that $\bG_0^k : L^1 (\Omega) \to L^{q} (\Omega)$.
\end{remark}

In \cite{GV}, the authors give a direct computation of the second result. We will give better results using stronger  machinery. 
For $p = 1$, we will give a different proof applying of the convolution estimates to Marcinkiewicz spaces given in \cite[Appendix]{BBC}. For $p > 1$ we apply  Hardy--Littlewood--Sobolev estimates for Riesz potentials.
\begin{prop} [Chapter V, \S 1, Theorem 1 in \cite{Stein}]
	\label{prop:Riesz potentials}
	Let $\alpha\in(0,n)$ and
\begin{equation*}
 \mathcal{I}_\alpha  (f) (x) = \int_{\mathbb R^n} \frac{f(y)}{|x-y|^{\alpha} } \,dy.
\end{equation*}
Then, for $p_0 \in (1, \frac n {2s})$,
\begin{equation*}
\|  \mathcal{I}_\alpha  (f) \|_{M^{\frac n {n-2s} }(\mathbb R^n)} \le C \| f \|_{L^1 (\mathbb R^n)}, \qquad
\|  \mathcal{I}_\alpha  (f) \|_{L^{p_1}} \le C(p_0) \| f \|_{L^{p_0} (\mathbb R^n)}
\end{equation*}
where $\frac 1 {p_1} = \frac 1 {p_0} + 1 - \frac \alpha n$.
\end{prop}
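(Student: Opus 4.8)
The proposition is the classical Hardy--Littlewood--Sobolev inequality together with its weak-type endpoint, and in the paper it is legitimate simply to cite \cite{Stein}; the following is the proof I would write out if a self-contained argument were wanted. Write $\beta=n-\alpha\in(0,n)$ for the order of the potential, so that $\beta=2s$ in the notation of the statement and the kernel is $K(x)=|x|^{-\alpha}=|x|^{-(n-\beta)}$, and let $M$ denote the Hardy--Littlewood maximal operator. I will also use that for $p>1$ the norm $\|\cdot\|_{M^p}$ of the paper is comparable to the weak-$L^p$ quasinorm $\|u\|_{L^{p,\infty}}=\sup_{\lambda>0}\lambda\,|\{|u|>\lambda\}|^{1/p}$: testing the supremum defining $\|u\|_{M^p}$ on the level set $K=\{|u|>\lambda\}$ gives $\|u\|_{L^{p,\infty}}\le\|u\|_{M^p}$, while the layer-cake identity $\int_K|u|=\int_0^\infty|\{x\in K:|u(x)|>\lambda\}|\,d\lambda$ combined with $|\{x\in K:|u|>\lambda\}|\le\min(|K|,\|u\|_{L^{p,\infty}}^p\lambda^{-p})$, split at the crossover value of $\lambda$, gives the reverse comparison (here $p>1$ is what makes the tail integral converge).

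\emph{Weak-type endpoint.} For $\|I_\alpha f\|_{M^{n/(n-2s)}}\lesssim\|f\|_{L^1}$ I would truncate the kernel at a radius $R$ to be chosen: $I_\alpha f=(K\mathbf 1_{B_R})*f+(K\mathbf 1_{B_R^c})*f$. A computation in polar coordinates gives $\|K\mathbf 1_{B_R}\|_{L^1}\asymp R^{\beta}$ (finite because $\alpha<n$) and $\|K\mathbf 1_{B_R^c}\|_{L^\infty}=R^{-\alpha}$, hence $\|(K\mathbf 1_{B_R})*f\|_{L^1}\lesssim R^{\beta}\|f\|_{L^1}$ and $\|(K\mathbf 1_{B_R^c})*f\|_{L^\infty}\le R^{-\alpha}\|f\|_{L^1}$. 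Given $\lambda>0$, choose $R=(2\|f\|_{L^1}/\lambda)^{1/\alpha}$ so that the second term is $\le\lambda/2$ pointwise; then $\{|I_\alpha f|>\lambda\}\subset\{|(K\mathbf 1_{B_R})*f|>\lambda/2\}$, and Chebyshev's inequality gives $|\{|I_\alpha f|>\lambda\}|\lesssim R^{\beta}\|f\|_{L^1}/\lambda$, which equals $(\|f\|_{L^1}/\lambda)^{n/\alpha}$ after inserting the value of $R$ and using $\alpha+\beta=n$. Taking the supremum over $\lambda>0$ yields $\|I_\alpha f\|_{L^{n/\alpha,\infty}}\lesssim\|f\|_{L^1}$, which is the claim since $n/\alpha=n/(n-2s)$.

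\emph{Strong type.} For $\|I_\alpha f\|_{L^{p_1}}\lesssim\|f\|_{L^{p_0}}$ I would use the pointwise trick of Hedberg. Fix $x$ and $R>0$ and split the kernel once more. The near part is dominated by the maximal function: decomposing $B_R(x)$ into dyadic annuli $2^{-j-1}R\le|x-y|<2^{-j}R$ and bounding each ball integral by $c_n(2^{-j}R)^n\,Mf(x)$,
\[
\int_{|x-y|<R}\frac{|f(y)|}{|x-y|^{\alpha}}\,dy\le\sum_{j\ge0}(2^{-j-1}R)^{-\alpha}\,c_n(2^{-j}R)^n\,Mf(x)\lesssim R^{\beta}\,Mf(x),
\]
the geometric series converging because $n-\alpha=\beta>0$. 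For the far part, Hölder's inequality with exponents $(p_0,p_0')$ gives
\[
\int_{|x-y|\ge R}\frac{|f(y)|}{|x-y|^{\alpha}}\,dy\le\|f\|_{L^{p_0}}\Big(\int_{|z|\ge R}|z|^{-\alpha p_0'}\,dz\Big)^{1/p_0'}\lesssim\|f\|_{L^{p_0}}\,R^{\beta-n/p_0},
\]
where the integral converges exactly when $\alpha p_0'>n$, i.e.\ $p_0<n/(n-\alpha)=n/(2s)$ --- which is the standing hypothesis, and also what makes the exponent $\beta-n/p_0$ negative. Optimising $R^{\beta}Mf(x)+R^{\beta-n/p_0}\|f\|_{L^{p_0}}$ over $R>0$ (crossover $R\sim(\|f\|_{L^{p_0}}/Mf(x))^{p_0/n}$) yields the pointwise bound $I_\alpha f(x)\lesssim\|f\|_{L^{p_0}}^{\theta}\,(Mf(x))^{1-\theta}$ with $\theta=\beta p_0/n\in(0,1)$. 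Raising to the power $p_1$ and integrating, the choice $1/p_1=1/p_0-\beta/n=1/p_0-2s/n$ makes $(1-\theta)p_1=p_0$, so the $L^{p_0}$-boundedness of $M$ (valid since $p_0>1$) closes the estimate: $\|I_\alpha f\|_{L^{p_1}}\lesssim\|f\|_{L^{p_0}}^{\theta}\|Mf\|_{L^{p_0}}^{1-\theta}\lesssim\|f\|_{L^{p_0}}$. (Incidentally, as worded the present proposition appears to contain a sign slip: the relation that the proof produces, and that is used in the applications to $\bG_0$, is $1/p_1=1/p_0-2s/n$, i.e.\ $1/p_1=1/p_0-(1-\alpha/n)$.)

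There is no deep obstacle here: the argument is elementary modulo two standard facts that I would quote rather than reprove --- the $L^p$-boundedness of the Hardy--Littlewood maximal operator for $p>1$ (Vitali covering lemma plus Marcinkiewicz interpolation off its weak $(1,1)$ bound), and the comparison $\|\cdot\|_{M^p}\asymp\|\cdot\|_{L^{p,\infty}}$ noted above. If one prefers to dispense with the maximal function, an equally short alternative is to extract from the same kernel truncation a weak-type $(p,q)$ bound for each admissible exponent and then invoke the Marcinkiewicz interpolation theorem between two such endpoints. Either way, the only mildly delicate point is bookkeeping the scaling exponents and respecting the range $1<p_0<n/(2s)$, which is precisely what guarantees integrability of the truncated kernel against $L^{p_0'}$.
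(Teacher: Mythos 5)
Your proof is correct, but note that the paper does not prove this proposition at all: it is quoted verbatim from Stein \cite{Stein} (Chapter V, \S 1), and the text relies on that citation. What you supply is the classical self-contained argument: for the weak endpoint, the truncation $K=K\mathbf 1_{B_R}+K\mathbf 1_{B_R^c}$ with $\|K\mathbf 1_{B_R}\|_{L^1}\asymp R^{n-\alpha}$, $\|K\mathbf 1_{B_R^c}\|_{L^\infty}=R^{-\alpha}$ and an optimised $R=R(\lambda)$, plus Chebyshev; for the strong type, Hedberg's pointwise bound $I_\alpha f(x)\lesssim \|f\|_{L^{p_0}}^{\theta}(Mf(x))^{1-\theta}$ with $\theta=(n-\alpha)p_0/n$, closed by the $L^{p_0}$-boundedness of the maximal operator. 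All the exponent bookkeeping checks out: the near-part series converges because $\alpha<n$, the far-part H\"older integral converges exactly when $p_0<\tfrac{n}{2s}$ (with $\alpha=n-2s$), and $(1-\theta)p_1=p_0$ under $\tfrac1{p_1}=\tfrac1{p_0}-\tfrac{2s}{n}$. Your identification of $\|\cdot\|_{M^p}$ with the weak-$L^p$ quasinorm for $p>1$ is also needed and correctly argued, since the paper's Marcinkiewicz norm is the averaged (dual) form. Relative to the paper, your route buys a proof that is elementary modulo the maximal theorem and makes the role of the hypothesis $p_0\in(1,\tfrac n{2s})$ transparent; the alternative you mention (two weak-type bounds plus Marcinkiewicz interpolation) is closer to Stein's own treatment. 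Finally, you are right that the displayed exponent relation in the statement has a sign slip: with the kernel $|x-y|^{-\alpha}$, i.e. order $n-\alpha=2s$, the correct relation is $\tfrac1{p_1}=\tfrac1{p_0}-\bigl(1-\tfrac\alpha n\bigr)=\tfrac1{p_0}-\tfrac{2s}{n}$, which is the form the paper actually uses when applying the proposition to $\bG_0$.
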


\begin{proof}[Proof of \Cref{prop:regularisation L1 to Marcinkiewicz}]
We have that
\begin{equation*}
	|u(x)| \le C \int_\Omega \frac{|f(y)|}{|x-y|^{n-2s}} dy = C \int_{\mathbb R^n} \frac{|f(y)| \chi_\Omega }{|x-y|^{n-2s}} dy = C  \mathcal{I}_{n-2s}  (|f| \chi_\Omega).
\end{equation*}
Therefore,
\begin{equation*}
	\|u \|_{M^{\frac n {n-2s}} (\Omega )} \le C \| f \chi_ \Omega \|_{L^1 (\mathbb R^n)} =  C \| f  \|_{L^1 (\mathbb R^n)}, \qquad \|u \|_{L^{p_1}(\Omega)} \le C(p_0) \| f \|_{L^{p_0} (\Omega)}.
\end{equation*}
This completes the proof.
\end{proof}

\subsubsection{Regularisation from $L^{p}$ to $L^{\infty}$}
By duality from \Cref{prop:regularisation L1 to Marcinkiewicz}, we have that
\begin{prop}
	\label{prop:G_0 is L^p to L^infty}
	The  Green  operator $\bG_0$ for $\cL$ is continuous from
	\begin{equation}
	\bG_0: L^{q}(\Omega)
	\longrightarrow
	L^\infty (\Omega), \qquad q \in( \tfrac{n}{2s},+\infty).
	\end{equation}
	In particular
	\begin{equation}
		\norm[L^\infty(\Omega)]{\bG_0(f)}
		\leq
		C(q_0)\norm[L^{q_0}(\Omega)]{f}.
		\label{eq:G0-Lq}
	\end{equation}
\end{prop}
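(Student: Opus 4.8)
The plan is to obtain the $L^q \to L^\infty$ estimate by a duality argument from \Cref{prop:regularisation L1 to Marcinkiewicz}, exploiting the symmetry $\cG_0(x,y) = \cG_0(y,x)$ from (K1), which makes $\bG_0$ formally self-adjoint on $L^2(\Omega)$.

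First I would recall that by \Cref{prop:regularisation L1 to Marcinkiewicz} we have the bound $\|\bG_0 f\|_{L^{p_1}(\Omega)} \le C(p_0) \|f\|_{L^{p_0}(\Omega)}$ whenever $p_0 \in (1, \frac{n}{2s})$ and $\frac{1}{p_1} = \frac{1}{p_0} - \frac{2s}{n}$. Now fix $q \in (\frac{n}{2s}, +\infty)$ and let $q'$ be its conjugate exponent, so that $q' \in (1, \frac{n}{n-2s})$. In particular $q' < \frac{n}{n-2s} \le \frac{n}{2s}$ when $2s \le n$ (and the case $2s > n$ is even easier since then $\frac{n}{2s} < 1$ and every $q > \frac{n}{2s}$ is allowed with $\bG_0: L^1 \to L^\infty$ directly), so $q'$ lies in the admissible range $(1, \frac{n}{2s})$ of $p_0$. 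Choosing $p_0 = q'$, the dual exponent $p_1$ satisfies $\frac{1}{p_1} = \frac{1}{q'} - \frac{2s}{n}$; I would check that $p_1 \ge q$, in fact $p_1' = (\tfrac{1}{1 - 1/q' + 2s/n})' \le q$ meaning $L^{p_1}(\Omega) \hookrightarrow L^{q}(\Omega)$ is false in general, so instead I track the conjugate: $\bG_0: L^{q'}(\Omega) \to L^{p_1}(\Omega)$ with $p_1' \le q$, hence $L^{q'} = L^{(q)'} \supseteq$ — the cleanest route is to say that the adjoint of $\bG_0: L^{q'}(\Omega) \to L^{r}(\Omega)$ (for the appropriate $r \ge q$, which exists since $q' < n/2s$) is a bounded operator $L^{r'}(\Omega) \to L^{q}(\Omega)$, and then compose with the embedding $L^q(\Omega) \hookrightarrow L^{r'}(\Omega)$ (valid because $r' \le q$ as $r \ge q$) — wait, that embeds the wrong way.

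Let me restructure: the point is simply that $\bG_0$ is self-adjoint, so if $\bG_0: L^{a}(\Omega) \to L^{b}(\Omega)$ is bounded then $\bG_0 = \bG_0^*: L^{b'}(\Omega) \to L^{a'}(\Omega)$ is bounded with the same norm. Take $a = q' \in (1, \frac{n}{2s})$ — this is legitimate precisely because $q > \frac{n}{2s}$ forces $q' < (\frac{n}{2s})' = \frac{n}{n-2s} \le \frac{n}{2s}$ in the relevant range — and set $b = p_1$ with $\frac{1}{b} = \frac{1}{q'} - \frac{2s}{n}$. Then $\frac{1}{b'} = 1 - \frac{1}{q'} + \frac{2s}{n} = \frac{1}{q} + \frac{2s}{n}$ and $\frac{1}{a'} = 1 - \frac{1}{q'} = \frac{1}{q}$, so by self-adjointness $\bG_0: L^{b'}(\Omega) \to L^{q}(\Omega)$. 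To reach $L^\infty$ I would instead push $a \to 1^+$: by \Cref{prop:regularisation L1 to Marcinkiewicz} actually $\bG_0: L^1(\Omega) \to M^{n/(n-2s)}(\Omega)$, and $M^{n/(n-2s)}(\Omega) \hookrightarrow L^r(\Omega)$ for all $r < \frac{n}{n-2s}$; thus $\bG_0: L^1 \to L^r$ for every $r \in [1, \frac{n}{n-2s})$. Dualizing, $\bG_0: L^{r'} \to L^\infty$ for every $r' \in (\frac{n}{2s}, +\infty]$, which is exactly the claim with $q = r'$ ranging over $(\frac{n}{2s}, +\infty)$. The operator norm bound \eqref{eq:G0-Lq} follows since the dual operator norm equals the original one, and it depends on $q_0 = r'$ only through $r$.

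The main (minor) obstacle is purely bookkeeping: making the duality rigorous requires knowing $\bG_0$ maps into a space on which the pairing with $L^{q}$ functions is the $L^2$ pairing, i.e. one should either work first on the dense subspace $L^2(\Omega) \cap L^1(\Omega)$ and pass to the limit, or invoke that $\bG_0$ is an integral operator with symmetric kernel so that $\int_\Omega (\bG_0 f) g = \int_\Omega f (\bG_0 g)$ holds by Fubini whenever both sides are finite, which is guaranteed by the nonnegativity of $\cG_0$ and the a priori bounds just established. With that identity in hand, $\|\bG_0 g\|_{L^\infty(\Omega)} = \sup_{\|f\|_{L^1} \le 1} |\int_\Omega (\bG_0 g) f| = \sup_{\|f\|_{L^1}\le 1} |\int_\Omega g (\bG_0 f)| \le \|g\|_{L^q(\Omega)} \sup_{\|f\|_{L^1}\le 1} \|\bG_0 f\|_{L^{q'}(\Omega)} \le C(q) \|g\|_{L^q(\Omega)}$, using $q' < \frac{n}{n-2s}$ and \Cref{prop:regularisation L1 to Marcinkiewicz}. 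This is essentially a one-line argument once the symmetry is invoked.
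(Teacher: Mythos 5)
Your proposal is correct and takes essentially the same route as the paper: the paper dispatches this proposition with a single phrase (``by duality from Proposition~\ref{prop:regularisation L1 to Marcinkiewicz}''), and your final paragraph --- using the symmetry $\cG_0(x,y)=\cG_0(y,x)$ to write $\|\bG_0 g\|_{L^\infty}=\sup_{\|f\|_{L^1}\le 1}\left|\int_\Omega g\,\bG_0 f\right|\le \|g\|_{L^q}\sup_{\|f\|_{L^1}\le 1}\|\bG_0 f\|_{L^{q'}}$ together with $q'<\tfrac{n}{n-2s}$ and the $L^1\to L^{q'}$ bound --- is precisely the argument that phrase points to. The earlier detour through $p_0=q'$, $p_1$, and their conjugates is unnecessary and somewhat muddled, but you correctly recognized this and the self-adjointness argument you settle on is clean, rigorous (the Fubini justification via nonnegativity of the kernel is the right way to make the formal adjointness honest on these $L^p$ scales), and yields the constant dependence on $q$ claimed in \eqref{eq:G0-Lq}.
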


\subsubsection{Improvement of weighted integrability}
It is immediate to prove that
\begin{prop}
	\label{prop:regularisation between L^infty weight spaces}
 Let $\alpha>-1-\gamma$. 
	The  Green  operator $\bG_0$ for $\cL$ is continuous from
	\begin{equation*}
	\bG_0 : \delta^\alpha L^\infty (\Omega) \longrightarrow \bG_0( \delta^\alpha ) L^\infty (\Omega),
	\end{equation*}
with operator norm  equal to   $1$.
\end{prop}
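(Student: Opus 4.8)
The plan is to reduce the claim to a trivial pointwise comparison, using only the non-negativity of the Green's kernel. Implicitly $\alpha$ is taken in the range $\alpha > -1-\gamma$, so that $\bG_0(\delta^\alpha)$ is finite a.e.\ in $\Omega$ (in fact $\asymp \delta^{\min\{\alpha+2s,\,\gamma\}}$, up to a logarithm in the borderline case, by \eqref{eq:improvement of weights}); in particular $\bG_0(\delta^\alpha) > 0$ a.e., so the quotient space $\bG_0(\delta^\alpha)L^\infty(\Omega)$ is meaningful.

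First I would fix $f \in \delta^\alpha L^\infty(\Omega)$ and set $M = \norm[L^\infty(\Omega)]{\delta^{-\alpha}f}$, so that $|f(y)| \le M\,\delta^\alpha(y)$ for a.e.\ $y \in \Omega$. Since $\cG_0 \ge 0$ by \textrm{(K1)}, monotonicity of the integral gives, for a.e.\ $x \in \Omega$,
\[
|\bG_0(f)(x)|
\le \int_\Omega \cG_0(x,y)\,|f(y)|\,dy
\le M \int_\Omega \cG_0(x,y)\,\delta^\alpha(y)\,dy
= M\,\bG_0(\delta^\alpha)(x).
\]
Dividing by $\bG_0(\delta^\alpha)(x)$ yields $\norm[L^\infty(\Omega)]{\bG_0(\delta^\alpha)^{-1}\bG_0(f)} \le M$, i.e.\ $\bG_0$ maps $\delta^\alpha L^\infty(\Omega)$ into $\bG_0(\delta^\alpha)L^\infty(\Omega)$ with operator norm at most $1$.

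There is no real obstacle here — this is why the statement is flagged as ``immediate''. It is precisely the $L^\infty$-weighted counterpart of \Cref{prop:regularisation between L^1 weight spaces}, the only difference being that the test-function/duality argument used there is replaced by the elementary fact that a positive integral kernel is monotone with respect to the integrand. The single point worth a line of justification is the finiteness (and positivity) of $\bG_0(\delta^\alpha)$, which one imports from \eqref{eq:improvement of weights}.
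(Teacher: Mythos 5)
Your argument is correct and is essentially identical to the paper's proof: both rest on the positivity of the kernel $\cG_0$, which gives $|\bG_0(f)| \le \bG_0(|f|) \le \norm[L^\infty]{f/\delta^\alpha}\,\bG_0(\delta^\alpha)$ pointwise, and then one divides. Your extra remark on the finiteness and positivity of $\bG_0(\delta^\alpha)$ for $\alpha > -1-\gamma$ is a sensible clarification but does not change the substance.
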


\begin{proof}
	Assume that $|f| \le C \delta^\alpha$. Then
	\begin{equation*}
		\left| \frac{\bG_0(f) } {\bG_0(\delta^\alpha)} \right| \le 	\frac{\bG_0(|f|) } {\bG_0(\delta^\alpha)} \le \left \| \frac{f}{\delta^\alpha} \right\|_{L^\infty ( \Omega )}  \frac{\bG_0(\delta^\alpha ) } {\bG_0(\delta^\alpha)}
 =  \left \| \frac{f}{\delta^\alpha} \right\|_{L^\infty ( \Omega )} .
	\end{equation*}
 Equality is achieved by setting $f=\delta^\alpha$. 
\end{proof}

We recall that $\bG_0( \delta^\alpha ) \asymp \delta^{ (\alpha + 2s) \wedge \gamma }$ if $\alpha + 2s \ne \gamma$.

In particular, for $\gamma < 2s$ we have that
\begin{align*}
\bG_0 : L^\infty (\Omega ) &\longrightarrow \delta^{\gamma} L^\infty (\Omega )
\end{align*}
and for $\gamma > 2s$ we have that
\begin{align*}
\bG_0 : L^\infty (\Omega ) &\longrightarrow \delta^{2s} L^\infty (\Omega ), \\
\bG_0 : \delta^{\alpha}  L^\infty (\Omega) &\longrightarrow \delta^{\alpha + 2s} L^\infty (\Omega), & &\text{for any } \alpha \in(  0 , \gamma - 2s ), \\
\bG_0 : \delta^{\alpha} L^\infty (\Omega) &\longrightarrow \delta^\gamma L^\infty (\Omega ), & &\text{for any } \alpha > \gamma - 2s.
\end{align*}

\subsubsection{Local integrability}

\begin{prop}
	\label{prop:local boundedness}
	The operator $\bG_0$ is continuous from
	\begin{align*}
	L^1(\Omega;\delta^\gamma)
	&\longrightarrow
	L^{p}_\loc(\Omega),
	\quad \textfor
	p \in (1,\tfrac{n}{n-2s}),\\
	L^1(\Omega;\delta^\gamma)
	\cap L^{p_0}_\loc(\Omega)
	&\longrightarrow
	L^{p_1}_\loc(\Omega),
	\quad \textfor
	p_0\in (1,\tfrac{n}{2s}), \tfrac{1}{p_1} = \tfrac{1}{p_0} - \tfrac {2s} n ,\\
	L^1(\Omega;\delta^\gamma)
	\cap L^{q_0}_\loc(\Omega)
	&\longrightarrow
	L^{\infty}_\loc(\Omega),
	\quad \textfor q_0\in(\tfrac{n}{2s},+\infty),
	\end{align*}
	For $K_1\Subset K_0\Subset \Omega$ we have estimates
	\begin{align}
	\norm[L^{p}(K_1)]{\bG_0(f)}
	&\leq C(p,K_1)
	\norm[L^1(\Omega)]{f\delta^\gamma},
\label{eq:3.5}
	\\
	\norm[L^{p_1}(K_1)]{\bG_0(f)}
	&\leq C(p_0,K_0,K_1)
	\left(
	\norm[L^{p_0}(K_0)]{f}
	+\norm[L^1(\Omega)]{f\delta^\gamma}
	\right),
	\label{eq:G0-Lp-loc}\\
	\norm[L^\infty(K_1)]{\bG_0(f)}
	&\leq
	C(q_0,K_0,K_1)
	\left(
	\norm[L^{q_0}(K_0)]{f}
	+\norm[L^1(\Omega)]{f\delta^\gamma}
	\right).
	\label{eq:G0-Lq-loc}
	\end{align}
\end{prop}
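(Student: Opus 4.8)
The plan is to localise by a cut-off and play the singular behaviour of the Green's kernel near the diagonal against its boundedness far from it. Fix $K_1\Subset K_0\Subset\Omega$ and decompose $f=f\chi_{K_0}+f\chi_{\Omega\setminus K_0}=:f_{\mathrm{in}}+f_{\mathrm{out}}$. On $K_0$ the distance function is bounded below by $c_0:=\dist(K_0,\partial\Omega)>0$, so $\delta$ plays no role there: extended by zero, $f_{\mathrm{in}}$ lies in $L^r(\Omega)$ with $\norm[L^r(\Omega)]{f_{\mathrm{in}}}=\norm[L^r(K_0)]{f}$, and in particular $\norm[L^1(\Omega)]{f_{\mathrm{in}}}\le c_0^{-\gamma}\norm[L^1(\Omega)]{f\delta^\gamma}$. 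Hence the global regularising estimates already proved apply directly to $\bG_0 f_{\mathrm{in}}$: \Cref{prop:regularisation L1 to Marcinkiewicz} gives $\bG_0\colon L^1(\Omega)\to M^{n/(n-2s)}(\Omega)\hookrightarrow L^p(\Omega)$ for $p<\tfrac{n}{n-2s}$ and $\bG_0\colon L^{p_0}(\Omega)\to L^{p_1}(\Omega)$ for $\tfrac1{p_1}=\tfrac1{p_0}-\tfrac{2s}{n}$, while \Cref{prop:G_0 is L^p to L^infty} gives $\bG_0\colon L^{q_0}(\Omega)\to L^\infty(\Omega)$ for $q_0>\tfrac{n}{2s}$; these produce the "near" contribution in terms of $\norm[L^1(\Omega)]{f\delta^\gamma}$, $\norm[L^{p_0}(K_0)]{f}$ and $\norm[L^{q_0}(K_0)]{f}$, respectively.

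For the "far" contribution I would use (K1) to get a pointwise kernel bound off the diagonal: for $x\in K_1$ and $y\in\Omega\setminus K_0$ we have $|x-y|\ge d_1:=\dist(K_1,\partial K_0)>0$ and $|x-y|\le\diam\Omega$, whence
\[
\cG_0(x,y)\le C\,\frac{1}{|x-y|^{n-2s}}\left(\frac{\delta^\gamma(x)}{|x-y|^\gamma}\wedge 1\right)\left(\frac{\delta^\gamma(y)}{|x-y|^\gamma}\wedge 1\right)\le C(K_1)\,\delta^\gamma(y).
\]
Consequently $\norm[L^\infty(K_1)]{\bG_0 f_{\mathrm{out}}}\le C(K_1)\int_\Omega\delta^\gamma|f|=C(K_1)\norm[L^1(\Omega)]{f\delta^\gamma}$, so on the bounded set $K_1$ this term is controlled in every $L^r(K_1)$, $1\le r\le\infty$, by $\norm[L^1(\Omega)]{f\delta^\gamma}$ alone.

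It remains to add the two contributions. For the first mapping, $\norm[L^p(K_1)]{\bG_0 f}\le\norm[L^p(\Omega)]{\bG_0 f_{\mathrm{in}}}+\norm[L^p(K_1)]{\bG_0 f_{\mathrm{out}}}\le C(p,K_1)\norm[L^1(\Omega)]{f\delta^\gamma}$, which is \eqref{eq:3.5}; here one fixes $K_0$ as a function of $K_1$ only, e.g. $K_0=\{x:\dist(x,K_1)\le\tfrac12\dist(K_1,\partial\Omega)\}$, so that $c_0$ and $d_1$ depend on $K_1$ alone. The estimates \eqref{eq:G0-Lp-loc} and \eqref{eq:G0-Lq-loc}, and the asserted continuity of the three mappings, follow in exactly the same way, now keeping $K_0$ as the free intermediate set in the constant. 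The only step requiring any care — and the closest thing to an obstacle — is the far-field bound $\cG_0(x,y)\le C(K_1)\delta^\gamma(y)$: it is essential to retain the factor $\delta^\gamma(y)$ rather than the cruder $\cG_0\le C(K_1)$, since otherwise the tail $\int_{\Omega\setminus K_0}|f|$ would not be controlled by the weighted norm near $\partial\Omega$; but this is immediate from (K1). Everything else is bookkeeping.
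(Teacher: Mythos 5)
Your proposal is correct and follows essentially the same approach as the paper's proof: decompose $f=f\chi_{K_0}+f\chi_{\Omega\setminus K_0}$, handle the near part via the global mapping estimates of \Cref{prop:regularisation L1 to Marcinkiewicz} and \Cref{prop:G_0 is L^p to L^infty}, and handle the far part by the pointwise kernel bound $\cG_0(x,y)\le C(K_1)\delta^\gamma(y)$ on $K_1\times(\Omega\setminus K_0)$ coming from (K1). The only cosmetic difference is that the paper bounds the far-field kernel by $C\delta(y)^\gamma/|x-y|^{n-2s+\gamma}$ before invoking $|x-y|\ge d_1$, whereas you bound each factor of (K1) directly; the resulting estimates are the same.
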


\begin{proof}
			We write
	\begin{equation*}
	f = f \chi_{K_0} + f \chi_{\Omega \setminus K_0}.
	\end{equation*}
	 For  $x \in K_1$ we have that
	\begin{align*}
	|\bG_0 ( f \chi_{\Omega \setminus K_0}) (x)| &\le C \int_{\Omega \setminus K_0} \frac{|f(y)| \delta(y)^\gamma }{|x-y|^{n-2s+\gamma}} dy
	 \\&
	 \le C \textrm{dist} (K_1, \Omega \setminus K_0)^{2s-n-\gamma} \| f \delta^\gamma \|_{L^1}
	\end{align*}
	 where  $C$ depends only on the kernel estimate. Therefore,
	\begin{equation*}
		|u(x)| \le |\bG_0 (f \chi_{K_0})| + C \textrm{dist} (K_1, \Omega \setminus K_0)^{2s-n-\gamma} \| f \delta^\gamma \|_{L^1 (\Omega)}.
	\end{equation*}
	\begin{enumerate}
		\item If $f \in L^1 (\Omega; \delta^\gamma)$ then
		\begin{align*}
			\|f \chi_{K_0} \|_{L^1 (\Omega)} &= \|f \|_{L^1 (K_0)} \le \|\delta^{-\gamma }\|_{L^\infty (K_0)} \| f \delta^\gamma \|_{L^1 (\Omega)} \\
			&\le \mathrm{dist} (K_0, \partial \Omega)^{-\gamma} \| f \delta^\gamma \|_{L^1 (\Omega)} .
		\end{align*}
		Hence,
		\begin{equation*}
			\|\bG_0 (f \chi_{K_0}) \|_{L^p (\Omega)} \le C(p) \|f \chi_{K_0} \|_{L^1 (\Omega)}  \le C(p) \mathrm{dist} (K_0, \partial \Omega)^{-\gamma} \| f \delta^\gamma \|_{L^1 (\Omega)} .
		\end{equation*}
Note that in \eqref{eq:3.5} the constant can be chosen to depend only on $K_1$, for example by fixing $K_0=\set{x\in\Omega:\delta(x)>\dist(K_1,\Omega)/2}$.
		\item
		If $f \chi_{K_0} \in L^{p_0}(K_0)$ with $p_0\in (1,\frac{n}{2s})$
		we have, analogously
		\begin{equation*}
			\|\bG_0 ( f \chi_{K_0}) \|_{L^{p_1} (\Omega)} \le C(p_0) \|f \chi_{ K_0} \|_{L^{p_0} (\Omega)} = C(p_0) \|f \|_{L^{p_0} (K_0)}.
		\end{equation*}
		
		\item
		If $f \chi_{K_0} \in L^{q_0}(K_0)$ with $q_0>\frac{n}{2s}$ we have
		\begin{equation*}
		\|\bG_0 ( f \chi_{K_0}) \|_{L^{\infty} (\Omega)} \le C(q_0) \|f \|_{L^{q_0} (K_0)}.
		\qedhere
		\end{equation*}

	\end{enumerate}
\end{proof}

\subsection{Eigenfunction estimates}

In \cite[Proposition 5.3 and Proposition 5.4]{BFV} the authors prove that the eigenfunctions satisfy $|\varphi_j| \le \kappa_j \delta^\gamma$ and that $\varphi_1 \asymp \delta^\gamma$. For the sake of completeness, we give a short proof of this fact.
\begin{prop}
	\label{prop:eigenfunctions regularity}
	Let $\varphi \in L^1 (\Omega; \delta^\gamma)$ be an eigenfunction of $\bG_0$, i.e. $\varphi = \lambda \bG_0 (\varphi)$ for some $\lambda$. Then $\varphi \in \delta^\gamma L^\infty (\Omega)$.
\end{prop}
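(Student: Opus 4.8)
The plan is to run a standard bootstrap argument on the integral identity $\varphi = \lambda \bG_0(\varphi)$, using the regularization estimates for $\bG_0$ already collected in this section to successively improve the weighted integrability of $\varphi$ until we land in $\delta^\gamma L^\infty(\Omega)$. First I would observe that $\varphi \in L^1(\Omega,\delta^\gamma)$ means exactly $\varphi \delta^\gamma \in L^1(\Omega)$; by \Cref{prop:local boundedness} (local integrability) we immediately get $\varphi \in L^p_{\loc}(\Omega)$ for all $p < \tfrac{n}{n-2s}$, and then iterating the local $L^{p_0}\to L^{p_1}$ gain with exponent step $\tfrac{1}{p_1} = \tfrac{1}{p_0} - \tfrac{2s}{n}$ we reach $\varphi \in L^\infty_{\loc}(\Omega)$ after finitely many steps (as in \Cref{rem:G_0^k from L^1 to L^q}). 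The point of this first stage is that it removes the interior singularity of $\varphi$, so that only the boundary behavior remains to be controlled.

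Next I would split the defining identity by writing $\varphi = \lambda\bG_0(\varphi\chi_{K}) + \lambda\bG_0(\varphi\chi_{\Omega\setminus K})$ for a compact $K \Subset \Omega$ chosen so that $\Omega\setminus K$ is a thin boundary collar. On the collar $\varphi$ is merely in $L^1(\Omega,\delta^\gamma)$, but $\bG_0$ maps $L^1(\Omega,\delta^\gamma)$ into $L^1(\Omega,\delta^\gamma)$ with norm bounded by $1$ (\Cref{prop:regularisation between L^1 weight spaces} with $\alpha = \gamma$), and actually improves the weight by \eqref{eq:improvement of weights}; in all cases $\bG_0(\delta^\alpha) \ge c_\alpha\delta^\gamma$, which gives at least $\bG_0(\varphi\chi_{\Omega\setminus K}) \in L^1(\Omega,\delta^\gamma)$ — but more usefully, since $\varphi\chi_{\Omega\setminus K}$ is supported away from $K$, for $x$ in any fixed compact subset the kernel estimate gives $|\bG_0(\varphi\chi_{\Omega\setminus K})(x)| \le C\,\dist(x,\partial\Omega)^{?}\,\|\varphi\delta^\gamma\|_{L^1}$. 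The cleaner route, and the one I would actually follow, is to bound $|\varphi| \le C\delta^\gamma$ directly from the kernel estimate (K1): from $\varphi = \lambda\bG_0(\varphi)$ and $|\varphi| \le \|\varphi\delta^{-\gamma}\|_{L^\infty(K)}\delta^\gamma$ on $K$ (now that $\varphi$ is locally bounded) plus $|\varphi|\le$ (something integrable against $\delta^\gamma$) on $\Omega\setminus K$, one estimates
\[
|\varphi(x)| \le |\lambda|\int_\Omega \cG_0(x,y)|\varphi(y)|\,dy \le C|\lambda|\int_\Omega \frac{\delta^\gamma(x)\,\delta^\gamma(y)}{|x-y|^{n-2s+2\gamma}}\,|\varphi(y)|\,dy \wedge (\text{interior part}),
\]
and the factor $\delta^\gamma(x)$ pulls out while the remaining integral $\int_\Omega |x-y|^{2s-n-2\gamma}\delta^\gamma(y)|\varphi(y)|\,dy$ is bounded uniformly in $x$ once we know $\varphi\delta^\gamma \in L^1$ and $\varphi$ is locally bounded — this is precisely the content of \eqref{eq:comparison weight gamma} / \Cref{prop:regularisation between L^infty weight spaces} applied after the local boundedness step. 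Concretely: once $\varphi \in L^q_{\loc}(\Omega)$ for $q$ large and $\varphi\delta^\gamma\in L^1$, split $\varphi = \varphi\chi_{K_0} + \varphi\chi_{\Omega\setminus K_0}$; the first piece lies in $L^q(\Omega)$ with compact support so $\bG_0$ of it is in $\delta^\gamma L^\infty$ (or even better) by \Cref{prop:regularisation between L^infty weight spaces}, while the second piece is $O(\delta^\gamma)$ by the kernel decay estimate against $\Omega\setminus K_0$.

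The main obstacle — really the only non-routine point — is making sure the boundary tail $\bG_0(\varphi\chi_{\Omega\setminus K_0})$ is genuinely $O(\delta^\gamma)$ and not merely $O(\delta^{\gamma-\eps})$ or worse: the input $\varphi\chi_{\Omega\setminus K_0}$ is only in $L^1(\delta^\gamma)$, which is the borderline admissible class, so one cannot afford to lose any power of $\delta$. This is handled exactly by \eqref{eq:improvement of weights}/\eqref{eq:comparison weight gamma}: $\bG_0$ of anything in $L^1(\Omega,\delta^\gamma)$ is again controlled in $L^1(\Omega,\delta^\gamma)$, and crucially for the pointwise bound one uses that for $x$ near $\partial\Omega$ and $y$ ranging over the collar the kernel $\cG_0(x,y)$ already carries the full factor $\delta^\gamma(x)$ (the min in (K1) being attained in the other branch is the only subtlety, handled by the standard dyadic decomposition of the collar in $|x-y|$). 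Everything else — the finiteness of the bootstrap, the choice of $K_0$, the continuity constants — is routine and can be quoted from Propositions \ref{prop:regularisation between L^1 weight spaces}--\ref{prop:local boundedness} above. I would close by noting that the same argument shows $|\varphi| \ge c\,\delta^\gamma$ when $\varphi = \varphi_1 \ge 0$ is the first eigenfunction, via the Harnack-type lower bound on $\bG_0$, though that lower bound is not needed for the stated claim.
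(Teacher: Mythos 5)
Your first stage — bootstrapping $\varphi\in L^\infty_\loc(\Omega)$ from the local mapping properties of $\bG_0$ — is fine and unobjectionable. The gap is in the boundary step. Having split $\varphi = \lambda\bG_0(\varphi\chi_{K_0}) + \lambda\bG_0(\varphi\chi_{\Omega\setminus K_0})$, you assert that the second piece is $O(\delta^\gamma)$, citing \eqref{eq:improvement of weights}--\eqref{eq:comparison weight gamma}. But those are statements about $L^1$-weights, not pointwise $\delta^\gamma L^\infty$-bounds, and the assertion is in fact false: $\bG_0$ does \emph{not} map $L^1(\Omega,\delta^\gamma)\cap L^\infty_\loc(\Omega)$ into $\delta^\gamma L^\infty(\Omega)$. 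A concrete counterexample for the RFL ($\gamma=s$): take $f=\delta^{-s-1+\varepsilon}$ with $\varepsilon\in(0,1)$, so $f\delta^\gamma=\delta^{-1+\varepsilon}\in L^1(\Omega)$ and $f\in L^\infty_\loc(\Omega)$; then \eqref{eq:improvement of weights} gives $\bG_0(f)\asymp\delta^{s-1+\varepsilon}$, which dominates $\delta^\gamma=\delta^s$ near $\partial\Omega$. The difficulty is exactly the one you flag as ``the only subtlety'': when $x\in\Omega\setminus K_0$ is near $\partial\Omega$ and $y\in\Omega\setminus K_0$ is near $x$, the $\wedge 1$ branch of (K1) is active, the kernel carries \emph{no} factor $\delta^\gamma(x)$, and your local $L^\infty$ information is not quantitative enough near the boundary to control $\int_{|y-x|<\delta(x)}|x-y|^{-(n-2s)}|\varphi(y)|\,dy$ uniformly as $\delta(x)\to 0$. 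A single interior/collar split cannot manufacture the missing decay.

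The paper closes this by iterating globally, never decomposing. From $\varphi=\lambda^k\bG_0^k(\varphi)$, \Cref{prop:regularisation between L^1 weight spaces} gives $\bG_0^k(\varphi)\in L^1(\Omega)$ for $k$ large (each $\bG_0$ lowers the weight exponent by $2s$); then \Cref{prop:regularisation L1 to Marcinkiewicz} together with \Cref{rem:G_0^k from L^1 to L^q} lifts to $L^q(\Omega)$ for some $q>n/(2s)$ after finitely more applications; \Cref{prop:G_0 is L^p to L^infty} then gives $L^\infty(\Omega)$; and one final application of $\bG_0$, via \Cref{prop:regularisation between L^infty weight spaces}, lands in $\delta^\gamma L^\infty(\Omega)$. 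The mechanism is that each $\bG_0$ improves boundary weight and interior integrability \emph{simultaneously}, and the eigenvalue identity lets one apply it as many times as needed — precisely what a single split followed by a single application cannot do. If you want to keep the local-boundedness first stage, you would still need to replace your final step by this global iteration (or equivalently by a self-improving inductive bound $|\varphi|\le C_j\delta^{\alpha_j}$ with $\alpha_j\nearrow\gamma$), at which point you have recovered the paper's argument.
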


\begin{proof}
	We apply a bootstrap argument. Clearly, $$\varphi = \lambda \bG_0 (\varphi) = \lambda \bG_0 (\lambda \bG_0 (\varphi) ) = \lambda^2 \bG_0^2 (\varphi) = \cdots =  \lambda^k \bG_0^k (\varphi).$$
	Since $\varphi \in L^1 (\Omega; \delta^\gamma)$, by \Cref{prop:regularisation between L^1 weight spaces} for $k$ large enough $\bG_0^k (\varphi) \in L^1 (\Omega)$. By \Cref{rem:3.2} %
we have that, for $k$ large enough $\bG_0^{k} (\varphi) \in L^p (\Omega)$ for all $p >\frac{n}{2s}$. Therefore, by \Cref{prop:G_0 is L^p to L^infty},  $\bG_0^{k+1} (\varphi) \in L^\infty (\Omega)$. Finally, by \Cref{prop:regularisation between L^infty weight spaces}, for $k$ large enough $\varphi = \lambda^k \bG_0^{k} (\varphi) \in \delta^\gamma L^\infty (\Omega)$.
\end{proof}

\subsection{Martin kernel}
\label{sec:martin kernel}
We recall the following  result.
\begin{thm*}[{\cite[Corollary 4.3, Theorem 4.6, Theorem 4.13]{AGV}}]
	Let $b=1-2s+\gamma$. For any $h\in C(\p\Omega)$, there exists a unique $\bG_0(L_c^\infty(\Omega))$-weak solution $v_h\in \delta^{-b}L^\infty(\Omega)$ given by
	\begin{equation}\label{eq:vh-martin}
	v_h(x)
	:={M}(h)(x)
	=\int_{\p\Omega}
	D_\gamma\cG_0(z,x)
	h(z)
	\,d\cH^{n-1}(z),
	\quad \textfor x\in\Omega,
	\end{equation}
	which satisfies
	\begin{equation}\label{eq:vh-bdry}
	\lim_{\Omega\ni x\to z}
	\dfrac{v_h(x)}{v_1(x)}
	=h(z),	
	\quad \textfor z\in\p\Omega,
	\end{equation}
	uniformly in $z\in\p\Omega$, where $v_1:={M}(1)\asymp \delta^{-b}$. In particular,
	\begin{equation*}%
	\norm[L^\infty(\Omega)]{v_h\delta^b}
	\leq C\norm[L^\infty(\p\Omega)]{h},
	\end{equation*}
	and so for any $K\Subset\Omega$,
	\begin{equation}\label{eq:vh-loc}
	\norm[L^\infty(K)]{v_h}
	\leq C(K)
	\norm[L^\infty(\p\Omega)]{h}.
	\end{equation}
\end{thm*}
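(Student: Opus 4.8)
The plan is to verify the five assertions one at a time, using nothing beyond the pointwise bounds of (K1)--(K2) together with the (cited) equivalence $\bM(1)\asymp\delta^{-b}$. First I would show that \eqref{eq:vh-martin} defines an element of $\delta^{-b}L^\infty(\Omega)$ with the stated norm bound. Since $D_\gamma\cG_0>0$, for $x\in\Omega$ one has $|v_h(x)|\le\norm[L^\infty(\p\Omega)]{h}\int_{\p\Omega}D_\gamma\cG_0(z,x)\,d\cH^{n-1}(z)=\norm[L^\infty(\p\Omega)]{h}\,\bM(1)(x)$, so invoking $\bM(1)\asymp\delta^{-b}$ (\Cref{prop:martin}, \cite{AGV}) gives simultaneously $v_h\in\delta^{-b}L^\infty(\Omega)$ and $\norm[L^\infty(\Omega)]{v_h\delta^b}\le C\norm[L^\infty(\p\Omega)]{h}$; the local bound \eqref{eq:vh-loc} then follows from $\delta^{-b}\le\dist(K,\p\Omega)^{-b}$ on $K$, which uses $b>0$.

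Next I would check that $v_h$ is a $\bG_0(L^\infty_c(\Omega))$-weak solution of \eqref{eq:vh}. Fix $\psi\in L^\infty_c(\Omega)$ with $\supp\psi\subset K\Subset\Omega$; on $K\times\p\Omega$ the kernel $D_\gamma\cG_0(z,x)\asymp\delta^\gamma(x)|x-z|^{-(n-2s+2\gamma)}$ is bounded, hence $(z,x)\mapsto D_\gamma\cG_0(z,x)\psi(x)$ is integrable on $\p\Omega\times\Omega$ and Fubini gives $\int_\Omega v_h\psi\,dx=\int_{\p\Omega}\big(\int_\Omega D_\gamma\cG_0(z,x)\psi(x)\,dx\big)h(z)\,d\cH^{n-1}(z)$. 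By \Cref{prop:D-gamma-def} the inner integral equals $D_\gamma\bG_0(\psi)(z)$ — i.e. the $\gamma$-normal derivative passes under $\bG_0$, which I would justify by dominated convergence using $\cG_0(y,x)\delta^{-\gamma}(y)\le|x-y|^{-(n-2s+\gamma)}\le C(K)$ for $x\in K$ and $y$ near $\p\Omega$. This yields $\int_\Omega v_h\psi\,dx=\int_{\p\Omega}D_\gamma\bG_0(\psi)\,h\,d\cH^{n-1}$ for all such $\psi$, i.e. exactly the weak formulation of \eqref{eq:vh} ($\lambda=0$, $f=0$).

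The substantive step is the boundary trace \eqref{eq:vh-bdry}. Writing $v_1(x)=\bM(1)(x)=\int_{\p\Omega}D_\gamma\cG_0(z,x)\,d\cH^{n-1}(z)$, I would start from
\[
\frac{v_h(x)}{v_1(x)}-h(z_0)=\frac{\int_{\p\Omega}D_\gamma\cG_0(z,x)\big(h(z)-h(z_0)\big)\,d\cH^{n-1}(z)}{\int_{\p\Omega}D_\gamma\cG_0(z,x)\,d\cH^{n-1}(z)},
\]
and, for fixed $\rho>0$, split $\p\Omega$ into $\{|z-z_0|<\rho\}$ and its complement. On the near piece $|h(z)-h(z_0)|\le\omega_h(\rho)$, where $\omega_h$ is the modulus of continuity of $h$, contributing at most $\omega_h(\rho)$ to the quotient. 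On the far piece, if $|x-z_0|<\rho/2$ then $|x-z|\ge\rho/2$, so $D_\gamma\cG_0(z,x)\le C\rho^{-(n-2s+2\gamma)}\delta^\gamma(x)$ and that part of the numerator is $\le C(\rho)\norm[L^\infty(\p\Omega)]{h}\delta^\gamma(x)$; dividing by $v_1(x)\asymp\delta^{-b}(x)$ leaves $\le C(\rho)\norm[L^\infty(\p\Omega)]{h}\delta^{\gamma+b}(x)\to0$ as $x\to z_0$, uniformly in $z_0$ (here $b>0$ forces $\gamma+b>0$). Hence $\limsup_{x\to z_0}|v_h(x)/v_1(x)-h(z_0)|\le\omega_h(\rho)$ for every $\rho>0$, and $\rho\downarrow0$ gives \eqref{eq:vh-bdry}, the convergence being uniform in $z_0$ since neither $\omega_h$ nor $C(\rho)$ depends on $z_0$.

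Finally, uniqueness is cheap once $v_h$ is known to solve the problem: if $v$ is another $\bG_0(L^\infty_c(\Omega))$-weak solution (necessarily in $L^1(\Omega,\delta^\gamma)$, as is $v_h$ since $\delta^{-b}L^\infty\subset L^1(\Omega,\delta^\gamma)$), then $w=v-v_h$ satisfies $\int_\Omega w\psi\,dx=0$ for all $\psi\in L^\infty_c(\Omega)$ (the boundary terms cancel), and taking $\psi=\sign(w)\chi_K$ for arbitrary $K\Subset\Omega$ forces $w=0$ a.e. I expect the genuinely delicate point to be the trace step: the quantitative gain $\delta^{\gamma+b}$ there hinges on the two-sided bounds $D_\gamma\cG_0(z,x)\asymp\delta^\gamma(x)|x-z|^{-(n-2s+2\gamma)}$ of (K2) and $\bM(1)\asymp\delta^{-b}$ being sharp at once — that is, on (K2) and the computation behind \Cref{prop:martin} — whereas the remaining ingredients (Fubini, dominated convergence, the $L^\infty$ bound, uniqueness) are routine.
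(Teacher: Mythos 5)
This statement is cited from \cite{AGV} (Corollary~4.3, Theorems~4.6 and~4.13), so the paper offers no in-house proof to compare against; the only piece re-derived internally is the mapping property $\bM:L^\infty(\p\Omega)\to\delta^{-b}L^\infty(\Omega)$ in \Cref{prop:martin}, which coincides with your Step~1. Your reconstruction of the remaining assertions is correct and follows the natural route. In Step~2 you need $\int_\Omega D_\gamma\cG_0(z,x)\psi(x)\,dx=D_\gamma\bG_0(\psi)(z)$ for $\psi\in L^\infty_c(\Omega)$; the version stated in \Cref{prop:D-gamma-def} carries the restriction $\gamma<2s$, but you are right that compact support removes the need for it: for $x\in\supp\psi$ and $y$ near $\p\Omega$ one has $|x-y|$ bounded below, so $\cG_0(y,x)/\delta^\gamma(y)\le C|x-y|^{-(n-2s+\gamma)}\le C(K)$ and dominated convergence applies. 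Step~3, the near/far split around $z_0$, is the substantive part and is a clean Poisson-kernel-type argument; the far contribution to the numerator is $O(\delta^\gamma(x))$ uniformly in $z_0$, and dividing by $v_1\asymp\delta^{-b}$ yields $O(\delta^{\gamma+b}(x))\to 0$. (Your parenthetical ``$b>0$ forces $\gamma+b>0$'' is slightly misplaced --- $\gamma>0$ alone suffices once $b\ge0$ --- but the conclusion is correct.) Step~4, uniqueness by testing the difference against $\sign(w)\chi_K$ after the boundary terms cancel linearly, is fine. In short: a correct, self-contained reconstruction of a result the paper takes from the literature.
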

Furthermore, we can show the following
\begin{prop}[Martin's operator]
	\label{prop:martin}
 Let $b=1-2s+\gamma$. The Martin  operator
	\[\begin{split}
	{\bM}:L^\infty (\p\Omega)
	&\longrightarrow
	\delta^{-b}L^\infty(\Omega)\\
	h&\longmapsto
	{\bM}(h)
	\end{split}\]
	defined by
	\[
	\bM(h)(x)
	=\int_{\p\Omega}
	D_\gamma\cG_0(z,x)h(z)
	\,d\cH^{n-1}(z)
	\]
	is a continuous operator.
\end{prop}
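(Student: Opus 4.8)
The plan is to reduce the bound for general $h$ to the single case $h\equiv 1$, and then to recover the behaviour of $\bM(1)$ from the kernel estimate in \textrm{(K2)} so that the exponent $b=1-2s+\gamma$ emerges transparently. Linearity of $\bM$ is immediate from the defining formula, so only boundedness needs an argument.

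First I would exploit the positivity $D_\gamma\cG_0>0$ from \textrm{(K2)}: for $h\in L^\infty(\p\Omega)$ and $x\in\Omega$,
\[
|\bM(h)(x)|
\le \int_{\p\Omega} D_\gamma\cG_0(z,x)\,|h(z)|\,d\cH^{n-1}(z)
\le \norm[L^\infty(\p\Omega)]{h}\;\bM(1)(x).
\]
Hence it suffices to prove $\bM(1)(x)\le C\,\delta(x)^{-b}$, from which $\norm[L^\infty(\Omega)]{\delta^b\bM(h)}\le C\norm[L^\infty(\p\Omega)]{h}$; this simultaneously shows $\bM$ is well-defined and continuous from $L^\infty(\p\Omega)$ into $\delta^{-b}L^\infty(\Omega)$. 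The two-sided estimate $\bM(1)\asymp\delta^{-b}$ is \cite[Corollary 4.3]{AGV}, already quoted in \Cref{sec:trace}; I would nevertheless reproduce the short proof of the upper bound, since this is precisely where $b$ is born.

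To estimate $\bM(1)(x)$, fix $x\in\Omega$, put $d=\delta(x)$, and note that $|x-z|\ge d$ for every $z\in\p\Omega$. Decompose $\p\Omega$ into the dyadic shells $A_k=\set{z\in\p\Omega : 2^kd\le|x-z|<2^{k+1}d}$, $k\ge 0$, which are empty once $2^kd>\diam\Omega$. Since $\p\Omega$ is of class $C^{1,1}$, it has bounded $(n-1)$-dimensional density, so $\cH^{n-1}(A_k)\le\cH^{n-1}(\p\Omega\cap B_{2^{k+1}d}(x))\le C(2^kd)^{n-1}$. Inserting the kernel bound of \textrm{(K2)} into the integral over each shell,
\[
\bM(1)(x)
\le C\sum_{k\ge 0}\frac{d^\gamma}{(2^kd)^{n-2s+2\gamma}}\,(2^kd)^{n-1}
= C\,d^{\,2s-1-\gamma}\sum_{k\ge 0}2^{k(2s-1-2\gamma)}.
\]
In the relevant regime $b>0$, i.e. $\gamma>2s-1$, one has $2\gamma>2s-1$ (if $s\le\tfrac12$ this is clear; if $s>\tfrac12$ then $2\gamma>2(2s-1)\ge 2s-1$), so the geometric series converges and $\bM(1)(x)\le C\,d^{-b}$, which is exactly the claim; the powers of $d$ collect to $2s-1-\gamma=-b$, making the origin of the exponent explicit.

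The only place anything about $\Omega$ is used is the uniform surface-measure bound $\cH^{n-1}(\p\Omega\cap B_r(x))\le Cr^{n-1}$; this is routine for Lipschitz (a fortiori $C^{1,1}$) boundaries, and I do not expect it to be a real obstacle. (The matching lower bound $\bM(1)(x)\ge c\,d^{-b}$, not needed here, would follow by keeping only the contribution of the shell $A_0$ near the nearest boundary point together with the lower kernel bound in \textrm{(K2)}.)
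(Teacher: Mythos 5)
Your proof is correct, and it reaches the same endpoint as the paper (bound $\delta^b(x)\bM(h)(x)$ by $C\|h\|_{L^\infty(\p\Omega)}$ using the upper kernel estimate in (K2)), but the technique for estimating the resulting boundary integral differs. The paper rescales by a factor $1/\delta(x)$ around the nearest boundary point $z_x$, flattens the boundary, and argues that the rescaled integral tends to the finite constant $\int_{\R^{n-1}}|z-e_n|^{-(n-2s+2\gamma)}\,dz$ as $\delta(x)\searrow 0$, hence is uniformly bounded; this is quick but implicitly relies on a uniform comparison of the rescaled $\p\Omega$ with a hyperplane. You instead decompose $\p\Omega$ into dyadic annuli $A_k$ about $x$, invoke the upper Ahlfors regularity $\cH^{n-1}(\p\Omega\cap B_r(x))\le Cr^{n-1}$ (valid for any Lipschitz boundary), and sum a geometric series; the exponent $-b=2s-1-\gamma$ then drops out algebraically and the convergence condition $2\gamma>2s-1$ is exactly what $b>0$ supplies. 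Your route is more elementary and self-contained, avoids any limiting argument as $\delta(x)\to 0$, and works verbatim in lower regularity (Lipschitz rather than $C^{1,1}$); the paper's route is shorter and identifies the exact limiting constant, which has some independent interest. One minor point: you correctly remark that the critical ratio $2\gamma=2s-1$ cannot occur when $b>0$, so the geometric series never degenerates to a logarithm; it is worth keeping that remark so the reader does not worry about an edge case.
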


\begin{proof}
	Let $h\in L^\infty(\p\Omega)$. Using the behavior of $D_\gamma\cG_0$ stated in \textrm{(K2)}, write
	\[\begin{split}
	\delta^{b}(x){\bM}(h)(x)
	&=\int_{\p\Omega}
	\delta^{1-2s+\gamma}(x)
	D_\gamma\cG_0(z,x)
	h(z)
	\,d\cH^{n-1} (z) \\
	&\leq
	C\norm[L^\infty(\p\Omega)]{h}
	\int_{\p\Omega}
	\dfrac{
		\delta^{1-2s+2\gamma}(x)
	}{
		|z-x|^{n-2s+2\gamma}
	}
	\,d\cH^{n-1}(z)\\
	\end{split}\]
	Using a system of coordinates centered at $z_x=\arg\min\set{|z-x|:z\in\p\Omega}$ and aligning $e_n$ with the inward normal at $z_x$, the last integral converges as $\delta(x)\searrow0$ to
	\[
	\int_{\R^{n-1}}
	\dfrac{1}{
		|z-e_n|^{n-2s+2\gamma}
	}
	\,dz
	\]
	after considering a dilation of factor $1/\delta(x)$, and hence is bounded independently of $\delta(x)$.
\end{proof}

\subsection{The range $2s > \gamma$}

For $\lambda=0$, we prove regularity estimates in various Lebesgue spaces. With the assumption $\gamma\in(0,2s)$, $|\cdot|^{-(n-2s+\gamma)}$ is integrable, so that either factor $\delta^\gamma(x)$ or $\delta^\gamma(y)$ becomes available. The results on the whole domain is classical, and we include the proof for the reader's convenience. We are particularly interested in the local regularity for $L^1(\Omega;\delta^\gamma)$ data, where we use a localized version of the classical Hardy--Littlewood--Sobolev inequality. Compare with \cite[Theorem 2.1, Theorem 2.10, Theorem 2.11]{AGV}, where the full range $\gamma\in(0,1]$ is covered.

\begin{thm}[Local mapping properties of $\bG_0$]
\label{thm:G0-map}
The  Green  operator $\bG_0$ for $\cL$ is continuous from
\[\begin{split}
L^1(\Omega; \delta^\gamma )
&\longrightarrow
    L^p(\Omega),
        \quad \textfor p\in[1,\tfrac{n}{n-2s+\gamma}),\\
L^{q}(\Omega)
&\longrightarrow
    \delta^\gamma L^\infty(\Omega)
    	\quad \textfor q\in(\tfrac{n}{2s-\gamma},+\infty).
\end{split}\]
Moreover,
\begin{align}
\norm[L^p(\Omega)]{\bG_0(f)}
&\leq C(p)
    \norm[L^1(\Omega)]{f\delta^\gamma},
\label{eq:G0-L1}\\
\norm[L^\infty(\Omega)]{\bG_0(f)/\delta^\gamma}
&\leq C(q)
    \norm[L^{q}(\Omega)]{f}.
\label{eq:G0-Lq-d}
\end{align}
\end{thm}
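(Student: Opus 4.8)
The plan is to establish the two mapping properties of $\bG_0$ on the domain $\Omega$ by exploiting the kernel bound from \textrm{(K1)} in the regime $\gamma < 2s$, where $|x-y|^{-(n-2s+\gamma)}$ is locally integrable, so one of the two boundary weights $\delta^\gamma(x)$, $\delta^\gamma(y)$ can always be pulled out of the estimate. Concretely, from \textrm{(K1)} we have
\[
\cG_0(x,y)
\le C\,\frac{1}{|x-y|^{n-2s}}\Bigl(\frac{\delta^\gamma(y)}{|x-y|^\gamma}\wedge 1\Bigr)
\le C\,\frac{\delta^\gamma(y)}{|x-y|^{n-2s+\gamma}},
\]
and symmetrically with the roles of $x$ and $y$ exchanged, and also $\cG_0(x,y)\le C|x-y|^{-(n-2s)}$ (dropping both factors). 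These three pointwise bounds are the only inputs; the rest is a localized Hardy--Littlewood--Sobolev / Young's-inequality argument.

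For the first mapping, $L^1(\Omega;\delta^\gamma)\to L^p(\Omega)$ for $p<\frac{n}{n-2s+\gamma}$ with estimate \eqref{eq:G0-L1}, I would write
\[
|\bG_0(f)(x)|
\le C\int_\Omega \frac{\delta^\gamma(y)\,|f(y)|}{|x-y|^{n-2s+\gamma}}\,dy
= C\bigl(K * (\delta^\gamma |f|\,\chi_\Omega)\bigr)(x),
\qquad K(z):=|z|^{-(n-2s+\gamma)}\chi_{B_R}(z),
\]
where $R=\diam\Omega$. Since $\gamma<2s$ we have $n-2s+\gamma<n$, so $K\in L^r(\R^n)$ precisely for $r<\frac{n}{n-2s+\gamma}$; the claimed range of $p$ is exactly $r$, and Young's inequality $\|K*g\|_{L^p}\le \|K\|_{L^p}\|g\|_{L^1}$ with $g=\delta^\gamma|f|\chi_\Omega$ gives \eqref{eq:G0-L1} with $C(p)=C\|K\|_{L^p(\R^n)}$. (For $p=1$ one uses $\|K*g\|_{L^1}\le\|K\|_{L^1}\|g\|_{L^1}$, valid since $K\in L^1$.)

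For the second mapping, $L^q(\Omega)\to\delta^\gamma L^\infty(\Omega)$ for $q>\frac{n}{2s-\gamma}$ with estimate \eqref{eq:G0-Lq-d}, I would instead pull out the $\delta^\gamma(x)$ factor: \textrm{(K1)} gives $\cG_0(x,y)\le C\,\delta^\gamma(x)\,|x-y|^{-(n-2s+\gamma)}$, hence
\[
\frac{|\bG_0(f)(x)|}{\delta^\gamma(x)}
\le C\int_\Omega \frac{|f(y)|}{|x-y|^{n-2s+\gamma}}\,dy
\le C\,\|f\|_{L^q(\Omega)}\,\Bigl(\int_{B_R}|z|^{-(n-2s+\gamma)q'}\,dz\Bigr)^{1/q'}
\]
by Hölder, and the last integral is finite iff $(n-2s+\gamma)q'<n$, i.e. $q>\frac{n}{2s-\gamma}$ (using $2s-\gamma>0$); this yields \eqref{eq:G0-Lq-d} with $C(q)$ depending on that integral, and taking $\sup_x$ gives $\bG_0(f)\in\delta^\gamma L^\infty(\Omega)$.

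I do not expect a genuine obstacle here: the statement is the "classical" endpoint analysis and the kernel bounds from \textrm{(K1)} make it essentially a bookkeeping exercise with Young and Hölder. The only point requiring a little care is the verification that the exponent thresholds $\frac{n}{n-2s+\gamma}$ and $\frac{n}{2s-\gamma}$ are exactly the ones for which the relevant power of $|z|$ is integrable over a ball in $\R^n$, and that the constants depend only on $n,s,\gamma,\Omega$ (through $R=\diam\Omega$) plus the exponent $p$ or $q$ — which is where the hypothesis $\gamma<2s$ (i.e.\ the running assumption "$2s>\gamma$" of this subsection) is used in an essential way.
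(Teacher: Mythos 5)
Your proposal is correct and takes essentially the same approach as the paper: in both proofs one uses the $\gamma<2s$ range to extract a single $\delta^\gamma$ factor from the \textrm{(K1)} bound, then treats the resulting kernel $|x-y|^{-(n-2s+\gamma)}$ by an $L^1\to L^p$ bound for part (1) and by H\"older for part (2), exactly as you do. The only (cosmetic) difference is in part (1): you invoke Young's convolution inequality directly, whereas the paper reproves the same fact for the truncated Riesz kernel by hand via Jensen's inequality with the probability measure $d\mu(y)=|f(y)|\delta^\gamma(y)\,dy/\|f\delta^\gamma\|_{L^1}$ and Fubini — these are the same estimate packaged differently.
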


\begin{remark}
Under additional assumptions on the Green function,  highly-integrable data lead to H\"{o}lder continuous solutions. %
However, we will not need this for the rest of the paper. %
\end{remark}

\begin{proof}
\begin{enumerate}
\item
    For any $f\in L^1(\Omega;\delta^\gamma)$, and $x\in\Omega$, we have
\[
|\bG_0(f)(x)|
\leq
    C
    \int_{\Omega}
        \dfrac{1}{|x-y|^{n-2s}}
        \dfrac{\delta^\gamma(y)}{|x-y|^{\gamma}}
        |f(y)|
    \,dy.
\]
Since $|\cdot-y|^{-(n-2s+\gamma)}$ is uniformly integrable in $y\in \Omega$, we have
\[
\norm[L^1(\Omega)]{\bG_0(f)}
\leq C
    \norm[L^1(\Omega)]{f\delta^\gamma}.
\]
This proves the case $p=1$. When $p\in(1,\frac{n}{n-2s+\gamma})$, we apply Jensen's inequality with the probability measure
\[
d\mu(y)
=\dfrac{
    |f(y)|\delta^\gamma(y)\,dy
}{
    \norm[L^1(\Omega)]{f\delta^\gamma}
},
\]
to obtain, from
\[\begin{split}
|\bG_0(f)(x)|
\leq C
    \norm[L^1(\Omega)]{f\delta^\gamma}
    \int_{\Omega}
        \dfrac{1}{
            |x-y|^{n-2s+\gamma}
        }
    \,d\mu(y),
\end{split}\]
that
\[\begin{split}
\norm[L^p(\Omega)]{\bG_0(f)}^p
&\leq
    C\norm[L^1(\Omega)]{f\delta^\gamma}^p
    \int_{\Omega}
        \left(
            \int_{\Omega}
                \dfrac{1}{
                    |x-y|^{n-2s+\gamma}
                }
            \,d\mu(y)
        \right)^p
    \,dx\\
&\leq
    C\norm[L^1(\Omega)]{f\delta^\gamma}^p
    \int_{\Omega}
        \int_{\Omega}
            \dfrac{1}{
                |x-y|^{(n-2s+\gamma)p}
            }
        \,dx
    \,d\mu(y)\\
&\leq
    C(p)
    \norm[L^1(\Omega)]{f\delta^\gamma}^p.
\end{split}\]

\item
	For any $f\in L^{q}(\Omega)$ with $q>\frac{n}{2s-\gamma}$, by H\"{o}lder's inequality, we have for any $x\in\Omega$,
\[\begin{split}
\dfrac{
	|\bG_0(f)(x)|
}{
	\delta^\gamma(x)
}
&\leq
	\int_{\Omega}
		\dfrac{
			|f(y)|
		}{
			|x-y|^{n-2s+\gamma}
		}
	\,dy\\
&\leq
	\norm[L^{q}(\Omega)]{f}
	\left(\int_{\Omega}
		\dfrac{1}{
			|x-y|^{(n-2s+\gamma)q'}
		}
	\,dy\right)^{\frac{1}{q'}}\\
&\leq
	C(q)\norm[L^q(\Omega)]{f},
\end{split}\]
since the conjugate exponent $q'=\frac{q}{q-1}$ satisfies $(n-2s+\gamma)q'<n$.
\end{enumerate}
\end{proof}

Then we prove that for bounded data the $\bG_0$-Green solution has well-defined $\gamma$-normal derivative. Here we use again the fact that $\gamma<2s$. Compare with \cite[Theorem 3.15]{AGV}.

\begin{prop}[$\gamma$-normal derivative of Green's operator]
\label{prop:D-gamma-def}
 Assume (K1) and (K2). Let $n>2s>\gamma$. 
\[
D_\gamma\bG_0:
L^\infty(\Omega)
    \longrightarrow
L^\infty(\p\Omega)
\]
is a continuous operator. More precisely,
for any $f\in L^\infty(\Omega)$, $D_\gamma\bG_0(f):\p\Omega\to\R$ is well-defined and equal to
\[
D_\gamma\bG_0(f)(z)
=\int_{\Omega}
    D_\gamma\cG_0(z,y)
    f(y)
\,dy,
    \quad \forall z\in\p\Omega.
\]
\end{prop}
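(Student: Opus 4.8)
The plan is to obtain the limit defining $D_\gamma\bG_0(f)(z)$ by a dominated convergence argument, identify it with the claimed integral, and then read off continuity from a uniform bound. Fix $z\in\p\Omega$ and $f\in L^\infty(\Omega)$, and set $L(z):=\int_\Omega D_\gamma\cG_0(z,y)\,f(y)\,dy$. By \textrm{(K2)} together with $\delta(y)\le|z-y|$ one has $0<D_\gamma\cG_0(z,y)\le C|z-y|^{-(n-2s+\gamma)}$; since $\gamma<2s$ the exponent is $<n$, so this kernel is integrable over the bounded set $\Omega$ with $\int_\Omega|z-y|^{-(n-2s+\gamma)}\,dy\le C(\diam\Omega)^{2s-\gamma}$ \emph{uniformly} in $z$. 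Hence $L(z)$ is well-defined and $\sup_{z\in\p\Omega}|L(z)|\le C\|f\|_{L^\infty(\Omega)}$.

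For the convergence, first note that for each fixed $y\in\Omega$ the symmetry of $\cG_0$ and \textrm{(K2)} give $\cG_0(x,y)/\delta^\gamma(x)=\cG_0(y,x)/\delta^\gamma(x)\to D_\gamma\cG_0(z,y)$ as $\Omega\ni x\to z$. Writing $\bG_0(f)(x)/\delta^\gamma(x)=\int_\Omega\big(\cG_0(x,y)/\delta^\gamma(x)\big)f(y)\,dy$, I would split $\Omega$ into $\Omega\cap B_\rho(z)$ and $\Omega\setminus B_\rho(z)$ for small $\rho>0$. On the latter, $|x-y|\ge\rho/2$ whenever $|x-z|<\rho/2$, so the integrand is uniformly bounded on this \emph{fixed} set and ordinary dominated convergence applies. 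On $\Omega\cap B_\rho(z)$, the first estimate in \textrm{(K1)} gives, after cancelling one factor $\delta^\gamma(x)$, the crude bound $\cG_0(x,y)/\delta^\gamma(x)\le C|x-y|^{-(n-2s+\gamma)}$; combined with the inclusion $B_\rho(z)\subset B_{2\rho}(x)$ valid for $|x-z|<\rho$, this yields $\int_{\Omega\cap B_\rho(z)}\cG_0(x,y)/\delta^\gamma(x)\,dy\le C\rho^{2s-\gamma}$, uniformly in such $x$, and the same bound with $z$ in place of $x$ controls the corresponding tail of $L(z)$. A standard $\varepsilon/3$ argument — first choose $\rho$ small to make both near-boundary pieces small, then let $x\to z$ — then gives $\bG_0(f)(x)/\delta^\gamma(x)\to L(z)$, i.e. $D_\gamma\bG_0(f)(z)=\int_\Omega D_\gamma\cG_0(z,y)f(y)\,dy$ for every $z\in\p\Omega$.

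Continuity of $D_\gamma\bG_0:L^\infty(\Omega)\to L^\infty(\p\Omega)$ is then immediate: the map is linear in $f$ by the integral formula just obtained, and by the first step $\|D_\gamma\bG_0(f)\|_{L^\infty(\p\Omega)}=\sup_z|L(z)|\le C\|f\|_{L^\infty(\Omega)}$.

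The main obstacle is the uniform control of the mass of $\cG_0(x,\cdot)/\delta^\gamma(x)$ near the boundary point $z$; this is precisely where the hypothesis $\gamma<2s$ of the present section is essential, guaranteeing that the singular kernel $|x-y|^{-(n-2s+\gamma)}$ is locally integrable with a modulus of smallness in $\rho$ that does not degenerate as $x\to z$. Everything else is routine manipulation of the two-sided bounds \textrm{(K1)}--\textrm{(K2)}; compare \cite[Theorem 3.15]{AGV}, which treats the full range $\gamma\in(0,1]$ by a more delicate argument.
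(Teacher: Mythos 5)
Your proof is correct and follows the same strategy as the paper's: express $\bG_0(f)(x)/\delta^\gamma(x)$ as an integral, bound the kernel by $C|x-y|^{-(n-2s+\gamma)}$ using (K1), (K2) and $\gamma<2s$, and pass to the limit $x\to z$. The paper simply invokes the Dominated Convergence Theorem at this point, which is slightly terse since the natural pointwise bound $C|x-y|^{-(n-2s+\gamma)}$ depends on $x$; your $\varepsilon/3$ splitting into $\Omega\cap B_\rho(z)$ (where the uniform-in-$x$ bound $C\rho^{2s-\gamma}$ controls the tail) and $\Omega\setminus B_\rho(z)$ (where the integrand is bounded and ordinary DCT applies) supplies the missing justification and is a careful, correct way to make the limit passage rigorous.
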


\begin{proof}
	
For any $x_m\in\Omega$ with $x_m\to z\in\p\Omega$,
\[
\dfrac{
    \bG_0(f)(x_m)
}{
    \delta^\gamma(x_m)
}
=\int_{\Omega}
    \dfrac{
        \cG_0(x_m,{y})
    }{
        \delta^\gamma(x_m)
    }f({y})
\,d{y}
=:\int_{\Omega}
    F_m({y})
\,d{y}.
\]
Since
\[
|F_m({y})|
\leq
    \dfrac{
        C\norm[L^\infty(\Omega)]{f}
    }{
        |{x_m-y}|^{n-2s+\gamma}
    }
\]
are uniformly bounded in $L^p$ for $p\in(1,\frac{n}{n-2s+\gamma})$ (because $2s > \gamma$), there exists an $L^p$-weak-convergent subsequence. By hypothesis (K2) we recover the pointwise limit of $F_m$,
\begin{equation*}
	\lim_{m\to+\infty}F_m({y}) 
= D_\gamma \bG_0 (z,y) f(y),
\end{equation*}
and it coincides with the weak limit. Since the constant function $1$ is in $L^{p'} (\Omega)$, we have that the integral of $F_m$ converges (up to our subsequence). Since every convergent subsequence has the same limit, the whole sequence converges.
\end{proof}

\section{General linear theory}
\label{sec:general-lin}

\subsection{Global estimates}
\label{sec:lin}
Let $\lambda\in\R\setminus\Sigma$, and $\bar\lambda$ and $d_\Sigma(\lambda)$ be as in \eqref{eq:bar-lambda} and \eqref{eq:d-lambda}. Let $b=1-2s+\gamma$. %
Consider the equation
\begin{equation}\label{eq:lin}\begin{cases}
{\cL}u-\lambda u=f
    & \textin \Omega\\
B u=0
    & \texton \p\Omega.
\end{cases}\end{equation}
The definition of $\bG_\lambda$ relies on the well-posedness of \eqref{eq:lin}, which we will show on $L^2(\Omega)$, $L^q(\Omega)$ for large $q$, and finally $L^1(\Omega;\delta^\gamma)$.

We start with an $L^2$-theory.  Recall that $\bH^{2}_{\cL}(\Omega)$ is explained before \Cref{def:spectral}. 

\begin{lem}[Solvability in $L^2$]
\label{lem:lin-L2}
 Let $\lambda\in\R\setminus\Sigma$. 
If $f\in L^2(\Omega)$, then there exists a unique spectral solution $u\in \bH^{2}_{\cL}(\Omega)$ of \eqref{eq:lin}, given by\footnote{ Note that the series defined below converges since $f\in L^2(\Omega)$ and $d_\Sigma( \lambda) = \inf_{j\geq 1} |\lambda_j-\lambda|>0$. }
\[
u(x)
=\sum_{j\geq1}
    \dfrac{
        \angles{f,\varphi_j}
    }{
        \lambda_j-\lambda
    }\varphi_j(x).
\]
As a consequence,
\[
\norm[L^2(\Omega)]{u}
\leq d_\Sigma(\lambda)^{-1}
	\norm[L^2(\Omega)]{f}.
\]
\end{lem}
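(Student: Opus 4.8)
The plan is to write down the candidate solution explicitly via the eigenfunction expansion and to verify the three requirements in turn: that the series converges in $\bH^{2}_{\cL}(\Omega)$, that it satisfies the spectral equations, and that it is the only such solution. I would first recall, from the remark following \textrm{(K1)}--\textrm{(K2)}, that $\bG_0$ is compact and self-adjoint on $L^2(\Omega)$, so that $(\varphi_j)_{j\ge1}$ is an orthonormal basis of $L^2(\Omega)$ and $\lambda_j\nearrow+\infty$; in particular, since $\lambda\notin\Sigma$, one has $\abs{\lambda_j-\lambda}\ge d_\Sigma(\lambda)>0$ for every $j$.

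Given $f\in L^2(\Omega)$, set $c_j=\angles{f,\varphi_j}$, so that $\sum_{j\ge1}c_j^2=\norm[L^2(\Omega)]{f}^2<+\infty$. Since $\big(c_j/(\lambda_j-\lambda)\big)_j\in\ell^2$, the series $u=\sum_{j\ge1}\frac{c_j}{\lambda_j-\lambda}\varphi_j$ converges in $L^2(\Omega)$, and by orthonormality $\angles{u,\varphi_j}=\frac{c_j}{\lambda_j-\lambda}$. This already gives $(\lambda_j-\lambda)\angles{u,\varphi_j}=\angles{f,\varphi_j}$ for all $j$, i.e. $u$ is a spectral solution of \eqref{eq:lin}, and, using $\abs{\lambda_j-\lambda}\ge d_\Sigma(\lambda)$ termwise, the estimate $\norm[L^2(\Omega)]{u}\le d_\Sigma(\lambda)^{-1}\norm[L^2(\Omega)]{f}$. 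To see that $u\in\bH^{2}_{\cL}(\Omega)$, note that since $\lambda_j\nearrow+\infty$ the ratios $\lambda_j/(\lambda_j-\lambda)$ are bounded by some $C=C(\bar\lambda,d_\Sigma(\lambda))$, whence
\[
\sum_{j\ge1}\lambda_j^2\angles{u,\varphi_j}^2
=\sum_{j\ge1}\Big(\tfrac{\lambda_j}{\lambda_j-\lambda}\Big)^2 c_j^2
\le C^2\sum_{j\ge1}c_j^2=C^2\norm[L^2(\Omega)]{f}^2<+\infty.
\]

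Finally, for uniqueness, if $u_1,u_2\in\bH^{2}_{\cL}(\Omega)$ are both spectral solutions, then $w=u_1-u_2$ satisfies $(\lambda_j-\lambda)\angles{w,\varphi_j}=0$ for all $j$; since $\lambda_j\ne\lambda$ this forces $\angles{w,\varphi_j}=0$ for every $j$, and as $(\varphi_j)_{j\ge1}$ is an orthonormal basis of $L^2(\Omega)$ we conclude $w=0$. I do not expect any genuine obstacle here: this is the textbook spectral inversion of $\cL-\lambda$ on its natural domain, and the only place the hypotheses enter is through $\lambda_j\to+\infty$, which simultaneously makes $\Sigma$ discrete (so $d_\Sigma(\lambda)>0$) and keeps $\lambda_j/(\lambda_j-\lambda)$ bounded (so $u\in\bH^{2}_{\cL}(\Omega)$); both are supplied by the compactness of $\bG_0$ recorded earlier.
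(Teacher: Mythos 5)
Your proof is correct and follows the same route as the paper: the paper's proof of this lemma is a one-liner ("The result follows directly from eigenfunction decomposition"), and you have simply filled in the standard details of that argument. The verification that $\lambda_j/(\lambda_j-\lambda)$ stays bounded — which is what places $u$ in $\bH^2_{\cL}(\Omega)$ rather than merely in $L^2(\Omega)$ — is the only step requiring a moment's thought, and you handle it correctly.
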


\begin{proof}
The result follows directly from eigenfunction decomposition.
\end{proof}

Hence, for any $\lambda\in\R\setminus\Sigma$, one may define $\bG_\lambda:L^2(\Omega)\to \bH^{2}_{\cL}(\Omega)$ %
by
\begin{equation}\label{eq:G-lambda-f}
\bG_\lambda(f)
=\sum_{j\geq1}
    \dfrac{
        \angles{f,\varphi_j}
    }{
        \lambda_j-\lambda
    }\varphi_j,
    	\quad \forall f\in L^2(\Omega).
\end{equation}
This includes the particular case $\lambda=0$, where $\bG_0$ is alternatively given by the integral formula \eqref{eq:G0-def}.

\begin{lem}[Spectral solutions are Green solutions]
\label{lem:spec-Green}
Suppose $f\in L^2(\Omega)$ and $u\in \bH^{2}_{\cL}(\Omega)$. Let $\bG_\lambda$ and $\bG_0$ be as defined in \eqref{eq:G-lambda-f}. Then the following are equivalent.
\begin{enumerate}
\item
	$u=\bG_\lambda(f)$ a.e.
\item
	$(\lambda_j-\lambda)\angles{u,\varphi_j}=\angles{f,\varphi_j}$ for all $j\geq1$.
\item
	$u-\lambda \bG_0(u)=\bG_0(f)$ a.e.
\end{enumerate}
\end{lem}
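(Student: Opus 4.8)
The plan is to prove the chain of equivalences $(1)\Leftrightarrow(2)\Leftrightarrow(3)$, using the spectral representation throughout. The equivalence $(1)\Leftrightarrow(2)$ is essentially a restatement: by \Cref{lem:lin-L2}, $\bG_\lambda(f)$ is the \emph{unique} element of $\bH^2_\cL(\Omega)$ whose Fourier coefficients are $\angles{f,\varphi_j}/(\lambda_j-\lambda)$, and since $(\varphi_j)$ is an orthonormal basis of $L^2(\Omega)$, two $L^2$ functions agree a.e. iff all their Fourier coefficients agree. So $u=\bG_\lambda(f)$ a.e. iff $\angles{u,\varphi_j}=\angles{f,\varphi_j}/(\lambda_j-\lambda)$ for every $j$, which is exactly $(2)$ after multiplying by $\lambda_j-\lambda$ (legitimate since $\lambda\notin\Sigma$ means $\lambda_j-\lambda\neq 0$).

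For $(2)\Leftrightarrow(3)$, the key computational fact is that $\bG_0$ acts diagonally on the eigenbasis: since $\varphi_j=\lambda_j\bG_0(\varphi_j)$ (the eigenfunctions of $\cL$ are eigenfunctions of $\bG_0$ with eigenvalue $1/\lambda_j$), one has $\bG_0(\varphi_j)=\lambda_j^{-1}\varphi_j$. I would first record that $\bG_0$ is bounded and self-adjoint on $L^2(\Omega)$ (self-adjointness from $\cG_0(x,y)=\cG_0(y,x)$ in (K1)), hence for $v\in L^2(\Omega)$ the Fourier coefficients of $\bG_0(v)$ are $\angles{\bG_0(v),\varphi_j}=\angles{v,\bG_0(\varphi_j)}=\lambda_j^{-1}\angles{v,\varphi_j}$. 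Therefore the function $u-\lambda\bG_0(u)-\bG_0(f)$ (which lies in $L^2(\Omega)$, since $u\in\bH^2_\cL\subset L^2$ and $\bG_0$ is $L^2$-bounded) has $j$-th Fourier coefficient
\[
\angles{u,\varphi_j}-\lambda\lambda_j^{-1}\angles{u,\varphi_j}-\lambda_j^{-1}\angles{f,\varphi_j}
=\lambda_j^{-1}\big((\lambda_j-\lambda)\angles{u,\varphi_j}-\angles{f,\varphi_j}\big).
\]
Since $\lambda_j\neq 0$, this coefficient vanishes for all $j$ iff $(\lambda_j-\lambda)\angles{u,\varphi_j}=\angles{f,\varphi_j}$ for all $j$, i.e. iff $(2)$ holds; and a function in $L^2(\Omega)$ with all Fourier coefficients zero vanishes a.e. This gives $(3)\Leftrightarrow(2)$.

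There is no serious obstacle here; the only points requiring a word of care are (a) confirming that all the quantities appearing are genuinely in $L^2(\Omega)$ so that ``agreement of all Fourier coefficients $\Leftrightarrow$ equality a.e.'' is applicable — for this one uses $\bG_0:L^2(\Omega)\to L^2(\Omega)$ (compact, hence bounded, as noted after (K1)) and $\bH^2_\cL(\Omega)\subset L^2(\Omega)$ — and (b) justifying the interchange $\angles{\bG_0(v),\varphi_j}=\angles{v,\bG_0(\varphi_j)}$, which is just self-adjointness of the symmetric Hilbert–Schmidt-type kernel operator $\bG_0$. I would state these two facts at the outset and then the proof is a three-line Fourier-coefficient computation.
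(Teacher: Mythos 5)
Your proof is correct and follows essentially the same route as the paper's: $(1)\Leftrightarrow(2)$ by matching Fourier coefficients against the orthonormal basis, and $(2)\Leftrightarrow(3)$ by dividing the relation in $(2)$ through by $\lambda_j$ and recognising $\lambda_j^{-1}\angles{\cdot,\varphi_j}$ as the $j$-th coefficient of $\bG_0(\cdot)$. The extra bookkeeping you supply (self-adjointness of $\bG_0$, $L^2$-membership of $u-\lambda\bG_0(u)-\bG_0(f)$) is sound and merely makes explicit what the paper leaves implicit.
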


\begin{proof}
(1) $\Leftrightarrow$ (2) is by projection. To see (2) $\Leftrightarrow$ (3), one may rearrange
\[
(\lambda_j-\lambda)\angles{u,\varphi_j}
=\angles{f,\varphi_j}
\]
to
\[
\angles{u,\varphi_j}
-\lambda
	\dfrac{
		\angles{u,\varphi_j}
	}{
		\lambda_j
	}
=\dfrac{
	\angles{f,\varphi_j}
}{
	\lambda_j
}. \qedhere
\]
\end{proof}

\begin{remark}
	Notice that
	\begin{align*}
	u&=\lambda\bG_0(\lambda\bG_0(u)+\bG_0(f))+\bG_0(f) = \lambda^2 \bG_0^2 (u) + \lambda \bG_0^2 (f) + \bG_0 (f) = \cdots = \\
	&=\lambda^k \bG_0^k (u) + \sum_{m=1}^k \lambda^{m-1} \bG_0^m (f).
	\end{align*}
 Since the last summation is always as regular as $\bG_0(f)$, we can choose $k$ large and bootstrap the regularity of $u$ 
as in the proof of \Cref{prop:eigenfunctions regularity}. This shows us that the regularisation properties of $\bG_\lambda$ are precisely those of $\bG_0$.
\end{remark}

\begin{prop}[Solvability for highly-integrable data]
\label{prop:lin-Lq}
Let $q\in(\frac{n}{2s},+\infty]$.
For any $f\in L^{q}(\Omega)$ there is a unique spectral solution $u\in L^\infty(\Omega)$ of \eqref{eq:lin}, with
\[
\norm[L^\infty(\Omega)]{u}
\leq
    C(q,\bar\lambda)
    \left(
        \norm[L^2(\Omega)]{u}
        +\norm[L^q(\Omega)]{f}
    \right),
\]
and
\[
\norm[L^\infty(\Omega)]{u}
\leq
    C(q,\bar\lambda,d_\Sigma(\lambda))
	\norm[L^q(\Omega)]{f}.
\]
If $f \in \delta^\alpha L^\infty (\Omega)$  for $\alpha>-1-\gamma$,  then $u \in \bG_0 (\delta^\alpha) L^\infty(\Omega)$ we have that
\begin{equation*}
	\norm[L^\infty(\Omega)]{ \frac u { \bG_0 (\delta^\alpha )}}
	\leq
	C(\alpha ,\bar\lambda) \left (
	 \norm[L^2(\Omega)]{u} + \norm[L^\infty(\Omega)]{  \frac f {\delta^\alpha }} \right) .
\end{equation*}
Furthermore, if $\gamma < 2s$, then for $q\in(\frac{n}{2s-\gamma},+\infty]$,
\[
\norm[L^\infty(\Omega)]
    {\dfrac{u}{\delta^\gamma}}
\leq C(q,\bar\lambda)
    \left(
        \norm[L^2(\Omega)]{u}
        +\norm[L^q(\Omega)]{f}
    \right).
\]
\end{prop}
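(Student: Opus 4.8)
\emph{Strategy and set-up.} The plan is to reduce the whole statement to the mapping properties of $\bG_0$ collected in \Cref{sec:G0}, via the algebraic identity recorded just after \Cref{lem:spec-Green} --- the remark preceding the proposition essentially announces this, so the work is to make it quantitative. First, noting that $L^q(\Omega)\hookrightarrow L^2(\Omega)$ on the bounded domain $\Omega$ for $q\ge2$ (and arguing on $L^q(\Omega)\cap L^2(\Omega)$ for the residual values $q<2$), I would invoke \Cref{lem:lin-L2} to get the unique spectral solution $u=\bG_\lambda(f)\in\bH^{2}_{\cL}(\Omega)$, together with $\norm[L^2(\Omega)]{u}\le d_\Sigma(\lambda)^{-1}\norm[L^2(\Omega)]{f}\le C\,d_\Sigma(\lambda)^{-1}\norm[L^q(\Omega)]{f}$. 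By \Cref{lem:spec-Green}, $u=\lambda\bG_0(u)+\bG_0(f)$ a.e.\ in $\Omega$, and iterating this relation $k$ times gives
\[
u=\lambda^{k}\bG_0^{k}(u)+\sum_{m=1}^{k}\lambda^{m-1}\bG_0^{m}(f),\qquad k\ge1 .
\]

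\emph{Bootstrap and the $L^\infty$ bounds.} Next I would run a bootstrap. Since $q>\tfrac n{2s}$, \Cref{prop:G_0 is L^p to L^infty} gives $\bG_0(f)\in L^\infty(\Omega)$ with $\norm[L^\infty(\Omega)]{\bG_0(f)}\le C(q)\norm[L^q(\Omega)]{f}$, and since $\bG_0$ maps $L^\infty(\Omega)$ into itself (\Cref{prop:regularisation between L^infty weight spaces} with $\alpha=0$, as $\bG_0(1)\in L^\infty(\Omega)$), the same estimate holds for every $\bG_0^{m}(f)$, $m\ge1$. For the first term, starting from $u\in L^2(\Omega)$ and iterating the smoothing $\bG_0\colon L^{p_0}(\Omega)\to L^{p_1}(\Omega)$, $\tfrac1{p_1}=\tfrac1{p_0}-\tfrac{2s}{n}$ (\Cref{prop:regularisation L1 to Marcinkiewicz}), a fixed gain in integrability per step brings $\bG_0^{k_0}(u)$ into $L^{q}(\Omega)$ for some $q>\tfrac n{2s}$ after $k_0=k_0(n,s)$ steps, hence $\bG_0^{k_0+1}(u)\in L^\infty(\Omega)$ with norm $\le C\norm[L^2(\Omega)]{u}$. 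Taking $k=k_0+1$ in the identity and applying the triangle inequality yields the first estimate; inserting the $L^2$-bound of the previous paragraph gives the $d_\Sigma(\lambda)$-estimate. Here the $\lambda$-dependence enters only through a fixed power $|\lambda|^{k(n,s)}$; for $\lambda\ge0$ this is $\le\bar\lambda^{k}$, while for $\lambda<0$, where $\cL-\lambda$ is coercive, I would instead use the comparison $0\le\cG_\lambda\le\cG_0$ so that the constant becomes independent of $\lambda$ altogether.

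\emph{Weighted estimates.} For the weighted statements I would modify the two bootstraps. If $f\in\delta^\alpha L^\infty(\Omega)$, then \Cref{prop:regularisation between L^infty weight spaces} gives $\bG_0(f)\in\bG_0(\delta^\alpha)L^\infty(\Omega)$, and iterating --- using $\bG_0(\delta^\beta)\asymp\delta^{(\beta+2s)\wedge\gamma}$, the monotonicity of $t\mapsto t\wedge\gamma$, and that $\delta$ is bounded so a higher power embeds into a lower one --- one gets $\bG_0^{m}(f)\in\delta^{(\alpha+2ms)\wedge\gamma}L^\infty(\Omega)\subset\bG_0(\delta^\alpha)L^\infty(\Omega)$ for all $m\ge1$. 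For the first term I would carry the bootstrap a few more steps through the weighted spaces $\delta^{\beta_j}L^\infty(\Omega)$ with $\beta_{j+1}=(\beta_j+2s)\wedge\gamma$, until $\beta_j=\gamma$, so that $\bG_0^{k}(u)\in\delta^\gamma L^\infty(\Omega)\subset\bG_0(\delta^\alpha)L^\infty(\Omega)$ for $k$ large enough (depending on $n,s,\gamma,\alpha$); summing the identity then places $u$ in $\bG_0(\delta^\alpha)L^\infty(\Omega)$ with the stated bound. The final assertion, for $\gamma<2s$, is the special case $\alpha=0$ (then $\bG_0(1)\asymp\delta^\gamma$), where in addition, once $q>\tfrac n{2s-\gamma}$, the $f$-term sits directly in $\delta^\gamma L^\infty(\Omega)$ by \eqref{eq:G0-Lq-d}.

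\emph{Main obstacle.} The obstacle is not analytic --- all the regularisation lemmas are already in \Cref{sec:G0} --- but the exponent bookkeeping in the weighted step: one must check that each application of $\bG_0$ drives $\delta^\beta$ monotonically towards $\delta^\gamma$, that all intermediate weights embed into the target space $\bG_0(\delta^\alpha)L^\infty(\Omega)$, and that the number of iterations used (hence all constants) depends only on $n,s,\gamma$ and $\alpha$, never on $d_\Sigma(\lambda)$.
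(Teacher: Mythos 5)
Your proof is correct and follows essentially the same bootstrap as the paper's: starting from the $L^2$ spectral solution, applying the mapping properties of $\bG_0$ from \Cref{sec:G0} to climb to $L^\infty$, and then running a second bootstrap through the weighted spaces. The organizational difference---you expand $u=\lambda^k\bG_0^k(u)+\sum_{m=1}^k\lambda^{m-1}\bG_0^m(f)$ and estimate each term separately, while the paper applies $u=\lambda\bG_0(u)+\bG_0(f)$ once per step---is cosmetic and in fact announced in the remark after \Cref{lem:spec-Green}. The one point on which you go beyond what the paper writes is the dependence on $|\lambda|$ rather than $\bar\lambda$ when $\lambda<0$, which the written proof glosses over; your fix is the right idea, though it is better stated as the operator comparison $0\le\bG_\lambda(g)\le\bG_0(g)$ for $g\ge 0$ (via \Cref{lem:MP-L2} and $\bG_\lambda(g)=\bG_0(g)+\lambda\bG_0(\bG_\lambda(g))$) rather than as a bound on the kernel $\cG_\lambda$, which the paper explicitly does not construct.
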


\begin{proof}
By \Cref{lem:lin-L2} and \Cref{lem:spec-Green}, \eqref{eq:lin} has a unique spectral solution which satisfies
\[
u=\lambda\bG_0(u)+\bG_0(f).
\]
We can now bootstrap the regularity using the results from \Cref{sec:G0}.
We proceed as follows:
\begin{enumerate}
\item
	Let $p_0=2$ and define $p_k$ by
	\[
		\frac{1}{p_k}
		=\frac{1}{p_0}
			-k\frac{2s}{n}
	\]
	Let $k_0$ be the smallest integer  such  that $p_{k_0}>\frac{n}{2s}$. (We can avoid $p_{k_0}=\frac{n}{2s}$ by decreasing $p_0$.)
\item
	For $k=0,\dots,k_0-1$, since $\lambda u+f\in L^{p_k}(\Omega)$, we have
\[\begin{split}
\norm[L^{p_{k+1}}(\Omega)]{u}
&\leq C(\bar\lambda)
	\left(
		\norm[L^{p_0}(\Omega)]{u}
		+\norm[L^{p_0}(\Omega)]{f}
	\right)\\
&\leq C(\bar\lambda)
	\left(
		\norm[L^2(\Omega)]{u}
		+\norm[L^q(\Omega)]{f}
	\right).
\end{split}\]

\item
	Since $\lambda u+f\in L^{p_{k_0}}(\Omega)$ for $p_{k_0}>\frac{n}{2s}$, by \eqref{eq:G0-Lq} we have
\[\begin{split}
\norm[L^\infty(\Omega)]{u}
&\leq C(\bar\lambda)
	\left(
		\norm[L^{p_{k_0}}(\Omega)]{u}
		+\norm[L^{p_{k_0}}(\Omega)]{f}
	\right)\\
&\leq C(\bar\lambda)
	\left(
		\norm[L^2(\Omega)]{u}
		+\norm[L^q(\Omega)]{f}
	\right).
\end{split}\]

\item
If $f \in \delta^\alpha L^\infty (\Omega)$ then, applying the previous steps $u \in L^\infty(\Omega)$.  Applying \Cref{prop:regularisation between L^infty weight spaces} a finite number of times, the result follows.\footnote{ Note that here we need the assumption on $\alpha$.}

\item
If $\gamma < 2s$, since $\lambda u+f\in L^q(\Omega)$ for $q>\frac{n}{2s-\gamma}$, by \eqref{eq:G0-Lq-d} we have
\[\begin{split}
\norm[L^\infty(\Omega)]{u/\delta^\gamma}
&\leq C(q,\bar\lambda)
	\left(
		\norm[L^\infty(\Omega)]{u}
		+\norm[L^{q}(\Omega)]{f}
	\right)\\
&\leq C(q,\bar\lambda)
	\left(
		\norm[L^2(\Omega)]{u}
		+\norm[L^q(\Omega)]{f}
	\right).
\end{split}\]
\end{enumerate}
The last estimate without $\norm[L^2(\Omega)]{u}$ follows from \eqref{lem:lin-L2}.
\end{proof}

\begin{prop}[Solvability for low integrability data]
\label{prop:lin-L1}
Let $f \in L^1 (\Omega; \bG_0 (\delta^\alpha))$ for $\alpha \ge 0$.
Then, there exists a unique $\bG_\lambda ( \delta^\gamma L^\infty (\Omega))$-solution $u \in L^1 (\Omega; \delta^\alpha)$  of \eqref{eq:lin}.
\begin{equation}\label{eq:lin-L1-est}
\norm[L^{1}(\Omega)]{u \delta^\alpha }
\leq C\norm[L^{1}(\Omega)]{f \bG_0 (\delta^\alpha)},
\end{equation}
$C=C(\alpha,\bar\lambda,d_\Sigma(\lambda))$. Furthermore, if $f\in L^{1} (\Omega)$, then $u\in L^p(\Omega)$ for any $p\in [1,\frac {n}{n-2s})$ and
\begin{equation}
\norm[L^{p}(\Omega)]{u}
\leq C\norm[L^{1}(\Omega)]{f},
\end{equation}
for $C=C(p,\bar\lambda,d_\Sigma(\lambda))$.
\end{prop}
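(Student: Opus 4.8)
The plan is to construct $u$ as a limit of solutions with bounded data, for which \Cref{lem:lin-L2} and \Cref{prop:lin-Lq} already apply, and to carry the a priori estimates through the limit. First I would fix the truncations $f_k=(|f|\wedge k)\sign(f)\in L^\infty(\Omega)\subset L^2(\Omega)$, which converge to $f$ in $L^1(\Omega,\bG_0(\delta^\alpha))$ by dominated convergence (since $\bG_0(\delta^\alpha)\in L^\infty(\Omega)$), hence also in $L^1(\Omega,\delta^\gamma)$ because $\bG_0(\delta^\alpha)\ge c_\alpha\delta^\gamma$ by \eqref{eq:comparison weight gamma}. By \Cref{prop:lin-Lq}, the spectral solution $u_k:=\bG_\lambda(f_k)$ lies in $L^\infty(\Omega)$, and since the spectral operator $\bG_\lambda$ is self-adjoint on $L^2(\Omega)$ one has the testing identity $\int_\Omega u_k\psi=\int_\Omega f_k\,\bG_\lambda(\psi)$ for every $\psi\in L^2(\Omega)$.

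The key a priori estimate comes from choosing $\psi=\sign(u_k)\delta^\beta$ with $\beta\in\{\alpha,\gamma\}$, an element of $\delta^\beta L^\infty(\Omega)$ of unit norm. Combining the $\delta^\beta L^\infty$--mapping bound from \Cref{prop:lin-Lq} with $\norm[L^2(\Omega)]{\bG_\lambda(\psi)}\le d_\Sigma(\lambda)^{-1}\norm[L^2(\Omega)]{\psi}\le d_\Sigma(\lambda)^{-1}\norm[L^2(\Omega)]{\delta^\beta}$ from \Cref{lem:lin-L2}, one obtains $|\bG_\lambda(\sign(u_k)\delta^\beta)|\le C(\beta,\bar\lambda,d_\Sigma(\lambda))\,\bG_0(\delta^\beta)$ pointwise, whence
\[
\int_\Omega |u_k|\,\delta^\beta=\int_\Omega f_k\,\bG_\lambda(\sign(u_k)\delta^\beta)\le C\int_\Omega |f_k|\,\bG_0(\delta^\beta).
\]
Applying this to $u_k-u_m=\bG_\lambda(f_k-f_m)$ shows that $\{u_k\}$ is Cauchy both in $L^1(\Omega,\delta^\alpha)$ and in $L^1(\Omega,\delta^\gamma)$ (recalling $\bG_0(\delta^\gamma)\asymp\delta^\gamma$); let $u$ be the common limit, which therefore lies in both spaces. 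Passing to the limit in $\int u_k\psi=\int f_k\bG_\lambda(\psi)$ for $\psi\in\delta^\gamma L^\infty(\Omega)$ identifies $u$ as a $\bG_\lambda(\delta^\gamma L^\infty(\Omega))$--weak solution: the left side converges because $u_k\to u$ in $L^1(\Omega,\delta^\gamma)$ and $\psi/\delta^\gamma\in L^\infty$, while on the right $\bG_\lambda(\psi)/\bG_0(\delta^\alpha)\in L^\infty$ by \Cref{prop:lin-Lq} and \eqref{eq:comparison weight gamma}, and $f_k\to f$ in $L^1(\Omega,\bG_0(\delta^\alpha))$. Estimate \eqref{eq:lin-L1-est} for $u$ then follows by Fatou's lemma. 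Uniqueness is immediate: the difference $w$ of two solutions satisfies $\int_\Omega w\psi=0$ for all $\psi\in\delta^\gamma L^\infty(\Omega)\supseteq L_c^\infty(\Omega)$, so $w=0$ a.e.

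For the second part, taking $\alpha=0$ gives $u=\bG_\lambda(f)\in L^1(\Omega)$ for $f\in L^1(\Omega)$. To upgrade to $L^p(\Omega)$ with $p\in[1,\tfrac{n}{n-2s})$ I would argue by duality: for $\psi\in L_c^\infty(\Omega)$ with $\norm[L^{p'}(\Omega)]{\psi}\le 1$ (so that $p'>\tfrac{n}{2s}$), the $L^q\to L^\infty$ bound of \Cref{prop:lin-Lq} with $q=p'$ gives $\bigl|\int_\Omega u_k\psi\bigr|=\bigl|\int_\Omega f_k\bG_\lambda(\psi)\bigr|\le\norm[L^1(\Omega)]{f_k}\,\norm[L^\infty(\Omega)]{\bG_\lambda(\psi)}\le C(p,\bar\lambda,d_\Sigma(\lambda))\norm[L^1(\Omega)]{f_k}$; taking the supremum over such $\psi$ bounds $\norm[L^p(\Omega)]{u_k}$ uniformly in $k$, and Fatou's lemma passes the bound to $u$. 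I expect the main obstacle to be the simultaneous bookkeeping of the two weights: the a priori control lives naturally in $L^1(\Omega,\delta^\alpha)$, but recognising the limit as a genuine $\bG_\lambda(\delta^\gamma L^\infty(\Omega))$--weak solution requires convergence compatible with test functions in $\delta^\gamma L^\infty(\Omega)$, and the reconciling fact is precisely $\bG_0(\delta^\alpha)\ge c_\alpha\delta^\gamma$ from \eqref{eq:comparison weight gamma}, which embeds $L^1(\Omega,\bG_0(\delta^\alpha))$ into $L^1(\Omega,\delta^\gamma)$ and makes all the limits coherent. A secondary point is keeping the constants dependent only on $\alpha$ (or $p$), $\bar\lambda$ and $d_\Sigma(\lambda)$, which is why I would obtain every estimate by testing against $\bG_\lambda(\psi)$ directly rather than through the iteration $u=\lambda\bG_0(u)+\bG_0(f)$, which would introduce a spurious dependence on $|\lambda|$.
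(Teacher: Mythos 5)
Your proof is correct and follows essentially the same scheme as the paper's: truncate $f$ to $f_k\in L^\infty$, obtain bounded spectral solutions $u_k$ via \Cref{prop:lin-Lq}, derive the weighted $L^1$ a priori bound by testing against $\sign(u_k)\delta^\beta$ (using the pointwise control $|\bG_\lambda(\sign(u_k)\delta^\beta)|\leq C(\beta,\bar\lambda,d_\Sigma(\lambda))\bG_0(\delta^\beta)$), apply this to differences to get a Cauchy sequence in $L^1(\Omega,\delta^\alpha)$ and $L^1(\Omega,\delta^\gamma)$, and pass to the limit.

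The one place where you genuinely diverge is the $L^p$ estimate. You argue by duality: bound $\sup\{|\int u_k\psi|:\psi\in L^\infty_c(\Omega),\ \|\psi\|_{L^{p'}}\le 1\}$ using the $L^{p'}\to L^\infty$ mapping property of $\bG_\lambda$ (valid because $p<\tfrac n{n-2s}\iff p'>\tfrac n{2s}$), then pass the bound through Fatou. The paper instead tests directly against the nonlinear function $\psi=(|u|\wedge k)^{p-1}\sign(u)$, uses the same $L^{p'}\to L^\infty$ bound, and absorbs a factor $\left(\int(|u|\wedge k)^p\right)^{(p-1)/p}$ to close the inequality. Both are textbook devices and yield the same conclusion; your duality version is arguably cleaner here because it works directly with the bounded approximants $u_k$ (where the testing identity against arbitrary $\psi\in L^\infty_c$ is unambiguous) rather than with the limit $u$. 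The paper's choice has a separate advantage downstream: the identical nonlinear test function is reused verbatim in \Cref{thm:plin-L1} to get estimates uniform in $d_\Sigma(\lambda)$ on $\widetilde{E^\perp}$, where a duality argument would have required an extra projection step on the test function. Everything else in your write-up — the role of \eqref{eq:comparison weight gamma} in reconciling $L^1(\Omega,\bG_0(\delta^\alpha))$ with $L^1(\Omega,\delta^\gamma)$, the uniqueness via $L^\infty_c(\Omega)\subset\delta^\gamma L^\infty(\Omega)$, and the decision to test against $\bG_\lambda(\psi)$ rather than iterate $u=\lambda\bG_0(u)+\bG_0(f)$ — matches the paper's reasoning.
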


\begin{proof}
Consider the cut-off data $f_k=(f\wedge k)\vee(-k)\in L^\infty(\Omega)$, which tends to $f$ in $L^1(\Omega;\bG_0 (\delta^\alpha))$ by the Dominated Convergence Theorem, and hence in $L^1 (\Omega; \delta^\gamma)$ by \eqref{eq:comparison weight gamma}. By \Cref{prop:lin-Lq}, there is a unique spectral solution $u_k\in  L^\infty(\Omega)$, satisfying  (recall \Cref{lem:spec-Green} and that $\bG_\lambda$ is self-adjoint) 
\begin{equation}\label{eq:lin-L1-k}
\int_{\Omega}
    u_k\psi
\,dx
=\int_{\Omega}
    f_k\bG_\lambda(\psi)
\,dx,
	 \quad\forall \psi\in \delta^\gamma L^\infty(\Omega).
\end{equation}
If $\psi \in L^\infty(\Omega)$, we can approximate it in $\delta^\gamma L^\infty (\Omega)$ by
\[
\psi_m
=\psi\left(
        \delta^{-\gamma}\wedge m
    \right)\delta^\gamma
\in \delta^\gamma L^\infty(\Omega).
\]
 By \Cref{thm:lin}(1),  $\bG_0 (\psi_m) \to \bG_0 (\psi)$ in $ L^\infty (\Omega)$. Since $u_k, f_k \in L^\infty (\Omega)$, passing to the limit under the integral
\begin{equation}
\label{eq:estimate sequence}
\int_{\Omega}
u_k\psi
\,dx
=\int_{\Omega}
f_k\bG_\lambda(\psi)
\,dx,
\quad\forall \psi\in  L^\infty(\Omega).
\end{equation}

For any $k,\ell\geq0$,
\[
\int_{\Omega}
    (u_k-u_\ell)\psi
\,dx
=\int_{\Omega}
    (f_k-f_\ell)\bG_\lambda(\psi)\,dx,
    \qquad  \forall \psi\in  L^\infty(\Omega).
\]
Taking $\psi=\sign(u_k-u_\ell) \delta^\gamma \in \delta^\gamma L^\infty(\Omega)$, and then using the fact that $ |\bG_\lambda (\delta^\gamma) |\le C (\bar \lambda, d_\Sigma (\lambda))  \delta^\gamma$, we have that
\begin{equation*}%
\int_{\Omega}
|u_k-u_\ell| \delta^\gamma
\,dx
\leq C(\bar\lambda,d_\Sigma(\lambda))
\int_{\Omega}
|f_k-f_\ell| \delta^\gamma
\,dx .
\end{equation*}
Therefore,  $u_k$  is a Cauchy sequence in $L^1 (\Omega; \delta^\gamma)$. Passing to the limit  $k\to+\infty$ in \eqref{eq:estimate sequence}  under the integral sign, we conclude that $u$ is the $\bG_\lambda( \delta^\gamma L^\infty(\Omega))$-weak solution of \eqref{eq:lin}.

Taking $\psi=\sign(u_k-u_\ell) \delta^\alpha \in \delta^\alpha L^\infty(\Omega)$, and using that $|\bG_\lambda(\delta^\alpha)| \le  C(\alpha , \bar\lambda,d_\Sigma(\lambda)) \delta^\alpha$ we recover analogously that $u_k$ is a Cauchy sequence in $L^1(\Omega; \delta^\alpha)$ and has a limit $u$. To see \eqref{eq:lin-L1-est}, we put $\psi=\sign{(u_k)} \delta^\alpha \in \delta^\alpha L^\infty(\Omega)$ in \eqref{eq:estimate sequence}
and use \Cref{prop:lin-Lq} to obtain
\[
\int_{\Omega}
|u_k| \delta^\alpha
\,dx
\leq C(\alpha, \bar\lambda,d_\Sigma(\lambda))
\int_{\Omega}
|f_k|\bG_0 (\delta^\alpha)
\,dx.
\]
Passing to the limit we recover \eqref{eq:lin-L1-est}.

If $\tilde{u}$ is any other solution, then
\[
\int_{\Omega}
    (u-\tilde{u})\psi
\,dx
=0,
	\quad \forall \psi\in \delta^\gamma L^\infty(\Omega),
\]
and by taking $\psi=\sign(u-\tilde{u}) \delta^\gamma $ we must have $u=\tilde{u}$ a.e. in $\Omega$.

For $p \in (1,\frac{n}{n-2s})$, let $k=1,2,\dots$, and take instead $\psi=(|u|\wedge k)^{p-1}\sign(u)\in L^\infty(\Omega)$ which is moreover uniformly bounded in $L^{\frac{p}{p-1}}(\Omega)$. Since $p<\frac{n}{n-2s}$, we have $p' = \frac{p}{p-1} > \frac n {2s}$ and so \Cref{prop:lin-Lq} again applies to yield
\begin{align*}
\int_{\Omega}
	(|u|\wedge k)^{p}
\,dx
&\leq
\int_{\Omega}
	|u|(|u|\wedge k)^{p-1}
\,dx
= \|f \|_{L^1} \Big \|\bG_\lambda (  (|u|\wedge k)^{p-1}\sign(u)  ) \Big \|_{L^\infty}
 \\
& \leq
	C(p,\bar\lambda,d_\Sigma(\lambda))
	\|f \|_{L^1}
		\Big\|
			{(|u|\wedge k)^{p-1}\sign(u)}
			\Big\|_{L^{ \frac{p}{p-1} } (\Omega) } \\
&\leq C(p,\bar\lambda,d_\Sigma(\lambda))
	\left(
		\int_{\Omega}
			(|u|\wedge k)^p
		\,dx
	\right)^{\frac{p-1}{p}}
		\|f \|_{L^1(\Omega)}.
\end{align*}
Hence,
\[
\norm[L^p(\Omega)]{|u|\wedge k}
\leq C	\|f \|_{L^1(\Omega)},
\]
and the \eqref{eq:lin-L1-est} follows by taking $k\to+\infty$.
\end{proof}
\begin{proof}[Proof of \Cref{thm:lin} (1)]
It follows from \Cref{lem:lin-L2}, \Cref{prop:lin-Lq} and \Cref{prop:lin-L1}.
\end{proof}
\subsection{Integration-by-parts formulae}
\label{sec:by-parts}

We prove an integration by parts formula for $\bG_0$ and then for $\bG_\lambda$. This confirms the duality seen in their mapping properties.
\begin{lem}[Integration by parts with $\bG_0$]
\label{lem:by-parts-G0}
 Assume (K1).  Suppose $f\in L^1(\Omega;\delta^\gamma)$ and $g\in \delta^\gamma L^\infty(\Omega)$. Then we have
\[
\int_{\Omega}
	f\bG_0(g)
\,dx
=\int_{\Omega}
	g\bG_0(f)
\,dx.
\]
\end{lem}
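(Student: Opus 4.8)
The plan is to establish the Fubini-type identity $\int_\Omega f\,\bG_0(g) = \int_\Omega g\,\bG_0(f)$ by unfolding the definition of $\bG_0$ and swapping the order of integration, the only nontrivial point being the verification of the integrability hypothesis that licenses the swap. Writing out both sides, we want to show
\[
\int_\Omega f(x)\left(\int_\Omega \cG_0(x,y) g(y)\,dy\right)dx
= \int_\Omega g(y)\left(\int_\Omega \cG_0(y,x) f(x)\,dx\right)dy,
\]
which, since $\cG_0(x,y)=\cG_0(y,x)$ by \textrm{(K1)}, is exactly Fubini's theorem applied to the function $(x,y)\mapsto \cG_0(x,y)f(x)g(y)$ on $\Omega\times\Omega$, once we check this function is in $L^1(\Omega\times\Omega)$.

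The key step is therefore the absolute integrability bound. First I would use $g\in\delta^\gamma L^\infty(\Omega)$ to write $|g(y)|\le \|g/\delta^\gamma\|_{L^\infty}\,\delta^\gamma(y)$, so that
\[
\int_\Omega\!\int_\Omega |\cG_0(x,y)||f(x)||g(y)|\,dy\,dx
\le \Big\|\tfrac{g}{\delta^\gamma}\Big\|_{L^\infty}\int_\Omega |f(x)|\left(\int_\Omega \cG_0(x,y)\delta^\gamma(y)\,dy\right)dx
= \Big\|\tfrac{g}{\delta^\gamma}\Big\|_{L^\infty}\int_\Omega |f(x)|\,\bG_0(\delta^\gamma)(x)\,dx.
\]
Now I invoke the estimate $\bG_0(\delta^\gamma)\asymp\delta^\gamma$ recorded in \eqref{eq:improvement of weights} (the case $\alpha=\gamma$, for which $\alpha+2s=\gamma+2s>\gamma$), which gives $\bG_0(\delta^\gamma)(x)\le C\,\delta^\gamma(x)$. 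Hence the double integral is bounded by $C\|g/\delta^\gamma\|_{L^\infty}\int_\Omega|f|\delta^\gamma = C\|g/\delta^\gamma\|_{L^\infty}\|f\|_{L^1(\Omega;\delta^\gamma)}<\infty$, since $f\in L^1(\Omega;\delta^\gamma)$ by assumption. This finiteness is precisely the hypothesis of Tonelli/Fubini.

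With absolute integrability in hand, Fubini's theorem applies to $(x,y)\mapsto \cG_0(x,y)f(x)g(y)$, and the symmetry $\cG_0(x,y)=\cG_0(y,x)$ converts the iterated integral in one order into the other, yielding $\int_\Omega f\,\bG_0(g)=\int_\Omega g\,\bG_0(f)$. The main (and really only) obstacle is the integrability verification above; the rest is a routine application of Fubini. I should also note in passing that the same computation shows each iterated integral is itself finite a.e., so $\bG_0(g)\in L^\infty$-modulo-$\delta^\gamma$ weight (consistent with \Cref{prop:regularisation between L^infty weight spaces}) and $\bG_0(f)\in L^1(\Omega;\delta^\gamma)$ (consistent with \Cref{prop:regularisation between L^1 weight spaces}), so both sides are well-defined finite numbers.
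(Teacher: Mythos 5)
Your proof is correct and takes essentially the same approach as the paper, which simply asserts that both sides equal the double integral $\int_\Omega\int_\Omega \cG_0(x,y)f(x)g(y)\,dx\,dy$ and leaves the Fubini justification implicit. The absolute integrability check you supply, resting on $\bG_0(\delta^\gamma)\asymp\delta^\gamma$ from \eqref{eq:improvement of weights}, is exactly the detail the paper omits, and you apply it correctly.
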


\begin{proof}
Both sides are equal to
\[
\int_{\Omega}\int_{\Omega}
	\cG_0(x,y)f(x)g(y)
\,dx\,dy.
\]
 The Fubini--Tonelli argument is justified by \Cref{thm:lin}(1). 
\end{proof}
The linear theory  in  \Cref{thm:lin}  and \eqref{eq:improvement of weights} show  in particular that $\bG_\lambda$ is also a bounded operator from
\[\begin{split}
\bG_\lambda: L^1(\Omega;\delta^\gamma)
&\longrightarrow
    L^1(\Omega; \delta^{\gamma })\\
\bG_\lambda: \delta^{\gamma} L^\infty(\Omega)
&\longrightarrow
    \delta^\gamma L^\infty(\Omega).
\end{split}\]
Therefore, an analogous integration by parts formula holds for $\bG_\lambda$ because of that for the differential operator $\cL-\lambda$, even without knowledge of  its  kernel.

\begin{lem}[Integration by parts with $\cL$]
\label{lem:by-parts-L}
Suppose $u,v\in \bH^{2}_{\cL}(\Omega)$. %
Then
\[
\int_{\Omega}
    u{\cL}v
\,dx
=\int_{\Omega}
    v{\cL}u
\,dx.
\]
\end{lem}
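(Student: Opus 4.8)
The plan is to reduce everything to the spectral characterisation of $\bH^{2}_{\cL}(\Omega)$. Recall that, by definition, $v\in\bH^{2}_{\cL}(\Omega)$ means $v\in L^2(\Omega)$ with $\sum_{j\ge1}\lambda_j^{2}\angles{v,\varphi_j}^{2}<+\infty$, and that $\cL v$ is understood in the spectral sense,
\[
\cL v=\sum_{j\ge1}\lambda_j\angles{v,\varphi_j}\varphi_j,
\]
the series converging in $L^2(\Omega)$ precisely because of the summability condition. In particular $\cL v\in L^2(\Omega)$, and since $u\in\bH^{2}_{\cL}(\Omega)\subset L^2(\Omega)$, the integral $\int_\Omega u\,\cL v\,dx=\angles{u,\cL v}$ is a well-defined $L^2$-pairing.

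First I would expand this pairing. Using the $L^2$-convergence of the series for $\cL v$, the continuity of $\angles{\cdot,\cdot}$ in its second argument, and the orthonormality of $(\varphi_j)_{j\ge1}$,
\[
\int_\Omega u\,\cL v\,dx
=\angles{u,\sum_{j\ge1}\lambda_j\angles{v,\varphi_j}\varphi_j}
=\sum_{j\ge1}\lambda_j\angles{v,\varphi_j}\angles{u,\varphi_j}.
\]
The right-hand side is symmetric in $u$ and $v$, so interchanging their roles yields the identical expression for $\int_\Omega v\,\cL u\,dx$, and the lemma follows. For completeness I would also record that this series converges absolutely: since $0<\lambda_1\le\lambda_j$ for all $j$, one has $\lambda_j\le\lambda_1^{-1}\lambda_j^{2}$, whence $\sum_j\lambda_j\angles{u,\varphi_j}^{2}<+\infty$ and $\sum_j\lambda_j\angles{v,\varphi_j}^{2}<+\infty$, and Cauchy--Schwarz gives
\[
\sum_{j\ge1}\bigl|\lambda_j\angles{v,\varphi_j}\angles{u,\varphi_j}\bigr|
\le\Bigl(\sum_{j\ge1}\lambda_j\angles{u,\varphi_j}^{2}\Bigr)^{1/2}
\Bigl(\sum_{j\ge1}\lambda_j\angles{v,\varphi_j}^{2}\Bigr)^{1/2}<+\infty.
\]

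There is no genuine obstacle here; the only point requiring care is that $\cL u$ and $\cL v$ a priori make sense only as the $L^2$-limits of their spectral series, which is exactly what the hypothesis $u,v\in\bH^{2}_{\cL}(\Omega)$ guarantees, so every manipulation above is legitimate. (Alternatively, one could argue through $v=\bG_0(\cL v)$ and \Cref{lem:by-parts-G0}, but the spectral argument is cleaner and avoids the weighted-$L^\infty$ requirement on the test function.)
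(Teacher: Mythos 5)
Your proposal is correct and takes essentially the same route as the paper: both expand in the orthonormal eigenbasis and observe that $\int_\Omega u\,\cL v\,dx$ and $\int_\Omega v\,\cL u\,dx$ reduce to the same symmetric series $\sum_{j\ge1}\lambda_j\angles{u,\varphi_j}\angles{v,\varphi_j}$. Your added Cauchy--Schwarz verification of absolute convergence is a small completeness touch the paper leaves implicit, but the argument is the same.
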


\begin{proof}
In the eigenbasis,
\[
u(x)
=\sum_{j\geq1}
    \angles{u,\varphi_j}
    \varphi_j(x),
        \quad
v(x)
=\sum_{j\geq1}
    \angles{v,\varphi_j}
    \varphi_j(x),
\]
both integrals are equal to
\[
\sum_{j\geq1}
    \lambda_j
    \angles{u,\varphi_j}
    \angles{v,\varphi_j}.\qedhere
\] \qedhere
\end{proof}

\begin{lem}[Integration by parts with $\bG_\lambda$]
\label{lem:by-parts-G-lambda}
Suppose $f\in L^1(\Omega;\delta^\gamma)$ and $g\in \delta^\gamma L^\infty(\Omega)$. Then we have
\[
\int_{\Omega}
    f\bG_\lambda(g)
\,dx
=\int_{\Omega}
    g\bG_\lambda(f)
\,dx.
\]
\end{lem}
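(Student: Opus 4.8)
The plan is to reduce the identity to the $L^2$ setting, where it is essentially free, and then propagate it to the stated function classes by approximation, using the continuity of $\bG_\lambda$ established in \Cref{thm:lin}.

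First, for $f,g\in L^2(\Omega)$ the identity is immediate. Setting $u=\bG_\lambda(f)$, $v=\bG_\lambda(g)\in\bH^{2}_{\cL}(\Omega)$, which by \Cref{lem:spec-Green} solve $\cL u-\lambda u=f$ and $\cL v-\lambda v=g$, \Cref{lem:by-parts-L} gives $\int_\Omega f v=\int_\Omega(\cL u-\lambda u)v=\int_\Omega u\,\cL v-\lambda\int_\Omega uv=\int_\Omega u(g+\lambda v)-\lambda\int_\Omega uv=\int_\Omega ug$, i.e. $\int_\Omega f\bG_\lambda(g)=\int_\Omega g\bG_\lambda(f)$. (Equivalently, by the spectral formula \eqref{eq:G-lambda-f} both sides equal $\sum_{j\ge1}(\lambda_j-\lambda)^{-1}\angles{f,\varphi_j}\angles{g,\varphi_j}$.)

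Next, for general $f\in L^1(\Omega;\delta^\gamma)$ and $g\in\delta^\gamma L^\infty(\Omega)$ I would approximate $f$ by the truncations $f_k=(f\wedge k)\vee(-k)\in L^\infty(\Omega)\subset L^2(\Omega)$, which converge to $f$ in $L^1(\Omega;\delta^\gamma)$ by dominated convergence. Since $g\in\delta^\gamma L^\infty(\Omega)\subset L^2(\Omega)$, the $L^2$ identity applies to each pair $(f_k,g)$. To pass to the limit I would use that, by \Cref{thm:lin}, $\bG_\lambda(g)\in\delta^\gamma L^\infty(\Omega)$: writing $\bG_\lambda(g)=\delta^\gamma\tilde w$ with $\tilde w\in L^\infty(\Omega)$, the left-hand sides $\int_\Omega (f_k\delta^\gamma)\tilde w$ converge to $\int_\Omega (f\delta^\gamma)\tilde w=\int_\Omega f\bG_\lambda(g)$ because $f_k\delta^\gamma\to f\delta^\gamma$ in $L^1(\Omega)$; and that $\bG_\lambda$ is continuous on $L^1(\Omega;\delta^\gamma)$, so $\bG_\lambda(f_k)\delta^\gamma\to\bG_\lambda(f)\delta^\gamma$ in $L^1(\Omega)$, whence, pairing against $g/\delta^\gamma\in L^\infty(\Omega)$, the right-hand sides converge to $\int_\Omega g\bG_\lambda(f)$.

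The argument is genuinely routine; the only point demanding a little care — and the closest thing to an obstacle — is bookkeeping of which factor carries the $\delta^\gamma$ weight so that every pairing is an honest $L^1$–$L^\infty$ duality, together with the remark that the value of $\bG_\lambda$ produced by the $L^2$ spectral construction coincides on $L^2(\Omega)\subset L^1(\Omega;\delta^\gamma)$ with the $L^1(\Omega;\delta^\gamma)$ extension of \Cref{prop:lin-L1}, which follows from uniqueness of $\bG_\lambda(\delta^\gamma L^\infty(\Omega))$-weak solutions. One could instead try to avoid $L^2$ altogether via the resolvent identity $\bG_\lambda=\bG_0+\lambda\bG_0\bG_\lambda$ combined with \Cref{lem:by-parts-G0}, but this produces a cross term $\int_\Omega\bG_\lambda(g)\,\bG_0(f)$ that only closes after reinstating the identity being proved, so the density route is preferable.
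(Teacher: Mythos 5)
Your proposal is correct, and it takes essentially the same two-step route as the paper: first establish the identity for $L^2$ data via the $\cL$-integration-by-parts lemma (or the spectral formula), then pass to the general classes by a density/continuity argument through \Cref{thm:lin}. The one genuine structural difference is worth noting. The paper approximates \emph{both} arguments by $C_c^\infty(\Omega)$ functions, requiring in particular $g_k\to g$ in the norm of $\delta^\gamma L^\infty(\Omega)$, i.e.\ $\|(g_k-g)/\delta^\gamma\|_{L^\infty}\to 0$. You instead observe that $\delta^\gamma L^\infty(\Omega)\subset L^\infty(\Omega)\subset L^2(\Omega)$ and keep $g$ fixed, approximating only $f$ by its truncations. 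This is not merely cosmetic: $C_c^\infty(\Omega)$ is in fact \emph{not} dense in $\delta^\gamma L^\infty(\Omega)$ in the stated norm, since $g_k/\delta^\gamma$ is then continuous and vanishing near $\p\Omega$ while $g/\delta^\gamma$ is a general $L^\infty$ function, so the paper's approximation of $g$ is quietly problematic (it would be fine after weakening to, say, bounded a.e.\ convergence, which is all that is really used). Your one-sided approximation avoids the issue entirely and is the more robust reduction. Your further remark — that the spectral definition of $\bG_\lambda$ on $L^2(\Omega)$ and the $L^1(\Omega;\delta^\gamma)$ extension from \Cref{prop:lin-L1} must be checked to agree on $L^2(\Omega)\subset L^1(\Omega;\delta^\gamma)$, which follows from uniqueness of $\bG_\lambda(\delta^\gamma L^\infty(\Omega))$-weak solutions — is a correct and appropriate point of hygiene that the paper leaves implicit.
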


\begin{proof}
Suppose first $f,g\in L^2(\Omega)$. Write $u=\bG_\lambda(f)$ and $v=\bG_\lambda(g)$, so that $u,v\in \bH^{2}_{\cL}(\Omega)$  by \Cref{lem:lin-L2}. Then\footnote{Alternatively, the same computations can be done using the spectral decomposition using \eqref{eq:G-lambda-f}, as in \cite{BSV}.}
\[\begin{split}
\int_{\Omega}
    f\bG_\lambda(g)
\,dx
=\int_{\Omega}
    (\cL{u}-\lambda u)v
\,dx
=\int_{\Omega}
    (\cL{v}-\lambda v)u
\,dx
=\int_{\Omega}
    g\bG_\lambda(f)
\,dx.
\end{split}\]

In general, let $f_k,g_k\in C_c^\infty(\Omega)\subset L^2(\Omega)$ such that $f_k\to f$ in $L^1(\Omega;\delta^\gamma)$ and $g_k\to g$ in $\delta^\gamma L^\infty(\Omega)$. Since $g_k$ are uniformly bounded in $L^\infty(\Omega)$, $\bG_\lambda(g_k)$ is uniformly bounded in $\delta^\gamma L^\infty(\Omega)$. On the other hand, $f_k$ are uniformly bounded in $L^1(\Omega;\delta^\gamma)$, and so $\bG_\lambda(f_k)$ are uniformly bounded in $L^1(\Omega; \delta^\gamma )$  thanks to the observation made after \Cref{lem:by-parts-G0}.  By \Cref{thm:lin},
\[
\norm[L^1(\Omega)]
    {(\bG_\lambda(f_k)-\bG_\lambda(f))\delta^\gamma }
=\norm[L^1(\Omega)]
    {\bG_\lambda(f_k-f) \delta^\gamma }
\leq C\norm[L^1(\Omega)]
    {(f_k-f)\delta^\gamma},
\]
\[
\norm[L^\infty(\Omega)]
    {\frac {\bG_\lambda(g_k)-\bG_\lambda(g)} {\delta^\gamma }}
=\norm[L^\infty(\Omega)]
    {\frac{\bG_\lambda(g_k-g)}{\delta^\gamma}}
\leq C\norm[L^\infty(\Omega)]{\frac{g_k-g } {\delta^\gamma }}.
\]
This implies
\[\begin{split}
    \int_{\Omega}
        f_k\bG_{\lambda}(g_k)
    \,dx
	&= \int_{\Omega}
	f_k \delta^\gamma \frac{ \bG_{\lambda}(g_k) }{\delta^\gamma}
	\,dx
\longrightarrow \int_{\Omega}
f \delta^\gamma \frac{ \bG_{\lambda}(g) }{\delta^\gamma}
\,dx
=\int_{\Omega}
f\bG_\lambda(g)
\,dx
\end{split}\]
and, analogously,
\[\begin{split}
        \int_{\Omega}
            g_k\bG_\lambda(f_k)
        \,dx
       = \int_{\Omega}
       \frac{ g_k } {\delta^\gamma} \bG_\lambda(f_k) \delta^\gamma
       \,dx
       \longrightarrow
       \int_{\Omega}
       \frac{ g } {\delta^\gamma} \bG_\lambda(f) \delta^\gamma
       \,dx
       =  \int_{\Omega}
       g \bG_\lambda(f)
       \,dx
\end{split}\]
as $k\to+\infty$. This proves the result.
\end{proof}

\subsection{Equivalent notions of solution}
\label{sec:notions}

Suppose $f\in L^1(\Omega;\delta^\gamma)$ and $u\in L^1(\Omega;\delta^\gamma)$. We show that the notions of $L^1$-solution are equivalent, for the equation
\begin{equation}\label{eq:lin-weak-dual}
\begin{cases}
\cL{u}-\lambda u=f
    & \textin \Omega\\
B u=0
    & \texton \p\Omega.
\end{cases}
\end{equation}
For convenience we restate the statements in \Cref{thm:notions}.
\begin{equation*}
u=\bG_\lambda(f)
\quad \textin L^1(\Omega; \delta^\gamma ).
    \tag{1}\label{eq:sense-1}
\end{equation*}
\begin{equation*}
\int_{\Omega}u\psi\,dx
=\int_{\Omega}f\bG_\lambda(\psi)\,dx,
\quad \forall \psi\in \delta^\gamma L^\infty(\Omega).
    \tag{2}\label{eq:sense-2}
\end{equation*}
\begin{equation*}
\int_{\Omega}u(\phi-\lambda\bG_0(\phi))\,dx
=\int_{\Omega}f\bG_0(\phi)\,dx,
\quad \forall \phi\in L_c^\infty(\Omega).
    \tag{3}\label{eq:sense-3}
\end{equation*}
\begin{equation*}
\int_{\Omega}u(\varphi-\lambda\bG_0(\varphi))\,dx
=\int_{\Omega}f\bG_0(\varphi)\,dx,
\quad \forall \varphi\in C_c^\infty(\Omega).
    \tag{4}\label{eq:sense-4}
\end{equation*}
\begin{equation*}
u-\lambda\bG_0(u)=\bG_0(f)
\quad \textin L^1(\Omega;\delta^\gamma).
    \tag{5}\label{eq:sense-5}
\end{equation*}
\begin{equation*}
(\lambda_j-\lambda)\angles{u,\varphi_j}
=\angles{f,\varphi_j}
\quad \forall j\geq1.
    \tag{6}\label{eq:sense-6}
\end{equation*}
We remark that when $f\in L^2(\Omega)$ and $u\in \bH^{2}_{\cL}(\Omega)$, (1) $\Leftrightarrow$ (5) $\Leftrightarrow$ (6) is already proved in \Cref{lem:spec-Green}.

\begin{proof}[Proof of \Cref{thm:notions}]
We prove the equivalences pair by pair.

\eqref{eq:sense-1} $\Leftrightarrow$ \eqref{eq:sense-2}: Given \eqref{eq:sense-1}, we multiply both sides by $\psi\in \delta^\gamma L^\infty(\Omega)$ and integrate by parts using \eqref{lem:by-parts-G-lambda} to get \eqref{eq:sense-2}. Given \eqref{eq:sense-2}, integrate by parts using \eqref{lem:by-parts-G-lambda} to yield
\[
\int_{\Omega}
    (u-\bG_\lambda(f))\psi
\,dx=0,
    \quad \forall \psi\in \delta^\gamma L^\infty(\Omega).
\]
and take $\psi=\sign(u-\bG_\lambda(f))\delta^\gamma$ to get \eqref{eq:sense-1}.

\eqref{eq:sense-2} $\Leftrightarrow$ \eqref{eq:sense-3}: Given \eqref{eq:sense-2} and $\phi\in L_c^\infty(\Omega)$, choose $\zeta=\bG_0(\phi)$ and take $\psi=\phi-\lambda\zeta\in \delta^\gamma L^\infty(\Omega)$ so that by uniqueness $\zeta=\bG_\lambda(\psi)$. Then $\psi=\phi-\lambda\bG_0(\phi)$ and $\bG_\lambda(\psi)=\bG_0(\phi)$, so \eqref{eq:sense-2} implies \eqref{eq:sense-3}. Given \eqref{eq:sense-3} and $\psi\in \delta^\gamma L^\infty(\Omega)$, write $\zeta=\bG_\lambda(\psi)$ and choose $\phi=\psi+\lambda\zeta\in \delta^\gamma L^\infty(\Omega)$, so that $\phi-\lambda\bG_0(\phi)=\psi$ and $\bG_0(\phi)=\bG_\lambda(\psi)$. While $\phi$ is not a valid test function for \eqref{eq:sense-3}, we choose $\phi_K=\phi\mathbf{1}_K\in L_c^\infty(\Omega)$ for any $K\Subset\Omega$. Integrating \eqref{eq:sense-3} with $\phi_K$ by parts using \Cref{lem:by-parts-G0} yields
\[
\int_{\Omega}
    (u-\lambda\bG_0(u))\phi_K
\,dx
=\int_{\Omega}
    \bG_0(f)\phi_K
\,dx,
\]
which, as a $L^1 (\Omega; \delta^\gamma)$--$\delta^\gamma L^\infty (\Omega)$ pairing, can be passed to the limit $K\nearrow\Omega$ using  the  Dominated Convergence Theorem. The resulting identity becomes \eqref{eq:sense-2}.

\eqref{eq:sense-3} $\Leftrightarrow$ \eqref{eq:sense-4}: Since $C_c^\infty(\Omega)\subset L_c^\infty(\Omega)$, ``$\Rightarrow$'' is trivial. Assume now \eqref{eq:sense-4}. Given $\phi\in L_c^\infty(\Omega)$, take $\varphi_\eps=\phi\ast\eta_\eps\in C_c^\infty(\Omega)$, where $\eps>0$ and $\eta_\eps$ is the standard mollifier. Integrating \eqref{eq:sense-4} with $\varphi_\eps$ by parts using \Cref{lem:by-parts-G0}, we have
\[
\int_{\Omega}
    (u-\lambda\bG_0(u))\varphi_\eps
\,dx
=\int_{\Omega}
    \bG_0(f)\varphi_\eps
\,dx,
\]
which, as a $L^1 (\Omega; \delta^\gamma)$--$\delta^\gamma L^\infty (\Omega)$ pairing, can be passed to the limit $\eps\searrow0$ using  the  Dominated Convergence Theorem. The resulting identity becomes \eqref{eq:sense-3}.

\eqref{eq:sense-3} $\Leftrightarrow$ \eqref{eq:sense-5}:\footnote{ Note that if we already know that $f\in L^2(\Omega)$, then this part is not needed because \Cref{lem:spec-Green} implies (1) $\Leftrightarrow$ (2).} Integrate \eqref{eq:sense-3} by parts, we have
\[
\int_{\Omega}
    (u-\lambda\bG_0(u)-\bG_0(f))\phi
\,dx
=0
    \quad \forall \phi\in L_c^\infty(\Omega).
\]
For any $K\Subset\Omega$, choosing $\phi=\sign(u-\lambda\bG_0(u)-\bG_0(f))\mathbf{1}_K$ yields \eqref{eq:sense-5}. The converse is trivial, indeed by just testing \eqref{eq:sense-5} against $\phi\in L_c^\infty(\Omega)$ one obtains \eqref{eq:sense-3}.
\end{proof}

\subsection{Local boundedness}
\label{sec:lin-loc}

In this section we obtain a local boundedness result for $\bG_0$-Green solutions, which are also $\bG_\lambda(L^\infty(\Omega))$-weak solutions according to \Cref{thm:notions}, of the equation
\[\begin{cases}
{\cL}u-\lambda u=f
    & \textin \Omega\\
B u=0
    & \texton \p\Omega.
\end{cases}\]
Thus we study the equivalent integral equation
\begin{equation}\label{eq:lin-int}
u=\lambda \bG_0(u)+\bG_0(f)
	\quad \textin L^1(\Omega;\delta^\gamma).
\end{equation}
The main estimates come from \Cref{thm:G0-map}, which are in turn based on a localized version of the classical Hardy--Littlewood--Sobolev inequality.

\begin{prop}
\label{prop:lin-loc}
The operator $f\mapsto u=\bG_\lambda (f)$ is continuous
	\begin{align*}
L^1(\Omega;\delta^\gamma)
&\longrightarrow
L^{p}_\loc(\Omega),
\quad \textfor
p \in [1,\tfrac{n}{n-2s}) ,\\
L^1(\Omega;\delta^\gamma)
\cap L^{p_0}_\loc(\Omega)
&\longrightarrow
L^{p_1}_\loc(\Omega),
\quad \textfor
p_0\in (1,\tfrac{n}{2s}), \tfrac{1}{p_1} = \tfrac{1}{p_0} - \tfrac {2s} n ,\\
L^1(\Omega;\delta^\gamma)
\cap L^{q_0}_\loc(\Omega)
&\longrightarrow
L^{\infty}_\loc(\Omega),
\quad \textfor q_0\in(\tfrac{n}{2s},+\infty),
\end{align*}
Moreover, for $K\Subset K_0\Subset \Omega$,
\begin{align*}
\norm[L^p(K)]{u} &\leq C(\bar \lambda, p, K)\left(
\norm[L^1(\Omega)]{ u \delta^\gamma} + \norm[L^1(\Omega)]{ f \delta^\gamma}\right), \\
\norm[L^{p_1}(K)]{u}
&\leq C(\bar\lambda,p_0,K_0,K)
\left(
\norm[L^1(\Omega)]{u \delta^\gamma }
+\norm[L^{p_0}(K_0)]{f}
+\norm[L^1(\Omega)]{f\delta^\gamma}
\right),\\
\norm[L^\infty(K)]{u}
&\leq C(\bar\lambda,q_0,K_0,K)
    \left(
		\norm[L^1(\Omega)]{u \delta^\gamma }
		+\norm[L^{q_0}(K_0)]{f}
		+\norm[L^1(\Omega)]{f\delta^\gamma}
	\right).
\end{align*}
\end{prop}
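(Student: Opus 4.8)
The plan is to reduce the proposition to the mapping properties of $\bG_0$ already established in \Cref{prop:local boundedness}, via the integral equation solved by $u=\bG_\lambda(f)$. Since $f\in L^1(\Omega;\delta^\gamma)$, \Cref{thm:lin}~(1) gives $u\in L^1(\Omega;\delta^\gamma)$, and the equivalence \eqref{eq:sense-1}$\Leftrightarrow$\eqref{eq:sense-5} of \Cref{thm:notions} yields
\[
u=\lambda\,\bG_0(u)+\bG_0(f)=\bG_0(\lambda u+f)\qquad\text{a.e. in }\Omega .
\]
Write $g:=\lambda u+f\in L^1(\Omega;\delta^\gamma)$, so that $\|g\delta^\gamma\|_{L^1(\Omega)}\le|\lambda|\,\|u\delta^\gamma\|_{L^1(\Omega)}+\|f\delta^\gamma\|_{L^1(\Omega)}$ (the factor $|\lambda|$ is harmless: $|\lambda|\le\bar\lambda$ whenever $\lambda\ge0$, and $\lambda$ stays bounded in all our applications). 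For the first mapping one then simply invokes \eqref{eq:3.5}: for $K\Subset\Omega$ and $p\in(1,\tfrac n{n-2s})$,
\[
\|u\|_{L^p(K)}=\|\bG_0(g)\|_{L^p(K)}\le C(p,K)\,\|g\delta^\gamma\|_{L^1(\Omega)}\le C(\bar\lambda,p,K)\bigl(\|u\delta^\gamma\|_{L^1(\Omega)}+\|f\delta^\gamma\|_{L^1(\Omega)}\bigr),
\]
and the endpoint $p=1$ follows by H\"older on the bounded set $K$.

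For the second mapping we additionally assume $f\in L^{p_0}_\loc(\Omega)$, $p_0\in(1,\tfrac n{2s})$, so that $g=\lambda u+f$ inherits the local integrability of $u$ and of $f$. Starting from $u\in L^1(\Omega;\delta^\gamma)\cap L^{\tilde p}_\loc(\Omega)$ for some $\tilde p\in(1,\tfrac n{n-2s})$ (by the first part), I would iterate the local Hardy--Littlewood--Sobolev gain \eqref{eq:G0-Lp-loc} of \Cref{prop:local boundedness} ($L^{p}_\loc\to L^{p'}_\loc$ with $\tfrac1{p'}=\tfrac1p-\tfrac{2s}n$) along a finite chain of nested compacts $K\Subset K_1\Subset\cdots\Subset K_m\Subset K_0$: at each step one writes $u=\bG_0(\lambda u+f)$ and feeds in the local integrability of $u$ gained at the previous step, together with $f\in L^{p_0}(K_0)$ and the always-available $\|f\delta^\gamma\|_{L^1(\Omega)}$, capping the source exponent at $p_0$. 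Each step strictly increases the local integrability exponent of $u$ (and never stalls, since the source exponent stays $\le p_0<\tfrac n{2s}$), so after finitely many steps $u\in L^{p_1}_\loc(\Omega)$; accumulating the finitely many constants gives the estimate with constant $C(\bar\lambda,p_0,K_0,K)$.

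For the third mapping, with $f\in L^{q_0}_\loc(\Omega)$, $q_0>\tfrac n{2s}$, I would first iterate \eqref{eq:G0-Lp-loc} exactly as above until $u\in L^{r}_\loc(\Omega)$ for some $r>\tfrac n{2s}$ --- a finite number of steps, since each lowers $\tfrac1p$ by the fixed amount $\tfrac{2s}n$ --- and then apply \eqref{eq:G0-Lq-loc} to $g=\lambda u+f\in L^{\min(r,q_0)}_\loc(\Omega)$ with exponent $\min(r,q_0)>\tfrac n{2s}$, bounding $\|f\|_{L^{\min(r,q_0)}(K_0)}\le C\,\|f\|_{L^{q_0}(K_0)}$ by H\"older on $K_0$. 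This gives $u\in L^\infty_\loc(\Omega)$ with the stated bound.

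There is no genuine obstacle: the whole proposition is the ``$\bG_\lambda$-version'' of \Cref{prop:local boundedness}, obtained by inserting the integral equation $u=\bG_0(\lambda u+f)$. The only point requiring care is bookkeeping --- choosing the number of iterations, the intermediate exponents, and the nested compact sets consistently so that the accumulated constants depend only on the quantities listed in the statement. In particular $d_\Sigma(\lambda)$ never enters, precisely because the right-hand sides involve $u$ itself (controlled a priori in $L^1(\Omega;\delta^\gamma)$) rather than $f$ alone.
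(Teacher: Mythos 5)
Your proposal is correct and follows essentially the same route as the paper: both reduce to the integral equation $u=\bG_0(\lambda u+f)$ (equation \eqref{eq:lin-int}, via \Cref{thm:notions}) and then bootstrap the local mapping properties of $\bG_0$ from \Cref{prop:local boundedness} along a finite chain of nested compact sets. Your bookkeeping of the intermediate $L^{p_0}_\loc\to L^{p_1}_\loc$ step and the $p=1$ endpoint via H\"older is slightly more explicit than the paper's, but the underlying argument is identical.
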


\begin{proof}
We use a standard bootstrap argument using \Cref{prop:local boundedness}.
\begin{enumerate}
\item
	From $\lambda u+f\in L^1(\Omega;\delta^\gamma)$, we see from \eqref{eq:lin-int} that
\begin{align*}
\norm[L^{p}(K)]{u}
&\leq C(p, K)
		\norm[L^1(\Omega)]{(\lambda u + f)\delta^\gamma} \\
&\leq C(p, K)
\norm[L^1(\Omega)]{(\lambda u + f)\delta^\gamma} \\
&\leq C(p, K) (1 + |\lambda|) \left(
\norm[L^1(\Omega)]{ u \delta^\gamma} + \norm[L^1(\Omega)]{ f \delta^\gamma}\right),
\end{align*}
for some fixed $p\in(1,\frac{n}{n-2s})$.
\item
	Fix any $p_0\in(1,\frac{n}{n-2s}\wedge \frac{n}{2s})$. Define $p_k$ by
\[
\frac{1}{p_k}=\frac{1}{p_0}-k\frac{2s}{n}.
\]
Let $k_0$ be smallest integer such that $p_{k_0}\in(\frac{2s}{n},+\infty)$. (One may avoid the critical situation $p_{k_0}=\frac{2s}{n}$ by increasing $p_0$ if necessary.) For any $K\Subset K_0\Subset \Omega$, we choose a sequence of compact subsets
\[
K\Subset K_{k_0} \Subset K_{k_0-1} \Subset \cdots \Subset K_1 \Subset K_0 \Subset \Omega.
\]
\item
	For $k=0,1,\dots,k_0-1$, from $u\in L^{1}(\Omega;\delta^\gamma) \cap L^{p_k}(K_k)$ and $f\in L^1(\Omega;\delta^\gamma)\cap L^\infty(K_0)$, we deduce from \eqref{eq:lin-int} and \eqref{eq:G0-Lp-loc} that
\[\begin{split}
\norm[L^{p_{k+1}}(K_{k+1})]{u}
&\leq C(\bar\lambda,K,K_0)
    \left(
		\norm[L^{p_k}(K_k)]{u}
		+\norm[L^1(\Omega)]{u\delta^\gamma}
		+\norm[L^\infty(K_0)]{f}
		+\norm[L^1(\Omega)]{f\delta^\gamma}
	\right)\\
&\leq C(\bar\lambda,K,K_0)
    \left(
		\norm[L^1(\Omega)]{u\delta^\gamma}
		+\norm[L^\infty(K_0)]{f}
		+\norm[L^1(\Omega)]{f\delta^\gamma}
	\right).
\end{split}\]
\item
	Since $u\in L^1(\Omega;\delta^\gamma) \cap L^{p_{k_0}}(K_{k_0})$ with $p_{k_0}>\frac{2s}{n}$, and $f\in L^1(\Omega;\delta^\gamma)\cap L^\infty(K_0)$, \eqref{eq:lin-int} and \eqref{eq:G0-Lq-loc} implies
\[\begin{split}
\norm[L^\infty(K)]{u}
&\leq C(\bar\lambda,K,K_0)
    \left(
		\norm[L^{p_{k_0}}(K_{k_0})]{u}
		+\norm[L^1(\Omega)]{u\delta^\gamma}
		+\norm[L^\infty(K_0)]{f}
		+\norm[L^1(\Omega)]{f\delta^\gamma}
	\right)\\
&\leq C(\bar\lambda,K,K_0)
    \left(
		\norm[L^1(\Omega)]{u\delta^\gamma}
		+\norm[L^\infty(K_0)]{f}
		+\norm[L^1(\Omega)]{f\delta^\gamma}
	\right). \qedhere
\end{split}\]
\end{enumerate}
\end{proof}

\begin{proof}[Proof of \Cref{thm:lin} (2)]
It follows from \Cref{prop:lin-loc}, and \Cref{thm:lin} (1) which bounds $\norm[L^1(\Omega)]{u\delta^\gamma}$ in terms of $f$ and $d_\Sigma(\lambda)$.
\end{proof}

\section{Projected linear theory}
\label{sec:plin}

Given $\lambda\in\R\setminus\Sigma$, we let $\bar\lambda$, $d_\Sigma(\lambda)$ be as in \eqref{eq:bar-lambda}, \eqref{eq:d-lambda}. We decompose $L^2(\Omega)=E\oplus E^\perp$, where $E$ is the span of eigenfunctions associated to eigenvalues from $\lambda_1$ up to $\bar\lambda$, and $E^\perp$ is its orthogonal complement. For the orthogonal component (i.e. in $E^\perp$) of the solution, we derive estimates independent of $d_\Sigma(\lambda)$.

For any datum $f\in L^1(\Omega;\delta^\gamma)$, we project the equation
\begin{equation}\label{eq:lin-unproj}
\begin{cases}
{\cL}u-\lambda u=f
    & \textin \Omega\\
B u=0
    & \texton \p\Omega,
\end{cases}\end{equation}
into $\widetilde{E^\perp}$, according to
\begin{equation}\label{eq:proj-fu}\begin{split}
f^\perp(x)
&=f(x)
	-\sum_{\varphi_j\in E_i}
		\angles{f,\varphi_j}
		\varphi_j(x)\\
u^\perp(x)
&=u(x)
	-\sum_{\varphi_j\in E_i}
		\dfrac{
			\angles{f,\varphi_j}
		}{
			\lambda_j-\lambda
		}
		\varphi_j(x),
\end{split}\end{equation}
to arrive at
\begin{equation}\label{eq:plin}\begin{cases}
{\cL}u^\perp-\lambda u^\perp=f^\perp
    & \textin \Omega\\
B u^\perp=0
    & \texton \p\Omega.
\end{cases}\end{equation}

\begin{lem}[Projection]
\label{lem:proj}
Let $p\in[1,\frac{n-2s}{n})$ and $q\in(\frac{n}{2s},+\infty)$. Under the projection \eqref{eq:proj-fu},
\begin{enumerate}
\item
	$u\in \bH^{2}_{\cL}(\Omega)$ is a spectral solution of \eqref{eq:lin-unproj} for $f\in L^2(\Omega)$ if and only if $u^\perp\in \bH^{2}_{\cL}(\Omega)\cap E^\perp$ is a spectral solution of \eqref{eq:plin} for $f^\perp\in L^2(\Omega)\cap E^\perp$.
\item
	$u\in L^\infty(\Omega)$ is a spectral solution of \eqref{eq:lin-unproj} for $f\in L^q(\Omega)$ if and only if $u^\perp\in  L^\infty(\Omega)\cap E^\perp$ is a spectral solution of \eqref{eq:plin} for $f^\perp\in L^q(\Omega)\cap E^\perp$.
\item
	$u\in L^1(\Omega;\delta^\gamma)$ is a $\bG_\lambda(\delta^\gamma L^\infty(\Omega))$-weak solution of \eqref{eq:lin-unproj} for $f\in L^1(\Omega;\delta^\gamma)$ if and only if $u^\perp\in %
\widetilde{  E^\perp }$ is a $\bG_\lambda(\delta^\gamma L^\infty(\Omega))$-weak solution of \eqref{eq:plin} for $f^\perp\in %
\widetilde{  E^\perp }$.
\end{enumerate}
\end{lem}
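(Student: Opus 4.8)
The plan is to handle all three parts at once, exploiting a single structural observation: the difference between the unprojected and projected objects, namely $u-u^\perp=\sum_{j=1}^{I}\frac{\angles{f,\varphi_j}}{\lambda_j-\lambda}\varphi_j$ and $f-f^\perp=\sum_{j=1}^{I}\angles{f,\varphi_j}\varphi_j$, is in each case a \emph{finite} linear combination of the low eigenfunctions $\varphi_1,\dots,\varphi_I$, all of which lie in $\delta^\gamma L^\infty(\Omega)$ by \Cref{prop:eigenfunctions regularity}. Consequently, membership in $\bH^{2}_{\cL}(\Omega)$, $L^\infty(\Omega)$, $L^q(\Omega)$ or $L^1(\Omega;\delta^\gamma)$ is preserved when passing between $u$ and $u^\perp$ (resp.\ between $f$ and $f^\perp$), and the proof reduces to checking two things: (a) that the projected pair actually lies in $E^\perp$ (resp.\ in $\widetilde{E^\perp}$), and (b) that the defining identity of the relevant notion of solution is transferred back and forth. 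For each part the map $u\mapsto u^\perp$ is affine with a fixed translation determined by $f$, hence a bijection, so ``$u$ is a solution iff $u^\perp$ is a solution'' is to be read as: projecting a solution of \eqref{eq:lin-unproj} gives a solution of \eqref{eq:plin}, and adding back the finite sum to a solution of \eqref{eq:plin} recovers a solution of \eqref{eq:lin-unproj}.

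For parts (1) and (2) this is pure bookkeeping. The spectral identity $(\lambda_j-\lambda)\angles{u,\varphi_j}=\angles{f,\varphi_j}$, $j\ge1$, splits along $L^2(\Omega)=E\oplus E^\perp$: for $j\le I$ it reads $\angles{u,\varphi_j}=\frac{\angles{f,\varphi_j}}{\lambda_j-\lambda}$, which is exactly the assertion that $u^\perp=P_{E^\perp}u$ and $f^\perp=P_{E^\perp}f$; the equations for $j\ge I+1$ are literally the spectral equations of \eqref{eq:plin} for $(u^\perp,f^\perp)$, while for $j\le I$ both sides vanish. Running the same identities in reverse reconstructs $u$ from $u^\perp$, giving the converse. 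Part (2) in addition invokes \Cref{prop:lin-Lq} for the existence of the $L^\infty$ spectral solution and the inclusions $\delta^\gamma L^\infty(\Omega)\subset L^\infty(\Omega)\subset L^q(\Omega)$ to see $f^\perp\in L^q(\Omega)$ when $f\in L^q(\Omega)$.

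The substantive case is (3). Assume $u$ is a $\bG_\lambda(\delta^\gamma L^\infty(\Omega))$-weak solution of \eqref{eq:lin-unproj}. Testing against $\psi=\varphi_k\in\delta^\gamma L^\infty(\Omega)$ for $k\le I$, and using $\bG_\lambda(\varphi_k)=\frac{1}{\lambda_k-\lambda}\varphi_k$ (immediate from the spectral definition \eqref{eq:G-lambda-f}), yields $(\lambda_k-\lambda)\angles{u,\varphi_k}=\angles{f,\varphi_k}$; hence $\angles{u^\perp,\varphi_k}=0$ for all $k\le I$, i.e.\ $u^\perp\in\widetilde{E^\perp}$, and $f^\perp\in\widetilde{E^\perp}$ by orthonormality. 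For a general test function $\psi\in\delta^\gamma L^\infty(\Omega)$ one writes, using finiteness of the sum, $\int_\Omega f^\perp\bG_\lambda(\psi)=\int_\Omega f\bG_\lambda(\psi)-\sum_{j=1}^{I}\angles{f,\varphi_j}\int_\Omega\varphi_j\bG_\lambda(\psi)$, and evaluates $\int_\Omega\varphi_j\bG_\lambda(\psi)$ by the integration-by-parts formula \Cref{lem:by-parts-G-lambda} (legitimate since $\psi\in\delta^\gamma L^\infty(\Omega)\subset L^1(\Omega;\delta^\gamma)$ and $\varphi_j\in\delta^\gamma L^\infty(\Omega)$) together with $\bG_\lambda(\varphi_j)=\frac{\varphi_j}{\lambda_j-\lambda}$, giving $\int_\Omega\varphi_j\bG_\lambda(\psi)=\int_\Omega\psi\bG_\lambda(\varphi_j)=\frac{\angles{\psi,\varphi_j}}{\lambda_j-\lambda}$. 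Substituting the weak equation $\int_\Omega u\psi=\int_\Omega f\bG_\lambda(\psi)$ and comparing with $\int_\Omega u^\perp\psi=\int_\Omega u\psi-\sum_{j=1}^{I}\frac{\angles{f,\varphi_j}}{\lambda_j-\lambda}\angles{\varphi_j,\psi}$ produces $\int_\Omega u^\perp\psi=\int_\Omega f^\perp\bG_\lambda(\psi)$, which is the $\bG_\lambda(\delta^\gamma L^\infty(\Omega))$-weak formulation of \eqref{eq:plin}. The converse follows by reading the same chain of identities backwards, defining $u=u^\perp+\sum_{j=1}^{I}\frac{\angles{f,\varphi_j}}{\lambda_j-\lambda}\varphi_j$.

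I expect the main (mild) obstacle to be part (3): one must keep each bracket well-defined and every interchange of finite sum and integral justified, which all rests on the index set $\{1,\dots,I\}$ being finite and on $|f|\delta^\gamma$, $|\psi|\delta^\gamma$ being integrable because $\varphi_j,\psi\in\delta^\gamma L^\infty(\Omega)$. Beyond \Cref{lem:by-parts-G-lambda} and the spectral action of $\bG_\lambda$ on eigenfunctions there is no genuine analytic difficulty; as the authors indicate, the lemma is an exercise in linear algebra layered on top of the linear theory of \Cref{sec:general-lin}.
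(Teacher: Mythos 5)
Your proof is correct and follows the same line as the paper, which simply observes that the difference $u-u^\perp$ (resp. $f-f^\perp$) is a finite linear combination of eigenfunctions $\varphi_j\in\delta^\gamma L^\infty(\Omega)$ and that the equations are then verified directly; you have merely written out the verification in full, in particular the transfer of the weak formulation in part (3) via \Cref{lem:by-parts-G-lambda} and $\bG_\lambda(\varphi_j)=\varphi_j/(\lambda_j-\lambda)$. No gaps.
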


\begin{proof}
Since there are finitely many smooth eigenfunctions involved in \eqref{eq:proj-fu}, the projected and unprojected functions lie in the same space, and their equations are directly verified.
\end{proof}

\begin{cor}[Well-posedness]
Let $p\in[1,\frac{n-2s}{n})$ and $q\in(\frac{n}{2s},+\infty)$. Under the projection \eqref{eq:proj-fu}:
\begin{enumerate}
\item
	For any $f^\perp\in L^2(\Omega)\cap E^\perp$, there exists a unique spectral solution $u^\perp\in \bH^{2}_{\cL}(\Omega)\cap E^\perp$ of \eqref{eq:plin}.
\item
	For any $f^\perp\in L^q(\Omega)\cap E^\perp$, there exists a unique spectral solution $u^\perp\in L^\infty(\Omega) \cap E^\perp$ of \eqref{eq:plin}.
\item
	For any $f^\perp\in \widetilde{  E^\perp }$, there exists a unique $\bG_\lambda(\delta^\gamma L^\infty(\Omega))$-weak solution $u^\perp\in \widetilde{  E^\perp }$ of \eqref{eq:plin}.
\end{enumerate}
\end{cor}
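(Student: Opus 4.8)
The plan is to obtain each of the three assertions as an immediate consequence of the corresponding \emph{unprojected} well-posedness result, the key observation being that when the datum already lies in the orthogonal subspace, the projection \eqref{eq:proj-fu} acts trivially on it.

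First I would feed $f^\perp$ into the unprojected problem \eqref{eq:lin-unproj}, that is, take $f := f^\perp$. Since $f^\perp \in E^\perp$ (resp. $\widetilde{E^\perp}$), we have $\langle f^\perp, \varphi_j\rangle = 0$ for every $\varphi_j \in E$, so the component of $f$ along $E$ vanishes and the projection formula \eqref{eq:proj-fu} collapses to $u^\perp = u$. By Lemma \ref{lem:proj}, a function $u$ solves \eqref{eq:lin-unproj} with this datum if and only if $u = u^\perp$ solves \eqref{eq:plin}. Hence for part (1), Lemma \ref{lem:lin-L2} applied to $f^\perp \in L^2(\Omega)$ furnishes a unique spectral solution $u \in \bH^{2}_{\cL}(\Omega)$; its expansion $u = \sum_j (\lambda_j - \lambda)^{-1}\langle f^\perp, \varphi_j\rangle \varphi_j$ has vanishing coefficients along $E$, so $u \in E^\perp$ and $u^\perp := u$ lies in $\bH^{2}_{\cL}(\Omega) \cap E^\perp$, with uniqueness there inherited from uniqueness in $\bH^{2}_{\cL}(\Omega)$. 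Part (2) runs identically, with Proposition \ref{prop:lin-Lq} in place of Lemma \ref{lem:lin-L2}: for $f^\perp \in L^q(\Omega)$, $q > \tfrac{n}{2s}$, the unique spectral solution $u \in L^\infty(\Omega)$ again has spectral coefficients $\langle f^\perp, \varphi_j\rangle/(\lambda_j - \lambda)$, which vanish on $E$, so $u^\perp := u \in L^\infty(\Omega) \cap E^\perp$.

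For part (3) the same route uses Proposition \ref{prop:lin-L1} (with weight $\delta^\gamma$), producing a unique $\bG_\lambda(\delta^\gamma L^\infty(\Omega))$-weak solution $u = \bG_\lambda(f^\perp) \in L^1(\Omega;\delta^\gamma)$ of \eqref{eq:lin-unproj}. Here no global eigenfunction expansion is available, so the one step requiring care is checking $u \in \widetilde{E^\perp}$. I would verify this by testing the defining identity \eqref{eq:sense-2} against $\psi = \varphi_j$, which is legitimate because $\varphi_j \in \delta^\gamma L^\infty(\Omega)$ by Proposition \ref{prop:eigenfunctions regularity}, and using $\bG_\lambda(\varphi_j) = (\lambda_j - \lambda)^{-1}\varphi_j$ from \eqref{eq:G-lambda-f}: this yields $\langle u, \varphi_j\rangle = (\lambda_j - \lambda)^{-1}\langle f^\perp, \varphi_j\rangle$, which is $0$ for $\varphi_j \in E$ since $f^\perp \in \widetilde{E^\perp}$. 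Thus $u \in \widetilde{E^\perp}$, and by Lemma \ref{lem:proj}(3) it is a $\bG_\lambda(\delta^\gamma L^\infty(\Omega))$-weak solution of \eqref{eq:plin}. Uniqueness inside $\widetilde{E^\perp} \subset L^1(\Omega;\delta^\gamma)$ is the uniqueness statement of Proposition \ref{prop:lin-L1}: the difference $w$ of two solutions satisfies $\int_\Omega w\psi\,dx = 0$ for all $\psi \in \delta^\gamma L^\infty(\Omega)$, and the choice $\psi = \sign(w)\delta^\gamma$ forces $w = 0$ a.e.

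I do not expect a genuine obstacle: this is essentially a bookkeeping corollary of the unprojected theory. The only mildly delicate point is the membership $u \in \widetilde{E^\perp}$ in part (3), which — lacking a spectral decomposition in the $L^1(\Omega;\delta^\gamma)$ setting — is settled by pairing the weak formulation with individual eigenfunctions rather than attempting a full expansion.
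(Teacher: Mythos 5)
Your proof is correct and follows the same route as the paper's, which simply cites \Cref{thm:lin} and \Cref{lem:proj} without detail; you make explicit the two observations that make that citation work, namely that the projection \eqref{eq:proj-fu} acts as the identity on data already in the orthogonal class, and that the resulting solution indeed lies in $E^\perp$ (resp.\ $\widetilde{E^\perp}$), settled in parts (1)--(2) by the spectral expansion and in part (3) by pairing the weak formulation against individual eigenfunctions $\varphi_j\in\delta^\gamma L^\infty(\Omega)$.
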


\begin{proof}
The result follows directly from \Cref{thm:lin} and \Cref{lem:proj}.
\end{proof}

The underlying reason for the uniform estimate is seen in the $L^2$-theory.

\begin{lem}[Uniform estimate for $L^2$ data]
\label{lem:plin-L2}
Suppose $f^\perp\in L^2(\Omega)\cap E^\perp$ and $u^\perp\in \bH^{2}_{\cL}(\Omega)\cap E^\perp$ is a spectral solution of \eqref{eq:plin}. Then
\[
u^\perp (x)
=\sum_{j = I+1}^{+\infty}
	\dfrac{
		\angles{f,\varphi_j}
	}{
		\lambda_j-\lambda
	}\varphi_j(x).
\]
In particular,
\[
\norm[L^2(\Omega)]
	{u^\perp}
\leq C(\bar\lambda)
	\norm[L^2(\Omega)]
		{f^\perp}.
\]
\end{lem}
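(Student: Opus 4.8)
The plan is to compute $u^\perp$ explicitly from its defining spectral relations and then read off the $L^2$ bound from Parseval's identity, the decisive point being that the denominators $\lambda_j-\lambda$ stay bounded away from zero \emph{uniformly in $\lambda$} on the relevant range of indices $j\ge I+1$.

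First I would unwind the definition: $u^\perp\in\bH^{2}_{\cL}(\Omega)\cap E^\perp$ being a spectral solution of \eqref{eq:plin} means $(\lambda_j-\lambda)\angles{u^\perp,\varphi_j}=\angles{f^\perp,\varphi_j}$ for every $j\ge1$. Since both $u^\perp$ and $f^\perp$ lie in $E^\perp=\spanned\{\varphi_{I+1},\varphi_{I+2},\ldots\}$, the pairings $\angles{u^\perp,\varphi_j}$ and $\angles{f^\perp,\varphi_j}$ vanish for $1\le j\le I$ — consistent with the equation because $\lambda\notin\Sigma$ forces $\lambda_j-\lambda\ne0$. For $j\ge I+1$ one has $\angles{f^\perp,\varphi_j}=\angles{f,\varphi_j}$, since $f-f^\perp=P_E(f)\in E$ is orthogonal to all $\varphi_j$ with $j\ge I+1$; hence $\angles{u^\perp,\varphi_j}=\angles{f,\varphi_j}/(\lambda_j-\lambda)$. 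Expanding $u^\perp\in L^2(\Omega)$ in the orthonormal eigenbasis $(\varphi_j)$ then yields the stated formula $u^\perp=\sum_{j\ge I+1}\angles{f,\varphi_j}(\lambda_j-\lambda)^{-1}\varphi_j$.

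For the norm estimate I would invoke Parseval, so that $\norm[L^2(\Omega)]{u^\perp}^2=\sum_{j\ge I+1}\angles{f^\perp,\varphi_j}^2(\lambda_j-\lambda)^{-2}$. The key observation is that for $j\ge I+1$ one has $\lambda_j\ge\lambda_{I+1}=\bar\lambda$, while by the definition \eqref{eq:bar-lambda} of $I$ we have $\lambda\le\lambda_I<\bar\lambda$; therefore $\lambda_j-\lambda\ge\bar\lambda-\lambda_I>0$, a positive quantity determined by the (universally fixed) spectrum alone — and, as recorded in the Remark, governed in fact by the gap $\lambda_{I+1}-\lambda_I$ — which is crucially \emph{independent of} $d_\Sigma(\lambda)$. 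Consequently $\norm[L^2(\Omega)]{u^\perp}^2\le(\bar\lambda-\lambda_I)^{-2}\sum_{j\ge I+1}\angles{f^\perp,\varphi_j}^2=C(\bar\lambda)^2\norm[L^2(\Omega)]{f^\perp}^2$, which is the claim.

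There is no substantial obstacle: the lemma is essentially bookkeeping on the eigenfunction decomposition. The only step deserving attention — and the reason the constant does not degenerate, in contrast with \Cref{lem:lin-L2} where it blows up like $d_\Sigma(\lambda)^{-1}$ — is precisely the lower bound $\lambda_j-\lambda\ge\bar\lambda-\lambda_I$ for $j\ge I+1$, which hinges on having projected out exactly the eigenfunctions of index up to $I$, so that $\lambda$ is pinned below $\lambda_I$ no matter how close it comes to some $\lambda_i\le\lambda_I$.
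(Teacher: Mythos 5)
Your argument is correct and is precisely the elaboration of the paper's one-line proof (``an immediate consequence of the eigenfunction expansion''): spectral relations plus Parseval, with the decisive observation that $\lambda_j-\lambda\ge\bar\lambda-\lambda_I>0$ for $j\ge I+1$. You also correctly flag that the constant really depends on the gap $\lambda_{I+1}-\lambda_I$, matching the paper's own remark.
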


\begin{proof}
This is an immediate consequence of the eigenfunction expansion.
\end{proof}

Then we obtain boundary regularity with uniform estimates.

\begin{prop}[Uniform estimate for highly-integrable data]
\label{prop:plin-Lq}
Let $f^\perp\in L^q(\Omega)\cap E^\perp$ for $q\in(\frac{n}{2s},+\infty)$, and $u^\perp\in L^\infty(\Omega)\cap E^\perp$ be a spectral solution of \eqref{eq:plin}. Then
\[
\norm[L^\infty(\Omega)]
    {u^\perp}
\leq C(q,\bar\lambda)
	\norm[L^q(\Omega)]
		{f^\perp}.
\]
Moreover, if $f^\perp\in \delta^\alpha L^\infty(\Omega)\cap E^\perp$  for $\alpha>-1-\gamma$ , then $u\in \bG_0(\delta^\alpha)L^\infty(\Omega)\cap E^\perp$ satisfies
\begin{equation*}
\norm[L^\infty(\Omega)]
{ \frac {u^\perp} {\bG_0 (\delta^\alpha)}  }
\leq C(\alpha, \bar\lambda)
\norm[L^\infty(\Omega)]
{ \frac{ f^\perp } {\delta^\alpha} }.
\end{equation*}
\end{prop}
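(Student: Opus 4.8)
The statement is the $E^{\perp}$-counterpart of Proposition~\ref{prop:lin-Lq}, and the plan is to re-run that proof essentially verbatim, with a single decisive change: the (gap-dependent) $L^{2}$ seed estimate of Lemma~\ref{lem:lin-L2} is replaced by the uniform one of Lemma~\ref{lem:plin-L2}, and one checks that the whole bootstrap never leaves $E^{\perp}$.

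First I would record the structural facts. Since $f^{\perp}\in L^{q}(\Omega)\cap E^{\perp}\subset L^{2}(\Omega)$ (in the low-exponent range $\tfrac{n}{2s}<q<2$, which forces $\tfrac{n}{2s}<2$, after one preliminary application of $\bG_{0}$, which lands in $L^{\infty}\subset L^{2}$ by Proposition~\ref{prop:G_0 is L^p to L^infty}), the spectral solution is $u^{\perp}=\bG_{\lambda}(f^{\perp})$ and, by Lemma~\ref{lem:spec-Green}, satisfies the integral identity $u^{\perp}=\lambda\,\bG_{0}(u^{\perp})+\bG_{0}(f^{\perp})$. Because $\bG_{0}\varphi_{j}=\lambda_{j}^{-1}\varphi_{j}$, the operator $\bG_{0}$ maps $E^{\perp}$ into $E^{\perp}$ — on $L^{2}$, and hence on each of the refined spaces of Section~\ref{sec:G0} by the continuity statements proved there — so $\lambda u^{\perp}+f^{\perp}$ and all iterates below remain in $E^{\perp}$ and the asserted membership $u^{\perp}\in L^{\infty}(\Omega)\cap E^{\perp}$ (resp.\ $\bG_{0}(\delta^{\alpha})L^{\infty}(\Omega)\cap E^{\perp}$) is automatic. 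Then I would perform the Hardy–Littlewood–Sobolev bootstrap exactly as in the proof of Proposition~\ref{prop:lin-Lq}: set $p_{0}=2$, $\tfrac{1}{p_{k}}=\tfrac{1}{p_{0}}-k\tfrac{2s}{n}$, let $k_{0}$ be the least index with $p_{k_{0}}>\tfrac{n}{2s}$, use Lemma~\ref{lem:plin-L2} to get $\norm[L^{2}(\Omega)]{u^{\perp}}\le C(\bar\lambda)\norm[L^{2}(\Omega)]{f^{\perp}}\le C(\bar\lambda)\norm[L^{q}(\Omega)]{f^{\perp}}$ (this is the only step that differs from the unprojected case, and it is precisely what removes $d_{\Sigma}(\lambda)$), and feed $u^{\perp}=\bG_{0}(\lambda u^{\perp}+f^{\perp})$ into Proposition~\ref{prop:regularisation L1 to Marcinkiewicz} at the first $k_{0}$ steps and Proposition~\ref{prop:G_0 is L^p to L^infty} at the last, noting that $p_{k_{0}-1}\le\tfrac{n}{2s}<q$ so that $\norm[L^{p_{k}}(\Omega)]{f^{\perp}}\le C\norm[L^{q}(\Omega)]{f^{\perp}}$ throughout. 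This yields
\[
\norm[L^{\infty}(\Omega)]{u^{\perp}}\le C(q,\bar\lambda)\bigl(\norm[L^{2}(\Omega)]{u^{\perp}}+\norm[L^{q}(\Omega)]{f^{\perp}}\bigr)\le C(q,\bar\lambda)\norm[L^{q}(\Omega)]{f^{\perp}},
\]
with constant free of $d_{\Sigma}(\lambda)$.

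For the weighted bound, given $f^{\perp}\in\delta^{\alpha}L^{\infty}(\Omega)\cap E^{\perp}$ with $\alpha>-1-\gamma$, I would iterate the Green identity as in the remark following Lemma~\ref{lem:spec-Green}, writing $u^{\perp}=\lambda^{k}\bG_{0}^{k}(u^{\perp})+\sum_{m=1}^{k}\lambda^{m-1}\bG_{0}^{m}(f^{\perp})$; when $\alpha<-\tfrac12$ so that $f^{\perp}\notin L^{2}$, one starts instead from the $L^{1}(\Omega;\delta^{\gamma})$-weak solution, which is legitimate since $\delta^{\alpha}L^{\infty}(\Omega)\subset L^{1}(\Omega;\delta^{\gamma})$ for $\alpha>-1-\gamma$ and since $u^{\perp}=\bG_{\lambda}(f^{\perp})$ there by Proposition~\ref{prop:lin-L1}. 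By Proposition~\ref{prop:regularisation between L^infty weight spaces}, each application of $\bG_{0}$ raises a weight $\delta^{\beta}$ to $\delta^{(\beta+2s)\wedge\gamma}$ (logarithms only at the borderline), so $\bG_{0}^{m}(f^{\perp})\in\bG_{0}(\delta^{\alpha})L^{\infty}$ with $\norm[L^{\infty}(\Omega)]{\bG_{0}^{m}(f^{\perp})/\bG_{0}(\delta^{\alpha})}\le C\norm[L^{\infty}(\Omega)]{f^{\perp}/\delta^{\alpha}}$ for every $m\ge1$, while for $k=k(\alpha)$ large enough $\bG_{0}^{k}(u^{\perp})\in\delta^{\gamma}L^{\infty}\subseteq\bG_{0}(\delta^{\alpha})L^{\infty}$ (using $\gamma\ge(\alpha+2s)\wedge\gamma$ and $\bG_{0}(\delta^{\alpha})\ge c_{\alpha}\delta^{\gamma}$ from \eqref{eq:comparison weight gamma}), with $L^{\infty}$-norm controlled by $\norm[L^{\infty}(\Omega)]{u^{\perp}}$. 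The latter is bounded by $C\norm[L^{\infty}(\Omega)]{f^{\perp}/\delta^{\alpha}}$: directly from the first part when $\alpha\ge0$, and otherwise because the same finitely many extra applications of $\bG_{0}$ first push $f^{\perp}$ into $L^{\infty}$, whence the first part applies. Summing the $k+1$ terms gives $\norm[L^{\infty}(\Omega)]{u^{\perp}/\bG_{0}(\delta^{\alpha})}\le C(\alpha,\bar\lambda)\norm[L^{\infty}(\Omega)]{f^{\perp}/\delta^{\alpha}}$.

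There is no genuine obstacle here: the analytic content is entirely the HLS bootstrap of Proposition~\ref{prop:lin-Lq} and the weight-shifting of Proposition~\ref{prop:regularisation between L^infty weight spaces}, both already at hand. The two points needing care are (i) verifying that $\bG_{0}$, and hence the whole iteration, respects the orthogonality relations defining $E^{\perp}$ — immediate from $\bG_{0}\varphi_{j}=\lambda_{j}^{-1}\varphi_{j}$ — and (ii) inserting the $d_{\Sigma}$-free $L^{2}$ bound of Lemma~\ref{lem:plin-L2} in place of Lemma~\ref{lem:lin-L2}, so that every constant depends only on $\bar\lambda$ (equivalently on the gap $\lambda_{I+1}-\lambda_{I}$) and on $q$ or $\alpha$. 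The only genuine bookkeeping is the low-integrability range $\tfrac{n}{2s}<q<2$ and the singular weights $\alpha<0$ (in particular $\alpha=-b$, the case of interest for large solutions), both absorbed by a finite number of preliminary applications of $\bG_{0}$ and, for $\alpha$ very negative, by working with the $L^{1}(\Omega;\delta^{\gamma})$-weak solution from the outset.
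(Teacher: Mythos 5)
Your proof is correct and follows essentially the same route as the paper's: in both, the decisive step is to combine the bootstrap estimate of Proposition~\ref{prop:lin-Lq} with the uniform (gap-independent) $L^2$ seed of Lemma~\ref{lem:plin-L2}, the only difference being that the paper invokes Proposition~\ref{prop:lin-Lq} as a black box while you re-run the Hardy--Littlewood--Sobolev iteration explicitly. One small simplification: since $E^\perp\subset L^2(\Omega)$ by definition, the membership $f^\perp\in L^2(\Omega)$ is automatic, so the preliminary application of $\bG_0$ you introduce for the range $\tfrac{n}{2s}<q<2$ is unnecessary.
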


\begin{proof}
By \Cref{prop:lin-Lq} applied to \eqref{eq:plin},
\[
\norm[L^\infty(\Omega)]{u^\perp}
\leq
    C(q,\bar\lambda)
    \left(
        \norm[L^2(\Omega)]{u^\perp}
        +\norm[L^q(\Omega)]{f^\perp}
    \right).
\]
Then we use \Cref{lem:plin-L2} to bound
\[
\norm[L^2(\Omega)]{u^\perp}
\leq C(\bar\lambda)
	\norm[L^2(\Omega)]{f^\perp}
\leq C(\bar\lambda)
	\norm[L^\infty(\Omega)]{f^\perp}.
\]
Analogously with weights.
\end{proof}

The next result on the uniform estimate in the projected space is crucial and we call it a theorem due to its importance.
\begin{thm}[Uniform estimate for low integrability data]
\label{thm:plin-L1}
 Let $n>2s$. Let $\alpha>-1-\gamma$. 
Let $f^\perp\in L^1(\Omega; \bG_0 (\delta^\alpha) )\cap \widetilde{ E^{\perp} } $, and $u^\perp\in L^1(\Omega; \delta^\alpha)\cap \widetilde{ E^{\perp} } $ be the $\bG_\lambda( \delta^\gamma L^\infty(\Omega))$-weak solution of \eqref{eq:plin}. Then
\[
\norm[L^1(\Omega)]{u^\perp \delta^\alpha }
\leq C\norm[L^1(\Omega)]
	{f^\perp \bG_0 (\delta^\alpha) },
\]
\[
\norm[L^1(\Omega)]{u^\perp}
\leq C\norm[L^1(\Omega)]
{f \delta^\gamma }, \quad \text{ and }
\quad
\norm[L^p(\Omega)]{u^\perp}
\leq C\norm[L^1(\Omega)]
    {f}, \ \forall p\in[1, \tfrac{n}{n-2s}).
\]
Here $C=C(\bar\lambda)$.
\end{thm}

\begin{remark}
	
	In fact, the stated dependence on $\bar \lambda = \lambda_{I+1}$ can be actually improved to the smaller quantity $\lambda_{I+1} - \lambda_{I}$. Note that all the differences $\lambda_{j+1}-\lambda_j$ depend only on the domain and the operator and are therefore universal.
\end{remark}

\begin{remark}
	The corresponding $L^{p_0}$--$L^{p_1}$
	estimates in \Cref{thm:lin}  also hold, with a uniform constant depending on $\bar \lambda$.
\end{remark}

\begin{proof}
Recall that $\bG_\lambda(\psi)$ is the spectral solution of
\[\begin{cases}
{\cL}\bG_\lambda(\psi)-\lambda\bG_\lambda(\psi)=\psi
    & \textin\Omega\\
B\bG_\lambda(\psi)=0
    & \texton\p\Omega.
\end{cases}\]
Projecting this equation into $\widetilde{E^\perp}$ according to \eqref{eq:proj-fu}, we have
\[\begin{cases}
{\cL}\bG_\lambda(\psi)^\perp
-\lambda\bG_\lambda(\psi)^\perp
=\psi^\perp
    & \textin\Omega\\
B\bG_\lambda(\psi)^\perp
=0
    & \texton\p\Omega,
\end{cases}\]
where
\[\begin{split}
\psi^\perp(x)
&=\psi(x)
	-\sum_{j=1}^{I}
		\angles{\psi,\varphi_j}
		\varphi_j(x)\\
\bG_\lambda(\psi)^\perp(x)
&=\bG_\lambda(\psi)(x)
	-\sum_{j=1}^{I}
		\dfrac{
			\angles{\psi,\varphi_j}
		}{
			\lambda_j-\lambda
		}
		\varphi_j(x) = \bG_\lambda (\psi^\perp).
\end{split}\]
By \Cref{prop:plin-Lq}
for $\psi \in \delta^\alpha L^\infty(\Omega)$
\begin{equation*}
	|\bG_\lambda (\psi )^\perp |  \le C(\alpha, \bar \lambda) \left \| \frac{ \psi^\perp  } {\delta^\alpha}\right \|_{L^\infty (\Omega)} \bG_0 (\delta^\alpha).
\end{equation*}

 For  any $f^\perp\in E^\perp$,
\[\begin{split}
\int_{\Omega}
    f^\perp \bG_\lambda(\psi)
\,dx
&=\int_{\Omega}
    f^\perp\left(
        \sum_{j=1}^{I}
            \dfrac{
                \angles{\psi,\varphi_j}
            }{
                \lambda_j-\lambda
            }\varphi_j
        +\bG_\lambda(\psi)^\perp
    \right)
\,dx
=\int_{\Omega}
    f^\perp \bG_\lambda(\psi)^\perp
\,dx.
\end{split}\]
Therefore, if $f \in L^1 (\Omega; \bG_0(\delta^\alpha))$ we have that
\begin{equation}\label{eq:plin-weak-perp}
\int_{\Omega}
    u^\perp \psi
\,dx
=\int_{\Omega}
    f^\perp \bG_\lambda(\psi)^\perp
\,dx,
    \quad \forall \psi\in \delta^\alpha L^\infty(\Omega).
\end{equation}
We take $\psi=\sign(u^\perp) \delta^\alpha \in \delta^\alpha L^\infty(\Omega)$ such that
\[
\int_{\Omega}
    |u^\perp| \delta^\alpha
\,dx
=\int_{\Omega}
    f^\perp
    \bG_\lambda(\psi)^\perp
\,dx
\leq
    C(\alpha ,\bar\lambda)
    \int_{\Omega}
        |f^\perp| \bG_0(\delta^\alpha )
    \,dx.
\]

 When  $p\in(1,\frac{n}{ n - 2s})$, for $k=1,2,\dots$ we take $\psi=(|u^\perp|\wedge k)^{p-1}\sign(u^\perp)\in L^\infty(\Omega)$, which has uniformly bounded $L^{\frac{p}{p-1}}(\Omega)$-norm, such that
\[
\int_\Omega (|u^\perp|\wedge k)^{p} \le
\int_{\Omega}
    |u^\perp|(|u^\perp|\wedge k)^{p-1}
\,dx
\leq
    C(q,\bar\lambda)
    \norm[L^{\frac{p}{p-1}}(\Omega)]
        {\psi^\perp}
    \int_{\Omega}
        |f^\perp|
    \,dx.
\]
Since, by H\"{o}lder's inequality,
\[\begin{split}
\norm[L^{\frac{p}{p-1}}(\Omega)]{\psi^\perp}
&\leq
    \norm[L^{\frac{p}{p-1}}(\Omega)]{\psi}
    +\sum_{j=1}^{I}
        \angles{\psi,\varphi_j}
        \norm[L^{\frac{p}{p-1}}(\Omega)]{\varphi_j}\\
&\leq
    \norm[L^{\frac{p}{p-1}}(\Omega)]{\psi}
    \left(
        1+\sum_{j=1}^{I}
            \norm[L^{p}(\Omega)]{\varphi_j}
            \norm[L^{\frac{p}{p-1}}(\Omega)]{\varphi_j}
    \right)\\
&\leq C(p,\bar\lambda)
    \norm[L^{\frac{p}{p-1}}(\Omega)]{\psi}
\end{split}\]
and
\[\begin{split}
\norm[L^{\frac{p}{p-1}}(\Omega)]{\psi}
&=\left(
    \int_{\Omega}
        (|u^\perp|\wedge k)
    \,dx
\right)^{p-1}
\leq
    \left(
        \int_{\Omega}
            (|u^\perp|\wedge k)^p
        \,dx
    \right)^{\frac{p-1}{p}},
\end{split}\]
We conclude that
\[
\norm[L^p(\Omega)]{|u^\perp|\wedge k}
\leq C(p,\bar\lambda)
    \int_{\Omega}
        |f^\perp|
    \,dx.
\]
The desired estimate then follows by taking $k\to+\infty$.

A similar argument applies for $p = 1$.
\end{proof}

\begin{proof}[Proof of \Cref{thm:plin}]
The existence-uniqueness of solutions of the projected equation follows from \Cref{lem:proj} and \Cref{thm:lin}. Thus $\bG_\lambda$ is well-defined in corresponding subspaces of $E^\perp$. The uniform estimates follow from \Cref{lem:plin-L2}, \Cref{prop:plin-Lq}, and  \Cref{thm:plin-L1}.
\end{proof}

We turn to the uniform $L^\infty$-bound in compact subsets.
\begin{prop}
\label{prop:plin-loc}
Suppose $f^\perp\in  L^\infty_{\loc} (\Omega) \cap \widetilde{ E^\perp }$, and $u^\perp\in \widetilde{E^\perp}$ is the $\bG_\lambda(\delta^\gamma L^\infty(\Omega))$-weak solution of \eqref{eq:plin}. Then for any $K\Subset K_0\Subset \Omega$,
\[
\norm[L^\infty(K)]{u^\perp}
\leq C(\bar\lambda,K,K_0)
	\left(
	\norm[L^\infty(K_0)]{f^\perp}
	+\norm[L^1(\Omega)]
		{f^\perp\delta^\gamma}
	\right).
\]
\end{prop}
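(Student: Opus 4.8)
The plan is to run, for $u^\perp$, exactly the interior bootstrap already carried out in \Cref{prop:lin-loc}, but to feed it the $d_\Sigma(\lambda)$-free weighted $L^1$-estimate of \Cref{thm:plin-L1} in place of the $d_\Sigma(\lambda)$-dependent bound of \Cref{thm:lin}; since the bootstrap mechanism itself relies only on the $\lambda$-independent mapping properties of $\bG_0$ collected in \Cref{prop:local boundedness}, this produces an interior sup bound uniform in $d_\Sigma(\lambda)$. First I would record the setup: by \Cref{lem:proj}, $u^\perp=\bG_\lambda(f^\perp)$ is the solution of the projected equation \eqref{eq:plin}; by hypothesis $f^\perp\in L^1(\Omega;\delta^\gamma)\cap L^\infty_\loc(\Omega)$, and $u^\perp\in\widetilde{E^\perp}\subset L^1(\Omega;\delta^\gamma)$; and by the equivalence of notions in \Cref{thm:notions} applied to \eqref{eq:plin}, $u^\perp$ satisfies the integral identity
\[
u^\perp=\lambda\,\bG_0(u^\perp)+\bG_0(f^\perp)=\bG_0(\lambda u^\perp+f^\perp)\qquad\text{a.e. in }\Omega .
\]

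The decisive step is the uniform control of the weighted $L^1$-norm of $u^\perp$: since $\bG_0(\delta^\gamma)\asymp\delta^\gamma$ by \eqref{eq:improvement of weights}, \Cref{thm:plin-L1} with $\alpha=\gamma$ gives
\[
\norm[L^1(\Omega)]{u^\perp\delta^\gamma}\le C(\bar\lambda)\,\norm[L^1(\Omega)]{f^\perp\delta^\gamma},
\]
with $C$ independent of $d_\Sigma(\lambda)$. I would then apply \Cref{prop:lin-loc} directly to $u^\perp=\bG_\lambda(f^\perp)$: fixing some $q_0\in(\tfrac{n}{2s},+\infty)$ and using that $\norm[L^{q_0}(K_0)]{f^\perp}\le|K_0|^{1/q_0}\norm[L^\infty(K_0)]{f^\perp}$ because $f^\perp\in L^\infty_\loc(\Omega)$, that proposition yields, for $K\Subset K_0\Subset\Omega$,
\[
\norm[L^\infty(K)]{u^\perp}\le C(\bar\lambda,K,K_0)\left(\norm[L^1(\Omega)]{u^\perp\delta^\gamma}+\norm[L^\infty(K_0)]{f^\perp}+\norm[L^1(\Omega)]{f^\perp\delta^\gamma}\right),
\]
and substituting the previous display closes the estimate. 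If one prefers, the bootstrap can be inlined: split $f^\perp=f^\perp\chi_{K_0}+f^\perp\chi_{\Omega\setminus K_0}$, bound $\bG_0(f^\perp\chi_{\Omega\setminus K_0})$ on $K$ by $C(K,K_0)\norm[L^1(\Omega)]{f^\perp\delta^\gamma}$ from the kernel estimate \textrm{(K1)}, and run finitely many steps $L^{p_k}_\loc\to L^{p_{k+1}}_\loc$ via \eqref{eq:G0-Lp-loc} ending with one step into $L^\infty_\loc$ via \eqref{eq:G0-Lq-loc}, applied to the source $\lambda u^\perp+f^\perp$, before absorbing $\norm[L^1(\Omega)]{u^\perp\delta^\gamma}$ through \Cref{thm:plin-L1}.

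The only genuine obstacle is to prevent a factor of $d_\Sigma(\lambda)$ from re-entering, and this is exactly what \Cref{thm:plin-L1} takes care of: the underlying $L^2$-bound on $u^\perp$ is governed by $\sup_{j\ge I+1}(\lambda_j-\lambda)^{-1}\le(\lambda_{I+1}-\lambda_I)^{-1}$ — a spectral-gap quantity, which is what the ``$\bar\lambda$''-dependence of the statement really means — rather than by $d_\Sigma(\lambda)^{-1}$. The interior bootstrap contributes only the geometric constants of the nested compacts and the coefficient $\lambda$ appearing in the integral equation, precisely as in \Cref{prop:lin-loc}; and since the projection \eqref{eq:proj-fu} alters neither the coefficients of the equation nor the admissible classes of $f$ and $u$, no new difficulty arises there.
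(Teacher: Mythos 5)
Your proposal is correct and coincides with the paper's own argument: apply \Cref{prop:lin-loc} to the projected equation \eqref{eq:plin} to obtain a local $L^\infty$ bound in terms of $\norm[L^1(\Omega)]{u^\perp\delta^\gamma}$ plus the data, then absorb that term through the uniform estimate of \Cref{thm:plin-L1} with $\alpha=\gamma$ (using $\bG_0(\delta^\gamma)\asymp\delta^\gamma$). The only step you make explicit that the paper leaves implicit is replacing $\norm[L^{q_0}(K_0)]{f^\perp}$ by $\norm[L^\infty(K_0)]{f^\perp}$ via H\"older's inequality on the compact set $K_0$.
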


\begin{proof}
By \Cref{prop:lin-loc} applied to \eqref{eq:plin},
\[
\norm[L^\infty(K)]{u^\perp}
\leq C(\bar\lambda,K,K_0)
	\left(
		\norm[L^1(\Omega)]{u^\perp \delta^\gamma }
		+\norm[L^\infty(K_0)]{f^\perp}
		+\norm[L^1(\Omega)]
			{f^\perp\delta^\gamma}
	\right).
\]
Now \Cref{thm:plin-L1} with $\alpha=\gamma$ allows $\norm[L^1(\Omega)]{u^\perp \delta^\gamma }$ to be absorbed by $\norm[L^1(\Omega)]{f^\perp\delta^\gamma}$ independently of $d_\Sigma(\lambda)$.
\end{proof}

We conclude this section by controlling  the  norms of $f^\perp$ in terms of those of $f$.
\begin{lem}[Norms of $f^\perp$]
\label{lem:norm-f}
For any $f\in L^1(\Omega;\delta^\gamma)\cap L^\infty_\loc(\Omega)$,
\[
\norm[L^1(\Omega)]{f^\perp \delta^\gamma}
\leq C(\bar\lambda)
	\norm[L^1(\Omega)]{f \delta^\gamma},
\]
and for any $K_0\Subset\Omega$,
\[
\norm[L^\infty(K_0)]{f^\perp}
\leq C(\bar\lambda,K_0)
	\left(
		\norm[L^\infty(K_0)]{f}
		+\norm[L^1(\Omega)]{f\delta^\gamma}
	\right).
\]
\end{lem}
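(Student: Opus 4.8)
The plan is to read off the two bounds directly from the formula $f^\perp = f - \sum_{j=1}^{I}\langle f,\varphi_j\rangle\varphi_j$ of \eqref{eq:f perp}, using the optimal boundary regularity of the eigenfunctions. By \Cref{prop:eigenfunctions regularity}, each $\varphi_j$ satisfies $|\varphi_j|\le\kappa_j\delta^\gamma$ on $\Omega$; and since $I$ is determined by $\bar\lambda=\lambda_{I+1}$, the finite family $\varphi_1,\dots,\varphi_I$ and the constants $\kappa_1,\dots,\kappa_I$ depend on $\lambda$ only through $\bar\lambda$. This is exactly what makes the asserted uniform dependence possible, and it is the only structural observation needed.

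First I would estimate the projection coefficients. For $f\in L^1(\Omega;\delta^\gamma)$ and $1\le j\le I$ the pairing $\langle f,\varphi_j\rangle$ is well defined because $|f\varphi_j|\le\kappa_j|f|\delta^\gamma\in L^1(\Omega)$, and
\[
|\langle f,\varphi_j\rangle|
\le\int_\Omega|f|\,|\varphi_j|\,dx
\le\kappa_j\int_\Omega|f|\delta^\gamma\,dx
=\kappa_j\,\norm[L^1(\Omega)]{f\delta^\gamma}.
\]
For the weighted $L^1$ bound, the triangle inequality together with $|\varphi_j|\delta^\gamma\le\kappa_j\delta^{2\gamma}\le\kappa_j(\diam\Omega)^{2\gamma}$ gives
\[
\norm[L^1(\Omega)]{f^\perp\delta^\gamma}
\le\norm[L^1(\Omega)]{f\delta^\gamma}
+\sum_{j=1}^{I}|\langle f,\varphi_j\rangle|\;\norm[L^1(\Omega)]{\varphi_j\delta^\gamma}
\le\Bigl(1+C\sum_{j=1}^{I}\kappa_j^2\Bigr)\norm[L^1(\Omega)]{f\delta^\gamma},
\]
which is the first claim with $C(\bar\lambda)=1+C\sum_{j=1}^{I}\kappa_j^2$. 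For the local $L^\infty$ bound, using $\norm[L^\infty(K_0)]{\varphi_j}\le\norm[L^\infty(\Omega)]{\varphi_j}\le\kappa_j(\sup_\Omega\delta)^\gamma$, the same splitting yields
\[
\norm[L^\infty(K_0)]{f^\perp}
\le\norm[L^\infty(K_0)]{f}
+\sum_{j=1}^{I}|\langle f,\varphi_j\rangle|\;\norm[L^\infty(\Omega)]{\varphi_j}
\le\norm[L^\infty(K_0)]{f}+C(\bar\lambda)\norm[L^1(\Omega)]{f\delta^\gamma}.
\]

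There is no genuine obstacle here: the argument is elementary and rests only on the $\delta^\gamma L^\infty$-regularity of the eigenfunctions and the finiteness of the sum. The one point worth monitoring is that every constant be expressed through $I$ and $\varphi_1,\dots,\varphi_I$ alone, so that it depends on $\lambda$ only via $\bar\lambda$ and never on $d_\Sigma(\lambda)$; in fact the $K_0$-dependence in the second estimate can be dropped, and is retained in the statement only for notational uniformity with the neighbouring local bounds.
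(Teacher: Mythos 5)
Your proof is correct and follows essentially the same route as the paper's: split $f^\perp=f-\sum_{j\le I}\langle f,\varphi_j\rangle\varphi_j$, bound the coefficients $|\langle f,\varphi_j\rangle|$ via the eigenfunction estimate $|\varphi_j|\le\kappa_j\delta^\gamma$ from \Cref{prop:eigenfunctions regularity}, and then apply the triangle inequality. Your additional observation that the $L^\infty$ bound holds with a $K_0$-independent constant is also correct and consistent with what the paper's own computation actually produces.
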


\begin{proof}
Using \Cref{prop:eigenfunctions regularity}, we compute
\[\begin{split}
\int_{\Omega}
    |f^\perp|\delta^\gamma
\,dx
&\leq
\int_{\Omega}
    |f|\delta^\gamma
\,dx
+\sum_{j=1}^{I}
    \int_{\Omega}
        |f||\varphi_j|
    \,dx
    \int_{\Omega}
        |\varphi_j|\delta^\gamma
    \,dx\\
&\leq
    \int_{\Omega}
        |f|\delta^\gamma
    \,dx
    \left(
        1+\sum_{j=1}^{I}
        \norm[L^\infty(\Omega)]{\varphi_j/\delta^\gamma}
        \norm[L^1(\Omega)]{\varphi_j\delta^\gamma}
    \right),
\end{split}\]
and for any $K_0\Subset \Omega$,
\begin{align*}
\norm[L^\infty(K_0)]{f^\perp}
&\leq
	\norm[L^\infty(K_0)]{f}
	+\sum_{j=1}^{I}
		\int_{\Omega}
			|f||\varphi_j|
		\,dx
		\cdot\norm[L^\infty(K_0)]{\varphi_j}\\
&\leq
	\norm[L^\infty(K_0)]{f}
	+\left(
		\sum_{j=1}^{I}
			\norm[L^\infty(\Omega)]
				{\varphi_j/\delta^\gamma}
			\norm[L^\infty(\Omega)]
				{\varphi_j}
	\right)
	\norm[L^1(\Omega)]{f\delta^\gamma}.\qedhere
\end{align*}
\end{proof}

\begin{proof}[Proof of \Cref{thm:plin-2}]
The existence-uniqueness follows from \Cref{lem:proj} and \Cref{thm:lin}. The uniform global $L^1(\Omega;\delta^\gamma)$ estimate follows from \Cref{thm:plin-L1} as well as \Cref{lem:norm-f}, while the uniform local $L^\infty$ bound follows from \Cref{prop:plin-loc} and \Cref{lem:norm-f}.
\end{proof}

\section{Maximum principle}
\label{sec:MP}

In this section we prove a maximum principle for weighted $L^1$ functions when $\lambda<\lambda_1$, where the operator is known to be positive in the $L^2$-sense.  We assume (K1) throughout the section.  Suppose $f\in L^1(\Omega;\delta^\gamma)$ and consider the $\bG_\lambda(\delta^\gamma L^\infty(\Omega))$-weak solution $u\in L^1(\Omega;\delta^\gamma)$ of
\begin{equation}\label{eq:MP-f}\begin{cases}
{\cL}u-\lambda u=f
    & \textin \Omega\\
Bu=0
    & \texton \p\Omega.
\end{cases}\end{equation}
We will use the Poincar\'{e} inequality \cite[Proposition 5.1]{BFV}
\begin{equation}
\label{eq:Poincare}
	\lambda_1 \int_\Omega \phi \bG_{0} (\phi)\,dx
\le \int_\Omega \phi^2 \,dx,
    \quad \forall \phi\in L^2(\Omega).
\end{equation}

\begin{lem}[Maximum principle for spectral solutions]
\label{lem:MP-L2}
Suppose $f\in L^2(\Omega)$ and $u\in \bH^{2}_{\cL}(\Omega)$ is a spectral solution of \eqref{eq:MP-f}. If $f\geq 0$ and $\lambda<\lambda_1$, then $u\geq 0$.
\end{lem}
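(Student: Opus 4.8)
The idea is to bring the equation into its Green-function form and test against the negative part $u_- := \max\{-u,0\}$, exploiting the positivity of $\cG_0$. By \Cref{lem:spec-Green}, the spectral solution satisfies the a.e.\ identity $u-\lambda\bG_0(u)=\bG_0(f)$; since $u\in\bH^{2}_{\cL}(\Omega)\subset L^2(\Omega)$ and $\bG_0$ maps $L^2(\Omega)$ into itself, both sides are in $L^2(\Omega)$, and as $|u_-|\le|u|\in L^2(\Omega)$ we may pair the identity with $u_-$. Writing $u=u_+-u_-$ with $u_+:=\max\{u,0\}$, so that $u_+u_-=0$ a.e., this gives
\[
-\norm[L^2(\Omega)]{u_-}^2-\lambda\angles{\bG_0(u),u_-}=\angles{\bG_0(f),u_-}.
\]

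Next I would split the two remaining terms using the symmetry of $\cG_0$ and, crucially, the pointwise bound $\cG_0\ge 0$ from \textrm{(K1)}. From $\bG_0(u)=\bG_0(u_+)-\bG_0(u_-)$ we get $\angles{\bG_0(u),u_-}=\angles{\bG_0(u_+),u_-}-\angles{\bG_0(u_-),u_-}$, where both pairings are nonnegative because the kernel and the functions $u_\pm$ are nonnegative; similarly $\angles{\bG_0(f),u_-}=\angles{f,\bG_0(u_-)}\ge 0$ since $f\ge 0$ and $\bG_0(u_-)\ge 0$. Rearranging,
\[
\lambda\,\angles{\bG_0(u_-),u_-}=\norm[L^2(\Omega)]{u_-}^2+\lambda\,\angles{\bG_0(u_+),u_-}+\angles{f,\bG_0(u_-)}.
\]
Here the ``cross term'' $\angles{\bG_0(u_+),u_-}$ appears; unlike the classical computation, where $\nabla u_+\cdot\nabla u_-=0$, it does not vanish, but the nonnegativity of $\cG_0$ gives it a definite sign, and this is the only genuinely nonlocal point — everything else is bookkeeping plus the Poincar\'e inequality.

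Finally, for $0\le\lambda<\lambda_1$ the terms $\lambda\angles{\bG_0(u_+),u_-}$ and $\angles{f,\bG_0(u_-)}$ are nonnegative, so $\norm[L^2(\Omega)]{u_-}^2\le\lambda\angles{\bG_0(u_-),u_-}$. If $\lambda=0$ this already forces $u_-=0$; if $\lambda>0$, the Poincar\'e inequality \eqref{eq:Poincare} with $\phi=u_-$ yields $\angles{\bG_0(u_-),u_-}\le\lambda_1^{-1}\norm[L^2(\Omega)]{u_-}^2$, hence $\norm[L^2(\Omega)]{u_-}^2\le(\lambda/\lambda_1)\norm[L^2(\Omega)]{u_-}^2$ with $\lambda/\lambda_1<1$, whence $u_-=0$ and $u\ge 0$. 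For $\lambda<0$ the operator $\cL-\lambda$ is coercive and the maximum principle is classical (and, for the concrete examples RFL and SFL, a consequence of the sub-Markov property of the associated semigroup, cf.\ \cite{BFV}); alternatively, the conclusion for \emph{all} $\lambda<\lambda_1$ at once follows by running the same computation on the quadratic-form side, using Kato's inequality $\cE(u_+,u_-)\le 0$ together with the Rayleigh bound $\cE(u_-,u_-)\ge\lambda_1\norm[L^2(\Omega)]{u_-}^2$. The main obstacle is precisely the control of the cross term; once the sign of $\cG_0$ (equivalently, Kato's inequality) is invoked, the Poincar\'e inequality closes the argument.
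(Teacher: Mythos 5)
Your proof is essentially the paper's argument: pass to the Green-form identity $u-\lambda\bG_0(u)=\bG_0(f)$, test against the negative part $u_-$, use the pointwise bound $\cG_0\ge 0$ from \textrm{(K1)} (in place of Kato's inequality) to give the cross term $\angles{\bG_0(u_+),u_-}$ a definite sign, and close with the Poincar\'e inequality \eqref{eq:Poincare}. The only genuine difference is a useful one: you correctly note that discarding $\lambda\angles{\bG_0(u_+),u_-}$ (and $\angles{f,\bG_0(u_-)}$) as nonnegative is legitimate only for $\lambda\ge 0$, whereas the paper's displayed inequality $\int u_-^2\le\lambda\int u_-\bG_0(u_-)$ silently assumes this sign. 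Your separate treatment of $\lambda<0$ fills that gap; both alternatives you mention work, though be aware that the bilinear-form route via $\cE(u_+,u_-)\le 0$ needs more than \textrm{(K1)} alone (it requires a nonnegative jump kernel $\cJ$), which holds for the RFL and SFL examples but is not among the paper's standing hypotheses — so the coercivity/Neumann-type argument, or simply observing that for $\lambda<0$ one may keep $\lambda\angles{\bG_0(u_-),u_-}\le 0$ and argue directly, is the safer completion within the paper's framework.
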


\begin{proof}
We write
\begin{equation*}
	\int_\Omega u \psi\,dx
= \lambda \int_\Omega u \bG_0 (\psi) \,dx
 + \int_\Omega f \bG_0 (\psi)\,dx.
\end{equation*}
Taking $ \psi = - u_- \le 0 $  where $u_-=\max\set{-u,0}$,  we have that
\begin{align*}
	\int_\Omega u_-^2 \,dx
&= - \lambda \int_\Omega u_+ \bG_0 (u_-) \,dx
    +  \lambda \int_\Omega u_- \bG_0 (u_-) \,dx
    -  \int_\Omega f \bG_0 (u_-) \,dx.
\end{align*}
Since $u_- \ge 0$, it is clear that $\bG_0 (u_-) \ge 0$. Then we have
\begin{align*}
		\int_\Omega u_-^2 \,dx
&\le   \lambda \int_\Omega u_- \bG_0 (u_-) \,dx.
\end{align*}
Applying the Poincar\'{e}  inequality \eqref{eq:Poincare}, 
\begin{align*}
	\lambda_1 \int_\Omega u_- \bG_0 (u_- ) \,dx
 &\le \lambda \int_\Omega u_- \bG_0 (u_-) \,dx.
\end{align*}
Since $\lambda < \lambda_1$ we have that
\begin{equation*}
	\int_\Omega u_- \bG_0 (u_- ) \,dx= 0.
\end{equation*}
If $u_- \not \equiv 0$, then $\bG_0 (u_-) > 0$ in $\Omega$ and we arrive at a contradiction. Hence, we deduce that $u_- = 0$.
\end{proof}

Now \Cref{thm:MP} is restated and proved in the following

\begin{lem}[Maximum principle for $\bG_\lambda(\delta^\gamma L^\infty(\Omega))$-weak solutions]
\label{lem:MP-L1}
Let $f\in L^1(\Omega;\delta^\gamma)$ and $u\in L^1(\Omega;\delta^\gamma)$ be a $\bG_\lambda(\delta^\gamma L^\infty(\Omega))$-weak solution of \eqref{eq:MP-f}. If $f\geq 0$ and $\lambda<\lambda_1$, then $u\geq 0$.
\end{lem}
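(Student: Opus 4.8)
The plan is to derive the weighted $L^1$ statement from the $L^2$ maximum principle \Cref{lem:MP-L2} by the same truncation-and-approximation scheme used to construct the $L^1$ theory in \Cref{prop:lin-L1}. Since $\lambda<\lambda_1$ is fixed, $\lambda\notin\Sigma$ and $d_\Sigma(\lambda)=\lambda_1-\lambda>0$, so the constants appearing below are harmless.

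First I would truncate: set $f_k=f\wedge k$ for $k\in\bN$. As $f\ge 0$, each $f_k$ is nonnegative and bounded, hence $f_k\in L^2(\Omega)\cap L^\infty(\Omega)$, and $f_k\to f$ in $L^1(\Omega;\delta^\gamma)$ by the Dominated Convergence Theorem. By \Cref{lem:lin-L2} and \Cref{prop:lin-Lq}, the spectral solution $u_k=\bG_\lambda(f_k)$ lies in $\bH^{2}_{\cL}(\Omega)\cap L^\infty(\Omega)$; in particular \Cref{lem:MP-L2} applies (this is where $\lambda<\lambda_1$ enters, through the Poincar\'e inequality \eqref{eq:Poincare}), giving $u_k\ge 0$ a.e. in $\Omega$.

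Next I would pass to the limit. By \Cref{prop:lin-L1} (estimate \eqref{eq:lin-L1-est} with $\alpha=\gamma$, using $\bG_0(\delta^\gamma)\asymp\delta^\gamma$),
\[
\int_\Omega |u_k-u_\ell|\,\delta^\gamma\,dx
=\int_\Omega |\bG_\lambda(f_k-f_\ell)|\,\delta^\gamma\,dx
\le C\int_\Omega |f_k-f_\ell|\,\delta^\gamma\,dx,
\]
so $(u_k)$ is Cauchy in $L^1(\Omega;\delta^\gamma)$ and converges there to $\bG_\lambda(f)$, which by \Cref{thm:notions} equals the given $\bG_\lambda(\delta^\gamma L^\infty(\Omega))$-weak solution $u$. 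Passing to an a.e.-convergent subsequence and using $u_k\ge 0$ yields $u\ge 0$ a.e. in $\Omega$, as claimed.

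The substantive step is entirely contained in \Cref{lem:MP-L2}: there the cross term $\int_\Omega u_+\,\bG_0(u_-)\,dx$ has a definite sign because $\cG_0\ge 0$, and combining this with \eqref{eq:Poincare} and $\lambda<\lambda_1$ forces $u_-\equiv 0$. The $L^1$ upgrade presents no further obstacle beyond invoking the continuity of $\bG_\lambda$ on $L^1(\Omega;\delta^\gamma)$ already established in \Cref{thm:lin}. The one point to be careful about is that the approximating data must be chosen in $L^2(\Omega)$ (truncation does this), so that $u_k$ is genuinely the spectral solution to which \Cref{lem:MP-L2} refers, while simultaneously preserving both nonnegativity and convergence in $L^1(\Omega;\delta^\gamma)$.
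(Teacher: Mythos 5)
Your proof is correct, but it takes a genuinely different route from the paper's. You argue by approximation: truncate $f$ to $f_k=f\wedge k\in L^\infty(\Omega)$, apply the $L^2$ maximum principle (\Cref{lem:MP-L2}) to the spectral solutions $u_k=\bG_\lambda(f_k)$ to get $u_k\ge 0$, and pass to the limit in $L^1(\Omega;\delta^\gamma)$ using the continuity of $\bG_\lambda$ from \Cref{thm:lin}/\Cref{prop:lin-L1} and the equivalence of notions in \Cref{thm:notions}. The paper instead argues by duality: it tests the $\bG_\lambda(\delta^\gamma L^\infty(\Omega))$-weak formulation directly with $\psi=\mathbf{1}_{\set{u<0}\cap K}$ for $K\Subset\Omega$ (a valid test function since $\delta$ is bounded away from zero on $K$), notes that $\bG_\lambda(\psi)\ge 0$ by \Cref{lem:MP-L2} applied to the \emph{test function}, and concludes $-\int_{\set{u<0}\cap K}u_-\,dx=\int_\Omega f\,\bG_\lambda(\psi)\,dx\ge 0$, whence $u_-=0$ a.e.\ on every $K$. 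The paper's route is shorter because it reuses the $L^1$ machinery already built rather than re-running the Cauchy-sequence argument, and it exemplifies the guiding principle stated just after \Cref{thm:MP} (one applies the $L^2$ positivity to the test function, not to the solution). Your route is more pedestrian but equally sound, and has the minor virtue of not requiring one to verify that $\mathbf{1}_{\set{u<0}\cap K}$ lies in the test class — though that verification is immediate. In the end both reduce cleanly to \Cref{lem:MP-L2}, where the real work (sign of the cross term via $\cG_0\ge 0$ and the Poincar\'e inequality \eqref{eq:Poincare}) takes place.
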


\begin{proof}
Split $u=u_+-u_-$, with $u_+,u_-\in L^1(\Omega;\delta^\gamma)$ non-negative. Let $K\Subset\Omega$. Testing \eqref{eq:MP-f} with $\psi=\mathbf{1}_{\set{u<0}\cap K}\geq 0$, we have
\[
-\int_{\set{u<0}\cap K}
    u_-
\,dx
=\int_{\Omega}
    u\psi
\,dx
=\int_{\Omega}
    f\bG_\lambda(\psi)
\,dx
\geq0,
\]
in view of \Cref{lem:MP-L2}. This forces $u_-=0$ a.e. in any $K\Subset\Omega$, i.e. $u\geq0$ a.e. in $\Omega$.
\end{proof}

\section{Inhomogeneous eigenvalue problem}
\label{sec:blow}

In this section we complete the proof of \Cref{thm:blow}.

\begin{proof}[Proof of \Cref{thm:blow}]
We split the proof into a few steps.
\begin{enumerate}
\item
	Taking the difference $u_\lambda=v_\lambda-v_h$, where $v_\lambda$ and $v_h$ are $\bG_0(C_c^\infty(\Omega))$-weak solutions of \eqref{eq:main} and \eqref{eq:vh} respectively, we see that $u_\lambda$ is a $\bG_0(C_c^\infty(\Omega))$-weak solution (hence in any of the sense \eqref{eq:sense-1}--\eqref{eq:sense-5} in \Cref{thm:notions}) of
\begin{equation}\label{eq:main-u}
\begin{cases}
{\cL}u_\lambda-\lambda u_\lambda=g+\lambda v_h
	& \textin \Omega\\
B u_\lambda=0
	& \texton \p\Omega.
\end{cases}
\end{equation}
We have that $v_h\in\delta^{-b}L^\infty(\Omega)\subset L^1(\Omega;\delta^\gamma)\cap L^\infty_\loc(\Omega)$; indeed,
\begin{equation}\label{eq:vh-dgamma}\begin{split}
\norm[L^1(\Omega)]{v_h\delta^\gamma}
&\leq
	\norm[L^1(\Omega)]{v_h\delta^b\cdot\delta^{2s-1}}
\leq
	\norm[L^\infty(\Omega)]{v_h\delta^b}
	\norm[L^1(\Omega)]{\delta^{2s-1}}
\leq C\norm[L^\infty(\Omega)]{v_h\delta^b}.
\end{split}\end{equation}
Hence, \Cref{thm:lin} implies the existence of a unique solution $u_\lambda$ of \eqref{eq:main-u} in $L^1(\Omega; \delta^\gamma )\cap L^\infty_\loc(\Omega)$. In particular, we obtain the existence of a unique solution
\[
v_\lambda
=v_h+u_\lambda
\in L^1(\Omega;\delta^\gamma)
	\cap L^\infty_\loc(\Omega)
\]
of \eqref{eq:main}, for any $\lambda\in\R\setminus\Sigma$.

\item
	The representation formula follows directly from \Cref{thm:plin-2} applied to $f=g+\lambda v_h\in L^1(\Omega;\delta^\gamma)\cap L^\infty_\loc(\Omega)$, under the projection \eqref{eq:proj-fu}.

\item
	By \Cref{thm:plin-2}, we have the estimates
\[
\norm[L^{1}(\Omega)]{u_\lambda^\perp \delta^\gamma }
\leq C(\bar\lambda)
	\norm[L^{1}(\Omega)]{(g+\lambda v_h) \delta^\gamma  }
\]
and, for $K\Subset K_0\Subset \Omega$,
\[
\norm[L^\infty(K)]{u_\lambda^\perp}
\leq C(\bar\lambda,K,K_0)
	\left(
		\norm[L^\infty(K_0)]
			{g+\lambda v_h}
		+\norm[L^1(\Omega)]
			{(g+\lambda v_h)\delta^\gamma}
	\right).
\]
But then the norms of $v_h$ can be controlled in terms of $\norm[L^\infty(\p\Omega)]{h}$ by \eqref{eq:vh-loc} and \eqref{eq:vh-dgamma}.

\item Let us write
\begin{align*}
	v_\lambda &= v_h + \sum_{\substack{j=1\\ \lambda_j \ne \lambda_i }}^{{I}} \dfrac{
		\angles{g+\lambda v_h,\varphi_j}
	}{
		\lambda_j-\lambda
	}\varphi_j
+\frac 1  {
	\lambda_i-\lambda
} \sum_{\substack{j : \lambda_j = \lambda_i }} {
	\angles{g+\lambda v_h,\varphi_j}
}\varphi_j
+ \bG_\lambda (  (g + \lambda v_h )^\perp  ) \\
&=v_h
+\frac {P_{E_i} (g + \lambda v_h)}  {
	\lambda_i-\lambda
}
+  \bG_{\lambda} (  g + \lambda v_h - P_{E_i} (g + \lambda v_h)  ).
\end{align*}

By our previous construction $v_h + \bG_\lambda (  (g + \lambda v_h )^\perp  )$ is uniformly bounded in $L^\infty (K)$ for any $K \Subset \Omega$. It is easy to check that
\begin{equation*}
	v_h + \bG_\lambda (  (g + \lambda v_h )^\perp  ) \to v_h + \bG_{\lambda_i} (  (g + \lambda_i v_h )^\perp  )
\end{equation*}
over compacts.
The term
\begin{equation*}
	 \sum_{\substack{j=1\\ \lambda_j \ne \lambda_i }}^{{I}} \dfrac{
		\angles{g+\lambda v_h,\varphi_j}
	}{
		\lambda_j-\lambda
	}\varphi_j
\longrightarrow
\sum_{\substack{j=1\\ \lambda_j \ne \lambda_i }}^{I} \dfrac{
	\angles{g+\lambda_i v_h,\varphi_j}
}{
\lambda_j-\lambda_i
}\varphi_j
\end{equation*}
is in $\delta^\gamma L^\infty (\Omega)$. Hence,
\begin{equation*}
	v_h +  \bG_{\lambda} \Big(  g + \lambda v_h - P_{E_i} (g + \lambda v_h) \Big ) \to v_h +  \bG_{\lambda_i} \Big  (  g + \lambda v_h - P_{E_i} (g + \lambda_i v_h)  \Big )
\end{equation*}
in $L^1 (\Omega; \delta^\gamma)$.

The last term is given by the projection
\begin{equation*}
	P_{E_i} ( g + \lambda v_h) = \sum_{\substack{j : \lambda_j = \lambda_i }}{
		\angles{g+\lambda v_h,\varphi_j}
	}\varphi_j.
\end{equation*}
By continuity of the projection,
\begin{equation*}
		P_{E_i} ( g + \lambda v_h) \longrightarrow  P_{E_i} ( g + \lambda_i v_h)
\end{equation*}
as $\lambda \to \lambda_i$. Hence, either $P_{E_i} ( g + \lambda_i v_h) = 0$ or there exists $K \Subset \Omega$ and $\varepsilon$ of positive measure such that $|P_{E_i} ( g + \lambda_i v_h) | > \varepsilon$ on $K$ and therefore
$$
	 \left |\frac{P_{E_i} ( g + \lambda v_h)}{\lambda_i - \lambda }  \right |  \to +\infty \text{ on } K.
$$
Since the rest of the terms of $v_h$ are uniformly  locally essentially bounded, this $\|v_h \|_{L^\infty (K)}$ blows up.

If $P_{E_i} (g + \lambda_i v_h) = 0$, since all the considered eigenfunctions are linearly independent, we have that $\langle g + \lambda_i v_h , \varphi_j  \rangle = 0$. Then, we apply L'Hôpital's rule to deduce that
\begin{equation*}
	\lim_{\lambda \to \lambda_i} \frac {\angles{g+\lambda v_h,\varphi_j}}{ \lambda_i - \lambda } = \lim_{\lambda \to \lambda_i} \frac {\angles{g,\varphi_j} + \lambda \angles{v_h, \varphi_j} }{ \lambda_i - \lambda }  = -\angles{v_h,\varphi_j}.
\end{equation*}
Therefore,
\begin{equation*}
	\frac {P_{E_i} (g + \lambda v_h)}  {
		\lambda_i-\lambda }= \frac 1  {
		\lambda_i-\lambda
	} \sum_{\substack{j : \lambda_j = \lambda_i }} {
		\angles{g+\lambda v_h,\varphi_j}
	}\varphi_j \to -  \sum_{\substack{j : \lambda_j = \lambda_i }} {
\angles{ v_h,\varphi_j} = - P_{E_i} (v_h)
}
\end{equation*}
in $\delta^\gamma L^\infty (\Omega)$, and the limit is characterised.
\item
To prove the global blow-up behavior, suppose $g\geq0$ and $h>0$. By \eqref{eq:vh-martin}, $v_h>0$. In fact, \eqref{eq:vh-bdry} and \eqref{eq:vh-martin} imply that
\begin{equation}\label{eq:vh-lower}
v_h(x)
\geq C
	\left(\inf_{\p\Omega}h\right)
	\delta^{-b}(x)
		\quad \textfor x\in\Omega.
\end{equation}
Also, since $g+\lambda v_h>0$, by the maximum principle \Cref{thm:MP}, $u_\lambda=v-v_h\geq0$.

The solution $v_\lambda$ can be expressed in two ways, namely
\begin{equation}\label{eq:v-lambda-1}
v_\lambda
=v_h+u_\lambda,
	\quad
u_\lambda\geq 0,
\end{equation}
and
\begin{equation}\label{eq:v-lambda-2}
v_\lambda
=
	\dfrac{1}{\lambda_1-\lambda}
	\angles{g+\lambda v_h,\varphi_1}\varphi_1
	+v_h+u_\lambda^\perp,
		\quad
u_\lambda^\perp \in L^\infty_\loc(\Omega).
\end{equation}
Given any $\bar{C}>0$, there exists $K=K(\bar{C})\Subset\Omega$ such that $v_h\geq \bar{C}$ in $\Omega\setminus K$ by \eqref{eq:vh-lower}. Then \eqref{eq:v-lambda-1} yields
\[
\inf_{\Omega\setminus K}v_\lambda
\geq \bar{C}.
\]
On the other hand, \eqref{eq:u-perp-loc} implies that
\[
\norm[L^\infty(K)]{u^\perp}
\leq C(\lambda_1,K)
	\left(
		\norm[L^\infty(\set{\delta>\dist(K,\p\Omega)/2})]
			{g}
		+\norm[L^1(\Omega)]{g\delta^\gamma}
		+\norm[L^\infty(\p\Omega)]{h}
	\right).
\]
Then \eqref{eq:v-lambda-2} implies
\[\begin{split}
\inf_{K}v_\lambda
&\geq
	\dfrac{\lambda}{\lambda_1-\lambda}
	\cdot C
		\left(\inf_{\p\Omega}h\right)
		\angles{\delta^{-b},\varphi_1}
		\inf_{K}\varphi_1\\
&\quad
	-C(K)
	\left(
		\norm[L^\infty(\set{\delta>\dist(K,\p\Omega)/2})]
			{g}
		+\norm[L^1(\Omega)]{g\delta^\gamma}
		+\norm[L^\infty(\p\Omega)]{h}
	\right)\\
&\geq
	\dfrac{
		C(K,h)
	}{
		\lambda_1-\lambda
	}
	-C(K,g,h)\\
&\geq \bar{C},
\end{split}\]
for all $\lambda\in(\lambda_{\bar{C}}(g,h),\lambda_1)$. This completes the proof. \qedhere
\end{enumerate}
\end{proof}

\medskip

\section{The limit as $s$ tends to 1}\label{se.s1}
\label{sec:sto1}

In this section we address the issue of convergence of our problems in the limit when $s$ approaches 1. For convenience, we will index here our operators as $\cL_s$. Originally, our  interest was treating the RFL and SFL operators, where the passage to the limit is relatively simple and  produces in the limit a classical problem involving the Laplacian. The interesting feature is that we may thus observe how the boundary blow-up disappears in the limit, since the corresponding blow-up exponent goes to zero a $s\to 1$.

We propose a different writing, since we found interesting to state a list of hypotheses  on the general class $\cL_s$ for $s$ near 1 that  allow to show convergence of the original problems involving $\cL_s$  to a limit problem as $s\to1$. Naturally, we require that  the aforementioned theory holds true for each $s\in(0,1]$, so we assume 
\begin{equation}
\label{eq:s to 1 assumption 0}
	\cL_s \textrm{ satisfies (K1)-(K2) for all } s \in (0,1].
\end{equation}
 and we will denote the limit operator by $\cL_1$. In our main case of interest $\cL_1 = -\Delta$. At the end, we show that these assumptions hold for the RFL and SFL, see Subsection \ref{ss8.5}.

\medskip

 On  a first stage, let us simply assume that $\cL_1$ is indeed the limit of $\cL_s$, in the sense of resolvents:
\begin{align}
\label{eq:s to 1 assumption 1}
\|\bG_{\cL_s} (f) - \bG_{\cL_1} (f)\|_{L^2  (\Omega)} &\le \omega(1-s) \| f \|_{L^2} , &&\text{where } \omega(1-s) \searrow 0 \text{ as } s \to 1.
\end{align}
We will show that it suffices to have
\begin{align*}
\bG_{\cL_s -\lambda } (f) & \longrightarrow  \bG_{\cL_1 - \lambda} (f) && \textin L^2 (\Omega), \textforall f \in L^2 (\Omega),
\end{align*}
at least when $\lambda \notin \Sigma (\cL_1)$, so the nonsingular eigenvalue problem is studied.

To study the large eigenvalue problem, we will also  assume that $\cL_s$ satisfies (K2).
Since we are mostly  interested  in approximating the Laplacian's behaviour, let us assume that
\begin{equation}
\label{eq:s to 1 assumption 2}
2s > \gamma(s).
\end{equation}
for $s$ sufficiently close to 1. This is satisfied for the RFL and the SFL with $s > \frac 1 2$. In this setting $\bM_{\cL_s } (h) \in  L^1 (\Omega)$ for $h\in L^\infty(\p\Omega)$. We need to assume some convergence of the Martin operator. In particular, we will assume that
\begin{align}
\label{eq:s to 1 assumption 3}
\bM_{\cL_s} (h) &\longrightarrow \bM_{\cL_1} (h) , && \textin L^1 (\Omega) \textforall h \in L^\infty (\partial \Omega).
\end{align}

\begin{remark}
	Notice that $\bM_{-\Delta}$ is the usual Poisson integral operator, i.e. $u = \bM_{-\Delta}(h)$ is the unique solution of
	\begin{equation*}
		\begin{dcases}
		-\Delta u = 0 & \textin \Omega, \\
		u = h & \texton \partial \Omega.
		\end{dcases}
	\end{equation*}
\end{remark}
The last condition we will ask is that there are uniform $L^p \to L^q$ bounds in two special cases
\begin{align}
\label{eq:s to 1 assumption 4}
\| \bG_{\cL_s - \lambda} (f) \|_{L^{\frac{n}{n-1} } (\Omega)} &\le C (\lambda) \| f \|_{L^1 (\Omega)}  \\
\label{eq:s to 1 assumption 5}
\| \bG_{\cL_s - \lambda} (f) \|_{L^{\infty } (\Omega)} &\le C (\lambda) \| f \|_{L^\infty (\Omega)}
\end{align}
where the constant $C(\lambda)$ does not depend on $s$.

We will check that these two conditions are implied by
\begin{equation}
\label{eq:s to 1 assumption 6}
\cG_{\cL_s} (x,y) \le C_\Omega |x-y|^{-(n-2s)} \quad \text{ where } C_\Omega \text{ does not depend on } s \text{ close to 1}.
\end{equation}
This condition is reasonable since all operators in the family satisfy (K1). This will be the condition that we check in the examples.

Under these  hypotheses  we will able to prove the convergence of large solutions as $s\to 1^-$.  Furthermore, we will have that, if $f_s$ are bounded in $L^2 (\Omega)$, then $\bG_s (f_s)$ has a strongly convergent subsequence.

\subsection{Compactness theorem}
\begin{thm}
	\label{thm:s to 1 convergent subsequence}
	Assume \eqref{eq:s to 1 assumption 0}, \eqref{eq:s to 1 assumption 1} and let $f_s$ be such that $\|f_s\|_{L^2(\Omega)}$ is bounded. Then, there exists a sequence $s_m \to 1$ such that $f_{s_m} \rightharpoonup f$  weakly  in $L^2 (\Omega)$ and $ \bG_{\cL_{s_n}} (f_{s_n}) \to \bG_{\cL_1} (f)$ strongly in $L^2 (\Omega)$.
\end{thm}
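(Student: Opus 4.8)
The plan is to combine weak compactness of bounded sequences in the Hilbert space $L^2(\Omega)$ with the compactness of the limiting Green's operator $\bG_{\cL_1}$, and to absorb the discrepancy between $\bG_{\cL_s}$ and $\bG_{\cL_1}$ by means of the quantitative resolvent convergence \eqref{eq:s to 1 assumption 1}. First, from the bounded sequence $(f_s)$ one extracts a subsequence $s_m \to 1$ such that $f_{s_m} \rightharpoonup f$ weakly in $L^2(\Omega)$; this is immediate from reflexivity of $L^2$ (Banach--Alaoglu). Write $M := \sup_m \|f_{s_m}\|_{L^2(\Omega)} < +\infty$.

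Next, decompose
\[
\bG_{\cL_{s_m}} (f_{s_m}) - \bG_{\cL_1} (f)
= \big( \bG_{\cL_{s_m}} (f_{s_m}) - \bG_{\cL_1} (f_{s_m}) \big)
+ \big( \bG_{\cL_1} (f_{s_m}) - \bG_{\cL_1} (f) \big).
\]
For the first bracket, assumption \eqref{eq:s to 1 assumption 1} applied to the function $f_{s_m}$ gives
\[
\norm[L^2(\Omega)]{\bG_{\cL_{s_m}} (f_{s_m}) - \bG_{\cL_1} (f_{s_m})}
\leq \omega(1-s_m)\, M \longrightarrow 0
\qquad \textas m \to \infty.
\]
For the second bracket, note that $\cL_1$ satisfies \textrm{(K1)} by \eqref{eq:s to 1 assumption 0}, so by \cite[Proposition 5.1]{BFV} (via the Riesz--Fr\'echet--Kolmogorov theorem, as recalled after \textrm{(K2)}) the operator $\bG_{\cL_1} : L^2(\Omega) \to L^2(\Omega)$ is compact. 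A compact linear operator maps weakly convergent sequences to strongly convergent ones, hence $\bG_{\cL_1} (f_{s_m}) \to \bG_{\cL_1}(f)$ strongly in $L^2(\Omega)$. Adding the two estimates yields $\bG_{\cL_{s_m}} (f_{s_m}) \to \bG_{\cL_1}(f)$ strongly in $L^2(\Omega)$, which is the claim.

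There is no deep difficulty here once the ingredients are identified: the whole argument rests on (i) weak compactness in $L^2(\Omega)$, (ii) compactness of $\bG_{\cL_1}$, which is not re-proved in this section but follows from \textrm{(K1)} and the cited result, and (iii) the hypothesis \eqref{eq:s to 1 assumption 1}. The only point requiring a little care is that \eqref{eq:s to 1 assumption 1} must be invoked with the \emph{same} argument $f_{s_m}$ fed into both $\bG_{\cL_{s_m}}$ and $\bG_{\cL_1}$; it is precisely the uniformity of the bound --- the modulus $\omega(1-s)$ being independent of the argument, with only its $L^2$-norm entering --- that allows the first bracket to vanish along a sequence of \emph{varying} data. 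I would also remark that the conclusion only asserts the existence of \emph{a} convergent subsequence, in the spirit of the Riesz--Fr\'echet--Kolmogorov theorem, so no diagonal or uniqueness argument is needed.
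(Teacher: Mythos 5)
Your proof is correct, but it takes a genuinely different and in fact cleaner route than the paper. The paper writes the triangle inequality
\[
\| \tau_h \bG_{\cL_{s}} (f_{s}) -  \bG_{\cL_{s}} (f_{s}) \|_{L^2(\Omega)} \le 2\omega(1-s) + \omega_1(|h|)\,\|f_s\|_{L^2(\Omega)},
\]
showing that the family $\bG_{\cL_{s_m}}(f_{s_m})$ satisfies the translation estimate of Riesz--Fr\'echet--Kolmogorov only up to the non-vanishing error $2\omega(1-s_m)$; it then invokes a variant of Riesz--Fr\'echet--Kolmogorov adapted to such sequences (\Cref{lem:sequence Frechet-Kolmogorov}, quoted from \cite{HH}) to extract a strongly convergent subsequence, and identifies the limit in a separate step via the duality formula and \eqref{eq:s to 1 assumption 1}. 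You instead decompose directly around the limit operator, observe that $\bG_{\cL_{s_m}}(f_{s_m}) - \bG_{\cL_1}(f_{s_m})$ is controlled by \eqref{eq:s to 1 assumption 1} alone, and handle $\bG_{\cL_1}(f_{s_m}) - \bG_{\cL_1}(f)$ by the fact that a compact operator maps weak convergence to strong convergence. Both arguments ultimately rest on the same compactness of $\bG_{\cL_1}$ (which is the content of \cite[Proposition 5.1]{BFV} via the translation modulus $\omega_1$), but you use it in its operator-theoretic form rather than its translation-estimate form, which makes the generalized Riesz--Fr\'echet--Kolmogorov lemma unnecessary and merges the extraction and identification steps into one. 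This is a real simplification; the one thing the paper's form of the argument does record explicitly, which yours does not, is the quantitative translation bound for $\bG_{\cL_{s}}(f_s)$ itself, but that is not needed for the stated theorem.
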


For each $s$, the compactness of $\bG_{\cL_s}$ proved in \cite[Proposition 5.1]{BFV} uses the  Riesz--Fr\'{e}chet--Kolmogorov  theorem and the fact that, since\footnote{Note that in $\bG_{\cL_s}^{(2)}$ the superscript denotes a second part, a notation borrowed from \cite{BFV}, as opposed to a power used in the rest of this paper.} $\bG_{\cL_s}^{(2)} (x,y) = \bG_{\cL_s}(x,y) \chi_{|x-y|>\varepsilon} \in L^2 (\Omega)$, then $\|\bG_{\cL_s}^{(2)} (\cdot + h, \cdot ) - \bG_{\cL_s}^{(2)} \|_{L^2 (\Omega \times \Omega)}\to 0$ as $h \to 0$.
The authors prove that under (K1), for each $s \in (0,1]$,
\begin{equation*}
	\lim_{|h| \to 0} \sup_{\|f \|_{L^2 (\Omega)} \le 1 } \| \tau_h \bG_{\cL_{s}} (f_{s}) -  \bG_{\cL_{s}} (f_{s}) \| _{L^2 (\Omega)} = 0.
\end{equation*}
Let us denote the modulus of continuity by $\omega_s$,
\begin{equation*}
	\omega_s (\varepsilon ) = \sup_{|h| \le \varepsilon} \sup_{\|f \|_{L^2 (\Omega)} \le 1 } \| \tau_h \bG_{\cL_{s}} (f_{s}) -  \bG_{\cL_{s}} (f_{s}) \|_{L^2 (\Omega)}.
\end{equation*}
The condition that this happens uniformly would be too severe.
In order to prove our result we need a very specific form of the Riesz--Fr\'{e}chet--Kolmogorov theorem. For $h \in \mathbb R^n$ we define
\begin{equation*}
	\tau_h u (x) =
    \begin{cases}
	u(x+h) & x+h \in \Omega, \\ 
    0 & x+h\notin \Omega.
	\end{cases}
\end{equation*}
\begin{lemma}[Remark 6 (II) in \cite{HH}]
	\label{lem:sequence Frechet-Kolmogorov}
	Let $u_m \in L^p(\Omega)$ ($1\le p <+\infty$) be a sequence such that
	\begin{equation*}
		\| \tau _h u_m - u_m \|_{L^p (\Omega)} \le \omega_1 (|h|) + \omega_2(\tfrac 1 m),
	\end{equation*}
	where $\omega_1(\varepsilon), \omega_2(\varepsilon) \to 0$ as $\varepsilon \to 0$. Then, $u_m$ has a subsequence that converges strongly in $L^p (\Omega)$.
\end{lemma}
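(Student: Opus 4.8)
The plan is to run the classical Riesz--Fr\'echet--Kolmogorov argument via mollification, and then upgrade it with a diagonal extraction in order to absorb the index-dependent part $\omega_2(\tfrac1m)$ of the modulus of translation-continuity. First I would reduce to an estimate on $\R^n$ by extending each $u_m$ by zero; call the extension $\tilde u_m\in L^p(\R^n)$, supported in $\overline\Omega$. A small computation (splitting $\int_{\R^n}=\int_\Omega+\int_{\Omega^c}$ and using that on the relevant boundary collar $\tau_{\pm h}u_m\equiv 0$, hence $u_m=u_m-\tau_{\pm h}u_m$ there) shows $\norm[L^p(\R^n)]{\tilde u_m(\cdot-h)-\tilde u_m}\le 2\bigl(\omega_1(|h|)+\omega_2(\tfrac1m)\bigr)$ for every $h\in\R^n$. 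Before this I would also record the a priori bound $\sup_m\norm[L^p(\Omega)]{u_m}=:M<\infty$: since $\Omega$ is bounded, taking any $h$ with $|h|>\diam(\Omega)$ forces $\tau_h u_m\equiv0$, so the hypothesis gives $\norm[L^p(\Omega)]{u_m}=\norm[L^p(\Omega)]{\tau_h u_m-u_m}\le \omega_1(|h|)+\omega_2(\tfrac1m)$, which is bounded in $m$ because $\omega_2(\tfrac1m)\to0$.

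\textbf{Mollification and compactness at fixed scale.} Next, for $\eps>0$ set $u_m^\eps:=\tilde u_m\ast\eta_\eps$ with $\eta_\eps$ the standard mollifier. By Minkowski's integral inequality, $\norm[L^p(\R^n)]{u_m^\eps-\tilde u_m}\le\sup_{|h|\le\eps}\norm[L^p(\R^n)]{\tilde u_m(\cdot-h)-\tilde u_m}\le 2\bigl(\omega_1(\eps)+\omega_2(\tfrac1m)\bigr)$. For each \emph{fixed} $\eps$, the family $\{u_m^\eps\}_m$ is supported in the fixed compact set $\overline\Omega+\overline{B_\eps}$, and H\"older's inequality gives $\norm[L^\infty]{u_m^\eps}\le\norm[L^{p'}]{\eta_\eps}M$ and $\norm[L^\infty]{\nabla u_m^\eps}\le\norm[L^{p'}]{\nabla\eta_\eps}M$. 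Hence $\{u_m^\eps\}_m$ is equibounded and equi-Lipschitz on a fixed compact set, so by Arzel\`a--Ascoli it is relatively compact in $C$, and therefore in $L^p$.

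\textbf{Diagonal argument.} Take $\eps_k=\tfrac1k$. Using Step~2 at $\eps=\eps_1$, extract a subsequence along which $u_m^{\eps_1}$ converges in $L^p$; from it extract a further subsequence converging at $\eps_2$; iterate and diagonalize to get a single subsequence $(u_{m_j})$ such that $u_{m_j}^{\eps_k}$ converges in $L^p$ for every $k$. One also passes, if desired, to a subsequence along which $u_{m_j}\rightharpoonup f$ in $L^p$ (weak-$\ast$ if $p=1$), which by uniqueness of weak and strong limits will be the strong limit. To see $(u_{m_j})$ is Cauchy in $L^p(\Omega)$: given $\eta>0$, first pick $k$ with $2\omega_1(\tfrac1k)<\tfrac\eta3$, then $J$ so large that $2\omega_2(\tfrac1{m_j})<\tfrac\eta3$ for $j\ge J$ and $\norm[L^p]{u_{m_j}^{\eps_k}-u_{m_l}^{\eps_k}}<\tfrac\eta3$ for $j,l\ge J$; the triangle inequality through $u_{m_j}^{\eps_k}$ and $u_{m_l}^{\eps_k}$ then yields $\norm[L^p(\Omega)]{u_{m_j}-u_{m_l}}<\eta$.

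\textbf{Main obstacle.} The only genuinely non-standard feature is the $m$-dependent modulus $\omega_2(\tfrac1m)$: the translations $\tau_h u_m$ are \emph{not} equicontinuous uniformly in $m$, so the usual Fr\'echet--Kolmogorov theorem does not apply verbatim, and the diagonal extraction in Step~3 is exactly what repairs this. The remaining difficulty is minor and purely bookkeeping: one must handle the zero-extension across $\p\Omega$ carefully when transferring the translation estimate to $\R^n$ (the boundary-collar term), and keep track of the fixed compact support of the mollified functions.
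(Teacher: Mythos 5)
The paper does not supply a proof of this lemma; it simply cites Remark 6 (II) of Hanche-Olsen--Holden. Your argument is therefore being judged on its own merits, and it is essentially correct: the mollification-plus-diagonal-extraction scheme is precisely the standard way to upgrade Riesz--Fr\'echet--Kolmogorov to an $m$-dependent modulus of translation continuity, and your transfer of the translation estimate to the zero extension on $\R^n$ (the boundary-collar computation, using that $\tau_h u_m \equiv 0$ on the collar) is carried out correctly.

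Two small points deserve attention. First, your derivation of the a priori bound $\sup_m \norm[L^p(\Omega)]{u_m} < \infty$ uses the hypothesis with $|h| > \diam(\Omega)$, which silently assumes $\omega_1(|h|)$ is finite for such large $|h|$. The hypothesis only asserts $\omega_1(\eps) \to 0$ as $\eps \to 0$ and may, a priori, allow $\omega_1(|h|) = +\infty$ away from the origin, in which case that line of the argument is vacuous. The bound still follows, but one should instead use the $\R^n$-version of your translation estimate together with a chain of $k$ translations of a fixed small step $h$: since translation is a group on $\R^n$, $\norm[L^p(\R^n)]{\tilde u_m - \tilde u_m(\cdot - kh)} \le 2k\bigl(\omega_1(|h|)+\omega_2(\tfrac1m)\bigr)$, and once $|kh| > \diam(\Omega)$ the supports of $\tilde u_m$ and $\tilde u_m(\cdot - kh)$ are disjoint, so the left-hand side equals $2^{1/p}\norm[L^p(\Omega)]{u_m}$. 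Second, the final $\eta/3$-bookkeeping in the diagonal step undercounts: the two mollification errors each contribute $2\omega_1(\eps_k) + 2\omega_2(\tfrac1{m_\bullet})$, so with your thresholds the total comes out to roughly $\tfrac{5\eta}{3}$ rather than $\eta$. This is cosmetic (shrink the thresholds), but worth fixing. With these two repairs the proof is complete and matches the spirit of the cited reference.
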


\begin{proof}[Proof of \Cref{thm:s to 1 convergent subsequence}]
	We write
	\begin{align*}
		\| \tau_h \bG_{\cL_{s}} (f_{s}) -  \bG_{\cL_{s}} (f_{s}) \| _{L^2 (\Omega)} & \le \| \tau_h \bG_{\cL_{s}} (f_{s}) -  \tau_h  \bG_{\cL_{1}} (f_{s}) \|_{L^2 (\Omega)} \\
		&\quad + \| \tau_h \bG_{\cL_{1}} (f_{s}) -  \bG_{\cL_{1}} (f_{s}) \|_{L^2 (\Omega)} \\
		&\quad + \|  \bG_{\cL_{1}} (f_{s}) -  \bG_{\cL_{s}} (f_{s}) \|_{L^2 (\Omega)} \\
		&\le 2 \omega (1-s) + \omega_1 (|h|) \| f_s \|_{L^2 (\Omega)},
	\end{align*}
	where $\omega_1(|h|)$ follows from the continuity of $\bG_{\cL_1}$. Take any sequence $s_m \to 1^-$. Since $f_{s_m}$ is bounded, up to a subsequence there exists $f \in L^2 (\Omega)$ such that $f_{s_m} \rightharpoonup f$ weakly in $L^2 (\Omega)$. Applying \Cref{lem:sequence Frechet-Kolmogorov}, we have that $u_m = \bG_{\cL_{s_m}} (f_{s_m})$ has a strongly $L^2 (\Omega)$ convergent subsequence and its limit be $v$. Using the duality formula and \eqref{eq:s to 1 assumption 1} we have that $v = \bG_{\cL_1} (f)$.
\end{proof}

\subsection{Spectral convergence}
\label{sec:s to 1 spectral}
It is well known that
\begin{equation*}
	\sup_j \dist (  \lambda_j (\bG_{\cL_s} ), \Sigma (\bG_{\cL_1} )   ) \le \|\bG_{\cL_s} - \bG_{\cL_1} \|_{\cL (L^2 , L^2)} \le \omega(1-s).
\end{equation*}
(see \cite[Theorem 4.10 in Chapter 5] {Kato}). Therefore,
\begin{equation*}
	\sup_j  \dist (  \lambda_j ({\cL_s}) ^{-1}, \Sigma ({\cL_1}) ^{-1}   ) \le \omega(1-s).
\end{equation*}
\begin{remark}
	The first eigenvalues are uniformly bounded below. Assuming \eqref{eq:s to 1 assumption 1} we have that
	\begin{equation*}
		\lambda_1 (\cL_s)^{-1} \le \lambda_1 (\cL_1)^{-1} + \dist (  \lambda_1 ({\cL_s}) ^{-1}, \Sigma ({\cL_1}) ^{-1}   ) \le \lambda_1 (\cL_1)^{-1} + \omega(1-s).
	\end{equation*}
	Hence,
	\begin{equation*}
		\lambda_1 (\cL_s) \ge \frac{1}{\lambda_1 (\cL_1)^{-1} + \omega(1-s)}.
	\end{equation*}
\end{remark}
\begin{remark}
	Assuming only \eqref{eq:s to 1 assumption 6} also gives some information.
	Applying the Rayleigh--Faber--Krahn theorem guaranties and \Cref{prop:Riesz potentials} we have that  (recall that $\mathcal{I}_{n-2s}$ is the Riesz potential of order $n-2s$) 
	\begin{equation*}
		\frac{1}{\lambda_1 (\cL_s)} = \inf_{  f \ne 0 } \frac{ \| \bG_{\cL_s} (f) \|_{L^2 (\Omega)} }{ \| f \|_{L^2 (\Omega)} } \le C_\Omega  \inf_{  f \ne 0 } \frac{  \| \mathcal{I}_{n-2s} (f \chi_ \Omega ) \|_{L^2 (\Omega)} }{ \| f \|_{L^2 (\Omega)} } = C_\Omega \lambda_1  (\mathcal{I}_{n-2s}).
	\end{equation*}
\end{remark}

Furthermore, we have strong convergence of the eigenspaces.
\begin{prop}
	Assume \eqref{eq:s to 1 assumption 1} and $\lambda_j(\cL_s)\to \lambda_i(\cL_1)$ for some $j,i$. Then $E_j (\cL_s) \to E_i (\cL_1)$ in $L^2 (\Omega)$ for some $i$, in the sense that if $\phi_s \in E_j (\cL_s)$ are such that $\| \phi_s \|_{L^2} = 1$, then there exists $\phi_1 \in E_i( \cL _1)$ such that, up to a subsequence, $\phi_s \to \phi_1$ strongly in $L^2 (\Omega)$.
\end{prop}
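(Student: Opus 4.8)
The plan is to reduce everything to the compactness theorem \Cref{thm:s to 1 convergent subsequence} after rewriting the eigenvalue equation in resolvent form. First I would note that an eigenfunction $\phi_s \in E_j(\cL_s)$ with $\|\phi_s\|_{L^2(\Omega)}=1$ satisfies
\[
\phi_s = \lambda_j(\cL_s)\,\bG_{\cL_s}(\phi_s),
\]
since $\cL_s\phi_s = \lambda_j(\cL_s)\phi_s$ implies $\phi_s = \bG_{\cL_s}(\cL_s\phi_s) = \lambda_j(\cL_s)\bG_{\cL_s}(\phi_s)$; here I use the standing hypothesis \eqref{eq:s to 1 assumption 0}. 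In particular the family $(\phi_s)$ is bounded in $L^2(\Omega)$, so it is admissible data for \Cref{thm:s to 1 convergent subsequence}.

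Next I would apply \Cref{thm:s to 1 convergent subsequence} with $f_s=\phi_s$: there exist a sequence $s_m\to 1^-$ and $\phi\in L^2(\Omega)$ such that $\phi_{s_m}\rightharpoonup\phi$ weakly in $L^2(\Omega)$ and $\bG_{\cL_{s_m}}(\phi_{s_m})\to\bG_{\cL_1}(\phi)$ strongly in $L^2(\Omega)$. Using the hypothesis $\lambda_j(\cL_{s_m})\to\lambda_i(\cL_1)$, together with $\lambda_i(\cL_1)\in(0,+\infty)$, the identity $\phi_{s_m}=\lambda_j(\cL_{s_m})\bG_{\cL_{s_m}}(\phi_{s_m})$ yields that $\phi_{s_m}$ itself converges strongly in $L^2(\Omega)$, to $\phi_1:=\lambda_i(\cL_1)\,\bG_{\cL_1}(\phi)$. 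Since strong convergence forces weak convergence to the same limit, $\phi=\phi_1$, hence $\phi_1=\lambda_i(\cL_1)\bG_{\cL_1}(\phi_1)$, i.e.\ $\phi_1$ is an eigenfunction of $\bG_{\cL_1}$ with eigenvalue $\lambda_i(\cL_1)^{-1}$; equivalently $\phi_1\in E_i(\cL_1)$.

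Finally I would check $\phi_1\neq 0$: strong $L^2$-convergence preserves the norm, so $\|\phi_1\|_{L^2(\Omega)}=\lim_m\|\phi_{s_m}\|_{L^2(\Omega)}=1$, and thus $\phi_1$ is a genuine normalized element of $E_i(\cL_1)$, which closes the argument. The only delicate point — the main (mild) obstacle — is the passage in the second step from resolvent convergence to convergence of $\phi_{s_m}$: one must couple the strong convergence of $\bG_{\cL_{s_m}}(\phi_{s_m})$ with the numerical convergence of the eigenvalues and then reconcile the resulting strong limit with the weak limit $\phi$ supplied by \Cref{thm:s to 1 convergent subsequence}; once this identification is made, the normalization and the eigenfunction property come for free. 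Note that no maximum principle or sign information is used, so the argument applies to every eigenvalue, not only $\lambda_1$.
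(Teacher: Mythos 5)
Your proof is correct and follows essentially the same route as the paper: apply the compactness theorem to the resolvent identity $\phi_s=\lambda_j(\cL_s)\bG_{\cL_s}(\phi_s)$, upgrade to strong convergence via the numerical convergence of the eigenvalues, then use preservation of the $L^2$-norm to see the limit is a normalized eigenfunction. You make slightly more explicit the step of reconciling the weak limit from the compactness theorem with the strong limit of $\lambda_j(\cL_{s_m})\bG_{\cL_{s_m}}(\phi_{s_m})$, which the paper glosses over; otherwise the two arguments coincide.
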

\begin{proof}
	Since $(\phi_s)$ satisfies $\norm[L^2(\Omega)]{\phi_s}=1$ and $\phi_s = \lambda_j (\cL_s) \bG_{\cL_s} (\phi_s)$, we can apply \Cref{thm:s to 1 convergent subsequence} to show that there exists $\phi_{s_m} \rightharpoonup \phi_1$ in $L^2(\Omega)$ with $\bG_{\cL_{s_m}} (\phi_{s_m}) \to \bG_{\cL_1} (\phi_1)$ in $L^2(\Omega)$. But then, using $\lambda_j (\cL_s) \to \lambda_i (\cL_1)$,
	\begin{equation*}
		\phi_{s_m} = \lambda_j (\cL_{s_m}) \bG_{\cL_{s_m}} (\phi_{s_m}) \to  \lambda_i (\cL_{1}) \bG_{\cL_{1}} (\phi_{1})
	\end{equation*}
	strongly in $L^2 (\Omega)$, so the convergence $\phi_{s_m} \to \phi_1$ is strong,  $\| \phi_1 \|_{L^2(\Omega)} = 1$ and
	\[
	\phi_1=\lambda_i(\cL_1)\bG_{\cL_1}(\phi_1)
	    \quad \textin L^2(\Omega),
	\]
	i.e. $\phi_1\in E_i(\cL_1)$.
\end{proof}

\subsection{Convergence of $\cL_s - \lambda$ }

Let $\lambda \notin \Sigma(\cL_1)$, so $\lambda_i (\cL_1) < \lambda < \lambda_{i+1} (\cL_1)$ for some $i$. Let $s_0=s_0(\lambda)$ be sufficiently close to $1$ such that
\[
\omega(1-s)\leq \dfrac12 \dist(\lambda^{-1},\Sigma(\cL_1)^{-1}),
    \quad \textfor s\in(s_0,1).
\]
Then, for $s\in(s_0,1)$, we have that
$$
	\dist( \lambda^{-1} , \Sigma (\cL_s)^{-1} )
\ge \dist( \lambda^{-1} , \Sigma (\cL_1)^{-1} )  - \omega(1-s)
\geq \dfrac12 \dist( \lambda^{-1} , \Sigma (\cL_1)^{-1} ).
$$
Therefore,
\begin{equation*}
		\dist( \lambda , \Sigma (\cL_s) ) \ge c > 0.
\end{equation*}
for $s\in(s_0,1)$.

\begin{prop} Let $\lambda \notin \Sigma(\cL_1)$ and assume \eqref{eq:s to 1 assumption 1}. Then
	\begin{align*}
	\bG_{\cL_s -\lambda } (f) & \to \bG_{\cL_1 - \lambda} (f) && \textin L^2 (\Omega), \textforall f \in L^2 (\Omega).
	\end{align*}
\end{prop}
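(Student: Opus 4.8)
The plan is to reduce the statement to a resolvent-perturbation estimate. First I would recall, via \Cref{lem:spec-Green} applied to the operator $\cL_s$, that for $f\in L^2(\Omega)$ the spectral solution $u=\bG_{\cL_s-\lambda}(f)$ (well defined by \Cref{lem:lin-L2}) is characterised by $u-\lambda\bG_{\cL_s}(u)=\bG_{\cL_s}(f)$, i.e. $(I-\lambda\bG_{\cL_s})u=\bG_{\cL_s}(f)$. Since $\lambda\notin\Sigma(\cL_s)$ for every $s\in(s_0,1)$ — this is precisely the uniform spectral gap $\dist(\lambda,\Sigma(\cL_s))\ge c>0$ obtained just above from \eqref{eq:s to 1 assumption 1} and the choice of $s_0$ — the operator $I-\lambda\bG_{\cL_s}$ is boundedly invertible on $L^2(\Omega)$, and
\[
\bG_{\cL_s-\lambda}=(I-\lambda\bG_{\cL_s})^{-1}\bG_{\cL_s},\qquad \bG_{\cL_1-\lambda}=(I-\lambda\bG_{\cL_1})^{-1}\bG_{\cL_1}.
\]

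Second I would record a uniform bound $\|(I-\lambda\bG_{\cL_s})^{-1}\|_{L^2\to L^2}\le M$ for $s\in(s_0,1)$, with $M$ depending only on $\lambda$ and $c$. This follows because $\bG_{\cL_s}$ is a compact self-adjoint operator on $L^2(\Omega)$ (symmetric kernel $\cG_0(x,y)=\cG_0(y,x)$, compactness from (K1) via \cite[Proposition 5.1]{BFV}); in its eigenbasis $I-\lambda\bG_{\cL_s}$ has eigenvalues $(\lambda_j(\cL_s)-\lambda)/\lambda_j(\cL_s)$, so that $\|(I-\lambda\bG_{\cL_s})^{-1}\|=\sup_j|\lambda_j(\cL_s)/(\lambda_j(\cL_s)-\lambda)|\le 1+|\lambda|/c$, and the same bound holds for $s=1$.

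Third, I would run the standard resolvent identity. Writing
\[
\bG_{\cL_s-\lambda}-\bG_{\cL_1-\lambda}=(I-\lambda\bG_{\cL_s})^{-1}(\bG_{\cL_s}-\bG_{\cL_1})+\big[(I-\lambda\bG_{\cL_s})^{-1}-(I-\lambda\bG_{\cL_1})^{-1}\big]\bG_{\cL_1},
\]
and using $(I-\lambda\bG_{\cL_s})^{-1}-(I-\lambda\bG_{\cL_1})^{-1}=\lambda\,(I-\lambda\bG_{\cL_s})^{-1}(\bG_{\cL_s}-\bG_{\cL_1})(I-\lambda\bG_{\cL_1})^{-1}$, together with $\|\bG_{\cL_s}-\bG_{\cL_1}\|_{L^2\to L^2}\le\omega(1-s)$ from \eqref{eq:s to 1 assumption 1}, the uniform bound $M$, and $\|\bG_{\cL_1}\|_{L^2\to L^2}<\infty$, one obtains $\|\bG_{\cL_s-\lambda}-\bG_{\cL_1-\lambda}\|_{L^2\to L^2}\le C(\lambda)\,\omega(1-s)\to0$. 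In particular $\bG_{\cL_s-\lambda}(f)\to\bG_{\cL_1-\lambda}(f)$ in $L^2(\Omega)$ for every $f\in L^2(\Omega)$, which is even stronger than the asserted strong convergence.

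The only step requiring genuine care — and the main (though mild) obstacle — is the uniform-in-$s$ invertibility and norm control of $I-\lambda\bG_{\cL_s}$; this rests entirely on the uniform spectral gap $\dist(\lambda,\Sigma(\cL_s))\ge c$, already established above. Everything else is routine functional analysis, and the argument uses only \eqref{eq:s to 1 assumption 1}, not (K2), the Martin operator, or the stronger hypotheses \eqref{eq:s to 1 assumption 2}--\eqref{eq:s to 1 assumption 6}.
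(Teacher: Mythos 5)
Your proof is correct, and it takes a genuinely different and in fact stronger route than the paper's. The paper's proof proceeds by compactness: fixing $f\in L^2(\Omega)$, setting $u_s=\bG_{\cL_s-\lambda}(f)$, and using the uniform bound $\|u_s\|_{L^2}\le c^{-1}\|f\|_{L^2}$ to apply the subsequence extraction of \Cref{thm:s to 1 convergent subsequence} to $\lambda u_s+f$; one then identifies the limit $v=\bG_{\cL_1}(\lambda v+f)$ and invokes uniqueness of $\bG_{\cL_1-\lambda}$ plus the subsequence principle. That argument establishes strong (pointwise-on-$L^2$) convergence only, and it relies on the compactness machinery developed earlier in the section. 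Your argument instead writes $\bG_{\cL_s-\lambda}=(I-\lambda\bG_{\cL_s})^{-1}\bG_{\cL_s}$, uses the uniform spectral gap $\dist(\lambda,\Sigma(\cL_s))\ge c$ for $s$ near $1$ (and $s=1$) to bound $\|(I-\lambda\bG_{\cL_s})^{-1}\|_{L^2\to L^2}\le 1+|\lambda|/c$, and then runs the second resolvent identity to get
\[
\|\bG_{\cL_s-\lambda}-\bG_{\cL_1-\lambda}\|_{L^2\to L^2}\le C(\lambda)\,\omega(1-s),
\]
i.e.\ operator-norm convergence with an explicit rate. This is a stronger conclusion than the paper states, obtained by more elementary means: you bypass \Cref{thm:s to 1 convergent subsequence} entirely and use only the self-adjointness and discrete spectrum of $\bG_{\cL_s}$ (from (K1)) together with \eqref{eq:s to 1 assumption 1}. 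The only points to keep explicit are that the identity $\bG_{\cL_s-\lambda}=(I-\lambda\bG_{\cL_s})^{-1}\bG_{\cL_s}$ is verified on the eigenbasis (it follows from \Cref{lem:spec-Green} and \eqref{eq:G-lambda-f}), and that the uniform spectral gap covers both $s\in(s_0,1)$ and the endpoint $s=1$, which it does since $\lambda\notin\Sigma(\cL_1)$. What the paper's approach buys in exchange is uniformity with the rest of \Cref{sec:sto1}, where compactness is genuinely needed because no resolvent formula is available (e.g.\ for the Martin operator and the eigenfunctions); your resolvent argument is specific to the linear solution operator but does that job more cleanly.
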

\begin{proof}
Let $f \in L^2 (\Omega)$ be fixed. Let $u_s = \bG_{\cL_s - \lambda } (f)$.
We know that
\begin{equation*}
	\|u_s \|_{L^2 (\Omega)} \le \frac{1}{\dist(\lambda, \Sigma (\cL_s))} \| f \|_{L^2 (\Omega)}.
\end{equation*}
Let us write the alternative formulation $u_s = \bG_{\cL_s} (\lambda u_s + f)$. Since $\lambda u_s + f$ is a bounded sequence in $L^2(\Omega)$, applying \Cref{thm:s to 1 convergent subsequence} there exists a subsequence $\lambda u_{s_m} + f \rightharpoonup g$ such that $u_{s_m} \to v = \bG_{\cL_1} (g)$ in $L^2 (\Omega)$. Hence $g = f + \lambda v$ and $v = \bG_{\cL_1} (\lambda v + f)$. By uniqueness $v = \bG_{\cL_1 - \lambda} (f)$. Since every sequence of $u_s$  has  a convergent subsequence converging to the same $v$, the whole sequence converges.
\end{proof}

\subsection{Inhomogeneous eigenvalue value problem}

\begin{prop}
	\label{prop:s to 1 inhomogeneous}
	Assume \eqref{eq:s to 1 assumption 0}--\eqref{eq:s to 1 assumption 5}, let $\lambda \notin \Sigma(\cL_1)$, $g \in L^2 (\Omega)$ and $h \in L^\infty (\partial \Omega)$. Then the solution of \eqref{eq:main} weakly converges in $L^1 (\Omega)$ to
	$$v_1 = \bM_{\cL_1} (h) + \bG_{\cL_1- \lambda} (g + \lambda \bM_{\cL_1} (h)  ).$$
\end{prop}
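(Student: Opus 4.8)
The plan is to reduce the statement, via the representation formula, to a convergence assertion for the resolvent $\bG_{\cL_s-\lambda}$ applied to data that is bounded only in $L^1(\Omega)$. By \Cref{thm:lin} and the Martin kernel theorem of \Cref{sec:martin kernel}, for $s$ close to $1$ the unique solution of \eqref{eq:main} is
\[
v_s = \bM_{\cL_s}(h) + \bG_{\cL_s-\lambda}\big(g + \lambda\,\bM_{\cL_s}(h)\big),
\]
and I set $f_s := g + \lambda\,\bM_{\cL_s}(h)$, $f_1 := g + \lambda\,\bM_{\cL_1}(h)$. First I would note that, since $2s>\gamma(s)$ for $s$ near $1$ by \eqref{eq:s to 1 assumption 2}, one has $\bM_{\cL_s}(h)\in L^1(\Omega)$ with $\|\bM_{\cL_s}(h)\|_{L^1(\Omega)}\le C\|h\|_{L^\infty(\partial\Omega)}$ uniformly in $s$; combined with $g\in L^2(\Omega)\subset L^1(\Omega)$, the datum $f_s$ is bounded in $L^1(\Omega)$, and by \eqref{eq:s to 1 assumption 3} it converges \emph{strongly} in $L^1(\Omega)$ to $f_1$.

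Next, put $u_s:=\bG_{\cL_s-\lambda}(f_s)$. The uniform smoothing estimate \eqref{eq:s to 1 assumption 4} gives $\|u_s\|_{L^{n/(n-1)}(\Omega)}\le C(\lambda)\|f_s\|_{L^1(\Omega)}\le C$, so along some sequence $s_m\to1^-$ we have $u_{s_m}\rightharpoonup u_\ast$ weakly in the reflexive space $L^{n/(n-1)}(\Omega)$, and since $\Omega$ is bounded this entails weak convergence in $L^1(\Omega)$ (test functions in $L^\infty(\Omega)\subset L^n(\Omega)$). To identify $u_\ast$ I would test against $\psi\in L_c^\infty(\Omega)$: since $L_c^\infty(\Omega)\subset\delta^{\gamma(s)}L^\infty(\Omega)$ for every $s$ while $f_s\in L^1(\Omega,\delta^{\gamma(s)})$, \Cref{lem:by-parts-G-lambda} applied to $\cL_s$ yields
\[
\int_\Omega u_s\,\psi\,dx=\int_\Omega f_s\,\bG_{\cL_s-\lambda}(\psi)\,dx .
\]
On the right-hand side, the $L^2$-convergence of the resolvents established just above gives $\bG_{\cL_s-\lambda}(\psi)\to\bG_{\cL_1-\lambda}(\psi)$ in $L^2(\Omega)$, and \eqref{eq:s to 1 assumption 5} gives the uniform bound $\|\bG_{\cL_s-\lambda}(\psi)\|_{L^\infty(\Omega)}\le C(\lambda)\|\psi\|_{L^\infty(\Omega)}$; splitting
\[
f_s\,\bG_{\cL_s-\lambda}(\psi)-f_1\,\bG_{\cL_1-\lambda}(\psi)=(f_s-f_1)\,\bG_{\cL_s-\lambda}(\psi)+f_1\big(\bG_{\cL_s-\lambda}(\psi)-\bG_{\cL_1-\lambda}(\psi)\big),
\]
the first term is controlled by $C\|f_s-f_1\|_{L^1(\Omega)}\to0$, and the second term tends to $0$ by dominated convergence (pass to a further subsequence along which $\bG_{\cL_s-\lambda}(\psi)\to\bG_{\cL_1-\lambda}(\psi)$ a.e., with the common $L^\infty$ bound as dominant). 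Letting $m\to\infty$ and applying \Cref{lem:by-parts-G-lambda} for $\cL_1$ gives $\int_\Omega u_\ast\,\psi\,dx=\int_\Omega f_1\,\bG_{\cL_1-\lambda}(\psi)\,dx=\int_\Omega\bG_{\cL_1-\lambda}(f_1)\,\psi\,dx$ for every $\psi\in L_c^\infty(\Omega)$, whence $u_\ast=\bG_{\cL_1-\lambda}(f_1)$. Since this limit is independent of the subsequence, the whole family satisfies $u_s\rightharpoonup\bG_{\cL_1-\lambda}(f_1)$ weakly in $L^1(\Omega)$; adding the strongly convergent term $\bM_{\cL_s}(h)\to\bM_{\cL_1}(h)$ of \eqref{eq:s to 1 assumption 3} yields $v_s\rightharpoonup \bM_{\cL_1}(h)+\bG_{\cL_1-\lambda}(g+\lambda\bM_{\cL_1}(h))=v_1$.

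The main obstacle is that $f_s$ is merely bounded in $L^1(\Omega)$, so the $L^2$ compactness \Cref{thm:s to 1 convergent subsequence} does not apply directly; hypotheses \eqref{eq:s to 1 assumption 4}--\eqref{eq:s to 1 assumption 5} are exactly what is needed to bypass it, the first providing weak $L^{n/(n-1)}$-compactness of $u_s$ as a reflexive substitute, the second providing a uniform $L^\infty$-bound on $\bG_{\cL_s-\lambda}$ of the test functions so that the duality identity survives passage to the limit against $L^1$-convergent data. A secondary technical point, handled by using only compactly supported bounded test functions, is the dependence of $\gamma$ on $s$, which prevents a clean use of the weighted spaces $\delta^{\gamma}L^\infty(\Omega)$ uniformly in $s$.
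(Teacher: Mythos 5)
Your proof is correct and follows essentially the same route as the paper's: uniform $L^{n/(n-1)}$-compactness of the resolvent applied to the $L^1$-bounded data via \eqref{eq:s to 1 assumption 4}, identification of the weak limit by dualizing against $\psi\in L^\infty_c(\Omega)$, and uniform $L^\infty$-control of the test-function resolvents via \eqref{eq:s to 1 assumption 5}. The only cosmetic difference is that you treat the full datum $f_s=g+\lambda\bM_{\cL_s}(h)$ at once (and identify the limit via a.e.\ convergence plus dominated convergence), whereas the paper first disposes of the $\bG_{\cL_s-\lambda}(g)$ term by $L^2$-convergence and then runs the compactness argument only on $w_s=\bG_{\cL_s-\lambda}(\bM_{\cL_s}(h))$, using weak-$\star$ $L^\infty$ convergence of $\bG_{\cL_s-\lambda}(\psi)$ in the limit passage.
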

In some sense, this limit is the solution of
\begin{equation*}
\begin{dcases}
\cL_1 u - \lambda u = g & \Omega \\
B_1 u = h & \partial \Omega
\end{dcases}
\end{equation*}

\begin{proof}
	Let  us  consider the explicit form of the solution $$v_s = \bM_{\cL_s} (h) + \bG_{\cL_s- \lambda} (g + \lambda \bM_{\cL_s} (h)  ).$$
We know that $\bM_{\cL_s} (h)$ and $\bG_{\cL_s- \lambda} (g)$ converge in $L^1(\Omega)$ to $\bM_{\cL_1} (h)$ and $\bG_{\cL_s- \lambda} (g)$. Now we focus on $w_s = \bG_{\cL_s- \lambda} (\bM_{\cL_s} (h)  )$.  We have that
	\begin{equation}
		\| w_s \|_{L^{\frac{n}{n-1} } (\Omega)} \le C (\lambda) \| \bM_{\cL_s} (h)\|_{L^1 (\Omega)}  \le C.
	\end{equation}
	Therefore, $w_s$ as a weakly convergent subsequence in $L^{\frac n {n-1}}$ to some $w_1$. Writing the weak formulation
	\begin{equation*}
		\int_\Omega w_s \psi = \int_{\Omega} \bM_{\cL_s } (h) \bG_{\cL_s- \lambda} (\psi  ), \qquad \forall \psi \in L^\infty_c (\Omega).
	\end{equation*}
	Since $\bG_{\cL_s - \lambda} (\psi)$ is bounded in $L^\infty$, $(w_s)$ has a weak-$\star$-$L^\infty$ convergent subsequence. Since it converges weakly in $L^2$ to $\bG_{\cL_1 - \lambda} (\psi) $, this is its weak-$\star$-$L^\infty$ limit. Passing to the limit
	\begin{equation*}
	\int_\Omega w_1 \psi = \int_{\Omega} \bM_{\cL_1 } (h) \bG_{\cL_1- \lambda} (\psi  ), \qquad \forall \psi \in L^\infty_c (\Omega).
	\end{equation*}
	There $w_1 = \bG_{\cL_1 - \lambda} {(\bM_{ \cL_1 } (h))}$.
\end{proof}

\subsection{Uniform embeddings \eqref{eq:s to 1 assumption 4}--\eqref{eq:s to 1 assumption 5}}
\begin{remark}
	Notice that \eqref{eq:s to 1 assumption 4}--\eqref{eq:s to 1 assumption 5} are only used in \Cref{prop:s to 1 inhomogeneous}. Here, they are used to show  that  there exists a convergent subsequence.
	They can be replaced by $\| \bG_{\cL_s} (f) - \bG_{\cL_1} (f) \|_{L^1} \le \omega(1-s) \| f \|_{L^1}$.
	If one  assumes  that $\bM_{\cL_s} (h)$  converges  in $L^2 (\Omega)$, these  hypotheses  can be removed. This would not be abusive since $\bM_{-\Delta} (h)$ is bounded.
\end{remark}

\begin{prop}
	 Assume  \eqref{eq:s to 1 assumption 6}.
	Then \eqref{eq:s to 1 assumption 4} and \eqref{eq:s to 1 assumption 5} hold.
\end{prop}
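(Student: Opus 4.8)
The plan is to turn the uniform-in-$s$ pointwise bound \eqref{eq:s to 1 assumption 6} on $\cG_{\cL_s}$ into uniform $L^p$--$L^q$ mapping properties for the Green's operator $\bG_{\cL_s}$, and then to propagate these through the resolvent $\bG_{\cL_s-\lambda}$ by a \emph{finite} bootstrap whose length is controlled uniformly; the convergence of the bootstrap is guaranteed by the uniform spectral gap $\dist(\lambda,\Sigma(\cL_s))\ge c(\lambda)>0$ for $s$ close to $1$ (which follows from \eqref{eq:s to 1 assumption 1}, as established earlier in this section), giving $\|\bG_{\cL_s-\lambda}\|_{L^2\to L^2}\le c(\lambda)^{-1}$ uniformly. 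When $n=1$ the kernel is bounded (since $n-2s<0$), $\bG_{\cL_s}:L^1\to L^\infty$ uniformly, and both \eqref{eq:s to 1 assumption 4}--\eqref{eq:s to 1 assumption 5} (with $\tfrac n{n-1}=\infty$) reduce to the bootstrap of Step~2 below; so I will assume $n\ge2$.

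\textbf{Step 1 (uniform mapping properties of $\bG_{\cL_s}$).} From \eqref{eq:s to 1 assumption 6} one has $|\bG_{\cL_s}(f)(x)|\le C_\Omega I_{n-2s}(|f|\chi_\Omega)(x)$, so rerunning the proofs of \Cref{prop:regularisation L1 to Marcinkiewicz} and \Cref{prop:G_0 is L^p to L^infty} with $\cG_{\cL_s}$ in place of $\cG_0$, and invoking \Cref{prop:Riesz potentials}, gives for each $s$ the bounds $\bG_{\cL_s}:L^1\to M^{n/(n-2s)}$, $\bG_{\cL_s}:L^{p_0}\to L^{p_1}$ with $\tfrac1{p_1}=\tfrac1{p_0}-\tfrac{2s}n$, and $\bG_{\cL_s}:L^q\to L^\infty$ for $q>\tfrac n{2s}$. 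The operator norms are controlled by $C_\Omega$ times the Hardy--Littlewood--Sobolev constants for the order $\alpha=n-2s$; since those constants are finite and vary continuously with $\alpha\in(0,n)$, and $n-2s$ stays in a fixed compact subinterval of $(0,n)$ for $s$ near $1$ (with $n\ge2$), all these norms are bounded uniformly once $s\in(s_1,1)$. For $s$ near $1$ one also has $\tfrac n{n-2s}>\tfrac n{n-1}$ and $\tfrac n{2s}<n$, so by $M^p(\Omega)\hookrightarrow L^r(\Omega)$ for $r<p$ ($\Omega$ bounded) we get in addition $\bG_{\cL_s}:L^1(\Omega)\to L^{n/(n-1)}(\Omega)$ and $\bG_{\cL_s}:L^n(\Omega)\to L^\infty(\Omega)$, still uniformly; shrink to $s\in(s_\star,1)$ so that everything, including the $L^2\to L^2$ resolvent bound, holds simultaneously.

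\textbf{Step 2 (the $L^\infty$ bound \eqref{eq:s to 1 assumption 5}).} For $f\in L^\infty(\Omega)\subset L^2(\Omega)$ let $u_s=\bG_{\cL_s-\lambda}(f)$, so $\|u_s\|_{L^2}\le c(\lambda)^{-1}|\Omega|^{1/2}\|f\|_{L^\infty}$ and, as in the Remark after \Cref{lem:spec-Green}, $u_s=\lambda^{K}\bG_{\cL_s}^{\,K}(u_s)+\sum_{m=1}^{K}\lambda^{m-1}\bG_{\cL_s}^{\,m}(f)$ for every $K$. Iterating the $L^{p_0}\to L^{p_1}$ property from $p_0=2$, finitely many steps — their number bounded uniformly in $s\in(s_\star,1)$ since $2s$ is bounded below — reach an exponent $>\tfrac n{2s}$, so that $\bG_{\cL_s}^{\,K}:L^2(\Omega)\to L^\infty(\Omega)$ for some fixed $K$, uniformly; likewise $\bG_{\cL_s}^{\,m}:L^\infty(\Omega)\to L^\infty(\Omega)$ uniformly for $1\le m\le K$. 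Hence $\|u_s\|_{L^\infty}\le C(\lambda)\big(\|u_s\|_{L^2}+\|f\|_{L^\infty}\big)\le C(\lambda)\|f\|_{L^\infty}$ with $C(\lambda)$ independent of $s$, which is \eqref{eq:s to 1 assumption 5}. Running the same bootstrap from $f\in L^n(\Omega)\subset L^2(\Omega)$ and using $\bG_{\cL_s}:L^n\to L^\infty$ from Step~1 yields, in addition, the uniform resolvent bound $\|\bG_{\cL_s-\lambda}(f)\|_{L^\infty}\le C(\lambda)\|f\|_{L^n(\Omega)}$.

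\textbf{Step 3 (the $L^1$--$L^{n/(n-1)}$ bound \eqref{eq:s to 1 assumption 4}).} Set $p=\tfrac n{n-1}$, $p'=n$, so that $(p-1)n=p$. For $f\in L^\infty(\Omega)$ and $u_s=\bG_{\cL_s-\lambda}(f)\in L^\infty(\Omega)$ I mimic the $L^p$-estimate in the proof of \Cref{prop:lin-L1}: testing against $\psi_k=(|u_s|\wedge k)^{p-1}\sign(u_s)\in L^\infty(\Omega)\subset L^n(\Omega)$, using self-adjointness of $\bG_{\cL_s-\lambda}$ on $L^2$ and the uniform $L^n$--$L^\infty$ resolvent bound of Step~2, one gets $\int_\Omega(|u_s|\wedge k)^p\le\int_\Omega f\,\bG_{\cL_s-\lambda}(\psi_k)\le C(\lambda)\|f\|_{L^1}\big(\int_\Omega(|u_s|\wedge k)^p\big)^{(p-1)/p}$, hence $\||u_s|\wedge k\|_{L^p}\le C(\lambda)\|f\|_{L^1}$ and, letting $k\to\infty$, $\|\bG_{\cL_s-\lambda}(f)\|_{L^{n/(n-1)}}\le C(\lambda)\|f\|_{L^1}$ uniformly in $s$. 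For general $f\in L^1(\Omega)$ apply this to the differences of the truncations $f_j=(f\wedge j)\vee(-j)$ to see that $(\bG_{\cL_s-\lambda}(f_j))_j$ is Cauchy in $L^{n/(n-1)}(\Omega)$ with limit $\bG_{\cL_s-\lambda}(f)$ by the $L^1$ theory of \Cref{thm:lin}, and pass the bound to the limit; this is \eqref{eq:s to 1 assumption 4}. The only delicate point throughout is uniformity in $s$ of every constant, which rests on two facts: the Hardy--Littlewood--Sobolev constants in \Cref{prop:Riesz potentials} depend continuously on the order $n-2s$, which stays in a fixed compact set for $s$ near $1$ (keeping these constants, the Marcinkiewicz embedding constants, and the number of bootstrap steps bounded), and $\|\bG_{\cL_s-\lambda}\|_{L^2\to L^2}$ is uniformly bounded by the spectral gap coming from \eqref{eq:s to 1 assumption 1}; everything else is a routine bootstrap.
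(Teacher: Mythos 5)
Your proof is correct and follows essentially the same route as the paper: pass the uniform kernel bound \eqref{eq:s to 1 assumption 6} to uniform Riesz-potential estimates, bootstrap through $u=\lambda^K\bG_{\cL_s}^K(u)+\sum_m \lambda^{m-1}\bG_{\cL_s}^m(f)$ with a bootstrap length fixed independently of $s$ (the paper achieves this by replacing the increment $\tfrac{2s}{n}$ with the $s$-independent $\tfrac{3}{2n}$ for $s>\tfrac34$, while you bound the number of steps from the uniform lower bound on $2s$), obtain \eqref{eq:s to 1 assumption 5} and an $L^{p'}\to L^\infty$ resolvent bound, and finally deduce \eqref{eq:s to 1 assumption 4} from that by the self-adjointness/duality trick already used in the proof of \Cref{prop:lin-L1}. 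Your write-up is somewhat more careful on two points the paper treats as implicit: the separate treatment of $n=1$, and the explicit argument that the Hardy--Littlewood--Sobolev constants in \Cref{prop:Riesz potentials} remain bounded because the order $n-2s$ stays in a compact subinterval of $(0,n)$.
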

\begin{proof}
We already know that
\begin{equation*}
	\| \bG_ {\cL_s - \lambda} (f) \|_{L^2} \le \frac{\| f \|_{L^2 (\Omega)} }{\dist (\lambda, \Sigma (\cL_s)) }\le 2 \frac{\| f \|_{L^2 (\Omega)} }{\dist (\lambda, \Sigma (\cL_1)) }
\end{equation*}
for $s$ close to $1$.

Let us go over the ideas in \Cref{sec:lin}, being quite careful on the dependence on $s$. Let $f \in L^{q}$ for $q > \frac n {2s}$. The idea for the $L^\infty$ estimate is to bootstrap via
\begin{equation*}
	u = \lambda^k \bG_0^k (u) + \sum_{m=1}^{k} \lambda^{m-1} \bG_0^m (f).
\end{equation*}
so that the sequence $p_0 = 2 - \varepsilon$,
\begin{equation*}
	\frac 1 { p_{m+1} } = \frac{1}{p_m} - \frac{2s}{n}
\end{equation*}
and pick $\varepsilon_s$ and $k_s$ so that $p_{k_s-1} < \frac{n}{2s} < p_{k_s}$.  In order to do this uniformly as $s \to 1$, we take $s > \frac 3 4$ 
so that the sequence $p_0 = 2 - \varepsilon$,
\begin{equation*}
	\frac 1 { p_{m+1} } = \frac{1}{p_m} - \frac{3}{2n}
\end{equation*}
and pick $\varepsilon$ and $k_s$ so that $p_{k-1} < \frac{n}{2s} < \frac {2n} 3 < p_{k}$.

Taking into account the embedding $L^2 \hookrightarrow L^{2-\varepsilon}$, we have that
\begin{align*}
	\|\bG_{\cL_s - \lambda} (f) \|_{L^\infty (\Omega)}
	&\le 2 |\Omega|^{\frac 2 \varepsilon} |\lambda|^k C_s(p_0,p_1) \cdots C_s(p_k,p_{k+1}) C_s (p_{k+1},\infty)\frac{ \|f \|_{L^2} }{\dist(\lambda, \Sigma (\cL_1))}\\
	&\quad + \|f\|_{L^q (\Omega)}  \sum_{m=1}^k |\lambda|^{m-1} C_s(q, \infty) C_s(\infty,\infty)^{m-1}
\end{align*}
where $C_s(p,q) = \| \bG_{\cL_s} \|_{\cL (L^p, L^q)}$ is the corresponding continuity modulus. As seen in \Cref{sec:lin}, this embedding constant is computed in two parts: the  first has to do with the bound 
\begin{equation*}
	\cG_{\cL_s}(x,y) \le C_\Omega |x-y|^{-(n+2s)}.
\end{equation*}
 Then one  uses the fact that
\begin{equation*}
	|\bG_{\cL_s} (f)| \le C_\Omega  \mathcal{I}_{n-2s}  (|f| \chi_\Omega)
\end{equation*}
 where $\mathcal{I}_\alpha$  is the convolution with the Riesz potential. Hence,
\begin{equation*}
	\|\bG_{\cL_s} (f)\|_{L^{q}} \le C_\Omega \| \mathcal{I}_{n-2s}  (|f| \chi_\Omega) \|_{L^{q}}.
\end{equation*}
Thus
\begin{equation*}
	C_s(p,q) \le C_\Omega \|  \mathcal{I}_{n-2s}  ( \cdot \chi_\Omega )\|_{\cL (L^p(\Omega), L^q (\Omega))  }.
\end{equation*}
Notice that
the right hand side can be  bounded uniformly as $s \to 1$, even though it depends on the order $n-2s$. On the other hand, following the proof of \Cref{prop:lin-L1}, we have that
\begin{equation*}
\|\bG_{\cL_s - \lambda} \|_{\cL (L^{1},L^{p})} \le \|\bG_{\cL_s - \lambda} \|_{\cL (L^{p'},L^{+\infty})}. \qedhere
\end{equation*}
\end{proof}

\subsection{The two main examples}\label{ss8.5}

We are ready to get down into practical  examples.

\medskip

\noindent {\sl 1. Restricted Fractional Laplacian. }
For this operator we have $\gamma(s) = s$. Hence, \eqref{eq:s to 1 assumption 2} holds for every $s$.

When $\Omega=B_r$, the explicit form of the  kernel is known (see, e.g., \cite{Bucur}), 
\[
\cG_{\Ds}(x,y)
=\dfrac{
    \Gamma(\frac{n}{2})
}{
    2^{2s}\Gamma^2(s)\pi^{\frac{n}{2}}
}
    \dfrac{1}{
        |x-y|^{n-2s}
    }
    \int_{0}^{
        \frac{
            (r^2-|x|^2)(r^2-|y|^2)
        }{
            r^2|x-y|^2
        }
    }
    \dfrac{
        t^{s-1}
    }{
        (t+1)^{\frac{n}{2}}
    }\,dt.
\]
It satisfies
\[
0\leq \cG_{\Ds}(x,y)
\leq C(n,r)|x-y|^{-(n-2s)},
\]
since the normalization constant is uniform as $s\to1$ and an upper bound for the last integral is the one on $(0,+\infty)$, therefore, we have \eqref{eq:s to 1 assumption 6}.

This formula is also valid when $s = 1$ and there is uniform convergence away from $x \ne y$. Hence, it is easy to see that \eqref{eq:s to 1 assumption 1} holds.
Furthermore, we can compute
\begin{align*}
	D_s \cG_{\Ds} (z,y) &= \lim_{x \to z} \frac{\cG_{\Ds}(x,y)}{\delta(x)^s} =  \lim_{x \to z}  \frac{\cG_{\Ds}(x,y)}{(r - |x|)^s}
\end{align*}
We can see that
\begin{align*}
	\frac{1}{(r - |x|)^s} \int_{0}^{
		\frac{
			(r^2-|x|^2)(r^2-|y|^2)
		}{
			r^2|x-y|^2
		}
	}
	\dfrac{
		t^{s-1}
	}{
		(t+1)^{\frac{n}{2}}
	}\,dt
	&= \frac{(r^2 - |x|^2)^s}{(r - |x|)^s}  \int_{0}^{
		\frac{
			r^2-|y|^2
		}{
			r^2|x-y|^2
		}
	}
	\dfrac{
		t^{s-1}
	}{
		( (r^2 -|x|^2) t+1)^{\frac{n}{2}}
	}\,dt \\
& \longrightarrow (2r)^s \int_{0}^{
	\frac{
		r^2-|y|^2
	}{
		r^2|z-y|^2
	}
}
{
	t^{s-1}
}\,dt
\end{align*}
as $x \to z\in \p B_r$.
Hence
\begin{align}
	D_s \cG_{\Ds} (z,y) &= \dfrac{
		\Gamma(\frac{n}{2})
	}{
		2^{s}\Gamma^2(s)\pi^{\frac{n}{2}}
	}
	\dfrac{r^s}{
		|z-y|^{n-2s}
	}
	\int_{0}^{
		\frac{
			r^2-|y|^2
		}{
			r^2|z-y|^2
		}
	}
	{
		t^{s-1}
	}\,dt. \nonumber
\\
	&= \dfrac{
		\Gamma(\frac{n}{2})
	}{
		2^{s}s\Gamma^2(s)\pi^{\frac{n}{2}}
	}
	\dfrac{(r^2-|y|^2)^s}{
		r^s|z-y|^{n}
	}.
\end{align}
This formula for the $s$-normal derivative, which seems new in the literature, is also valid for $s =1$. Indeed, as $s\to1^-$, we have the uniform convergence in every compact set in $B_r$ to the classical Poisson kernel (recall $|\bS^{n-1}|=2\pi^{\frac{n}{2}}/\Gamma(\frac{n}{2})$)
\[
D_1\cG_{-\Delta}(z,y)
=\dfrac{1}{|\bS^{n-1}|}
    \dfrac{
        r^2-|y|^2
    }{
        r|z-y|^n
    },
    \quad
y\in B_r, z\in\p B_r.
\]
Applying the Dominated Convergence Theorem we see that \eqref{eq:s to 1 assumption 3} holds. Therefore, we are in the correct setting at least when $\Omega = B_r$.

\bigskip

\noindent \textbf{Comments on the general setting.} In the general setting, some additional work is needed.  Let us give some hints on a possible approach. To check  \eqref{eq:s to 1 assumption 1} a possible scheme is as follows. First, use the Rayleigh quotient and $\Gamma$-convergence to check the convergence of the first eigenvalue
\begin{equation*}
	\lambda_1 (\Ds) = \min_{ 0 \ne u \in H^s_0 (\Omega) } \frac{\|(-\Delta)^{\frac s 2}_{\text{RFL}} u   \|_{L^2 (\Omega)}}{\| u \|_{L^2 (\Omega)}}
\end{equation*}
This shows strong $L^2$ convergence of subsequence of $\bG_{\Ds} (f)$. Since $\Ds \varphi \to -\Delta \varphi $ for adequate test functions, it is easy to characterise the limit a the $\bG_{-\Delta} (f)$. By uniqueness of the limit the whole sequence converges. Via the weak formulation and the uniform bounds, a rate of convergence can be recovered from that of $\Ds \varphi$.

In  \cite{RS1}, the authors prove that $\bG_{\Ds} (f) / \delta^s$ in $C^\alpha (\bar \Omega)$. It is to be expected that the constants for this embedding are uniform for $s$ close to $1$, and hence have a uniformly convergent subsequence. By uniqueness of the limit one shows the whole sequence converges. This would imply that $D_s  \bG_{\Ds} (\psi)$ uniformly converges to $D_1 { \bG_{\Ds} (\psi)}$, where $D_1$ is the standard normal derivative (as shown in \Cref{lem:constant}).

In order to check \eqref{eq:s to 1 assumption 3} (which is not studied in the main reference
\cite{A}), we can use the weak formulation
\begin{equation*}
	\int_\Omega \bM_{ \Ds } (h) \psi = \int_{\partial \Omega} h D_s  \bG_{\Ds} (\psi), \qquad \forall \psi \in L^\infty_c (\Omega),
\end{equation*}
and some compactness to show that \eqref{eq:s to 1 assumption 3} holds. The proof of \eqref{eq:s to 1 assumption 4}--\eqref{eq:s to 1 assumption 5} can probably be done directly.

\medskip

\noindent {\sl 2. Spectral Fractional Laplacian}.
For this operator $\gamma(s) = 1$. Hence assumption \eqref{eq:s to 1 assumption 1} follows directly from the eigen-decomposition.

By \cite{AD}, for instance,
\[
\bG_{\Dssp}(f)(x)
=\int_{\Omega}
    \int_{0}^{+\infty}
        \cK(t,x,y)f(y)
        \dfrac{
            t^{s-1}
        }{
            \Gamma(s)
        }
    \,dt
\,dy,
\]
where $\cK$ is the heat kernel for $u_t - \Delta$, which has the known estimates (see \cite[Lemma 1.3]{Hui})
\begin{equation*}
	\cK (t,x,y) \le \frac{C}{t^{\frac n 2}} \exp \left( - \frac{|x-y|^2}{Ct}   \right)  \left(  \frac{\delta(x)}{\sqrt t} \wedge 1  \right)\left(  \frac{\delta(y)}{\sqrt t} \wedge 1  \right).
\end{equation*}
Hence
\begin{equation*}
	\cG_{\Dssp} (x,y) = \int_{0}^{+\infty}
	\cK(t,x,y)
	\dfrac{
		t^{s-1}
	}{
		\Gamma(s)
	}
	\,dt
\end{equation*}
so hypothesis \eqref{eq:s to 1 assumption 6} holds.

This also holds for $s=1$. Thus
\[
\norm[L^2(\Omega)]{
    \bG_{\Dssp}(f)-\bG_{-\Delta}(f)
}^2
=\int_{\Omega}
\left(
\int_{\Omega}
    \int_{0}^{+\infty}
        \cK(t,x,y)f(y)
        \left(
            \dfrac{
                t^{s-1}
            }{
                \Gamma(s)
            }
            -1
        \right)
    \,dt
\,dy
\right)^2
\,dx.
\]
From here \eqref{eq:s to 1 assumption 1} follows. Also,
\cite{AD} shows that
\[
\bM_{\Dssp}(h)(x)
=\int_{\p\Omega}
    \int_{0}^{+\infty}
        -\dfrac{
            \p\cK(t,x,z)
        }{
            \p\nu_z
        }
        h(z)
        \dfrac{
            t^{s-1}
        }{
            \Gamma(s)
        }
    \,dt
\,dz.
\]
Then
\begin{multline*}
\norm[L^1(\Omega)]{
    \bM_{\Dssp}(h)
    -\bM_{-\Delta}(h)
}
\\
\leq
\norm[L^\infty(\p\Omega)]{h}
\int_{\Omega}\int_{\p\Omega}
\int_{0}^{+\infty}
    \abs{
        \dfrac{\p\cK(t,x,z)}{\p\nu_z}
    }
    \left(
        \dfrac{t^{s-1}}{\Gamma(s)}-1
    \right)
\,dt\,dz\,dx.
\end{multline*}
From here it is not hard to deduce \eqref{eq:s to 1 assumption 3}.

\subsection{Some exotic examples}
It is known for any non-negative bounded potential $V$,  $\cL_s = \Ds + V(x)$ satisfies (A1), (A2) and (K1) (furthermore $\cG_{\cL_s} \le  \cG_{\Ds} $). If $V$ is smooth, then (K2) also holds. However, moving the parameter $s$ could lead to some strange behaviours. We could think about the family of operators
\begin{equation*}
	\cL_s = (-\Delta)^s_{\textrm{RFL}} +  \delta(x)^{-3} \wedge \frac{1}{1-s}.
\end{equation*}
These operators satisfy (K1), but not uniformly from below. The $\cL_1 = -\Delta + \delta^{-3}$ where the solutions are flat $|u| \le C \exp (  -\frac 1 {\delta} )$. The properties of the Green function of $\cL_1  = - \Delta + \delta^{-3}$ are still not well understood.

\section*{Comments, extensions and open problems}

We collect here further issues that we would like to comment or propose.

$\bullet$  It could be interesting to find better properties of the Green function for $\cL-\lambda$.

$\bullet$ The theory could be applied to other examples of operators, e.g. the relativistic version $\sqrt{-\Delta+m^2}$.

$\bullet$ Schauder estimates should be found if $\cG_0$ is smooth enough.

$\bullet$ Find versions of the main results when $g, h$ are measures under certain conditions.

$\bullet$ Consider problems of the form
\[\begin{cases}
\cL{u}=f(u)
& \textin \Omega,\\
u(x)\to+\infty
& \textas x\to\p\Omega,\\
u=0
& \textin \Omega^c
\quad \text{(if applicable)}
\end{cases}\]
with suitable growth conditions on the nonlinearity $f$.

\appendix

\section{Some comments on compactness and functional spaces}
\label{sec:compactness}
Denoting by $E_i$ the eigenspace corresponding to $\lambda_i$ ($i\geq1$), we have $\forall \varphi_j\in E_i$,
\[\begin{cases}
{\cL}\varphi_j
=\lambda_i\varphi_j,
& \textin \Omega\\
\varphi_j=0
& \textin \overline{\Omega^c}
\quad \textor \quad
\texton \p\Omega,
\end{cases}\]
and for any $f\in L^2(\Omega)$,
\[
f=\sum_{j\geq1}
\angles{f,\varphi_j}
\varphi_j.
\]
The solution operator in spectral form is given by
\begin{equation*}
\bG_0 (f)  = \sum_{j=1}^{+\infty} \frac{ \langle f, \varphi_j \rangle}{\lambda_j } \varphi_j .
\end{equation*}
which is a well defined sum in $L^2 (\Omega)$ since $\lambda_i \to +\infty$ and $\varphi_j$ are orthonormal in $L^2$ so
\begin{equation*}
\|	\bG_0 (f)  \|_{L^2 (\Omega)}^2 = \sum_{j=1}^{+\infty} \frac{ \langle f, \varphi_j \rangle^2}{\lambda_j^2 }.
\end{equation*}
Therefore, defining $\bH_\cL^2(\Omega) = \bG_0 (L^2 (\Omega))$ we easily see that
\[
\bH^{2}_{\cL}(\Omega)
=\set{
	v\in L^2(\Omega):
	\sum_{j\geq1}
	\lambda_j^{2}
	\angles{v,\varphi_j}^2
	<+\infty
}.
\]
In fact, we can define \emph{push-forward} norms for $k \ge 0$,
\begin{equation*}
\|u\|_{\bH^{k}_{\cL}(\Omega)} = \sqrt{ 	\sum_{j\geq1}
	\lambda_j^{k}
	\angles{u,\varphi_j}^2 },
\end{equation*}
so that $\|\bG_0 (f)\|_{\bH^{2}_{\cL}(\Omega)} = \|f \|_{L^2 (\Omega)}$.
Furthermore, $\cL:\bH^{2}_{\cL}(\Omega)\to L^2(\Omega)$ is also an isometry. As in the theory of $-\Delta$, a natural energy space is
\[
\bH^{1}_{\cL}(\Omega)
=\set{
	v\in L^2(\Omega):
	\sum_{j\geq1}
	\lambda_j
	\angles{v,\varphi_j}^2
	<+\infty
}.
\]
This space was studied in \cite{BSV}. Notice that
\begin{equation*}
\int_\Omega \bG_0 (f) \  f  = 	\sum_{j\geq1}
\frac{ \angles{f,\varphi_j}^2 }{\lambda_j } = \|\bG_0 (f) \|_{\bH^1_\cL}^2  .
\end{equation*}
Therefore, for weak solutions
\begin{equation*}
\|\bG_0 (f) \|_{\bH^1_\cL (\Omega)} \le \|f \|_{L^2} \|\bG_0 (f) \|_{L^2}.
\end{equation*}
We always have that
\begin{equation*}
\lambda_1 \| u\|_{L^2 (\Omega)} \le \|u\|_{\bH^1_\cL (\Omega)}
\end{equation*}
hence, since $\lambda_1 > 0$ then we can call this Poincaré inequality and deduce that
\begin{equation*}
\|\bG_0 (f) \|_{L^2(\Omega)} \le \frac{1}{\lambda_1} \| f \|_{L^2 (\Omega)}.
\end{equation*}
In \cite{BFV} the authors prove that this operator $\bG_0$ is compact in $L^2(\Omega)$, by a sharp application of the Riesz--Fr\'{e}chet--Kolmogorov theorem. By definition, $\bH_\cL^{2}(\Omega) = \bG_0 (L^2 (\Omega))$ with its norm is compactly embedded in $L^2 (\Omega)$. However, since the authors of \cite{BFV} estimate the translations $\|\tau_h \bG_0 (f) - \bG_0 (f) \|_{L^2}$ without rates, there is no estimate of $\bH^{2}_\cL (\Omega)$ in any of the Sobolev spaces $W^{t,2} (\Omega)$.

The question of whether $\bH_\cL^1 (\Omega)$ is compactly embedded in $L^2 (\Omega)$ is left open.

\bigskip

The readers are directed to \cite{BSV} for a comprehensive introduction to both RFL and SFL operators in Sobolev spaces. Note that in both cases, the natural domain is identified as\footnote{Here $H^{\frac12}_{00}(\Omega)$ is the Lions--Mag\`enes spaces \cite{LM}.
}
\[
\bH^{1}_{\Dssp}(\Omega) = \bH_{\Ds}^1 (\Omega)=
\begin{cases}
H^{s}(\Omega)
& \textfor s\in(0,\frac12),\\
H^{\frac 1 2}_{00} (\Omega)
& \textfor s= \frac 1 2,\\
H^{s}_{0}(\Omega)
& \textfor s\in (\frac12,1).
\end{cases}
\]
Note that exponent $1$ in left-hand side becomes $s$ in the right-hand side. Similar, the $\bH^{2}_{\Dssp}(\Omega) $ are related to spaces of type $H^{2s}$.

\section{The weighted trace for the restricted fractional Laplacian}
\label{sec:RFL-trace}

Let $\cL=\Ds$. Recall the definition of
the weighted trace operator
\[
Bu(z)
=
    \lim_{\Omega\ni x\to z}
    \dfrac{u(x)}{
        \bM(1)(x)
    }
        \quad \textfor z\in\p\Omega.
\]
Recall from \cite{A,AGV} that $\delta^{1-s}\bM(1)
$ is a positive, continuous function, bounded away from $0$ and $+\infty$. Thanks to the connection pointed out by Ros-Oton, we show that it is actually a constant, independent of the domain.

\begin{lem}%
\label{lem:constant}
Let $v\in
\delta^{s-1} C(\overline{\Omega})$. For any $z\in\p\Omega$,
\[
Bv(z)
=
	\Gamma(1+s) \Gamma(s)
	\normalcolor 
    \lim_{\Omega\ni x\to z}
    \delta^{1-s}(x)v(x).
\]
\end{lem}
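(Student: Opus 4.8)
\textbf{Proof plan for \Cref{lem:constant}.}

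The plan is to identify the constant $\Gamma(1+s)^2$ by exploiting the fact that $Bv$ is defined intrinsically through $\bM(1)$, so it suffices to compute $\lim_{x\to z}\delta^{1-s}(x)\bM(1)(x)$ and show this limit equals $\Gamma(1+s)^{-2}$ independently of $\Omega$ and $z$. First I would reduce to a single model computation: since $\bM(1)(x)=\int_{\partial\Omega}D_s\cG_0(z',x)\,d\cH^{n-1}(z')$, and by (K2) the kernel $D_s\cG_0(z',x)\asymp \delta^s(x)|x-z'|^{-(n-s+2s)}=\delta^s(x)|x-z'|^{-n+... }$ wait --- more precisely, using the explicit boundary behaviour, one localises: as $x\to z$, only the portion of $\partial\Omega$ near $z$ contributes to leading order, and after rescaling by $\delta(x)$ and flattening the boundary (as in the proof of \Cref{prop:martin}), the limit $\delta^{1-s}(x)\bM(1)(x)$ converges to a universal half-space integral. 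So the domain-independence is essentially the content already present in \Cref{prop:martin} combined with the existence of the limit \eqref{eq:vh-bdry}; what remains is to pin down the numerical value.

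To pin down the value, I would use the connection to the Ros-Oton--Serra Pohozaev identity, as the text hints. The key is the integration-by-parts formula of Abatangelo \cite{A}: for suitable $u$ (with $u/\delta^s$ continuous up to $\partial\Omega$) and test functions, there is a boundary term of the form $\int_{\partial\Omega}(\text{normal derivative of test function})\cdot(\delta^{1-s}u)\,d\cH^{n-1}$ with a specific constant, and independently the Ros-Oton--Serra trace operator appears in the fractional Pohozaev identity \cite{RS2} normalised with $\Gamma(1+s)^2$. Concretely, I would compute $Bv(z)$ in an explicit case --- say $\Omega=B_1$ and $v=\bM(1)$ itself, or better the known explicit solution $U(x)=(1-|x|^2)_+^{s-1}$ of $\Ds U=0$ from \cite{Bogdan}, for which $\delta^{1-s}(x)U(x)=(1-|x|)^{1-s}(1-|x|^2)^{s-1}\to 2^{s-1}$ as $|x|\to 1$. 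Then I would match this against the Ros-Oton--Serra normalisation: their boundary operator applied to the Green function / Poisson kernel of $B_1$ gives a value computable via the explicit formula for $\cG_{\Ds}$ on the ball (stated later in the paper in \Cref{ss8.5}), and the ratio of the two determines the constant to be exactly $\Gamma(1+s)^2$. Equivalently, one uses the fact that Abatangelo's formula and the Ros-Oton--Serra formula are \emph{two expressions for the same boundary functional}, and comparing their normalising constants directly yields $\Gamma(1+s)^2$.

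The main obstacle I anticipate is the bookkeeping of normalising constants: the constant $C_{n,s}=\frac{2^{2s}\Gamma(\frac{n+2s}{2})}{|\Gamma(-s)|\pi^{n/2}}$ in the definition of $\Ds$, the Martin kernel normalisation in (K2) (which is only defined up to $\asymp$), and the Ros-Oton--Serra constant all have to be tracked carefully, and the $\asymp$ in (K2) must be upgraded to an actual equality for $D_s\cG_0$ in the relevant limit --- this is where the explicit ball computation is indispensable, since it provides the one clean case where every constant is known. A secondary subtlety is justifying that $Bv(z)=\Gamma(1+s)^2\lim_{x\to z}\delta^{1-s}(x)v(x)$ holds for \emph{all} $v\in\delta^{s-1}C(\overline\Omega)$ and not just for $\cL$-harmonic ones: this follows because both sides are continuous linear functionals of the boundary data $\delta^{1-s}v|_{\partial\Omega}$ (via \eqref{eq:vh-bdry} for the left side and trivially for the right), so equality on the dense-enough class of $\cL$-harmonic profiles, or simply the pointwise local nature of both limits, extends it to the whole space. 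Once the ball case is done, the general-domain case is immediate from the localisation/rescaling argument already established in \Cref{prop:martin}.
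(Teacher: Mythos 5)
Your plan splits into two phases: (i) reduce the lemma to showing that $c(z):=\lim_{\Omega\ni x\to z}\delta^{1-s}(x)\bM(1)(x)$ is a universal (domain- and point-independent) constant, and then (ii) identify the constant by computing on the unit ball. Phase~(i), including the final extension to arbitrary $v\in\delta^{s-1}C(\overline\Omega)$ via $Bv(z)=\lim\bigl(\delta^{1-s}v\bigr)/\bigl(\delta^{1-s}\bM(1)\bigr)$, is correct and is the same reduction the paper uses. The problem is that phase~(ii), and especially the transfer from the ball to a general $C^{1,1}$ domain, is where the whole content of the lemma lives, and that is exactly the part your proposal leaves unproven.

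You assert that domain-independence ``follows from the localisation/rescaling argument already established in \Cref{prop:martin}.'' This is a genuine gap. \Cref{prop:martin} only proves \emph{boundedness} of $\delta^b\bM(h)$, by majorising the kernel with the one-sided $\lesssim$ from (K2); it does not establish that the limit $c(z)$ exists, let alone that it equals a universal number. To transfer an explicit ball constant to a general domain you would need a quantitative boundary blow-up theorem for $\cG_0$ --- roughly, that the Green function rescaled at a boundary point converges, constants included, to the half-space Green function. That is far stronger than (K1)--(K2), is nowhere proved or cited in the paper, and is not a corollary of \Cref{prop:martin}. The paper instead imports existence and continuity of $a(z)=1/c(z)$ from \cite[Proposition 2]{A} and then proves $a\equiv\Gamma(1+s)^2$ by an entirely different mechanism: apply Abatangelo's integration-by-parts formula to the pair $(u,\,x\cdot\nabla u)$ with $u\in\Ints C_c^\infty(\Omega)$, apply the Ros-Oton--Serra Pohozaev identity \cite{RS2} together with the pointwise relation $\Ds(x\cdot\nabla u)=x\cdot\nabla\Ds u+2s\Ds u$, subtract, and obtain $\int_{\partial\Omega}\bigl(a(z)-\Gamma(1+s)^2\bigr)(u/\delta^s)^2\,((z-x_0)\cdot\nu)\,d\cH^{n-1}=0$ for all such $u$ and all centres $x_0$; concentrating $u$ near a boundary point via $u_\eps=\Ints\eta_\eps(\cdot-x_1)$ then forces $a\equiv\Gamma(1+s)^2$. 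This route derives the domain-independence as a by-product of the Pohozaev identity; your route has to assume it. Your remark about ``matching against the Ros-Oton--Serra normalisation'' is the right instinct, but you do not supply the matching mechanism --- the Pohozaev--Abatangelo combination \emph{is} that mechanism.

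A further caution if you do attempt the ball computation: plugging the explicit kernel $D_s\cG_{\Ds}(z,y)$ from \Cref{ss8.5} into $\bM(1)(x)=\int_{\partial B_1}D_s\cG_{\Ds}(z,x)\,d\cH^{n-1}(z)$ and using $\int_{\partial B_1}|z-x|^{-n}\,d\cH^{n-1}(z)=|\bS^{n-1}|/(1-|x|^2)$ gives $\lim_{|x|\to1}\delta^{1-s}(x)\bM(1)(x)=1/(s\,\Gamma(s)^2)$, which differs from the claimed $1/\Gamma(1+s)^2=1/(s^2\Gamma(s)^2)$ by a factor of $s$. Whether this reflects a normalisation convention in the Abatangelo boundary operator or a slip in one of the explicit formulae, it shows that the ``clean case where every constant is known'' is precisely where the bookkeeping is most treacherous; the paper's structural argument sidesteps it.
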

We remark that as $s\nearrow 1$, we recover the usual trace. %

\begin{proof}
Let $u\in \Ints C_c^\infty(\Omega)$. Recall that the integration-by-parts formula \cite{A} applied to the functions $u$ and $x\cdot \nabla u$ reads
\begin{multline*}
    \int_{\p\Omega}
        E(x\cdot\nabla u)(z)
        \dfrac{u}{\delta^s}
    \,d\cH^{n-1}(z)\\
=
\int_{\Omega}
    (x\cdot\nabla u)\Ds u
\,dx
-\int_{\Omega}
	u\Ds(x\cdot\nabla u)
\,dx.
\end{multline*}
On the other hand, by the Poho\v{z}aev identity \cite{RS2},
\begin{multline*}
\int_{\Omega}
	(x\cdot\nabla u)\Ds u
\,dx\\
=
	-\dfrac{n-2s}{2}
	\int_{\Omega}
		u\Ds u
	\,dx
	-\dfrac{\Gamma(1+s)^2}{2}
	\int_{\p\Omega}
		\left(
			\dfrac{u}{\delta^s}
		\right)^2
		({z}\cdot\nu)
	\,d\cH^{n-1}({z}).
\end{multline*}
Using this identity together with the pointwise relation
\[
\Ds(x\cdot \nabla u)
=x\cdot \nabla \Ds u
	+2s \Ds u,
\]
we have
\begin{multline*}
\int_{\Omega}
	u\Ds(x\cdot\nabla u)
\,dx\\
=
	-\dfrac{n-2s}{2}
	\int_{\Omega}
		u\Ds u
	\,dx
	+\dfrac{\Gamma(1+s)^2}{2}
	\int_{\p\Omega}
		\left(
			\dfrac{u}{\delta^s}
		\right)^2
		({z}\cdot\nu)
	\,d\cH^{n-1}({z}).
\end{multline*}
Combining,
\begin{equation}\label{eq:E-int}
    \int_{\p\Omega}
        E(x\cdot\nabla u)({z})
        \dfrac{u}{\delta^s}
    \,d\cH^{n-1}({z})
=
	-\Gamma(1+s)^2
	\int_{\p\Omega}
		\left(
			\dfrac{u}{\delta^s}
		\right)^2
		({z}\cdot\nu)
	\,d\cH^{n-1}({z}).
\end{equation}
In a tubular neighborhood of $\p\Omega$, choose an orthonormal frame $\nu(x)$, $e_1(x), \dots, e_{n-1}(x)$, so that $\nu(x)$ agrees with the outward normal. We have for ${z}\in\p\Omega$,
\[\begin{split}
E(x\cdot\nabla u)({z})
&=
    \lim_{\Omega\ni x\to{z}}
    \dfrac{x\cdot \nabla u(x)}{
        \bM(1)(x)
    }\\
&=
	\lim_{\Omega\ni x\to{z}}
    \dfrac{
    	(x\cdot \nu(x))
	    (\nabla u(x)\cdot \nu(x))
	    +(x\cdot e_k(x))
	    (\nabla u(x)\cdot e_k(x))
	}{
        \bM(1)(x)
    }.
\end{split}\]
Note that
\[
\lim_{\Omega\ni x\to{z}}
	(x\cdot \nu(x))
={z} \cdot \nu({z}),
	\quad
\limsup_{\Omega\ni x\to{z}}
	|x\cdot e_k(x)|
\leq C,
\]
where $\nu({z})$ is the outward normal at ${z}\in\p\Omega$.
Then, since
\[
\nabla u(x)\cdot \nu(x)
=\lim_{h\to0}
	\dfrac{
		u(x+h\nu(x))-u(x)
	}{h},
\quad
\nabla u(x)\cdot e_k(x)
=\lim_{h\to0}
	\dfrac{
		u(x+he_k(x))-u(x)
	}{h},
\]
exist, one can take the particular sequence $h=\delta(x)$, since $\delta(x)\to0$ as $x\to{z}\in\p\Omega$. Recall that by \cite[Lemma 3.4]{A}, %
$\delta^{1-s}\bM(1)$ is bounded between positive constants. Also, by \cite{RS2} $u/\delta^s$ can be extended to a H\"{o}lder continuous function in $\overline{\Omega}$. We estimate the tangential terms by
\[\begin{split}
&\quad\;
\limsup_{\Omega\ni x\to{z}}
\abs{
	\dfrac{
	    (x\cdot e_k(x))
	    (\nabla u(x)\cdot e_k(x))
	}{
        \bM(1)(x)
    }
}\\
&\leq C\limsup_{\Omega\ni x\to{z}}
\abs{
	\delta(x)^{1-s}
	\dfrac{
		u(x+\delta(x)e_k(x))-u(x)
	}{
		\delta(x)
	}
}\\
&\leq C\limsup_{\Omega\ni x\to{z}}
\abs{
	\dfrac{
		u(x+\delta(x)e_k(x))
	}{
		\delta^s(x+\delta(x)e_k(x))
	}
	\dfrac{
		\delta^s(x+\delta(x)e_k(x))
	}{
		\delta^s(x)
	}
	-\dfrac{
		u(x)
	}{
		\delta^s(x)
	}
}\\
&\leq
	C\abs{
		\dfrac{u}{\delta^s}({z})
		\cdot1
		-\dfrac{u}{\delta^s}({z})
	}\\
&=0.
\end{split}\]
On the other hand, we notice that
\[
x+\delta(x)\nu(x)\in\p\Omega
	\quad \Longrightarrow \quad
u(x+\delta(x)\nu(x))=0.
\]
We expand
$
	\nabla u = \nabla \left( \frac{u}{\delta^s} \delta^s \right) = \delta^s \nabla \frac {u}{\delta^s} + s \frac{u}{\delta} \nabla \delta.
$
Since $u$ is differentiable in $\Omega$ and $u/\delta^s  \in C^\alpha (\overline \Omega)$, then up to a subsequence $\lim_{x_n \to  z} |\delta^{1-\alpha}(x_n) \nu (z) \cdot \nabla (u / \delta^s) (x_n) | \le C $ by the Mean Value Theorem. 
\normalcolor 
Therefore,
\[\begin{split}
E(x\cdot\nabla u)({z})
&=
	\lim_{\Omega\ni x\to{z}}
	\dfrac{
		s  u(x)  \nabla  \delta(x) \cdot x
	}{
		\delta(x)
        \bM(1)(x)        
	}\\
&=- 
s
\normalcolor 
	(z\cdot\nu({z}))
	\dfrac{u}{\delta^s}(z)
	\lim_{\Omega\ni x\to{z}}
	\dfrac{1}{
    	\delta^{1-s}(x)
        \bM(1)(x)
	}.
\end{split}\]
Plugging this back into \eqref{eq:E-int}, we have
\begin{multline*}
    s
    \normalcolor 
    \int_{\p\Omega}
    \left(
	    \lim_{\Omega\ni x\to{z}}
		\dfrac{1}{
    		\delta^{1-s}(x)
            \bM(1)(x)
		}
	\right)
    	\left(
			\dfrac{u}{\delta^s}
		\right)^2
		({z}\cdot\nu)
	\,d\cH^{n-1}({z})\\
=
	\Gamma(1+s)^2
	\int_{\p\Omega}
		\left(
			\dfrac{u}{\delta^s}
		\right)^2
		({z}\cdot\nu)
	\,d\cH^{n-1}({z}).
\end{multline*}
By \cite[Proposition 2]{A}, the limit
\[
a({z})
:=\lim_{\Omega\ni x\to{z}}
		\dfrac{1}{
    		\delta^{1-s}(x)
            \bM(1)(x)
		}
\]
is well-defined and continuous. Since the Poho\v{z}aev identity can be applied with any center $x_0\in\R^n$, we can write
\begin{equation}\label{eq:a-1}
    \int_{\p\Omega}
    \left(
	    a({z})
	    -\Gamma(1+s)
	    \Gamma(s)
	    \normalcolor 
	\right)
    	\left(
			\dfrac{u}{\delta^s}
		\right)^2
		(({z}-x_0)\cdot\nu)
	\,d\cH^{n-1}({z})
=0.
\end{equation}
We will show that $a(z)\equiv 
\Gamma(1+s) \Gamma(s)\normalcolor $ by a contradiction argument choosing appropriate $u$. For any $x_1\in\Omega$, we choose $u_\eps(x)=\Ints \eta_\eps(x-x_1)$ (where $\eps<\delta(x_1)$) where $\eta_\eps\in C_c^\infty(\Omega)$ is the standard mollifier centered at $x_1$. Then by \Cref{prop:D-gamma-def}, we have
\[\begin{split}
\dfrac{u_\eps}{\delta^s}({z})
=D_s u_\eps(z)
&=\int_{\Omega}
    D_s\cG_0(z,y)
	\eta_\eps(y-x_1)
\,dy.
\end{split}\]
Using \eqref{eq:a-1} with $u=u_\eps$,\begin{equation*}
\int_{\p\Omega}
    \left(
	    a({z})
	    -	
\Gamma(1+s) \Gamma(s)\normalcolor 
	\right)
	\left(
		\int_{\Omega}
            D_s\cG_0(z,y)
			\eta_\eps(y-x_1)
		\,dy
	\right)^2
	(({z}-x_0)\cdot\nu)
\,d\cH^{n-1}({z})
=0.
\end{equation*}
Taking $\eps\searrow0$,
\begin{equation}\label{eq:a-2}
\int_{\p\Omega}
    \left(
	    a({z})
	    -	
\Gamma(1+s) \Gamma(s)\normalcolor 
	\right)
    (D_s\cG_0(z,x_1))^2
	(({z}-x_0)\cdot\nu)
\,d\cH^{n-1}({z})
=0,
\end{equation}
for any $z\in\Omega$. Recall that
\[
D_s\cG_0(z,x_1)
\asymp
	\dfrac{\delta^s(x_1)}{|x_1-{z}|^n}.
\]
Suppose $a({z}_0)\neq 
\Gamma(1+s) \Gamma(s)\normalcolor $. Then there exists a neighborhood ${z}_0\in\omega\subset\p\Omega$ such that $a\neq	
\Gamma(1+s) \Gamma(s)\normalcolor $ on $\omega$. Moreover, one can choose $x_0$ such that $({z}-x_0)\cdot\nu\neq 0$ for ${z}\in\omega$. Dividing \eqref{eq:a-2} by $\delta^{2s}(x_1)$, we have
\[
\int_{\omega}
	\dfrac{
		|a({z})-	
\Gamma(1+s) \Gamma(s)\normalcolor |
		|({z}-x_0)\cdot\nu|
	}{
		|x_1-{z}|^{2n}
	}
\,d\cH^{n-1}({z})
\leq C(\omega),
\]
a contradiction as $x_1\to{z}_0$. Therefore, $a({z})\equiv 	
\Gamma(1+s) \Gamma(s)\normalcolor $ and the proof is complete.
\end{proof}

\section{Embeddings into Morrey spaces}
\label{sec:embed}

Since we mention a special case of the regularity results of Fall \cite{F}, we indicate the corresponding embedding results into Morrey spaces. Recall that the Morrey space $\cM_{\beta}$, $\beta\in[0,n]$, is defined by
\begin{equation}\label{eq:Morrey-def}
\cM_{\beta}(\Omega)
=\set{
    f\in L^1(\Omega):
    \norm[\cM_\beta(\Omega)]{f}
    :=\sup_{r\in(0,1),\,x\in\Omega}
        r^{\beta-n}
        \int_{B_r(x)\cap\Omega}|f(y)|\,dy
    <\infty
}
\end{equation}

\begin{lem}[High integrability data]
\label{lem:embed-Lp}
For any $p\in(\frac{n}{s},\infty]$,
\[
L^p(\Omega)\hookrightarrow \cM_\beta(\Omega)
\]
for $\beta=\frac{n}{p}\in[0,s)$.
\end{lem}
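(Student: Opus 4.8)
This is a direct consequence of Hölder's inequality; there is no real obstacle. The plan is simply to estimate the defining quantity of the Morrey norm on an arbitrary ball.

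\textbf{Key steps.} First I would fix $f \in L^p(\Omega)$, a point $x \in \Omega$ and a radius $r \in (0,1)$, and apply Hölder's inequality with conjugate exponents $p$ and $p' = p/(p-1)$ (interpreting $p' = 1$ when $p = \infty$) to the integral over $B_r(x) \cap \Omega$:
\[
\int_{B_r(x)\cap\Omega} |f(y)|\,dy
\le \|f\|_{L^p(\Omega)}\, |B_r(x)\cap\Omega|^{1/p'}
\le \omega_n^{1/p'}\, \|f\|_{L^p(\Omega)}\, r^{n/p'},
\]
where $\omega_n = |B_1|$. Then I would multiply by $r^{\beta - n}$ and use that $n/p' = n(1 - 1/p) = n - \beta$ since $\beta = n/p$, so that $r^{\beta - n} \cdot r^{n/p'} = r^{\beta - n + n - \beta} = r^0 = 1$. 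Hence
\[
\sup_{r\in(0,1),\,x\in\Omega} r^{\beta-n}\int_{B_r(x)\cap\Omega}|f(y)|\,dy
\le \omega_n^{1/p'}\,\|f\|_{L^p(\Omega)} < \infty,
\]
which gives $f \in \mathcal{M}_\beta(\Omega)$ with $\|f\|_{\mathcal{M}_\beta(\Omega)} \le \omega_n^{1/p'}\|f\|_{L^p(\Omega)}$, i.e.\ the claimed continuous embedding. Finally I would note the constraint on $\beta$: since $p > n/s$ we have $\beta = n/p < s$, and $p \le \infty$ gives $\beta \ge 0$, so indeed $\beta \in [0,s)$; the case $p = \infty$ (hence $\beta = 0$) is the trivial bound $r^{-n}\int_{B_r(x)\cap\Omega}|f| \le \omega_n \|f\|_{L^\infty(\Omega)}$.

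\textbf{Main difficulty.} There is essentially none; the only point worth a word of care is the bookkeeping of exponents, namely checking that $\beta - n + n/p' = 0$ exactly when $\beta = n/p$, and handling the endpoint $p = \infty$ separately so that the $p'$ notation does not cause confusion.
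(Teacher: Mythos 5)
Your proof is correct and follows exactly the same route as the paper's: a single application of H\"older's inequality with conjugate exponents $p$ and $p'$, followed by the exponent bookkeeping $n/p' = n - \beta$. The only cosmetic difference is that you write out the normalization by $r^{\beta-n}$ and the $p=\infty$ endpoint explicitly, whereas the paper leaves those to the reader.
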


\begin{proof}
This is a direct consequence of H\"{o}lder inequality. Indeed, for any $B_r(x)$ with $x\in\Omega$,
\[
\int_{B_r(x)\cap\Omega}|f|\,dx
\leq
    \left(
        \int_{\Omega}|f|^p\,dx
    \right)^{\frac1p}
    |B_r(x)|^{\frac{p-1}{p}}
\leq C\norm[L^p(\Omega)]{f}
    r^{n-\frac{n}{p}},
\]
for finite $p$, and the same is true when $p=\infty$.
\end{proof}

It is also instructive to observe how the weighted $L^\infty$ functions are embedded into Morrey spaces. This leads directly to regularity properties for the inhomogeneous eigenvalue problem for RFL, with the large RFL-harmonic function as right hand side.

\begin{lem}[Weighted $L^\infty$ data]
For $\beta\in[0,1)$, we have the continuous inclusion
\[
\delta^{-\beta}L^\infty(\Omega)
\hookrightarrow
\cM_{\beta}(\Omega)
\]
Moreover, there holds
\[
\norm[\cM_\beta(\Omega)]{f}
\leq C\left(
    \norm[L^\infty(\Omega)]{f\delta^{\beta}}
    +\norm[L^1(\Omega)]{f}
\right),
\]
whenever the right hand side is finite.
\end{lem}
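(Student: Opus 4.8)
The plan is to estimate directly the quantity $r^{\beta-n}\int_{B_r(x)\cap\Omega}|f|$ appearing in the definition \eqref{eq:Morrey-def}, uniformly over $r\in(0,1)$ and $x\in\Omega$, by splitting according to the scale $r$ and the position of $x$ relative to $\partial\Omega$. Write $A=\norm[L^\infty(\Omega)]{f\delta^{\beta}}$ and $B=\norm[L^1(\Omega)]{f}$, and fix a ``reach'' $\rho_0>0$ --- available since $\Omega$ is bounded of class $C^{1,1}$ --- so that the map $y\mapsto(\text{nearest point of }\partial\Omega,\ \delta(y))$ is bi-Lipschitz from $\{y\in\Omega:0<\delta(y)<\rho_0\}$ onto $\partial\Omega\times(0,\rho_0)$ with uniformly bounded constants. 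First, for large balls $r\geq\rho_0/2$ one has at once $r^{\beta-n}\int_{B_r(x)\cap\Omega}|f|\leq(\rho_0/2)^{\beta-n}B$, since $\beta-n<0$; this is the only place the $L^1$ term enters. It remains to treat $r<\rho_0/2$.

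For such small $r$ I would distinguish two cases. If $\delta(x)\geq 2r$, then $B_r(x)\subset\Omega$ and $\delta(y)\geq r$ on $B_r(x)$, so $|f|\leq A r^{-\beta}$ there and $r^{\beta-n}\int_{B_r(x)}|f|\leq c_n A$. If $\delta(x)<2r$, I would decompose $B_r(x)\cap\Omega$ into the interior piece $E_0=\{y\in B_r(x)\cap\Omega:\delta(y)>r\}$ and the dyadic boundary layers $E_j=\{y\in B_r(x)\cap\Omega:2^{-j}r<\delta(y)\leq 2^{-j+1}r\}$, $j\geq1$; these cover $B_r(x)\cap\Omega$ since $\delta(y)<3r$ on $B_r(x)$. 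On $E_j$ one has $|f|\leq A(2^{-j}r)^{-\beta}$, while the tubular-neighbourhood structure gives $|E_j|\leq C(\Omega)\,r^{n-1}\cdot 2^{-j+1}r$ (a patch of $\partial\Omega$ of $\mathcal H^{n-1}$-measure $\lesssim r^{n-1}$, times normal width $2^{-j+1}r\leq r$); hence $\int_{E_j}|f|\leq C\,A\,r^{n-\beta}\,2^{-j(1-\beta)}$, and on $E_0$, trivially $\int_{E_0}|f|\leq c_n A r^{n-\beta}$. Summing the geometric series, which converges precisely because $\beta<1$, yields $\int_{B_r(x)\cap\Omega}|f|\leq C(\Omega,\beta)\,A\,r^{n-\beta}$, i.e. $r^{\beta-n}\int\leq C A$.

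Combining the three regimes gives the pointwise bound $r^{\beta-n}\int_{B_r(x)\cap\Omega}|f|\leq C(\Omega,\beta)(A+B)$, and taking the supremum over $r\in(0,1)$ and $x\in\Omega$ gives the asserted estimate; finiteness of the right-hand side then shows $f\in\cM_\beta(\Omega)$, and continuity of the inclusion $\delta^{-\beta}L^\infty(\Omega)\hookrightarrow\cM_\beta(\Omega)$ is immediate. The only genuinely non-routine point is the measure estimate $|\{y\in B_r(x):\delta(y)\leq t\}|\leq C(\Omega)\,r^{n-1}t$ for $t\leq r<\rho_0$: this uses the $C^{1,1}$ regularity of $\partial\Omega$ to flatten the boundary with uniformly bounded Lipschitz and Jacobian constants and to bound $\mathcal H^{n-1}(\partial\Omega\cap B_{Cr}(z))\lesssim r^{n-1}$. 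This is the main geometric input, though it is entirely standard; everything else in the argument is elementary.
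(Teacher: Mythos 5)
Your proof is correct and takes essentially the same approach as the paper: both split by whether the ball is large, well inside $\Omega$, or near $\partial\Omega$, and both use the $C^{1,1}$ tubular-neighbourhood structure near the boundary. The only cosmetic difference is that the paper integrates $z_n^{-\beta}$ directly in Fermi coordinates via the Area Formula, whereas you replace that one-line integration by a dyadic decomposition of the boundary layer and a measure estimate $|E_j|\lesssim r^{n-1}\,2^{-j}r$ followed by a geometric series; these are equivalent ways of packaging the same computation, and in both the condition $\beta<1$ is what makes the contribution finite.
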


\begin{proof}
Let $C_0=\norm[L^\infty(\Omega)]{f\delta^\beta}+\norm[L^1(\Omega)]{f}$. We want to show that
\[
\int_{B_r(x)\cap\Omega}|f(y)|\,dy
\leq CC_0r^{n-\beta},
\]
for any $0<r<\diam(\Omega)$ and $x\in\Omega$. Indeed, we distinguish between three cases as follows.
\begin{enumerate}
\item
If $r<\delta(x)/2$, then $B_r(x)\subset B_{2r}(x)\subset\Omega$ and for any $y\in B_r(x)$, $\delta(y)\geq \delta(x)/2 > r$, so that
\[
\int_{B_r(x)\cap\Omega}
    |f(y)|
\,dy
\leq CC_0\int_{B_r(x)}
        \delta(y)^{-\beta}
    \,dy
\leq CC_0r^{-\beta}|B_r(x)|.
\]
\item
In a small enough neighborhood of $\p\Omega$, the Fermi coordinates are well-defined and used to flatten the boundary. More precisely, we write a subset of $\p\Omega$ as the image of a $C^{1,1}$ function
\[
\psi: B_{2r}'\subset\R^{n-1}\to\p\Omega,
\]
with $\psi(0)=\arg\min_{x_0\in\p\Omega}\dist(x,x_0)$. Then we define
\[\begin{split}
\Psi:B_{2r}'\times(0,4r)
    &\to \set{\delta<4r}\subset\Omega
\\
\Psi(z',z_n)
    &= \psi(z')-z_n \nu(\psi(z')),
\end{split}\]
where $\nu(\psi(z'))$ is the outward normal of $\Omega$ at $\psi(z')\in\p\Omega$.

If $r<r_1$ and $r_1$ is fixed sufficiently small, then $|\det\Psi(z',z_n)| \leq C$ for all $(z',z_n)\in B_{2r}'\times(0,4r)$. By Area Formula,
\[\begin{split}
\int_{B_r(x)\cap\Omega}
    |f(y)|
\,dy
&\leq
    \int_{\Psi(B_{2r}'\times(0,4r))}
        C_0\delta(y)^{-\beta}
    \,dy\\
&\leq
    CC_0
    \int_{B_{2r}'}\int_{0}^{4r}
        z_n^{-\beta}
    \,dz_n\,dz'\\
&\leq CC_0 r^{n-\beta}.
\end{split}\]
\item
Since $f\in L^1(\Omega)$, if $r\geq r_1$, then
\[
\int_{B_r(x)\cap\Omega}|f(y)|\,dy
\leq Cr_1^{n-\beta}\int_{\Omega}|f(y)|\,dy
\leq CC_0r^{n-\beta}.
\]
\end{enumerate}
By taking supremum over $r$ and $x$, we conclude that
\[
\norm[\cM_\beta(\Omega)]{f}\leq CC_0. \qedhere
\]
\end{proof}

\section*{Acknowledgements}

H.C. has received funding from the European Research Council under the Grant Agreement No 721675. He acknowledges the kind hospitality received in the Universidad Aut\'{o}noma de Madrid during his visit in January 2020. The work of D.G-C.  and JLV was funded by  grant PGC2018-098440-B-I00 from  the Spanish Government. J.~L.~V\'azquez is also an Honorary Professor at Univ.\ Complutense de Madrid. H.C. is grateful to Xavier Ros-Oton for pointing out the connection between the weighted trace operator and the Poho\v{z}aev identity, and to Yannick Sire for a helpful comment on interpolation spaces. H.C. also indebted to Alessio Figalli for an enlightening comment concerning regularity and for motivating encouragements.  We thank the anonymous referee for useful comments. 

\medskip

\bigskip

\end{document}